\let\euscr\mathscr
\newtheorem{thm}{Theorem}[section]
\newtheorem{lemma}[thm]{Lemma}
\newtheorem{corollary}[thm]{Corollary}
\newtheorem{propn}[thm]{Proposition}
\newtheorem{rmk}[thm]{Remark}
\newtheorem{Fact}{Fact}
\newtheorem{exmpl}[thm]{Example}
\newtheorem{defn}[thm]{Definition}
\begin{document}

\title[The Quantum Steenrod Squares and Their Algebraic Relations]{A construction of the quantum Steenrod squares and their algebraic relations}

\author{Nicholas Wilkins}
\address{School of Mathematics, University of Bristol, Bristol BS8 1UG, UK, and Heilbronn Institute for Mathematical Research, Bristol, UK}
\email{nicholas.wilkins@bristol.ac.uk}
\date{version: \today}

\begin{abstract}
		We construct a quantum deformation of the Steenrod square construction on closed monotone symplectic manifolds, based on the work of Fukaya, Betz and Cohen. We prove quantum versions of the Cartan and Adem relations. We compute the quantum Steenrod squares for all $\mathbb{CP}^n$ and give the means of computation for all toric varieties. As an application, we also describe two examples of blowups along a subvariety, in which a quantum correction of the Steenrod square on the blowup is determined by the classical Steenrod square on the subvariety. 
\end{abstract}

\maketitle

\section{Introduction}

We begin with the background of the Steenrod squares. We will then mention quantum cohomology and the results in this paper. 
The Steenrod squares are cohomology operations that are uniquely defined by a set of axioms, although this uniqueness does not include a construction. There are multiple ways of constructing the squares, one of which involves constructing the operations on $H^{*}(K(\mathbb{Z}/2,n);\mathbb{Z}/2)$ for Eilenberg-MacLane spaces $K(\mathbb{Z}/2,n)$. 

For a topological space $M$, the Steenrod squares are additive homomorphisms $$ Sq^{i} : H^{n}(M) \rightarrow H^{n+i}(M)$$ 
using $\mathbb{Z}/2$ coefficients. They generalize the squaring operation on cohomology with respect to the cup product, $x \mapsto x \cup x$. The $\mathbb{Z}/2$ coefficients ensure that the $Sq^i$ are additive. These Steenrod squares together determine a degree doubling operation that we call the Steenrod square,
$$ Sq: H^*(M) \rightarrow H^*(M)[h],$$ where $$Sq(x) = \sum Sq^{|x|-i}(x) \, h^{i} .$$
Here $h$ is a formal variable in degree 1 that represents the generator of $$H^{*}(\mathbb{RP}^{\infty}; \mathbb{Z}/2)=\mathbb{Z}/2[h].$$

The Steenrod square satisfies the Cartan relation
\begin{equation} \label{equation:introcartan} Sq(x \cup y) = Sq(x) \cup Sq(y) \end{equation}
which, for example, allows one to inductively compute the Steenrod squares for the cohomology of $\mathbb{CP}^n$ (which we will review in Example \ref{exmpl:classcpn}). The Steenrod square also satisfies the Adem relations, which are relations between compositions of the $Sq^i$. Namely, for all $p,q>0$ such that $q<2p$, 
\begin{equation} \label{equation:ademrel} Sq^{q}Sq^{p} = \sum_{s=0}^{[q/2]} {{p-s-1}\choose{q-2s}}Sq^{p+q-s} Sq^{s} \end{equation} where $[q/2]$ is the integer part of $q/2$. The Adem relations are classically implied by the axioms.

This paper begins in Section \ref{sec:preliminaries} with a preliminary section that explains in more detail the relevant background material.

We will then describe two different constructions of the Steenrod square in Section \ref{sec:morssteensqu}: the first construction uses Morse homology and the second uses intersections of cycles. The first construction is based on the definition for Floer theory by Seidel in \cite{seidel}, the origins of which are in the flowlines construction of Betz in \cite{betz}, and Fukaya in \cite{fukaya}, the former of which was extended to a more categorical definition by Betz, Cohen and Norbury in \cite{betzcoh,cohnor}. The second construction we give will be isomorphic to the first construction, using the isomorphism between Morse and singular cohomology.

After considering these constructions of the Steenrod square, in Section \ref{sec:SqQviaMorse} we extend them to define a quantum Steenrod square on the quantum cohomology of a closed monotone symplectic manifold $(M,\omega)$.

In Section \ref{subsec:quantcupprod} we will give details of the quantum cohomology $QH^*(M)$. Briefly, $QH^*(M,\omega)$ is $H^*(M)[[t]]$ as a vector space using a graded formal variable $t$ of degree $2$. However, the cup product is deformed by quantum contributions from counting 3-pointed genus zero Gromov-Witten invariants. That is, by counting certain $J$-holomorphic spheres in $M$ where $J$ is an almost complex structure on $M$ compatible with $\omega$. We often abbreviate by $T= t^N$ for $N$ the minimal Chern number. 

The quantum Steenrod square will be a degree doubling operation, denoted $Q \mathcal{S}$, where \begin{equation} \label{equation:sqstatement} Q\mathcal{S} : QH^{*}(M) = H^{*}(M)[[t]] \rightarrow H^{*}(M)[[t]][h] = QH^{*}(M)[h]. \end{equation} As in the case of the classical Steenrod square, $Q\mathcal{S}$ will be built using additive homomorphisms $Q\mathcal{S}_{i,j}: QH^*(M) \rightarrow QH^{2*-i-2jN}(M)$, so $$Q\mathcal{S}(x) = \sum_{i,j \ge 0} Q\mathcal{S}_{i,j}(x) h^i T^j.$$

The quantum Steenrod square is not necessarily axiomatically defined, but a construction was first suggested by Fukaya in \cite{fukaya} based on his Morse homotopy theory. Our construction is different from Fukaya's, and can be viewed as a Morse theory analogue of the work by Seidel in Floer theory in \cite{seidel}. The first goal of this paper is to solve an open problem posed by Fukaya in \cite[Problem 2.11]{fukaya} as to whether the Adem and Cartan relations hold for quantum Steenrod squares and, if not, what their quantised versions should be. Our second goal is to explore consequences of the solution to this problem, specifically in computations for certain closed monotone symplectic manifolds.

In answer to the first part of Fukaya's problem,  the immediate generalisation of the Cartan and Adem relations fail. In the case of the Cartan relation, this means that it is not in general true that $Q \mathcal{S}(x*y) = Q \mathcal{S}(x) * Q \mathcal{S}(y)$. We will show this in the following example.

	\begin{exmpl}
	\label{exmpl:difficulties}
		In Definition \ref{defn:mqss} of the quantum Steenrod square, we will see that \begin{equation} \label{equation:qssttot2} Q \mathcal{S}(aT) = Q \mathcal{S}(a)T^2 \end{equation} for any $a \in QH^*(M)$. 

		Let $M = \mathbb{P}^{1}$. Let $x$ be the generator of $H^{2}(M)$. Recall that the quantum product is $x*x=T$, where $T$ has degree $4$. Then $$Q\mathcal{S}(x*x) = Q\mathcal{S}(T)=T^{2},$$ using \eqref{equation:qssttot2} and the fact that $Q \mathcal{S}(1) = 1$. Using degree reasons, knowledge of the classical Steenrod square for $\mathbb{P}^1$ and of the quantum cohomology ring, one can show that $Q\mathcal{S}(x) = xh^{2}+T$. Then $$Q\mathcal{S}(x)*Q\mathcal{S}(x) = (xh^{2}+T)*(xh^{2}+T) = Th^{4}+T^{2}.$$

		Hence, in this case $Q \mathcal{S}(x) * Q \mathcal{S}(x) \neq Q \mathcal{S}(x*x)$.
	\end{exmpl}

In Section \ref{sec:quancar} we will prove why the Cartan relation does not immediately generalise and compute the actual quantum Cartan relation. Briefly, the quantum Cartan relation is deformed because the moduli space $\overline{M}_{0,5}$ of genus zero stable curves with 5 marked points $(z_0,z_1,z_2,z_3,z_4)$ has non-trivial $\mathbb{Z}/2$-equivariant cohomology, under the $\mathbb{Z}/2$ action that transposes marked points via the permutation $(12)(34)$. More precisely, the two configurations in Figure \ref{fig:m05elmts} that determine $Q\mathcal{S}(x*y)$ and $Q\mathcal{S}(x) * Q\mathcal{S}(y)$ are not connected by a $\mathbb{Z}/2$-invariant path in $\overline{M}_{0,5}$.

We will prove that a quantum deformation of the Cartan relation holds:

\begin{thm}[Quantum Cartan relation]
\label{thm:quancar}
	$$Q\mathcal{S}(x*y) = Q\mathcal{S}(x)*Q\mathcal{S}(y) + \sum_{i,j} q_{i,j}(W_0 \times D^{i-2,+})(x,y)h^{i}$$ where the correction term is written in terms of linear homomorphisms $$q_{i,j}: H_*^{\mathbb{Z}/2}(\overline{M}_{0,5}) \otimes QH^*(M) \otimes QH^*(M) \rightarrow QH^*(M),$$ such that $q_{i,j}(W_0 \times D^{i-2,+})$ is nonzero only if $i \ge 2$ and $j > 0$. The $q_{i,j}$ will be defined precisely in Definition \ref{defn:qopn}.
\end{thm}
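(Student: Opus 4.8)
The plan is to deduce the relation from a $\mathbb{Z}/2$-equivariant cobordism argument in the Deligne--Mumford space $\overline{M}_{0,5}$, in the spirit of the proof of the classical Cartan relation \eqref{equation:introcartan} and of its Floer-theoretic analogue in \cite{seidel}. Recall from Definition \ref{defn:mqss} that $Q\mathcal{S}(x)*Q\mathcal{S}(y)$ and $Q\mathcal{S}(x*y)$ are each computed by counting Morse-flow configurations with $J$-holomorphic spheres attached, whose stabilised domain is a fixed $\mathbb{Z}/2$-invariant stable curve in $\overline{M}_{0,5}$ and whose ambient parameter ranges over a finite approximation $\mathbb{RP}^N$ of $B\mathbb{Z}/2$. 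The two relevant domains are: the curve $C_{\mathrm{split}}$ with one component carrying the marked points $\{z_1,z_2\}$, one carrying $\{z_3,z_4\}$, and a central component carrying $z_0$, on which the involution $(12)(34)$ acts componentwise; and the curve $C_{\mathrm{join}}$ with components carrying $\{z_1,z_3\}$ and $\{z_2,z_4\}$ attached to a central component carrying $z_0$, on which $(12)(34)$ exchanges the two outer components. Both are deepest (zero-dimensional) strata of $\overline{M}_{0,5}$ and both are $\mathbb{Z}/2$-fixed, but, as noted in the discussion above, they cannot be joined by a $\mathbb{Z}/2$-invariant path in $\overline{M}_{0,5}$; this is precisely the source of the correction term.

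Concretely, I would first fix a $\mathbb{Z}/2$-equivariant interpolating family $P_N$ in $\overline{M}_{0,5}$ joining $C_{\mathrm{split}}$ and $C_{\mathrm{join}}$, lifted to the Borel construction $\overline{M}_{0,5}\times_{\mathbb{Z}/2}S^N$ compatibly under $S^N\hookrightarrow S^{N+1}$; its boundary consists of the two endpoint strata together with the lower strata of $\overline{M}_{0,5}$ that the family is forced to cross. Second, I would form the parametrised moduli space $\mathcal{M}(P_N;x,y)$ of pairs (point of $P_N$; Morse/$J$-holomorphic configuration with that domain, inputs $x$ at $z_1,z_2$ and $y$ at $z_3,z_4$, and the output constraint at $z_0$), and prove via an equivariant Sard--Smale argument — using domain-dependent almost complex structures and Morse data that are generic in the interior and symmetric along the $\mathbb{Z}/2$-fixed locus, with monotonicity controlling bubbling and excluding the relevant multiple covers — that $\mathcal{M}(P_N;x,y)$ is cut out transversally of the expected dimension. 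Third, in the zero-dimensional case I would read off the codimension-one boundary of the compactified one-dimensional part: beyond breakings of Morse trajectories (which record the closedness of the operations in play), it is the fibre over $C_{\mathrm{split}}$, contributing $Q\mathcal{S}(x)*Q\mathcal{S}(y)$; the fibre over $C_{\mathrm{join}}$, contributing $Q\mathcal{S}(x*y)$; and the loci where $P_N$ crosses a lower stratum of $\overline{M}_{0,5}$. Since over $\mathbb{Z}/2$ this boundary count vanishes, the asserted identity follows once the crossing loci are identified.

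The identification of the crossing loci is where the corrections appear and is, I expect, the main obstacle. A transverse crossing of $P_N$ with a boundary stratum $W_0$ of $\overline{M}_{0,5}$ splits the configuration into a piece with domain $W_0$, weighted by the class $D^{i-2,+}$ of the $\mathbb{RP}$-cell traversed in the Borel direction; assembling these over all $N$ and over all strata crossed must be shown to realise exactly $\sum_{i,j} q_{i,j}(W_0\times D^{i-2,+})(x,y)h^i$ as in Definition \ref{defn:qopn}. Carrying this out requires matching, stratum by stratum, the automorphisms of the (possibly unstable) domain components against the $S^N$-parameter so that equivariant transversality holds uniformly near the fixed locus. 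The remaining assertions are then bookkeeping: when $j=0$ the configuration carries no sphere of positive energy and the whole construction collapses to the classical one, for which the interpolating family can be chosen genuinely $\mathbb{Z}/2$-invariant, so no crossing term arises and \eqref{equation:introcartan} is recovered; hence every correction has $j>0$. Finally, the cell $D^{i-2,+}$ has dimension $i-2$ and so is only defined for $i\ge 2$, which yields the vanishing of the correction in $h$-degrees $0$ and $1$.
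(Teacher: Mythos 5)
Your overall strategy is in the right spirit --- realize the correction as a crossing term when a chain in the Borel construction of $\overline{M}_{0,5}$ interpolates between the two stable curves $m_1$ and $m_2$ --- but as written there are two substantive gaps. First, you treat the fibres of your parametrised moduli space over $C_{\mathrm{split}}$ and $C_{\mathrm{join}}$ as if they tautologically compute $Q\mathcal{S}(x)*Q\mathcal{S}(y)$ and $Q\mathcal{S}(x*y)$. They do not: the count over $m_1$, say, is $q_{i,j}(\{m_1\}\times D^{i,+})(x,y)$, a count of configurations with three pseudoholomorphic spheres glued directly at nodes and parametrised by a single $v\in S^i$, whereas $Q\mathcal{S}(x)*Q\mathcal{S}(y)$ is by definition a composite of three separately defined operations. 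Identifying the two requires a further one-parameter bordism over a family of metric trees with growing internal edge lengths (the paper's $T^c\cong[0,\infty]$ of Section \ref{subsec:Cartan}), which degenerates the glued nodal configuration at $t=0$ into broken Morse trajectories at $t=\infty$; this is the entire content of Lemma \ref{lemma:lem1} and cannot be elided. Second, the step you flag as the ``main obstacle'' --- that the crossing loci assemble exactly into $\sum_{i,j} q_{i,j}(W_0\times D^{i-2,+})(x,y)h^i$, and that no other stratum ($W_1, W_\infty, \Delta_1, \Delta_2$, etc.) contributes --- is not a bookkeeping issue but the heart of the matter, and you do not give an argument. The paper avoids this geometric bookkeeping entirely: it proves that $q_{i,j}(W)$ depends only on the homology class $[W]\in H_*(\overline{M}_{0,5}\times_{\mathbb{Z}/2}S^i)$ (Lemma \ref{lemma:additiveandindep}) and then establishes the purely algebraic identity $[W_0\times D^{i-2,+}]=[\{m_1\}\times D^{i,+}]+[\{m_2\}\times D^{i,+}]$ by explicitly constructing a bounding chain (Lemmas \ref{lemma:lem111} and \ref{lemma:lem222}), using that $W_0\cong S^2$, the real line $\mathbb{R}\subset S^2$ is invariant, and the fixed-point set of $(12)(34)$ on $\overline{M}_{0,5}$ is $\{m_1\}\sqcup(\text{a sphere containing } m_2)$.

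Your argument for the vanishing at $j=0$ is also wrong in detail. You say the interpolating family can be chosen genuinely $\mathbb{Z}/2$-invariant when $j=0$, but the topology of $\overline{M}_{0,5}$ is the same for all $j$, and the paper states explicitly that $m_1$ and $m_2$ are not joined by any $\mathbb{Z}/2$-invariant path; this is why the classical Cartan relation itself is proved by a bordism over $T^c$ and not by a path in $\overline{M}_{0,5}$. The actual reason $q_{i,j}(W_0\times D^{i-2,+})$ vanishes for $j=0$ (Lemma \ref{lemma:lemqWtermscor}) is a dimension count: with constant spheres the configuration space parametrised by $W_0\times D^{i-2,+}$ has virtual dimension $-2$ whenever the ambient moduli space is rigid, hence is generically empty. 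The $i\geq 2$ assertion is correct as you state it.
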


In the correction term, $$W_0 \times D^{i,+} \subset \overline{M}_{0,5} \times_{\mathbb{Z}/2} S^{\infty},$$ where $W_0 \subset \overline{M}_{0,5} \simeq Bl_{\{(0,0),(1,1),(\infty,\infty) \} } (\mathbb{CP}^1 \times \mathbb{CP}^1)$ is the exceptional divisor over $(0,0)$ (compare to Figure \ref{fig:w0}). The notation $D^{i,+}$ means the upper $i$-dimensional hemisphere in $S^i \subset S^{\infty}$. In fact, we are abusing notation as we are really interested in the homology class represented by $W_0 \times D^{i,+}$ in $H_*(\overline{M}_{0,5} \times_{\mathbb{Z}/2} S^{\infty})$, where we are using the singular homology.

In Section \ref{sec:computingqsstoric} we use Theorem \ref{thm:quancar} to calculate the quantum Steenrod squares for a Fano toric variety $M$, as proven in Theorem \ref{thm:SqQtoric}. Here, for $\mu \in H_2(M; \mathbb{Z})$ (which is a free $\mathbb{Z}$-module as a Fano toric variety $M$ is simply connected), let $\mu_2$ be the image of $\mu$ under $H_2(M ; \mathbb{Z}) \rightarrow H_2(M ; \mathbb{Z}/2)$. Denote by $x *_{ \mu, k} y$ the coefficient of $t^{kN}$ in the quantum product $x * y$, using spheres representing $\mu$. Let $N$ be the minimal Chern number, and $|t|=2$.

		\begin{thm}
		\label{thm:SqQtoric}
			Let $M$ be a Fano toric manifold. For $b, x \in H^*(M)$ and $|x| = 2$,
\begin{equation} \label{equation:SqQtoric}  q_{i,j}(W_0 \times D^{i,+})(b,x) =  \sum_{j \ge 1} \sum_{k=1}^{j}  \sum_{c_1(\mu) = 2kN} n(x,\mu_2) \cdot \left( Q\mathcal{S}_{|b|-i+2,j-k}(b) *_{\mu, k} x \right) \cdot t^{jN} \end{equation}
		summing over a basis of $\mu \in H_2(M; \mathbb{Z})$, so $c_1(\mu) = 2kN \in \mathbb{Z}$ and if $\chi$ is some pseudocycle representative of $x$ then $n(x,\mu_2) := \# (\chi \bullet \mu_2) \in \mathbb{Z}/2$. 

	\end{thm}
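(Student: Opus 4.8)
The plan is to unwind Definition~\ref{defn:qopn} specialised to the class $W_0\times D^{i,+}$, and then exploit $|x|=2$ to collapse the resulting $\mathbb Z/2$-equivariant moduli count into a product of a lower-order quantum Steenrod square of $b$ with a quantum product by $x$. Recall that $q_{i,j}(W_0\times D^{i,+})(b,x)$ is a count --- via pseudocycle representatives, in the equivariant framework --- of $J$-holomorphic maps whose domain is constrained to the universal curve over $W_0$, twisted by the Borel sphere over the hemisphere $D^{i,+}$ together with its $\mathbb Z/2$-mirror (the exceptional divisor $W_1$ over $D^{i,-}$), and carrying incidences from a pseudocycle representative $\chi$ of $x$ at the swapped pair $z_3,z_4$, from a representative of $b$ at the swapped pair $z_1,z_2$, and from a representative of the output class at $z_0$. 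Over the interior of $W_0$ the domain is a nodal curve $C_0\cup_n C_1$ with $C_0\supset\{z_0,z_2\}$ and $C_1\supset\{z_1,z_3,z_4\}$, and the underlying map may acquire sphere bubbles along $C_1$.

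\emph{Step 1 (structure of the solutions).} Because $\chi$ has real codimension $2$ and is imposed at both points of the $\mathbb Z/2$-pair $z_3,z_4$ lying on $C_1$, a virtual dimension count shows that a rigid regular solution must be non-constant on $C_1$: the two incidences $z_3\mapsto\chi$ and $z_4\mapsto\chi$ become genuinely independent only once the component carrying $z_3,z_4$ is a sphere bubble $v$ in a class $\mu\neq0$, attached at a further node $w$; I expect to show moreover that $v$ carries exactly $\{z_3,z_4,w\}$ and that the map is trivial on the rest of $C_1$. Thus every contributing solution is such a bubble $v$ glued at $w$ to a configuration carrying $z_0$, the swapped pair $z_1,z_2$ and the attaching point $w$, possibly with a residual sphere class; the constraints $i\ge2$ and $j>0$ drop out of this analysis.

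\emph{Step 2 (identification of the two factors).} On the bubble $v$ one has $z_3\mapsto\chi$ and $z_4\mapsto\chi$. Since $x\in H^2$ and $M$ is monotone Fano --- so the relevant genus-zero moduli are regular and the counts enumerative --- the divisor axiom collapses the second incidence to the scalar $\langle x,\mu\rangle\bmod2=\#(\chi\bullet\mu_2)=n(x,\mu_2)$, leaving a single free evaluation of the $\mu$-sphere at $w$. The remaining configuration --- a genus-zero domain carrying the output $z_0$, the swapped input pair $z_1,z_2$, and the distinguished point $w$, fibred over the residual Borel parameter --- is by construction the integrand defining $Q\mathcal S_{|b|-i+2,\,j-k}(b)$ followed by quantum multiplication at $w$ by the class transported along $v$, that is $Q\mathcal S_{|b|-i+2,\,j-k}(b)*_{\mu,k}x$. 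The index shift $i\mapsto|b|-i+2$ records the removal of the two codimension-$2$ points $z_3,z_4$ onto $v$ and the matching of the hemisphere dimension against the equivariant class; the sphere bookkeeping contributes $T^{j-k}$ from the Steenrod factor and $T^k$ from the bubble, whence the total power $T^j=t^{jN}$, while $c_1(\mu)=2kN$ reflects the class-doubling characteristic of the construction (compare \eqref{equation:qssttot2}). Summing over a basis of $\mu\in H_2(M;\mathbb Z)$, over $k\in\{1,\dots,j\}$ and over $j\ge1$, and gluing the two factors, yields \eqref{equation:SqQtoric}.

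\emph{Expected main obstacle.} The crux is Step~1 together with the degree bookkeeping of Step~2: one must pin down exactly which degenerate $J$-holomorphic configurations over $W_0\times D^{i,+}$ are regular and isolated, rule out all contributions other than the single $\mu$-bubble carrying $\{z_3,z_4\}$, and obtain the equivariant degree shift and the $T$-power accounting precisely, via a gluing argument identifying the count over $W_0$ with the product of the two factors. A secondary difficulty is justifying the divisor-axiom reduction at the chain level, compatibly with the mod-$2$ equivariant setup, so that it genuinely produces $n(x,\mu_2)$; the monotone toric hypothesis enters here to supply the necessary transversality and pseudocycle representatives.
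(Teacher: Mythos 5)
Your core idea — forget one of the two $x$-marked points and recover a multiplicity $n(x,\mu_2)$ from the remaining divisor-type incidence — is in fact the same mechanism the paper uses (there, phrased as a forgetful map from ``setups'' to ``reduced setups'' in $1$-to-$\#(X\bullet\mu)$ correspondence). So Step 2's intuition is sound.

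However, Step 1 misreads the geometry of $W_0$, and this matters for the subsequent bookkeeping. The paper identifies $W_0 \cong \mathbb{CP}^1$ by normalising $(z_0, z_3, z_4) = (0,1,\infty)$ on the freely-varying component (see the proof of Lemma~\ref{lemma:lem222}): an element of $W_0$ is a nodal curve whose bubble carries $\{z_0, z_3, z_4, \mathrm{node}\}$ and whose other component carries $\{z_1, z_2, \mathrm{node}\}$. Your description $C_0 \supset \{z_0,z_2\}$, $C_1 \supset \{z_1,z_3,z_4\}$ is instead the structure of $W_1$ (where $z_3, z_4$ collide with $z_1$), and your ``further bubble $v$ carrying exactly $\{z_3,z_4,w\}$'' is the structure of the (proper transform of the) diagonal divisor $\Delta$ (where $z_3 \to z_4$), which is again a different class in $H_*(\overline{M}_{0,5})$. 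No further degeneration of the domain is needed in the paper's argument: after forgetting $z_4$ and its incidence condition, the reduced domain is still the two-component nodal sphere, with one sphere carrying $\{z_0, z_3, \mathrm{node}\}$ (this is precisely the $3$-pointed $J$-holomorphic sphere whose count is the quantum product $*_{\mu,k}\,x$) and the other carrying $\{z_1, z_2, \mathrm{node}\}$ together with the equivariant $v$-parameter (this is the quantum Steenrod square input $Q\mathcal{S}_{\bullet,\,j-k}(b)$). The fact that $z_0$ sits on the same bubble as $z_3, z_4$ is exactly what makes the factorisation $Q\mathcal{S}(b) *_{\mu,k} x$ appear; with your placement of $z_0$ (on the component carrying the $b$-incidence $z_2$), you would not obtain this product. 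You should also beware that your remark ``the constraints $i \ge 2$ and $j > 0$ drop out of this analysis'' is really Lemma~\ref{lemma:lemqWtermscor}, proved separately by a dimension count and by $J$-holomorphic energy considerations; it is not a by-product of the bubbling picture.
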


	For example, if $M = \mathbb{CP}^n$ then setting $b = x^i$ for the generator $x \in H^2(\mathbb{CP}^n)$, we will show in Lemma \ref{lem:qWcpn} that:

$$ q_{4i+2-2n,1}(W_0 \times D^{4i-2n,+})(x^{i},x) = {{i} \choose {n-i}} T, \text{ else } q_{i,j} = 0.$$ Hence

	\begin{equation} \label{equation:qsforcpn} Q\mathcal{S}(x^{i}) = \sum_{j=0}^{i} \left( {{i} \choose {j}}+  \sum_{k=0}^{\lfloor n/2 \rfloor + 1} {{n-k}\choose{k}}\cdot {{i-(n+1-k)} \choose {j-k}} \right) x^{i+j} h^{2(i-j)}, \end{equation} where $x^p$ denotes the $p$-th quantum power of $x$. In particular, if $i+j \ge n+1$ in \eqref{equation:qsforcpn} then $x^{i+j}$ refers to $x^{i+j - n - 1} T$. Omitting the inner summation would give the classical Steenrod square.

	\begin{corollary}
\label{corollary:fanotoricdecided}
		Let $M$ be a Fano toric manifold. Then if we can compute $QH^*(M)$ (over the Novikov ring as in \cite[Section 9.2]{jhols}) then we can compute $Q\mathcal{S}$ through recursive calculations.
	\end{corollary}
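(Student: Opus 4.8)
The plan is to combine the ring structure of $QH^*(M)$ with Theorems~\ref{thm:quancar} and~\ref{thm:SqQtoric} in a recursion on cohomological degree. First I would reduce to the classical cohomology: since $Q\mathcal{S}$ is additive and $Q\mathcal{S}(aT)=Q\mathcal{S}(a)T^2$ (Definition~\ref{defn:mqss}), it suffices to compute $Q\mathcal{S}(b)$ for $b$ running over a basis of $H^*(M)\subset QH^*(M)$, the value on a general Novikov-coefficient class following formally. Because $M$ is a smooth projective toric variety, its mod-$2$ cohomology ring is generated by $H^2(M)$, so I may take this basis to consist of cup products of degree-$2$ classes and induct on the number of factors (equivalently, on cohomological degree).

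The base of the recursion is $Q\mathcal{S}$ on $H^0(M)\oplus H^2(M)$. We have $Q\mathcal{S}(1)=1$. For $x\in H^2(M)$ the component $Q\mathcal{S}_{i,j}(x)$ lies in $QH^{4-i-2jN}(M)$; since $H^{\mathrm{odd}}(M)=0$ and $Q\mathcal{S}_{i,0}(x)=Sq^{2-i}(x)$, a degree-and-parity count leaves as possibly nonzero only $Q\mathcal{S}_{2,0}(x)=x$, the square $Q\mathcal{S}_{0,0}(x)=x\cup x$, and---only when the minimal Chern number $N$ equals $1$ or $2$---the finitely many quantum corrections $Q\mathcal{S}_{i,j}(x)$ with $j\ge 1$ and $jN\le 2$. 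These last I expect to read off $QH^*(M)$ directly (e.g.\ using that the $h^0$-component of $Q\mathcal{S}$ on a degree-$2$ class agrees with the quantum square $x*x$); this small finite input is the one genuinely non-formal step, and is where I would be most careful.

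For the inductive step, suppose $Q\mathcal{S}$ has been computed to all orders in $T$ on $\bigoplus_{k<2m}H^k(M)$. Given $b\in H^{2m-2}(M)$ and $y\in H^2(M)$, expand $b*y=b\cup y+\sum_{j\ge 1}(b*_j y)T^j$ in $QH^*(M)$, where $b*_j y$ (the coefficient of $T^j$) lies in $H^{2m-2jN}(M)$, of degree $<2m$; hence $Q\mathcal{S}(b*_j y)$ is known, and so
\[
Q\mathcal{S}(b\cup y)=Q\mathcal{S}(b*y)+\sum_{j\ge 1}Q\mathcal{S}(b*_j y)\,T^{2j}
\]
is determined once $Q\mathcal{S}(b*y)$ is. The quantum Cartan relation (Theorem~\ref{thm:quancar}) gives $Q\mathcal{S}(b*y)=Q\mathcal{S}(b)*Q\mathcal{S}(y)+\sum_{i,j}q_{i,j}(W_0\times D^{i-2,+})(b,y)h^i$, where $Q\mathcal{S}(b)$ and $Q\mathcal{S}(y)$ are known by induction and their quantum product is read off $QH^*(M)$. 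Finally, as $|y|=2$, Theorem~\ref{thm:SqQtoric} evaluates each correction term $q_{i,j}(W_0\times D^{i-2,+})(b,y)$ as a finite sum of contributions of the shape $n(y,\mu_2)\cdot\big(Q\mathcal{S}_{\bullet,\,j-k}(b)*_{\mu,k}y\big)t^{jN}$ with $k\ge 1$: each $Q\mathcal{S}_{\bullet,\,j-k}(b)$ is a strictly-lower-$T$-order component of the already-known $Q\mathcal{S}(b)$, $*_{\mu,k}$ with $k\ge 1$ is a purely quantum product extracted from $QH^*(M)$, and $n(y,\mu_2)$ is a mod-$2$ intersection number of a pseudocycle representing $y$ against a curve class, i.e.\ combinatorial toric data. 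Thus $Q\mathcal{S}(b*y)$, and then $Q\mathcal{S}(b\cup y)$, is computable; iterating gives $Q\mathcal{S}$ on all of $H^*(M)$, hence on all of $QH^*(M)$.

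I expect the main obstacle to be verifying that the recursion is \emph{well-founded}: that the correction terms produced by Theorem~\ref{thm:SqQtoric} at each stage reference only already-computed values of $Q\mathcal{S}$. This hinges on two points: for $|y|=2$ the correction involves $Q\mathcal{S}_{\bullet,\,j-k}(b)$ only with $k\ge 1$, hence strictly lower order in $T$; and replacing the quantum product $b*y$ by the cup product $b\cup y$ only introduces $Q\mathcal{S}$ on strictly-lower-degree classes. The remaining subtlety is pinning down the finitely many quantum corrections on $H^2(M)$ in the base case when $N\in\{1,2\}$.
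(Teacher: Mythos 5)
Your proposal follows essentially the same strategy as the paper's proof: reduce to $H^*(M)$, induct on cohomological degree via the quantum Cartan relation (Theorem~\ref{thm:quancar}), and use Theorem~\ref{thm:SqQtoric} to evaluate the correction term $q(W)(b,x)$ for $|x|=2$. The only structural difference is that you decompose via the cup product $b\cup y$ and then account for the discrepancy between $b\cup y$ and $b*y$ by a triangularity argument in the Novikov parameter, whereas the paper simply writes $a=b*x$; both are valid, and your version makes the ``$H^*(M)$ is quantum-generated by $H^2(M)$'' step a little more explicit than the paper does.

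Your caution about the base case is warranted, and in fact sharper than the paper's own justification (``degree reasons and Remark~\ref{rmk:propertiesqs}''). For $|x|=2$ the potential quantum corrections $Q\mathcal{S}_{i,j}(x)$ with $j\ge 1$ lie in $H^{4-i-2jN}(M)$, with $i$ even since $H^{\mathrm{odd}}(M)=0$. For $N\ge 2$ and $j\ge 1$ this degree is non-negative only when $i=0$ and $jN=2$, and that contribution is precisely part of $x*x$ by Remark~\ref{rmk:propertiesqs}, so the claimed formula $Q\mathcal{S}(x)=x\,h^2+x*x$ is indeed forced by degree reasons. Thus your worry about $N=2$ is unnecessary. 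For $N=1$, however, the degree count also leaves room for $Q\mathcal{S}_{2,1}(x)\in H^0(M)$, i.e.\ a term $Q\mathcal{S}_{2,1}(x)\,h^2T$; this sits in the $h^2$-component, not the $h^0$-component, so it is not captured by the identification with $x*x$, and I do not see a degree argument that forces it to vanish. Fano toric manifolds with $N=1$ do exist (e.g.\ $Bl_{pt}(\mathbb{CP}^2)$, where the exceptional curve has Chern number $1$), so this is not vacuous. Neither your proposal nor the paper's proof addresses this term: to close the argument one would need either to show $Q\mathcal{S}_{2,1}(x)=0$ for Fano toric $M$ with $N=1$, or to explain how to determine it from $QH^*(M)$.
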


In Section \ref{sec:QAR} we extend the Adem relations from Equation \eqref{equation:ademrel} to the quantum Steenrod square. In order to state the quantum Adem relation, we next introduce operations $Q\mathcal{S}^{a,b}: QH^*(M) \rightarrow QH^*(M)$, such that the sum of these $Q\mathcal{S}^{a,b}$ is the total quantum Steenrod square $Q \mathcal{S}$. The index $a$ is the change in homological degree, and the index $b$ is the change in the index of $T$. 

\begin{defn}
 Define $Q\mathcal{S}^{a,b}$ by $$Q\mathcal{S} (x T^i) = \sum_{a,b \in \mathbb{Z}} Q\mathcal{S}^{a,b} (x T^i) \cdot h^{|x| - 2N(b+i) - a} \quad \textrm{where} \quad Q\mathcal{S}^{a,b}(xT^i) \in T^{b+i} H^{|x|+a}(M),$$ for any $x \in H^*(M)$.
\end{defn}

Computing $Q\mathcal{S}^{a,b}$ for $\mathbb{CP}^2$, we find that the naive generalisation of the Adem relation (Equation \eqref{equation:ademrel}), namely  $$\sum_{b,d} \left( Q\mathcal{S}^{q-2bN,b} \circ Q\mathcal{S}^{p-2dN,d}(\alpha) - \sum_{s=0}^{q/2} {{p-s-1}\choose{q-2s}} Q\mathcal{S}^{p+q-s-2bN,b} \circ Q\mathcal{S}^{s-2dN,d}(\alpha) \right) = 0,$$ does not hold, as in the example below.

\begin{exmpl}

Let $M = \mathbb{CP}^2$, so $2N = 6$. Then $Q\mathcal{S}^{2-2N,1} \circ Q\mathcal{S}^{2-0N,0}(x) = T$, but $$\sum_{s=0}^{s = 1} {{1-s}\choose{2-2s}} Q\mathcal{S}^{2+2-s-2iN,i} \circ Q\mathcal{S}^{s-2jN,j}(x) = 0$$ for all $i,j$. 
\end{exmpl}

In order to prove the quantum Adem relation, we begin with the technical Theorem \ref{thm:QAR}. The terminology used in Theorem \ref{thm:QAR} will be fully defined in Section \ref{subsec:QAR}. 

\begin{thm}
\label{thm:QAR}
			For $M$ a closed monotone symplectic manifold, with $\alpha \in QH^*(M)$, and for $p,q>0$ such that $q<2p$:

			$$qq_{|\alpha|+p-q,|\alpha|-p}(\alpha) = \sum_{s=0}^{q/2} {{p-s-1}\choose{q-2s}}  qq_{|\alpha|+2s-p-q,|\alpha|-s}(\alpha).$$
			\end{thm}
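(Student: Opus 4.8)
The plan is to prove Theorem~\ref{thm:QAR} by the standard ``moduli space of configurations'' argument, now applied to the $(\mathbb{Z}/2 \times \mathbb{Z}/2)$-equivariant setup governing iterated quantum Steenrod squares. First I would set up the relevant parameter space: the operations $qq_{a,b}$ are defined by counting Morse flowlines/$J$-holomorphic maps over a family of domains parametrised by a space $\Gamma$ (a compactified moduli of two-level broken configurations of spheres with the $5$ or so marked points appearing in a double-squaring operation), together with the Borel construction $\Gamma \times_{(\mathbb{Z}/2)^2} (S^\infty \times S^\infty)$ coming from the two independent $\mathbb{Z}/2$-symmetries (the ``inner'' square and the ``outer'' square, as in Seidel's Floer-theoretic treatment of the classical Adem relations). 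The left-hand side $qq_{|\alpha|+p-q,|\alpha|-p}(\alpha)$ and the right-hand side $\sum_s \binom{p-s-1}{q-2s} qq_{|\alpha|+2s-p-q,|\alpha|-s}(\alpha)$ should correspond to counts over two different equivariant cycles in this parameter space: one where the outer circle of the Borel construction degenerates first, one where the inner does.

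The key steps, in order, are: (1) recall from Section~\ref{subsec:QAR} the moduli-theoretic description of $Q\mathcal{S}^{a,b}\circ Q\mathcal{S}^{c,d}$ as a signed/mod-2 count over the product Borel space, and identify the two natural families of $(\mathbb{Z}/2)^2$-equivariant subvarieties whose counts give the two sides of the claimed identity; (2) exhibit an explicit $(\mathbb{Z}/2)^2$-equivariant cobordism (a one-parameter family of cycles, i.e.\ a chain in $H_*^{(\mathbb{Z}/2)^2}(\Gamma)$) between these two families --- this is the heart of the matter and amounts to the computation, carried out equivariantly, of $H^*_{(\mathbb{Z}/2)^2}(\Gamma;\mathbb{Z}/2)$ and the relation in it that classically produces the Adem coefficients $\binom{p-s-1}{q-2s}$; (3) invoke the usual gluing/transversality package for the relevant monotone moduli spaces to conclude that the count of the boundary of the parametrised moduli space over this cobordism vanishes mod $2$, yielding the identity of operations. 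The binomial coefficients enter exactly as in the classical case, via the cell structure of $\mathbb{RP}^\infty \times \mathbb{RP}^\infty$ (equivalently, the combinatorics of $H^*_{(\mathbb{Z}/2)^2}(pt)$-module maps), so Step~(2) should reduce to a purely topological computation that is essentially Seidel's, transported to this Morse-theoretic domain moduli space.

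The main obstacle I expect is Step~(2): producing the equivariant cobordism with the correct combinatorial bookkeeping. One must be careful that the $(\mathbb{Z}/2)^2$-action on $\Gamma$ has the right fixed-point and stabiliser structure so that the equivariant homology is computed correctly, and that no extra quantum correction terms appear --- i.e.\ one must verify that, unlike in the quantum Cartan relation (Theorem~\ref{thm:quancar}) where an honest correction $q_{i,j}(W_0 \times D^{i-2,+})$ survives, here the analogous would-be corrections are killed either by degree/dimension reasons or because the relevant equivariant class in $H_*^{(\mathbb{Z}/2)^2}(\Gamma)$ is exact. Concretely, one must check that the broken configurations that could contribute new terms lie in the image of the equivariant boundary map; this is where the hypothesis $q < 2p$ is used, exactly mirroring its role in the classical Adem relation, to ensure the relevant range of cells pairs up without remainder. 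Secondary technical points --- achieving transversality compatibly with both $\mathbb{Z}/2$-symmetries, and orientation/sign issues, which here are moot since we work mod $2$ --- are handled by the same perturbation scheme already used to define the $qq_{a,b}$ in Section~\ref{subsec:QAR}, so I would simply cite that construction rather than redo it.
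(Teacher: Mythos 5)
Your proposal takes a genuinely different route from the paper, and the route has a gap. You propose to realize $qq$ via a $(\mathbb{Z}/2)^2$-equivariant Borel construction over $\overline{M}_{0,5}$ and then exhibit an explicit equivariant cobordism in $H_*^{(\mathbb{Z}/2)^2}(\Gamma)$ that realizes the binomial coefficients. The paper instead works with $D_8 \subset S_4$ acting by permutations on the four incoming marked points, defines $qq$ directly as a $D_8$-equivariant count, and the proof of Theorem~\ref{thm:QAR} is then two lines: it is an immediate algebraic consequence of Lemma~\ref{lemma:quantumademdiagram} (which shows $qq$ factors as $(id \otimes \pi^*) \circ q_{S_4}$ through an $S_4$-equivariant lift) together with the binomial identity of Lemma~\ref{lem:bincoeff}, exactly as in the classical Theorem~\ref{thm:car}. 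The decisive structure you are missing is the inclusion $D_8 \hookrightarrow S_4$ and the constraint that the image of $qq$ lies in the image of $id \otimes \pi^*$: since $\pi^*(n_2) = \sigma_2 + e^2$ and $\pi^*(c_3) = e\sigma_2$, the coefficients $qq_{i,j}$ are forced to satisfy linear relations expressible entirely through expansion against $(e\sigma_2)^{2k}(e^2+\sigma_2)^{l}$, and the Adem combinatorics pop out by Lemma~\ref{lem:bincoeff}. No cobordism in the parameter space is needed or constructed for this theorem.

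Two further concrete problems. First, the group is wrong: the symmetry of a double square is the wreath product $\mathbb{Z}/2 \wr \mathbb{Z}/2 \cong D_8$, a nonabelian group of order $8$, not the abelian $(\mathbb{Z}/2)^2$ of order $4$; moreover it is precisely the nontrivial embedding $D_8 \subset S_4$ and the resulting restriction map $\pi^*$ that produce the Adem relation, and an abelian $(\mathbb{Z}/2)^2$ on its own carries none of this structure. Second, your worry that quantum correction terms might survive and spoil the relation, analogous to the correction in the quantum Cartan relation, is misplaced for Theorem~\ref{thm:QAR}: this theorem is about the single geometrically-defined $D_8$-equivariant operation $qq$ and its expansion coefficients $qq_{i,j}$, not about the composition $Q\mathcal{S}^{a,b} \circ Q\mathcal{S}^{c,d}$. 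The comparison of $qq$ with $Q\mathcal{S} \circ Q\mathcal{S}$ --- where a genuine correction $T \cdot Q(\alpha)$ does appear --- is the content of the separate Corollary~\ref{corollary:QAR}, proved by its own moduli-space degeneration argument (Figure~\ref{fig:ademmodulispace}). For Theorem~\ref{thm:QAR} itself, there is nothing to kill, because once $qq$ is defined the relation is a formal consequence of $S_4$-invariance and binomial arithmetic.
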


The homomorphism $$qq: H^*(M) \rightarrow QH^*(M) \otimes H^*(BD_8),$$ where $D_8$ is the dihedral group. This $qq$ operation will include the data of the composition $Q \mathcal{S} \circ Q \mathcal{S}$. The ring $H^*(BD_8)$ has three generators, labelled $e, \sigma_1, \sigma_2$ (of which we only need to consider $e$ and $\sigma_2$). We then denote by $qq_{i,j}(\alpha)$ the coefficient of $e^i \sigma_2^j$ in $qq(\alpha)$, defined in Equation \eqref{equation:qqijdef}. 

This should be compared to equation \eqref{equation:ademrel}. The above theorem leads to a Corollary in more familiar terms:

\begin{corollary}[Quantum Adem Relations]
			\label{corollary:QAR}
			For $p,q > 0$ such that $q < 2p$, and $\alpha \in QH^*(M)$,

\begin{equation} \label{equation:QAR} \sum_{b,d} \left( Q\mathcal{S}^{q,b} \circ Q\mathcal{S}^{p,d}(\alpha) - \sum_{s=0}^{q/2} {{p-s-1}\choose{q-2s}} Q\mathcal{S}^{p+q-s,b} \circ Q\mathcal{S}^{s,d}(\alpha) \right) = T \cdot Q(\alpha) \end{equation} for the correction term 

$$
\begin{array}{rcl}
T \cdot Q(\alpha) &=&   q_{D_8}((g m_1 + g^2 m_1) \otimes \Psi (e^{|\alpha| + p - q} \sigma_2^{|\alpha|-p}))(\alpha) 
\\[0.5em] &&- \sum_{s=0}^{[q/2]} {{p-s-1}\choose{q-2s}} q_{D_{8}}((g m_1 + g^2 m_1) \times \Psi (e^{|\alpha| +2s - p - q} \sigma_2^{|\alpha|-s}))(\alpha).
\end{array}
$$
		\end{corollary}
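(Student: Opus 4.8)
The plan is to deduce the Corollary purely formally from Theorem \ref{thm:QAR} by unwinding the definition of the operation $qq$ in terms of $Q\mathcal{S} \circ Q\mathcal{S}$ and identifying the coefficient extraction in $H^*(BD_8)$ with the coefficient of appropriate monomials $e^i \sigma_2^j$. First I would recall from Section \ref{subsec:QAR} the precise formula defining $qq_{i,j}(\alpha)$, i.e. Equation \eqref{equation:qqijdef}, and the factorisation of $qq$ coming from the two-fold application of the Steenrod construction, together with the description of $H^*(BD_8)$ and the role of the subgroups generated by $m_1$ and $g$. The key point is that the restriction map $H^*(BD_8) \to H^*(B(\mathbb{Z}/2 \times \mathbb{Z}/2))$ (for the subgroup through which the iterated construction factors) takes the classes $e^i\sigma_2^j$ to the monomials that, by the classical computation of Adem (as in the proof of Equation \eqref{equation:ademrel}), reassemble $Q\mathcal{S}^{q,b}\circ Q\mathcal{S}^{p,d}$ out of the $qq_{i,j}$. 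So I would first establish a dictionary lemma: for fixed total degree shift, $$\sum_{b,d} Q\mathcal{S}^{q,b}\circ Q\mathcal{S}^{p,d}(\alpha) = (\text{explicit }\mathbb{Z}/2\text{-combination of } qq_{i,j}(\alpha)),$$ and similarly for the $\sum_s \binom{p-s-1}{q-2s} Q\mathcal{S}^{p+q-s,b}\circ Q\mathcal{S}^{s,d}$ terms, where the indices $(i,j)$ appearing are exactly $(|\alpha|+p-q,\,|\alpha|-p)$ and $(|\alpha|+2s-p-q,\,|\alpha|-s)$ respectively.

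Granting that dictionary, the left-hand side of Equation \eqref{equation:QAR} becomes $$qq_{|\alpha|+p-q,\,|\alpha|-p}(\alpha) - \sum_{s=0}^{[q/2]}\binom{p-s-1}{q-2s}\, qq_{|\alpha|+2s-p-q,\,|\alpha|-s}(\alpha)$$ \emph{plus} the terms where the cohomology class on $BD_8$ restricted to the relevant subgroup is not in the image of the subring generated by the classical Adem classes — precisely the terms captured by $q_{D_8}((gm_1+g^2m_1)\otimes\Psi(\cdots))(\alpha)$ in the statement, where $\Psi$ is the map sending a monomial in $e,\sigma_2$ to its Poincaré dual description in the equivariant homology of $\overline{M}_{0,5}^{\times\,\text{(iterated)}}$ (or whatever configuration space is used in Section \ref{subsec:QAR}), and $gm_1+g^2m_1$ records the failure of the relevant path in that configuration space to be $D_8$-equivariantly null-homotopic. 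By Theorem \ref{thm:QAR}, the first group of terms vanishes identically, leaving exactly the asserted correction $T\cdot Q(\alpha)$. I would then separately check, using the degree bookkeeping built into the definition of $Q\mathcal{S}^{a,b}$ (namely $Q\mathcal{S}^{a,b}(xT^i)\in T^{b+i}H^{|x|+a}(M)$) together with monotonicity, that every surviving term is divisible by $T$, justifying the notation $T\cdot Q(\alpha)$; this is where one uses that the quantum corrections to the Adem relation, like those to the Cartan relation in Theorem \ref{thm:quancar}, involve only positive powers of $T$.

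The main obstacle I anticipate is the dictionary lemma itself: one must verify that the combinatorics of extracting the $e^i\sigma_2^j$-coefficients of $qq(\alpha)$ reproduces \emph{exactly} the classical Adem coefficients $\binom{p-s-1}{q-2s}$, i.e. that the $\mathbb{Z}/2$-cohomology of $BD_8$ contributes, on the "classical" part, precisely the same relation that Adem's original argument extracts from $H^*(B(\mathbb{Z}/2)^{\times 2})$ via the symmetric-group computation. This is a purely algebraic-topological input — essentially the statement that the quantum construction restricted to the constant-sphere (degree-zero) sector recovers the classical iterated Steenrod square, so that the classical relation is embedded inside the quantum one — but it requires care in matching conventions for $e, \sigma_1, \sigma_2$ and the map $\Psi$. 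Once that identification is in place, everything else is a substitution of Theorem \ref{thm:QAR} and elementary degree counting, and no further geometric input (no new moduli space analysis) is needed.
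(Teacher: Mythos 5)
Your high-level reduction is right in outline --- once one knows that
\begin{equation*}
q_{D_8}\bigl(m_1 \otimes \Psi(e^i\sigma_2^j)\bigr)(\alpha) = \sum_{b,d} Q\mathcal{S}^{i,b}\circ Q\mathcal{S}^{j,d}(\alpha),
\end{equation*}
the Corollary follows by expanding $qq_{i,j}(\alpha) = q_{D_8}(m_1\otimes\Psi(e^i\sigma_2^j))(\alpha) + q_{D_8}((gm_1+g^2m_1)\otimes\Psi(e^i\sigma_2^j))(\alpha)$ and substituting into Theorem \ref{thm:QAR}. But you declare, explicitly, that ``no further geometric input (no new moduli space analysis) is needed,'' and that the only obstacle is matching Adem coefficients in $H^*(BD_8)$. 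This is where your proposal genuinely falls short: the identity displayed above is precisely the content of the paper's proof, and it is established by a genuine moduli-space cobordism, not by algebra. The paper parametrises a one-dimensional family of graph-plus-sphere configurations over $T^c\cong[0,\infty]$ (as in the Cartan relation), with parameter $v\in D^{i,+}\times D^{j,+}\times D^{j,+}\subset E$; the $t=0$ end recovers $q_{D_8}(m_1\otimes\Psi(e^i\sigma_2^j))$, while the $t=\infty$ end decouples into configurations computing $Q\mathcal{S}^{i,b}\circ Q\mathcal{S}^{j,d}$. A substantial part of the argument is the $D_8$-equivariant bookkeeping at $t=\infty$: showing that, after quotienting by $D_8$, contributions with intermediate critical points $w_1\ne w_2$ (or with $v_1\ne v_2$) cancel in pairs, so that only the diagonal contributions survive and reassemble as a composition. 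None of this is ``formal''; it is parallel in difficulty to the quantum Cartan proof (Lemma \ref{lemma:lem1}), and skipping it leaves the dictionary lemma unproved.

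Two smaller inaccuracies. First, $\Psi$ is the universal coefficients isomorphism $H^*(BD_8)_{red}\to H_*(BD_8)_{red}$ (and in the paper's moduli setup, $\Psi(e^i\sigma_2^j)$ is represented by the explicit hemisphere product $D^{i,+}\times D^{j,+}\times D^{j,+}$ via Lemma \ref{lemma:submandidjdj}); it is not a Poincar\'e-duality map on an iterated configuration space as you describe. Second, the term $gm_1+g^2m_1$ does not record a ``failure of a path to be $D_8$-equivariantly null-homotopic''; it is the leftover summand of the $S_4$-invariant cycle $m_1+gm_1+g^2m_1$ used to define $qq$ (necessary so that $qq$ lifts through $q_{S_4}$ in Diagram \eqref{quantumademdiagram}), whereas $Q\mathcal{S}\circ Q\mathcal{S}$ alone sees only $m_1$. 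You have also misplaced where the Adem coefficients $\binom{p-s-1}{q-2s}$ enter: they come from $\pi^*:H^*(BS_4)_{red}\to H^*(BD_8)_{red}$ and Lemma \ref{lem:bincoeff}, which is already packaged inside Theorem \ref{thm:QAR}; they are not produced by the dictionary lemma.
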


In the above corollary, the dihedral group $D_8 = \langle (12), (13)(24) \rangle \subset S_4$ acts on the four incoming marked points $(z_1,z_2,z_3,z_4)$ by permutations. The operation $q_{D_8}$ is a linear homomorphism determined by homology classes in $\overline{M}_{0,5} \times_{D_8} ED_8$, so $q_{D_8}(A) : H^*(M) \rightarrow H^{4*-|A|}(M)$ for $A \in H_*(\overline{M}_{0,5} \times_{D_8} ED_8)$. It is analogous to the $q_{i,j}$ in Theorem \ref{thm:quancar}. Here $m_1 \in \overline{M}_{0,5}$ (see Figure \ref{fig:m05elmts}), $e^i \sigma_2^j \in H^*(BD_8)$ and $\Psi: H^*(\overline{M}_{0,5} \times_{D_8} ED_8) \rightarrow H_*(\overline{M}_{0,5} \times_{D_8} ED_8)$ is the universal coefficients isomorphism. Let $g=(123) \in S_4$, such that the cosets of $D_8$ in $S_4$ are $D_8, gD_8, g^2 D_8$.

In Section \ref{sec:blowups}, we calculate $Q \mathcal{S}$ in the case of the blowups $M= Bl_Y(\mathbb{CP}^3)$ and $M= Bl_Y(\mathbb{CP}^1 \times \mathbb{CP}^1 \times \mathbb{CP}^1)$ where $Y$ is respectively the intersection of two quadrics and the intersection of two linear hypersurfaces. The setup here is similar to Blaier \cite{blaier}. Most of the squares can be computed using the methods from Section \ref{sec:computingqsstoric}. The new computation is of $Q \mathcal{S}_{1,1}$, which is given in the following theorem.

\begin{thm}
\label{thm:blowupID}
	
\begin{equation} \label{equation:blowupID1}  \qquad Q\mathcal{S}_{1,1} = id : H^3(Bl_Y(\mathbb{CP}^3)) \rightarrow H^3(Bl_Y(\mathbb{CP}^3))  \end{equation}	

and

\begin{equation} \label{equation:blowupID2} \qquad Q\mathcal{S}_{1,1} = id : H^3(Bl_Y(\mathbb{CP}^1 \times \mathbb{CP}^1 \times \mathbb{CP}^1)) \rightarrow H^3(Bl_Y(\mathbb{CP}^1 \times \mathbb{CP}^1 \times \mathbb{CP}^1)). \end{equation}

\end{thm}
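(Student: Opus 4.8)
The plan is to reduce everything to the exceptional divisor and to the classical Steenrod operation $Sq^0$ on $H^1(Y;\mathbb{Z}/2)$. Write $\sigma \colon M \to X$ for the blow-down, $j \colon E \hookrightarrow M$ for the exceptional divisor and $\pi = \sigma|_E \colon E = \mathbb{P}(N_{Y/X}) \to Y$ for the $\mathbb{P}^1$-bundle projection. First I would record the cohomological set-up: the blow-up isomorphism $H^*(M;\mathbb{Z}/2) \cong H^*(X;\mathbb{Z}/2) \oplus H^{*-2}(Y;\mathbb{Z}/2)$, with the second summand embedded by $\beta \mapsto j_*\pi^*\beta$, identifies $H^3(M) \cong H^1(Y;\mathbb{Z}/2)$ since $H^3(X) = 0$ and, in both cases, $Y$ is an elliptic curve. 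Next I would check that the minimal Chern number is $N = 1$: it is realised by the class $\ell_Y$ of a fibre of $\pi$, for which $c_1(\ell_Y) = -E \cdot \ell_Y = 1$. Hence $T = t$ and $Q\mathcal{S}_{1,1}$ is a degree-preserving endomorphism of $H^3(M)$, so the theorem amounts to computing one $(\mathbb{Z}/2)$-linear map. A last preliminary observation is that this map is a genuinely quantum quantity: the classical term of $Q\mathcal{S}$ carrying an $h^1$ on a degree-$3$ class is $Sq^2$, which lands in $H^5(M)$, so nothing classical contributes to the $h^1 t^1$-coefficient $Q\mathcal{S}_{1,1}$.

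The core of the argument is to pin down the only curve class contributing at order $t$ and then to run the count. Because $\ell_Y \cdot E = -1 < 0$, every stable curve in class $\ell_Y$ lies in $E$, and inside the $\mathbb{P}^1$-bundle $E \to Y$ the only such curve is a fibre; thus $\overline{\mathcal{M}}_{0,k}(M,\ell_Y)$ is the universal $k$-pointed $\mathbb{P}^1$ over $Y$. Using the classical projective geometry of the centres --- a $(2,2)$ complete intersection in $\mathbb{P}^3$, respectively a complete intersection of two $(1,1,1)$-hypersurfaces in $\mathbb{P}^1 \times \mathbb{P}^1 \times \mathbb{P}^1$, both elliptic curves --- together with monotonicity and the convexity of the ambient spaces, I would argue that $\ell_Y$ is the only effective class of Chern number $1$ that affects $Q\mathcal{S}_{1,1}$ on $H^3(M)$ (any other such class either does not exist or contributes only to operations landing in other degrees). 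With this, one applies the definition of $Q\mathcal{S}_{1,1}$ (Definition \ref{defn:mqss}): representing $x = j_*\pi^*\beta$ by a pseudocycle of the form $\pi^{-1}(c_\beta)$ for a loop $c_\beta$ on $Y$ Poincaré dual to $\beta$, the $\mathbb{Z}/2$-equivariant moduli problem defining $Q\mathcal{S}_{1,1}(x)$ fibres over $Y$, and since the pseudocycle pulls back from $Y$ along $\pi$, the count factors as a fibrewise Steenrod-type count on the $\mathbb{P}^1$-fibres (which contributes $1$ modulo $2$) times the classical degree-preserving operation $Sq^0 = \mathrm{id}$ on $H^1(Y;\mathbb{Z}/2)$. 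This yields $Q\mathcal{S}_{1,1}(x) = x$.

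An alternative, more algebraic route would use the quantum Cartan relation of Theorem \ref{thm:quancar}. Once the relevant portion of $QH^*(M)$ is known --- the only order-$t$ correction to products involving classes in $H^3(M)$ comes from $\ell_Y$ and is computed from the divisor axiom and the fibration $E \to Y$ just described --- one applies $Q\mathcal{S}$ to a well-chosen product, expands via Theorem \ref{thm:quancar}, and uses the already-established values of $Q\mathcal{S}$ on divisor and classical classes (from the toric-type methods of Section \ref{sec:computingqsstoric}) to isolate the $h^1 t^1$-coefficient and solve for $Q\mathcal{S}_{1,1}$. Here the correction term $q_{i,j}(W_0 \times D^{i-2,+})$ of Definition \ref{defn:qopn} again reduces, through the fibration over $Y$, to a classical intersection count on $Y$. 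The two statements of the theorem are obtained by carrying this out once for $X = \mathbb{P}^3$ and once for $X = \mathbb{P}^1 \times \mathbb{P}^1 \times \mathbb{P}^1$; the two arguments are parallel and differ only in the classical geometry of $Y$ and of the lines meeting it.

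The main obstacle is the moduli-theoretic content of the count. First, one must verify in each geometry that $\ell_Y$ really is the only class of Chern number $1$ that matters, which requires ruling out unexpected rational curves and controlling secant lines to $Y$. Second, one must set up transversality for the $\mathbb{Z}/2$-equivariant moduli problem despite the fact that the two incidence conditions with $\pi^{-1}(c_\beta)$ on a single fibre are not independent; this is where one must genuinely use the perturbation scheme underlying Definition \ref{defn:mqss} rather than a naive intersection count. Third, one must extract the $\mathbb{Z}/2$-equivariant count modulo $2$ and confirm that the fibrewise factor is $1$ and not $0$ --- this nonvanishing is exactly what distinguishes the assertion $Q\mathcal{S}_{1,1} = \mathrm{id}$ from the weaker (and a priori plausible) $Q\mathcal{S}_{1,1} = 0$.
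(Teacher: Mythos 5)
Your main route coincides with the paper's: reduce from $M$ to $E$ via the fact that all fibre-class curves lie in $E$, then from $E \cong Y \times \mathbb{P}^1$ to $Y$ by observing that configurations fibre over $Y$, thereby identifying $Q\mathcal{S}_{1,1}$ with a conjugate of $Sq^0 = id$ on $H^1(Y)$ through the isomorphisms $H^3(M) \cong H_3(E) \cong H^1(Y)$. This is exactly what Lemmas \ref{lemma:SqE2M} and \ref{lemma:SqE2Y} establish, together with the duality bookkeeping in the proof of Equation \eqref{equation:blowupID1}, and you correctly flag the same three obstacles.

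However, the step that would fail as written is your justification for why only the fibre class contributes. You claim any other Chern-number-$1$ class ``either does not exist or contributes only to operations landing in other degrees''. For $Bl_Y(\mathbb{CP}^1\times\mathbb{CP}^1\times\mathbb{CP}^1)$ neither alternative holds: here $c_1(TM) = (1,1,1,1)$, so each of the three lifted coordinate-line classes also has Chern number $1$, and the grading of $Q\mathcal{S}_{1,1}$ on $H^3$ is satisfied by these classes just as by the fibre class. What the paper actually does is an incidence argument: since $H_3(M) = i_* H_3(E)$, the input pseudocycles $A_v, A_{-v}$ used to compute $Q\mathcal{S}_{1,1}$ on $H^3(M)$ can be taken inside $E$, whereas a lifted coordinate line $\tilde S$ meets $E$ in at most one point (the two defining $(1,1,1)$-equations for $Y$, restricted to a coordinate $\mathbb{CP}^1$, have at most one common zero), so $\tilde S$ cannot meet both $A_v$ and $A_{-v}$ and the contribution is empty. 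Your aside about ``controlling secant lines to $Y$'' is the right instinct, but the degree-based claim must be replaced by this intersection count for the argument to go through. Separately, the sketched alternative via the quantum Cartan relation is unlikely to close: $H^3(M) \cong H^1(Y)$ does not lie in the subalgebra of $H^*(M)$ generated by $H^2(M)$, so the recursive divisor machinery of Section \ref{sec:computingqsstoric} does not reach these classes and there is no obvious ``well-chosen product'' whose expansion isolates $Q\mathcal{S}_{1,1}$ on them.
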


Observe that $Q\mathcal{S}_{1,1}$ are quantum correction terms to the classical Steenrod square on the blowup $M$. They are determined by lifts of contributions to the classical Steenrod square on $Y$.

\subsection*{Acknowledgements}
I thank my supervisor Alexander Ritter for his guidance and support throughout my Ph.D. and beyond. I thank Paul Seidel for suggesting this project, and for helpful conversations. I thank the anonymous referee for all of the comments and points of clarification, which have been crucial both to correct errors and to improve the exposition. I also thank Dominic Joyce and Ivan Smith, Frances Kirwan and Ulrike Tillmann for helpful comments and corrections on my thesis, and therefore this paper. This work was supported by an EPSRC grant, reference: EP/M508111/1, and partially supported by the Simons Foundation, through a Simons Investigator grant (PI: Paul Seidel).

This forms a chapter of my Ph.D. thesis.

\section{Preliminaries}
\label{sec:preliminaries}
	Henceforth we always work with coefficients in $\mathbb{Z}/2$, unless otherwise stated. For example $H^{*}(M)$ means $H^{*}(M;\mathbb{Z}/2)$.

	\subsection{Equivariant Cohomology}
	\label{subsec:equivcohom}
		We follow \cite[Section 2]{seidel}.

		\begin{defn}[Equivariant cohomology of a chain complex]
		\label{defn:equcohom}
		Let $(C^{\bullet},d)$ be a cochain complex over $\mathbb{Z}/2$. Suppose $(C^{\bullet},d)$ has a chain involution $\iota$, so $\iota: C^{\bullet} \rightarrow C^{\bullet}$ is a chain map with $\iota^2 = id_{C^{\bullet}}$. Let $h$ be a formal variable in grading $1$. The equivariant chain complex is $$(C^{\bullet}_{\mathbb{Z}/2} ,\delta) =  (C^{\bullet}[h],d + h(id_{C} + \iota)).$$ 

		Define $H^{*}_{\mathbb{Z}/2}(C) := H^*(C^{\bullet}_{\mathbb{Z}/2} ,\delta) $, the equivariant cohomology of $(C,d,\iota)$. 
		\end{defn}

		\begin{defn}[Equivariant Cohomology of a manifold]
		\label{defn:equivcohomman}
		Let $N$ be a topological space with a continuous involution $\iota: N \rightarrow N$. Let $C = C^{*}(N)$ be the singular cochain complex of the topological space $N$. There is a $\mathbb{Z}/2$ action on $C^* (N)$ induced by $\iota$. As in Definition \ref{defn:equcohom}, the \textit{equivariant cohomology of $N$} is $H^* _{\mathbb{Z}/2}(N) := H^* _{\mathbb{Z}/2}(C^* (N))$. 
		\end{defn}

		The important examples of this will be $M \times M$ with the involution swapping the factors and $M$ with the trivial involution. We will respectively denote the equivariant chains in this case by $C^{\bullet}_{\mathbb{Z}/2}(M \times M)$ (or in that case of Morse cohomology by $CM^{\bullet}_{\mathbb{Z}/2}(M \times M)$) and by $C^{\bullet}_{\mathbb{Z}/2}(M)$. Similarly, equivariant cohomology will be denoted respectively by $H^{\bullet}_{\mathbb{Z}/2}(M \times M)$ and by $H^{\bullet}_{\mathbb{Z}/2}(M)$

		\begin{rmk} 
			There is another description of $H^*_{\mathbb{Z}/2}(N)$ for a manifold $N$ with a continuous involution. Recall that $E \mathbb{Z}/2$ is the classifying space of $ \mathbb{Z}/2$: a contractible space with a free $\mathbb{Z}/2$ action, for example $E \mathbb{Z}/2 = S^{\infty}$ with the involution being the antipodal map. Then $$H^{*}_{ \mathbb{Z}/2}(N) := H^{*}(N \times_{\mathbb{Z}/2} E  \mathbb{Z}/2).$$

			This definition is equivalent to Definition \ref{defn:equivcohomman}. If we let $N = \{ pt \}$ then we obtain $pt \times_{\mathbb{Z}/2} S^{\infty} = S^{\infty} / (\mathbb{Z}/2) = \mathbb{RP}^{\infty}$, hence $$H^{*}_{\mathbb{Z}/2}(pt) = H^{*}(\mathbb{RP}^{\infty}) = \mathbb{Z}/2[h].$$
		\end{rmk}

	\subsection{The Steenrod Squares}
	\label{subsec:theSqs}
		For a reference, see \cite[Section 4.L]{algtop}.

		The Steenrod square operations $\{ Sq^{i} \}$ are the unique collection of additive homomorphisms such that:
		\begin{enumerate}
			\item $Sq^{i} : H^{n}(M) \rightarrow H^{n+i}(M)$ for each $n \ge 0$ and topological space $M$,
			\item Each $Sq^{i}$ is natural in $M$,
			\item $Sq^{0}$ is the identity,
			\item $Sq^{n}$ acts as the cup square on $H^{n}$, so $Sq^{|x|}(x) = x \cup x$,
			\item If $n > |x|$ or $n < 0$ then $Sq^{n}(x) = 0$,
			\item (Cartan relation) For each $n$, $$Sq^{n}(x \cup y) = \sum_{i+j=n} Sq^{i}(x) \cup Sq^{j}(y).$$ 
		\end{enumerate}
		
		Here $|x|$ is the cohomological grading of $x \in H^*(M)$. Recall that we use $\mathbb{Z}/2$ coefficients to ensure additivity: $(x+y) \cup (x+y) = x \cup x + y \cup y$ modulo 2. These $Sq^{i}$ together define a single operator, the ``total Steenrod square" $$Sq:H^{*}(M) \rightarrow (H^{\bullet}(M)[h])^{2*},$$ where $Sq^{i}$ is the coefficient of $h^{n-i}$, so $Sq(x) = \sum_{i} Sq^{|x|-i}(x) \cdot h^i$. The cup product on $H^{n}(M)[h]$ is $(a \cdot h^i) \cup (b \cdot h^j) = (a \cup b) \cdot  h^{i+j}$, so the Cartan relation becomes $Sq(x \cup y) = Sq(x) \cup Sq(y)$ and thus $Sq$ is a unital ring homomorphism. We will henceforth call $Sq$ the ``Steenrod square" when there is no ambiguity, noting that it contains the same information as $\{ Sq^i \}$.

		One must note that although these axioms imply that there is a unique Steenrod square, there are many different approaches to constructing them.

		\begin{exmpl}[The classical Steenrod square for $\mathbb{CP}^{n}$]
		\label{exmpl:classcpn}
			$$H^{*}(\mathbb{CP}^{n}) \cong \mathbb{Z}/2 [x] / (x^{n+1})$$ where $|x| = 2$. We see that $Sq^{0}(x) = x$ and $Sq^{2}(x) = x^{2}$ using axioms $3$ and $4$, and these are all of the nonzero terms by axiom $5$. Hence $Sq(x) = xh^{2} + x^{2}$. By the Cartan relation (axiom 6), $$Sq(x^{i}) = Sq(x)^{i} = (xh^{2} + x^{2})^{i} = x^{i} \sum_{j=0}^{i} {{i}\choose{j}} x^{j} h^{2(i-j)}.$$ Looking at the coefficient of $h^{2i-k}$, $Sq^k(x) = 0$ for $k$ odd and $Sq^{2j}(x^i) = {{i}\choose{j}} x^{i+j}$.
		\end{exmpl}
		
\subsection{The Betz-Cohen Construction}
\label{subsec:prelimbcncon}
The details are relevant for Section \ref{subsec:tmssissq}. 

Fix a Morse-Smale function $f$ on $M$, and pick a small convex neighbourhood $U_f$ of $f$ in $C^{\infty}(M)$ consisting of Morse-Smale functions. Let $\Gamma$ be the $Y$-shaped graph, oriented and parametrised as $(-\infty,0] \vee_0 [0,\infty) \vee_0 [0,\infty)$. We denote $S$ to be the set of triples $\sigma = (f_{1,s},f_{2,s},f_{3,s})$ such that $f_{1,s} \in U_f$ for each $s \in (-\infty,0]$ and $f_{2,s}, f_{3,s} \in U_f$ for $s \in [0,\infty)$, subject to:
\begin{enumerate}
\item $f_{1,0}, f_{2,0}, f_{3,0}$ are pairwise distinct. 
\item $f_{i,s} = \beta(|s|) f_{i,0} + (1-\beta(|s|)) f$, where $\beta: [0,\infty) \rightarrow [0,1]$ is a fixed monotone bump function such that $\beta(s) =1$ for $s \le 1/2$ and $\beta(s) = 0$ for $s \ge 1$.
\end{enumerate}
We define $\mathcal{M}_{\sigma}$ to be the set of continuous maps $\gamma: \Gamma \rightarrow M$ that are smooth on the edges, such that for each edge $E_i$ of $\Gamma$ we denote $\gamma_i = \gamma|_{E_i}$, and require $$d \gamma_i / dt (s) + \nabla f_{i,s}(\gamma_i(s)) = 0.$$ This is actually slightly different to the construction in \cite{betzcoh}, in which the $f_1,f_2,f_3$ were pairwise distinct and had no $s$ dependence. The construction due to Betz-Cohen is equivalent to that given here, as we are simply using a deformation retraction of their moduli space of metric Morse flows.

Let $\mathcal{M}_{BC}= \sqcup_{\sigma \in S} \mathcal{M}_{\sigma}$, topologised so that $\mathcal{M}_{BC} \rightarrow S$ is continuous. Observe that there is a $\mathbb{Z}/2$-action $\iota_S$ on $S$, induced by the permutation $(23)$. This induces a $\mathbb{Z}/2$-action on $\mathcal{M}_{BC}$, via $(\sigma, \gamma) \mapsto (\iota_S \circ  \sigma, \gamma \circ R_{\Gamma})$. Here, $R_{\Gamma}$ is the involution on $\Gamma$ that swaps the two positive half-lines and fixes the negative half-line.

For $a_1,a_2, a_3 \in \text{crit}(f)$, define $\mathcal{M}_{BC}(a_1,a_2,a_3)$ to consist of equivalence classes of pairs $[\sigma, \gamma] \in \mathcal{M}_{BC}/(\mathbb{Z}/2)$ such that $${\displaystyle \lim_{i \rightarrow -\infty}} \gamma_1(s) = a_1, \ {\displaystyle \lim_{i \rightarrow \infty}} \gamma_2(s) = a_2, \ {\displaystyle \lim_{i \rightarrow \infty}} \gamma_3(s) = a_3.$$

The space $S$ is contractible and has a free $\mathbb{Z}/2$-action, so $SB := S /( \mathbb{Z}/2)$ is homotopy equivalent to $\mathbb{RP}^{\infty}$. Thus, there are representatives $\delta_i$ of the nontrivial generator of $H_i(SB) \cong \mathbb{Z}/2$ for each $i$. Strictly, we consider some $\delta_i = \sum_{j} \tau_{i,j}$ where $\tau_{i,j}: \Delta_j \rightarrow S$ is a simplex. For each $i \ge 0$, let $$\mathcal{M}_{BC,i}(a_1,a_2,a_3) = {\displaystyle \bigcup_j} \tau_{i,j}^* \mathcal{M}_{BC}(a_1,a_2,a_3),$$ the union of the pullback of $\mathcal{M}_{BC}(a_1,a_2,a_3)$ along $\tau_{i,j}: \Delta_i \rightarrow S$, glued along faces.

Recal that in Morse theory, $CM^*(M \times M, f \oplus f)$ and $CM^*(M,f) \otimes CM^*(M,f)$ are identified via the K\"unneth isomorphism, where $$f \oplus f: M \times M \rightarrow \mathbb{R}, \quad (f \oplus f)(x,y) = f(x) + f(y).$$ One uses the correspondence between critical points of $f \oplus f$ and formal pairs of critical points of $f$, denoted $a \otimes b$ for $a,b \in \text{crit}(f)$. The isomorphism between Morse and singular cohomology respects the involution that swaps the factors, and hence we may replace the equivariant cohomology of $C^*(M) \otimes C^*(M)$, denoted $H^*_{\mathbb{Z}/2}(M \times M)$, with the equivariant cohomology of $CM^*(M,f) \otimes CM^*(M,f)$. One can think of this in terms of equivariant Morse cohomology, as detailed in \cite[Section 2]{seidelsmith}, where for the given $\mathbb{Z}/2$-action all of the necessary transversality conditions are satisfied.

Using the fact that the equivariant chains are $C^*_{\mathbb{Z}/2}(M \times M) = C^*(M \times M)[h]$, as well as using the previous paragraph, elements of $C^*_{\mathbb{Z}/2}(M \times M)$ may be written as a finite sum of $(b \otimes c)h^j$ for some $j \ge 0$ and $b,c \in \text{crit}(f)$. Let $a \in \text{crit}(f)$ and $\delta_i$ be the generator of $H_i(SB) \cong \mathbb{Z}/2$ (because $S$ is an $E \mathbb{Z}/2$). 

We define $$q: H_i(SB) \otimes H^j_{\mathbb{Z}/2}(M \times M) \rightarrow H^{j-i}(M),$$ at the chain level, such that the coefficient of $a$ in $q(\delta_i \otimes (b \otimes c)h^k)$ is $\# \mathcal{M}_{BC,i-k}(a,b,c)$, when $\mathcal{M}_{BC,i-k}(a,b,c)$ is a collection of points. Let $q_i(b \otimes c) = q(\delta_i \otimes b \otimes c)$. One defines $$Sq^{|x|-i}(x) = q_i (x \otimes x).$$

	\subsection{The Quantum Cup Product}
	\label{subsec:quantcupprod}
		For more details on the quantum cup product, see \cite[Chapter 8]{jhols}. Throughout this paper, for $M$ a closed $n$-manifold, we denote by $PD: H^*(M) \rightarrow H_{n-*}(M)$ and $PD: H_*(M) \rightarrow H^{n-*}(M)$ the Poincar\'e duality operation over $\mathbb{Z}/2$ coefficients. 

		Let $(M, \omega)$ be a monotone symplectic manifold of dimension $n$, with a fixed almost complex structure $J$ compatible with $\omega$.

		\begin{defn}
			A symplectic manifold $(M,\omega)$ is \textit{monotone} if the restriction to spherical homology classes of the cohomology class of $\omega$ is positively proportional to the first Chern class of $TM$. In other words, there exists a constant $\lambda > 0$ such that $$[\omega]|_{\pi_2(M)} = \lambda \cdot c_1(TM)|_{\pi_2(M)}.$$
		\end{defn}

		 As an abelian group, $QH^{*}(M) = H^{*}(M)[[t]]$ where $t$ is a formal variable of degree $2$. Let $T = t^N$, where $N \geq 0$ is the minimal Chern number of $M$, determined by $c_{1}(\pi_{2}(M)) = N \mathbb{Z}$. By rescaling our symplectic form if necessary, we will assume that $\lambda = 1/N$, and so referring to a {\it $J$-holomorphic map $u$ of energy $k$} means that $c_1(u_*[S^2]) = N \cdot [\omega](u)= kN$.

		As an important note, we define the quantum cochains $$QC^*(M) := C^*(M) \otimes_{\mathbb{Z}/2} \mathbb{Z}/2[[T]].$$ Then $QH^*(M) = H^*(QC^*(M), d \otimes id)$, where $d$ is the differential on $C^*(M)$. Most of the operations that we consider are defined at the chain level, and then descend to maps on (co)homology.

		We pick a basis $\mathcal{B}$ for $H^*(M)$ and a dual basis with respect to the nondegenerate cup product pairing $(e,f) \mapsto \langle e \cup f, [M] \rangle$. There is a dual basis $\mathcal{B}^{\vee}$ with respect to this pairing. Let $\alpha^{\vee} \in H^{n-|\alpha|}(M)$ denote the dual of the cohomology class $\alpha \in H^{|\alpha|}(M)$. Our operations on cohomology will not depend on this choice of basis, although they may affect the chain level description.

		Given $A \in H_{2}(M)$, let $\mathcal{M}_{A}(J)$ be the moduli space of $J$-holomorphic spheres $u: S^2 \rightarrow M$ such that $u_{*}([S^2]) = A$, up to reparametrisation by $PSL(2,\mathbb{C})$. For a generic choice of $J$, this moduli space is a smooth manifold with $$\dim \mathcal{M}_{A}(J) = 2c_{1}(A) + \dim(M).$$ For each $z \in S^{2}$, there is an evaluation map $ev_{A,z} : \mathcal{M}_{A}(J) \rightarrow M$ with $ev_{A,z}(u) =  u(z)$. Pick three distinct points, $z_{1}, z_{2},z_{3} \in S^2$. We use $0,1,\infty$ throughout, and denote $ev_{A} = ev_{A,0} \times ev_{A,1} \times ev_{A,\infty} : \mathcal{M}_{A}(J) \rightarrow M \times M \times M$.

		\begin{defn}[Quantum Product]
\label{defn:quantumproduct} 
			Let $\alpha, \beta \in H^{*}(M) \subset QH^{*}(M)$. Pick generic pseudocycle representatives $a, b$ of the classes $PD(\alpha)$ and $PD(\beta)$ (so that they are transverse to the evaluation maps in the previous paragraph). Similarly, for each $\gamma \in \mathcal{B}$, we pick a representative $c^{\vee}$ of $PD(\gamma^{\vee})$. Denote by $a \times b \times c^{\vee}$ the product of these cycles, landing in $M \times M \times M$.

			Then we define $$\alpha * \beta = \sum_{j \in \mathbb{Z}, \ \gamma \in \mathcal{B} :  |\gamma| = |\beta| + |\alpha| - 2jN} n(\gamma, \alpha, \beta, j) \cdot \gamma \cdot T^j,$$  $$n(\gamma, \alpha, \beta, j) = \sum_{A \in H_2 (M) : c_{1}(A) = jN} ev_A \bullet (a \times b \times c^{\vee}).$$ Here $\bullet$ is the intersection number of pseudocycles of complementary dimension. Extending $\mathbb{Z}/2[[t]]$-linearly defines $*$ on $QH^*(M)$.
		\end{defn}

Observe that for generic $J$, the evaluation map is a pseudocycle, \cite{jhols}. In order to show that this is well defined, one must prove that the outcome is independent of choice of pseudocycle representatives that we choose, such as in \cite[Lemma 7.1.4]{jhols}. The degree condition ensures that the pseudocycles are of complementary dimension. Notice that $|a*b| = |a|+|b|$, using that $|T|=2N$. If $A=0$ (so $E(u)=0$ and $u$ is constant), this recovers the classical intersection product. 

\begin{rmk}
		In concrete terms, in Definition \ref{defn:quantumproduct} we count the number of $J$-holomorphic spheres in $M$ intersecting some choice of pseudocycle representatives of the $PD(\alpha)$, $PD(\beta)$ and $PD(\gamma^{\vee})$. This can be thought of as the intersection $$ev_{A,0}^{-1}(a) \cap ev_{A,1}^{-1}(b) \cap ev_{A,\infty}^{-1}(c^{\vee})$$ in the space of $J$-holomorphic stable maps representing $A$.
\end{rmk}
		
\section{Two constructions of the Steenrod Squares}
\label{sec:morssteensqu}
	The first construction will use Morse theory, and will be based on that given in \cite{seidel}, \cite{fukaya} and \cite{betzcoh}. The second construction is a generalisation of the first, involving pseudocycles. In this section $\Gamma$ is the Y-shaped graph with incoming edge $e_1$ and outgoing edges $e_2$ and $e_3$. Let $e_1$ be parametrised by $(-\infty, 0]$ and $e_2,e_3$ by $[0,\infty)$. This is illustrated in Figure \ref{fig:classmorsqu}.

	Throughout this section, $M$ will be a smooth closed manifold. We recall the Morse theoretic cup product: given a Morse function $f$, pick three generic perturbations $f^1_{s}$ for $s \in (- \infty,0]$ and $f^2_{s}$, $f^3_{s}$ for $s \in [0,\infty)$ (so that they are ``transverse at $0$"). Making a generic choice ensures that the moduli space in Definition \ref{defn:morsecupprod} is cut out transversely: specifically, the genericity condition ensures that the moduli space is a smooth manifold. This is discussed in \cite[Chapter 5.2, Chapter 2]{schwarz} by Schwarz. It should be pointed out that the construction of the Morse theoretic cup product in the cited work uses three distinct fixed Morse functions $f^1,f^2,f^3$, rather than using perturbations $f^1_s,f^2_s,f^3_s$. Combining the case of Schwarz with the standard notion of continuation maps from $f^i$ to our fixed Morse function $f$, and applying a gluing argument, means that we can instead consider $s$-dependent functions on the edges. After applying such a gluing of continuation maps, the requirement that $f^1,f^2,f^3$ be chosen generically translates to requiring that $f^1_0,f^2_0, f^3_0$ be chosen generically, which is what we meant by ``transverse at $0$" above. This idea is made precise in \cite[Section 2]{morsetrajectories}. With this in mind, we choose the $f^i_s$ such that there is an $R > 0$ with $f^i_{s} = f$ if $|s| \ge R$, so that we can apply Morse theoretic arguments outside of a compact neighbourhood of the vertex in $\Gamma$. Denote by $\text{crit}_k(f)$ the critical points of $f$ of Morse index $k$. Write $|x|$ for the Morse index of $x \in \text{crit}(f)$.

	\begin{defn}[Morse cup product]
	\label{defn:morsecupprod}
	Let $a_{2},a_{3}$ be critical points of $f$, with respective Morse indices $|a_{2}|, |a_{3}|$ and let $k = |a_{2}|+|a_{3}|$, then $$a_{2} \cdot a_{3} := \sum_{a_{1} \in \text{crit}_k(f)} n_{a_1, a_2, a_3} a_1$$ where $n_{a_1, a_2, a_3}$ is the number of elements in the $0-$dimensional moduli space $\mathcal{M}(f^i_{s},a_{1},a_{2},a_{3})$ of continuous maps $u: \Gamma \rightarrow M$, smooth on the edges, such that: 

	\begin{enumerate}
		\item $d (u|_{e_i})/ds = -\nabla f^i_{s}$,
		\item $u|_{e_1}(x) \rightarrow a_1$ as $x \rightarrow -\infty$,
		\item $u|_{e_i}(x) \rightarrow a_i$ as $x \rightarrow \infty$ for $i=2,3$.
	\end{enumerate}

	\end{defn}

	\subsection{Morse Steenrod square}
	\label{subsec:msss}

Henceforth, we will consider a nested sequence of spheres $S^{0} \subset S^{1} \subset ... \subset S^{\infty}$, consisting of equators that exhaust $S^{\infty}$ and are preserved under the involution $v \mapsto -v$. Denote $$S^{\infty} = \{ (x_0,x_1,x_2, \ldots) \subset \bigoplus_{i \ge 0} \mathbb{R}^{i} : \sum_i x_i^2 = 1 \},$$ the subset $S^i \subset S^{\infty}$ consists of those elements of $S^{\infty}$ of the form $(x_0, \ldots, x_i, 0 ,\ldots)$.

	We refine the choice of $f^i_{s}$ by picking a collection of smooth functions $f^i_{v,s}: M \rightarrow \mathbb{R}$, smoothly parametrised by $v \in S^{\infty}$ and $s \in (-\infty,0]$ for $i=1$, respectively $s \in [0,\infty)$ for $i=2,3$, satisfying the following conditions:
	\begin{enumerate}
		\item $f^2_{v,s} = f^3_{-v,s}$,
		\item For each $i$, the smooth map $f^2_{\cdot,0}: S^{i} \times M \rightarrow \mathbb{R}$ must be chosen generically, with more details provided in Appendix \ref{subsec:mssrmks}. 
		\item There is an $R>0$ such that $f^i_{v,s} = f$ for all $|s| \ge R$ and $v \in S^{\infty}$.
		\item $f^1_{v,s} = f^1_{-v,s}$ .
	\end{enumerate}

	Given $a_1, a_2, a_3 \in \text{crit}(f)$, and $v \in S^{\infty}$, we define $\mathcal{M}'_{v}(a_1, a_{2}, a_{3})$ to be the set of pairs $(u: \Gamma \rightarrow M, v)$ such that:
	\begin{enumerate}
		\item $d(u|_{e_i})/ds = -\nabla f^i_{v,s}$.
		\item $u|_{e_1} (s) \rightarrow a_1$ as $s \rightarrow -\infty$ and $u|_{e_i} (s) \rightarrow a_{i}$ for $i=2,3$ as $s \rightarrow \infty$. 
	\end{enumerate}

Let $$\mathcal{M}'_{i}(a_1, a_{2},a_{3}) = \bigsqcup_{v \in S^{i}} \mathcal{M}'_{v}(a_1,a_{2}, a_3),$$ topologised as a subset of $C(\Gamma, M) \times S^i$ (where $C(\Gamma, M)$ is the space of continuous maps from $\Gamma$ to $M$ that are smooth on the edges). The projection to $S^i$ is then continuous for all $i$. Indeed, $\mathcal{M}'_{i}(a_1, a_{2},a_{3})$ is a smooth manifold for each $i$, by the genericity conditions as given in Appendix \ref{subsec:mssrmks}.

	Let $r : \Gamma \rightarrow \Gamma$ be the reflection that swaps $e_2$ and $e_3$ (preserving parametrisations) and fixes $e_1$. If $a_2 = a_3$ as in Figure \ref{fig:classmorsqu}, there is a free $\mathbb{Z}/2$ action on the moduli space $\mathcal{M}'_{i}(a_1,a_2, a_2)$, via $$(u,v) \mapsto (u \circ r, -v).$$ 

Let $\mathcal{M}_{i}(a_1,a_2, a_2) = \mathcal{M}'_{i}(a_1, a_2,a_2)/(\mathbb{Z}/2)$, the quotient by the $\mathbb{Z}/2$ action. If $a_2 \neq a_3$, $$\mathcal{M}_{i}(a_1,a_2, a_3) = \bigsqcup_{v \in D^{i,+}} \mathcal{M}'_{v}(a_1,a_{2}, a_3) = \bigsqcup_{v \in D^{i,-}} \mathcal{M}'_{v}(a_1,a_{3}, a_2),$$ where $D^{i,\pm}$ is the upper/lower $i-$dimensional hemisphere in $S^i \subset S^{\infty}$. Observe that when $v \in \partial D^{i,\pm}$, there is no overcounting of solutions (when $a_2 \neq a_3$). This is because a solution for $v \in \partial D^{i,+}$, with asymptotics $a_1, a_2, a_3$, does not correspond to a solution for $-v \in \partial D^{i,+}$ with asymptotics $a_1, a_2, a_3$: the action $u \mapsto u \circ r$ swaps the $a_2$ and $a_3$ asymptotics. Indeed, when $a_2 \neq a_3$ the number of solutions for $v \in \partial D^{i,\pm}$ exactly corresponds to the $Sq'((a_2 \otimes a_3 + a_3 \otimes a_2)h)$ term in Equation \eqref{equation:Sq'chainmap}.

Consider the natural projection $\mathcal{M}_{i}(a_1, a_2,a_3) \rightarrow \mathbb{RP}^{i}$. Over a generic $v \in \mathbb{RP}^{i}$ there is a smooth manifold of degree $|a_{1}| - |a_{2}| - |a_3|$, so the dimension of the moduli space is $$\text{dim}\mathcal{M}_{i}(a_{1},a_{2}, a_3) = |a_{1}| - |a_{2}| - |a_3|+ i.$$ This is an example of genericity in family Morse theory, as in \cite[Theorem 3.4]{hutchingsfamilies}, and as used in \cite[Equations (4.26), (4.95)]{seidel}.


\begin{figure}
\input{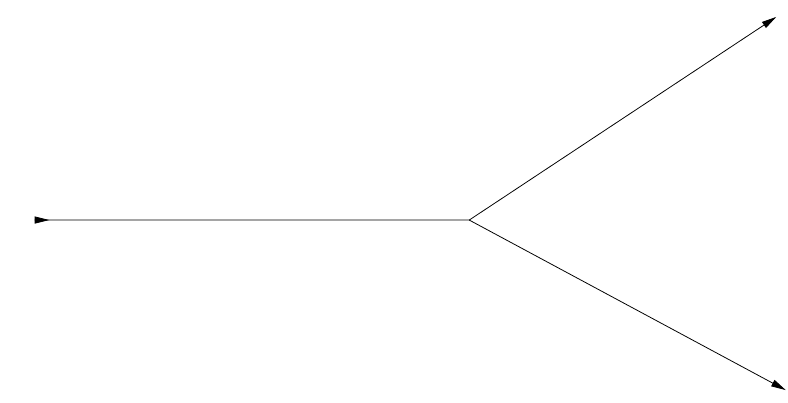_t}
		\caption{Morse flowline configurations for the Steenrod square.}
		\label{fig:classmorsqu}
\end{figure}

Before giving the definition, we recall the notation of Section \ref{subsec:equivcohom}. Specifically, given the chain complex $CM^{\bullet}(M,f)$ with the trivial action of $\mathbb{Z}/2$, one defines the $\mathbb{Z}/2$-equivariant Morse cohomology using the equivariant chain complex $CM_{\mathbb{Z}/2}^{\bullet}(M,f)$. Similarly, given the chain complex $CM^{\bullet}(M,f) \otimes CM^{\bullet}(M,f)$ (which we identify with $CM^{\bullet}(M \times M ,f \oplus f)$ via the K\"unneth isomorphism), there is the action of $\mathbb{Z}/2$ that swaps the two factors, and we denote the $\mathbb{Z}/2$-equivariant chain complex in this case $CM_{\mathbb{Z}/2}^{\bullet}(M \times M)$.  
	
	\begin{defn}[The Morse Steenrod Square]
	\label{defn:mss}
		Let $a_2, a_3 \in \text{crit}(f)$. This determines $a_2 \otimes a_3 \in CM_{\mathbb{Z}/2}^{\bullet}(M \times M)$. Define $$Sq': CM^{\bullet}_{\mathbb{Z}/2}(M \times M) \rightarrow CM_{\mathbb{Z}/2}^{\bullet}(M),$$ by $$Sq'(a_2 \otimes a_3) = \sum_{i =0}^{|a_2| + |a_3|} \sum_{a_1 \in \text{crit}_{|a_2| + |a_3|-i}(f)} n_{a_1, a_2, a_3,i} \cdot a_1 \cdot h^i$$ where $n_{a_1, a_2,a_3,i} = \# \mathcal{M}_{i}(a_1,a_2,a_3)$ for $\#$ the number of points modulo 2. Then extend as a $h$-module.

We then need to prove that $Sq'$ descends to a map on equivariant cohomology. To do this, we use a standard argument involving a $1$-dimensional moduli space (see for example \cite[Section 2.4, Section 5.3]{schwarz}, applied as in \cite[Proposition 1.9, Lemma 1.10]{fukaya}). We then consider its compactification, as covered in detail in Appendix \ref{sec:equivariantcompact}. This in turn shows that $Sq'$ is a chain map, i.e.:  \begin{equation} \label{equation:Sq'chainmap} Sq'( (a_2 \otimes a_3 + a_3 \otimes a_2) h + (d a_2) \otimes a_3 + a_2 \otimes (d a_3)) = d Sq'(a_2 \otimes a_3). \end{equation} 

Further, post-composing with the doubling operation $$\text{double}: CM^{*}(M) \rightarrow CM^{2*}_{\mathbb{Z}/2}(M \times M), \ a \mapsto a \otimes a,$$ which also descends to a map on equivariant cohomology, we define $$Sq := [Sq'] \circ [\text{double}].$$ Here $[-]$ denotes the cohomology level operation of the respective map of chains. This definition is independent of the choice of parametrised Morse functions by a standard continuation argument, such as in \cite[Section 3.4]{salamonfloer}.

		The coefficient of $h^{|a|-i}$ is denoted by $Sq^{i}(a) \in H^{|a|+i}(M)$.
	\end{defn}

\begin{propn}
\label{propn:propositionftw}
The homomorphism $Sq$ is additive, and satisfies axioms $1,2,4$ and $5$ from Section \ref{subsec:theSqs}.
\end{propn}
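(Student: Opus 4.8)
The plan is to verify each of the five properties in turn, using the Morse-theoretic definition of $Sq$ from Definition \ref{defn:mss}. Most of these are immediate from the construction, so the ``proof'' is really a matter of carefully unwinding definitions; the one point that requires genuine work is axiom 4.

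\emph{Additivity.} Since $Sq' : CM^\bullet_{\mathbb{Z}/2}(M\times M) \to CM^\bullet_{\mathbb{Z}/2}(M)$ is a linear map of chain complexes, the only nonlinearity in $Sq = [Sq']\circ[\mathrm{double}]$ comes from the doubling operation $a\mapsto a\otimes a$. At the cohomology level, working over $\mathbb{Z}/2$, we have $(x+y)\otimes(x+y) = x\otimes x + y\otimes y + (x\otimes y + y\otimes x)$, and the cross term $x\otimes y + y\otimes x$ is exactly $(\mathrm{id}+\iota)(x\otimes y)$, which is a coboundary in the equivariant complex $CM^\bullet_{\mathbb{Z}/2}(M\times M)$ (it equals $\delta(h^{-1}\cdot\text{-})$ up to the usual bookkeeping — more precisely, $(\mathrm{id}+\iota)(z)\cdot h$ is the part of $\delta z$ that lowers $h$-degree, so $(\mathrm{id}+\iota)(x\otimes y)$ represents $0$ in $H^*_{\mathbb{Z}/2}$). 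Hence $\mathrm{double}$ descends to an additive map on cohomology, and so does $Sq$.

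\emph{Axioms 1, 2, 5.} Axiom 1 (that $Sq^i : H^n(M)\to H^{n+i}(M)$) is immediate from the grading bookkeeping: $Sq^i(a)$ is by definition the coefficient of $h^{|a|-i}$ in $Sq(a)$, which lies in $H^{|a|+i}(M)$ by the dimension formula $\dim\mathcal{M}_i(a_1,a_2,a_3) = |a_1| - 2|a_2| + i$ recorded before Definition \ref{defn:mss} (with $a_2=a_3=a$, so a count of points forces $|a_1| = 2|a| - i$, i.e.\ the output class has degree $|a|+i$ once we pass to cohomology via Poincar\'e-type conventions). Axiom 5 ($Sq^n(a)=0$ for $n>|a|$ or $n<0$) is likewise built into the definition: the sum defining $Sq'(a_2\otimes a_3)$ runs only over $0\le i\le |a_2|+|a_3|$, and with $a_2=a_3=a$ this caps $Sq^i$ at $i\le |a|$; negative $i$ never occur since $h$-powers are nonnegative. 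Axiom 2 (naturality) follows from the standard functoriality of Morse-theoretic operations under smooth maps $M\to M'$: a continuation/correspondence argument identifies the moduli spaces $\mathcal{M}_i$ up to the pullback, exactly as for the ordinary Morse cup product; I would cite the relevant naturality discussion (e.g.\ \cite{schwarz}) rather than reprove it.

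\emph{Axiom 4 (the cup-square property), the main obstacle.} This is the only substantive step. I need to show $Sq^{|a|}(a) = a\cup a$, i.e.\ that the top coefficient $h^0$ of $Sq(a)$ is the Morse cup square. By definition this $h^0$-coefficient counts $\mathcal{M}_0(a_1,a,a) = \mathcal{M}'_{v_0}(a_1,a,a)/(\mathbb{Z}/2)$ for a single point $v_0\in S^0$; but the $\mathbb{Z}/2$-action $(u,v)\mapsto (u\circ r,-v)$ has no fixed points since it moves $v_0$ to $-v_0$, so in fact $\mathcal{M}_0 = \mathcal{M}'_{v_0}(a_1,a,a)$ with no quotient, and this is precisely the moduli space $\mathcal{M}(f^i_s, a_1, a, a)$ of Y-graph flowlines from Definition \ref{defn:morsecupprod} (with the two outgoing asymptotics both equal to $a$). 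That moduli space, by Definition \ref{defn:morsecupprod}, computes the coefficient of $a_1$ in the Morse cup product $a\cdot a$. Thus at chain level the $h^0$-part of $Sq'\circ\mathrm{double}$ is the Morse cup-square cochain, which represents $a\cup a$ in $H^*(M)$ under the Morse-to-singular isomorphism. The care needed here is (i) checking the genericity conditions on $f^i_{v,s}$ restricted to $v=v_0\in S^0$ reduce to the genericity hypotheses of Definition \ref{defn:morsecupprod}, so that the counts literally agree, and (ii) making sure the identification is compatible with the isomorphism between Morse and singular cohomology so that ``$a\cdot a$ in Morse cohomology'' really is ``$a\cup a$''. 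Both are standard but should be stated; I expect no real difficulty beyond bookkeeping, so the proof will be short, with the bulk of the words devoted to axiom 4.
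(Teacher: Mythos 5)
Your treatments of axioms 1, 2 and 4 are essentially the paper's, and the careful unwinding in axiom 4 (that $\mathcal{M}_0$ is literally a Morse cup-product moduli space since the $\mathbb{Z}/2$-action on $\mathcal{M}'_0$ moves $v_0$ to $-v_0$) matches the intended argument. But there are two genuine gaps.

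\textbf{Additivity.} You claim that $x\otimes y + y\otimes x = (\mathrm{id}+\iota)(x\otimes y)$ represents $0$ in $H^*_{\mathbb{Z}/2}(M\times M)$, ``up to the usual bookkeeping'' via a formal $\delta(h^{-1}\cdot\mathrm{-})$. This is false. In the complex $(C^\bullet[h],\, \delta = d + h(\mathrm{id}+\iota))$, if $dx=dy=0$ then $\delta(x\otimes y) = h(x\otimes y + y\otimes x)$, so it is $h\cdot(x\otimes y + y\otimes x)$ that is exact, not $x\otimes y + y\otimes x$ itself; the latter restricts under $h\mapsto 0$ to the Künneth class $[x]\otimes[y]+[y]\otimes[x] \in H^*(M\times M)$, which is typically nonzero, so the equivariant class is nonzero too. (Your argument is implicitly a localized one, valid only after inverting $h$.) The correct route is the one the paper takes: one does not show $x\otimes y + y\otimes x$ is exact, but rather that $Sq'(x\otimes y + y\otimes x)$ is. From the chain-map equation \eqref{equation:Sq'chainmap}, $h\cdot Sq'(x\otimes y+y\otimes x) = Sq'(\delta(x\otimes y)) = d\, Sq'(x\otimes y)$, and in the target $CM^\bullet(M)[h]$ the $\mathbb{Z}/2$-action is trivial so the differential is just $d$ (extended $h$-linearly) and multiplication by $h$ is injective; comparing $h$-coefficients then exhibits $Sq'(x\otimes y+y\otimes x)$ as an explicit coboundary.

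\textbf{Axiom 5(2).} You assert that $Sq^i(a) = 0$ for $i<0$ is ``built into the definition'' because the $h$-powers in $Sq'(a\otimes a)$ run over $0\le j \le 2|a|$. But $Sq^i(a)$ is the coefficient of $h^{|a|-i}$, and $|a|-i$ ranges over $[0,2|a|]$ for $i\in[-|a|,|a|]$; in particular the definition \emph{does} allow terms $Sq^i(a)$ with $-|a|\le i<0$, so there is something to prove. The actual argument is geometric: $f_{v,s}$ is a $C^2$-small perturbation of a fixed generic $f$, so there are no hybrid ``flowlines'' (gradient of $f$ for $s<0$, gradient of $f_{v,s}$ for $s>0$) from a critical point of lower index to one of higher index, which is exactly what a nonzero coefficient of $a_1h^j$ with $|a_1| < |a|$ (i.e.\ $j > |a|$) would require. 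You should supply this step rather than appeal to the grading range.
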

\begin{proof}
To prove additivity, observe first $Sq(x+y) = Sq(x) + Sq(y) + Sq'(x \otimes y + y \otimes x)$. Hence we must show that $[Sq'(x \otimes y + y \otimes x)] =0 $ when $dx = dy = 0$. In such a case, we see $d(x \otimes y) = (x \otimes y + y \otimes x) h$. Using that $Sq'$ is a chain map, it follows that $Sq'((x \otimes y + y \otimes x)h) =d Sq'(x \otimes y)$. As multiplication by $h$ is injective on $H^*_{\mathbb{Z}/2}(M) = H^*(M) \otimes H^*(B \mathbb{Z}/2)$, this shows that $[Sq'(x \otimes y + y \otimes x)]$ is exact, as required.

\textit{Axiom 1} is immediate from the definition of $Sq^{i}$

\textit{Axiom 2} and naturality is true for the same reason as for the Morse cup product: see for example \cite[Section 2.1]{rot}. 

For \textit{Axiom 4}, for $|y| = 2 |x|$ the coefficient of $y$ in $Sq^{|x|}(x)$ is the number of elements of the $0-$dimensional moduli space $\mathcal{M}_{0}(y,x,x)$. From the definition of $\mathcal{M}_{0}(y,x,x)$, and Definition \ref{defn:morsecupprod}, this number is the same as the coefficient of $y$ in $x^2$. 

For \textit{Axiom 5}(1), $Sq^{i}(x) = 0$ for $i > |x|$ by definition, as only non-negative powers of $h$ are counted in Definition \ref{defn:mss}.

For \textit{Axiom 5}(2), $f_{v,s}$ is a perturbation of $f$. The perturbation may be chosen arbitrarily small in the $C^{2}$ topology. For generic $f$ there is no $-\nabla f$ flowline from $b$ to $a$ if $|b| < |a|$. As $f_{v,s}$ is close to $f$, this means that generically for any $v$ there is no `flowline' from $b$ to $a$ that has gradient $- \nabla f$ for $s < 0$ and $- \nabla f_{v,s}$ for $s > 0$. Hence $Sq^{i}(x) = 0$ for $i < 0$. 

We verify Axiom $3$ in Section \ref{subsec:propofSq} and Axiom $6$ in Section \ref{subsec:Cartan}.
\end{proof}

\begin{rmk}
\label{rmk:msqaxioms}

 Note that showing $Sq$ satisfies these axioms is not sufficient to show that it is indeed the Steenrod square, because we have not shown naturality under all continuous maps: this definition is only applicable for closed smooth manifolds. Nonetheless it provides a sanity check.

\end{rmk}

		\begin{rmk}
		\label{rmk:mssrmks} It is not straightforward to prove $Sq^{0} = id$ without a specific choice of Morse functions. We prove it in Section \ref{subsec:intss} using a different approach.
		\end{rmk}

\subsection{The Morse Steenrod square is the Steenrod square}
\label{subsec:tmssissq}
Recall from Section \ref{subsec:prelimbcncon} the Steenrod square due to Betz and Cohen. Recall in Section \ref{subsec:msss} Definition \ref{defn:mss} of the Morse Steenrod square. We will show that these are the same.

In the previous section we chose $f^i_{v,s}$ for $(v,s) \in S^{\infty} \times [0,\infty)$ and $i=1,2,3$, such that $f^2_{v,s} = f^3_{-v,s}$. We abbreviate $f_{v,s} = f^2_{v,s}$ where appropriate, and observe we may choose $f_{v,0}$ distinct from $\{ f_{-v,0}, f \}$ for each $v$ (as $\text{Conf}_3(C^{\infty}(M))$ is open and dense in $(C^{\infty}(M))^3$, hence the condition is generic). Recall, from Section \ref{subsec:prelimbcncon}, that $S$ was a space consisting of triples $(f^1_{s},f^2_{s},f^3_{s})$, with each $f^p_{s} \in U_f$, a small neighbourhood of the Morse function $f$. Observe $S \xrightarrow{\simeq} \text{Conf}_3(C^{\infty}(M))$ is a $\mathbb{Z}/2$-equivariant homotopy equivalence, using the map $(f^1_{s},f^2_{s}, f^3_{s}) \mapsto (f^1_{0},f^2_{0}, f^3_{0})$, with the obvious homotopy inverse. Henceforth, assume $S = \text{Conf}_3(C^{\infty}(M))$. Let $SB = S / \langle (23) \rangle$, where the transposition $(23)$ acts on $S$ by permutation of the components. As remarked previously, $SB $ is homotopy equivalent to $\mathbb{RP}^{\infty}$.

There is a natural $\mathbb{Z}/2$-equivariant map $i: S^{\infty} \xhookrightarrow{} S$ induced by $v \mapsto (f,f_{v,0},f_{-v,0})$, which descends to $i: \mathbb{RP}^{\infty} \rightarrow SB$. 

\begin{lemma}
$i_*: H_*(\mathbb{RP}^{\infty}) \rightarrow H_*(SB)$ is an isomorphism.
\end{lemma}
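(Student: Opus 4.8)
The plan is to show that the map $i\colon \mathbb{RP}^\infty \to SB$ is a weak homotopy equivalence, which for these CW-type spaces gives the isomorphism on homology. Since $S^\infty$ is contractible and carries a free $\mathbb{Z}/2$-action, it is a model for $E\mathbb{Z}/2$, so $\mathbb{RP}^\infty = S^\infty/(\mathbb{Z}/2)$ is a $B\mathbb{Z}/2$. Likewise, $S = \mathrm{Conf}_3(C^\infty(M))$ is contractible (the configuration space of three distinct points in the contractible, indeed convex in a suitable local sense, space $C^\infty(M)$ — more carefully, $C^\infty(M)$ is an infinite-dimensional vector space and configuration spaces of $k$ distinct points in $\mathbb{R}^n$ for $n$ large are highly connected; in the limit $\mathrm{Conf}_k(\mathbb{R}^\infty)$ is contractible), and the $\mathbb{Z}/2$-action by the transposition $(23)$ is free because it moves the configuration. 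Hence $SB = S/(\mathbb{Z}/2)$ is also a model for $B\mathbb{Z}/2$.

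The key point is then that $i$ is $\mathbb{Z}/2$-equivariant and both source and target of the covering-space picture are the respective quotients of contractible free $\mathbb{Z}/2$-spaces. First I would verify that the lift $S^\infty \hookrightarrow S$, $v \mapsto (f, f_{v,0}, f_{-v,0})$, is genuinely $\mathbb{Z}/2$-equivariant: the antipodal map $v \mapsto -v$ on $S^\infty$ is sent to $(f, f_{-v,0}, f_{v,0})$, which is exactly the transposition $(23)$ applied to $(f, f_{v,0}, f_{-v,0})$, so equivariance holds. It also lands in $\mathrm{Conf}_3$ by the genericity choice that $f_{v,0}$ is distinct from both $f_{-v,0}$ and $f$ for every $v$. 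Since any equivariant map between free $\mathbb{Z}/2$-CW-complexes with contractible total spaces descends to a map of classifying spaces $B\mathbb{Z}/2 \to B\mathbb{Z}/2$ inducing an isomorphism on $\pi_1 = \mathbb{Z}/2$ (the only possibility, as the induced map on $\pi_1$ corresponds to a $\mathbb{Z}/2$-equivariant map of universal covers, which must be the identity on the group), $i$ is a weak homotopy equivalence and therefore induces an isomorphism $i_* \colon H_*(\mathbb{RP}^\infty) \to H_*(SB)$.

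Concretely, the cleanest route to write up is: (1) note $\mathbb{RP}^\infty$ and $SB$ are Eilenberg–MacLane spaces $K(\mathbb{Z}/2,1)$, using that $S^\infty$ and $S$ are contractible with free $\mathbb{Z}/2$-actions and invoking the long exact sequence of the covering $S \to SB$ (or $S^\infty \to \mathbb{RP}^\infty$) to identify $\pi_1 = \mathbb{Z}/2$ and $\pi_{\ge 2} = 0$; (2) observe that a map between $K(\mathbb{Z}/2,1)$'s inducing an isomorphism on $\pi_1$ is a homotopy equivalence (Whitehead, or uniqueness of $K(\pi,1)$'s), hence an isomorphism on all homology; (3) check that $i_*$ is an isomorphism on $\pi_1$, which follows because $i$ is covered by the equivariant inclusion $S^\infty \hookrightarrow S$ and so the generator of $\pi_1(\mathbb{RP}^\infty)$ — represented by a path in $S^\infty$ from $v$ to $-v$ — maps to the image path in $S$ from $(f,f_{v,0},f_{-v,0})$ to its transpose, which represents the generator of $\pi_1(SB)$ by construction of the covering $S \to SB$ via the same $\mathbb{Z}/2$-action.

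The main obstacle, and the only step requiring genuine care, is establishing that $S = \mathrm{Conf}_3(C^\infty(M))$ is contractible (equivalently, that $U_f$ in the original description retracts appropriately). I would handle this by noting that $C^\infty(M)$ is an infinite-dimensional topological vector space, so the ordered configuration space of three distinct points in it is contractible: one can contract by a Fadell–Neuwirth-type fibration argument (the map forgetting the last point, $\mathrm{Conf}_3 \to \mathrm{Conf}_2$, has fiber $C^\infty(M) \setminus \{\text{2 points}\}$, which is contractible since removing finitely many points from an infinite-dimensional vector space leaves a contractible space, and iterating down to $\mathrm{Conf}_1 = C^\infty(M)$), or alternatively by a direct straight-line-homotopy perturbation argument pushing any configuration to a fixed standard one while keeping the three components distinct, which is possible because the codimension of the "collision" locus is infinite. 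Granting contractibility of $S$, everything else is formal.
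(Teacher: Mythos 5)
Your proof is correct and takes essentially the same route as the paper: both arguments observe that $\mathbb{RP}^\infty$ and $SB$ are $K(\mathbb{Z}/2,1)$'s, reduce to checking that $i_*$ is an isomorphism on $\pi_1$, and verify this via the equivariant lift $S^\infty \hookrightarrow S$, $v\mapsto(f,f_{v,0},f_{-v,0})$, which sends the generating loop of $\pi_1(\mathbb{RP}^\infty)$ to a path in $\mathrm{Conf}_3(C^\infty(M))$ with distinct endpoints, hence a nontrivial loop downstairs. Your additional discussion of why $\mathrm{Conf}_3(C^\infty(M))$ is contractible (via a Fadell--Neuwirth fibration or a straight-line perturbation) is a worthwhile supplement, since the paper takes this for granted after reducing to $\mathrm{Conf}_3$; otherwise the two arguments coincide.
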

\begin{proof}
If $i$ is a weak homotopy equivalence then it is a quasi-isomorphism, see \cite[Proposition 4.21]{algtop}. As the two spaces are both homotopy equivalent to $\mathbb{RP}^{\infty}$ (which is a $K(\mathbb{Z}/2,1)$), it is sufficient to show that $$i_*: \pi_1(\mathbb{RP}^{\infty}) \cong \mathbb{Z}/2 \rightarrow \pi_1(SB) \cong \mathbb{Z}/2$$ is nontrivial.

Identify $S^1 \subset S^{\infty}$ with $\mathbb{R}/(2 \pi \mathbb{Z})$, parametrised by $\theta \in [0,2 \pi)$. Denote $f_v = f_{e^{iv},0}$. We wish to show that $\theta \mapsto [(f, f_{\theta/2}, f_{\theta / 2 + \pi})]$ determines a nontrivial loop, where $[\cdot]$ denotes the $\mathbb{Z}/2$-equivalence class. Observe that $\theta \mapsto (f, f_{\theta/2}, f_{\theta / 2 + \pi})$ is a path in $\text{Conf}_3(C^{\infty}(M))$ with different endpoints, hence the loop is not contractible.
\end{proof}

Consider Diagram \eqref{cohnorsquareandmorse}:

\begin{equation}\label{cohnorsquareandmorse}
\xymatrix{
H_*(\mathbb{RP}^\infty) \otimes H^*(M) 
\ar@{->}_-{i_* \otimes id}^-{\cong}[d]
\ar@{->}^-{MSq}[rr]
&
&
H^*(M)
\ar@{->}^{=}[d]
\\
H_*(SB) \otimes H^*(M)
\ar@{->}^-{s}[r]
\ar@/_2.0pc/@{->}_{Sq}[rr]
&
H_*(SB) \otimes H^*_{\mathbb{Z}/2}(M \times M)
\ar@{->}^-{q}[r]
&
H^*(M)
}
\end{equation}

Here $s(A \otimes x) = A \otimes x \otimes x$, and the map $q$ is as in Section \ref{subsec:prelimbcncon}. We have reinterpreted the Morse Steenrod square from the previous section, here denoted $MSq$, to be a map $MSq: H_*(\mathbb{RP}^\infty) \otimes H^*(M)  \rightarrow H^*(M)$, which we can do canonically as there is a unique graded basis of the homology of $\mathbb{RP}^{\infty}$. Observe that if we use the pushforward of the generator of $H_i(\mathbb{RP}^{\infty})$ by $i_*$ as the generator of $H_i(SB)$, then it is immediate that Diagram \eqref{cohnorsquareandmorse} commutes. Hence, Definition \ref{defn:mss} yields the Steenrod square.

	\subsection{The Cartan Relation}
	\label{subsec:Cartan}
		Let $T$ be a family of graphs as in Figure \ref{fig:modspatree}, parametrised by $t \in (0, \infty)$. Edge $e_1$ is a negative half-line and edges $e_3,e_4,e_6,e_7$ are positive half-lines. Edges $e_2,e_5$ are parametrised by $[0,t]$. Compactify $T$ by adding the graphs at $0$ and $\infty$ as in the figure, to obtain the compactification $T^{c} \cong [0,1]$. Use edge labels as given in Figure \ref{fig:modspatree}. Fix a Morse function $f$ on $M$. The edge parameter in each case will be denoted by $s$.

		\begin{figure}
			\input{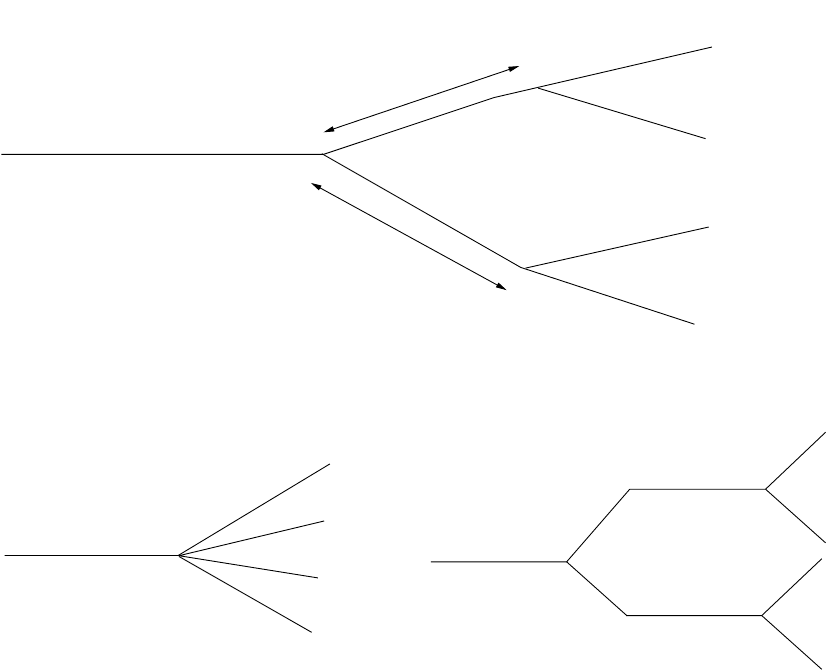_t}
			\caption{Elements of $T^{c}$.}
			\label{fig:modspatree}
		\end{figure}

Pick 5 perturbations of $f$ corresponding to the 5 tree edges in $t=0 \in T^{c}$ in figure \ref{fig:modspatree}.  These are $f^{p}_{v,s,0}$ for $p$ the edge label, $s \in \mathbb{R}^{\pm}$ and $v \in S^{\infty}$. We choose $f^{1}_{v,s,0}=f$ for all $s, v$. We ensure that $f^{3}_{v,s,0} = f^{4}_{-v,s,0}$ and $f^{6}_{v,s,0} = f^{7}_{-v,s,0}$ for all $v,s$. The choice of $f^p_{v,s,0}$ is made along with an $S_0 \in \mathbb{R}$ such that $f^p_{v,s,0} = f$ for $|s| \ge S_0$ and for all edge labels $p$.

		Choose 7 perturbations of $f$ labelled $f^{p}_{v,s,t}$ for $p=1,...,7$ corresponding to the edge labels in Figure \ref{fig:modspatree}, where $t \in T^{c}$, $v \in S^{\infty}$ and $s \in \mathbb{R}^{+}$ for $p=3,4,6,7$, $s \in \mathbb{R}^{-}$ for $p=1$ and $s \in [0,t]$ for $p=2,5$. Choose $f^{1}$ to be independent of $s,v,t$ in this case. Choose Morse functions $f^{2}_{s,2},f^{5}_{s,2}$ for $s \in [0,2]$ such that $f^{p}_{s} = f$ for $s > 1$ and $p=2,5$. The $f^{p}$ must be chosen ``generically at each vertex of $\Gamma$", which is discussed in Appendix \ref{subsec:appendcartrel}. This ensures the transversality of the moduli spaces. The $f^{p}_{v,s,t}$ satisfy the following conditions: 
		
		\begin{enumerate}
			\item $f^{p}_{v,s,t} = f^{p}_{v,s,0}$ as picked previously for $p = 1,3,4,6,7$ and for all $t$.
			\item $f^2_{v,s,t},f^5_{v,s,t}$ are independent of $v$.
			\item For $t \ge 2$ and $p= 2,5$: \ $\begin{cases} \begin{array}{l} f^p_{s,t} = f^p_{s,2}  \text{ for } s \le 2, \\ f^p_{s,t} = f \text{ for } s \ge 2. \end{array} \end{cases}$ In particular, $f^p_{2,2} = f$.
		\end{enumerate}

		Fix $i \in \mathbb{N}$ and $x,y \in \text{crit}(f)$. Let $\overline{T} \xrightarrow{\cong} T^c$ consist of pairs $(|t|,t)$ where $t \in T^c \cong [0,\infty]$ and $|t|$ is the metric tree represented by $t$ as a topological space. The metric structure for $t \in [0,\infty)$ is that the outer edges are semi-infinite and parametrised by respectively $(-\infty,0]$ for the incoming edge and $[0,\infty)$ for the outgoing edges. The inner edges are of length $t$, parametrised by $[0,t]$. For the $t=\infty$ boundary, the metric structure on $|\infty|$ is that the edges attached to bivalent vertices are semi-infinite with the infinite end at the bivalent vertex. 

For $z \in \text{crit}_{2|x|+2|y|-i}(f)$ consider the space $\tilde{\mathcal{M}}_{1}(x,y,z)$ of triples $(t,u,v)$ with $t \in T^c$, $u: |t| \rightarrow M$ a map and $v \in S^{|x| + |y| - i}$, such that $u$ satisfies: 
 $$\partial_{s}u_{s,t} = -\nabla f^{p}_{v,s,t}$$ along edge $p$, with asymptotic conditions  $(z,x,x,y,y)$ on the exterior edges $(1,3,4,6,7)$. One needs to use an equivariant gluing theorem at the $t=\infty$ boundary, as discussed in Appendix \ref{sec:equivariantgluing}.

		For generic $t \in T^{c}$ there is a $0$-dimensional subset of pairs $(u: |t| \rightarrow M ,v \in S^i)$ satisfying the conditions. So $\tilde{\mathcal{M}}_{1}(x,y,z)$ is 1-dimensional. Observe that $\tilde{\mathcal{M}}_{1}(x,y,z)$ has a free $\mathbb{Z}/2$ action, $(t,u,v) \mapsto (t,u \circ \overline{r},-v)$ for $\overline{r}$ acting on $|t|$ by the permutation of edges $(34)(67)$. Let $\mathcal{M}_{1}(x,y,z) = \tilde{\mathcal{M}}_{1}(x,y,z) / (\mathbb{Z}/2)$, which is still 1-dimensional.

		We also define a moduli space $\tilde{\mathcal{M}}_{2}(x,y,z)$ by choosing another 7 Morse functions, labelled $f^{p}_{v,s,t}$ as above, but now with the conditions: 

		\begin{enumerate}
			\item $f^{p}_{v,s,t}= f^{q}_{-v,s,t}$ for $(p,q)=(3,4),(6,7),(2,5)$,
			\item $f^{p}_{v,s,t}$ is independent of $(v,t)$ for large enough $t$ and for $p = 1,3,4,6,7$,
			\item $f^p_{v,s,t} = f$ for $p=1,3,4,6,7$ and $|s| \ge 1$.
			\item For large enough $t$ and $s \in [1,t]$, $f^{2}_{v,s,t} = f^5_{v,s,t}= f$.  
		\end{enumerate}

		\begin{figure}
			\input{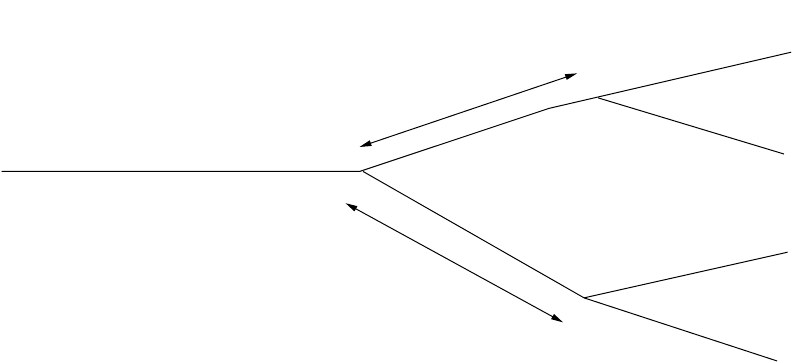_t}
			\caption{Tree labelling for $\mathcal{M}_{2}$.}
			\label{fig:modspatree2}
		\end{figure}

In defining equations for pairs $(t,u,v) \in \tilde{\mathcal{M}}_{2}(x,y,z)$, use the edge labellings in Figure \ref{fig:modspatree2}, i.e. the edge labels $4$ and $6$ from Figure \ref{fig:modspatree} have been swapped. For each edge label the equations and asymptotic conditions are the same as in the $\tilde{\mathcal{M}}_{1}$ case. Further, there is a free $\mathbb{Z}/2$ action on $\tilde{\mathcal{M}}_{2}(x,y,z)$ similarly to $\tilde{\mathcal{M}}_{1}$ but with edge permutation $(25)(34)(67)$ (using the new edge labels in Figure \ref{fig:modspatree2}). Taking the quotient defines $\mathcal{M}_{2}(x,y,z) = \tilde{\mathcal{M}}_{2}(x,y,z)/ (\mathbb{Z}/2)$. 

The following theorem is classical, and the following proof is a modification of \cite[Section 2, Example 2]{betzcoh} for our definition of the Steenrod square. The modification uses a cobordism argument as in \cite[Section 3.4]{salamonfloer}.

		\begin{thm}[The Cartan Relation]
		\label{thm:classicalcartan}
		 	$$Sq^{i}(x \cup y) = \sum_{j+k=i} Sq^{j}(x) \cup Sq^{k}(y).$$ 
		\end{thm}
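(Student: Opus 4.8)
The plan is to run the standard one-parameter moduli-space (cobordism) argument on the family $\overline{T}$ of graphs from Figure \ref{fig:modspatree}, analysing the two boundary strata at $t=0$ and $t=\infty$ and then using that a compact $1$-manifold has an even number of boundary points. First I would fix $x,y\in\mathrm{crit}(f)$, fix $i$, and work with the $1$-dimensional spaces $\mathcal{M}_1(x,y,z)$ for $z\in\mathrm{crit}_{2|x|+2|y|-i}(f)$ obtained by taking $\tilde{\mathcal M}_1(x,y,z)/(\mathbb Z/2)$. The key point is that $\mathcal{M}_1(x,y,z)$ is a compact $1$-manifold with boundary, where compactness comes from the usual Morse-theoretic broken-trajectory analysis together with the equivariant gluing at $t=\infty$ described in Appendix \ref{sec:equivariantgluing}, and where the only boundary contributions (after accounting for the free $\mathbb Z/2$-action, which has no fixed points so does not create extra boundary) occur at $t=0$ and $t=\infty$, plus the usual breaking of semi-infinite edges which assembles into the differential terms.

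Next I would identify the two endpoint strata. At $t=0$ the graph degenerates so that $e_2$ and $e_5$ have zero length and all of $e_3,e_4,e_6,e_7$ emanate from a single vertex through which $e_1$ also passes; with the choices $f^1_{v,s,0}=f$, $f^3_{v,s,0}=f^4_{-v,s,0}$, $f^6_{v,s,0}=f^7_{-v,s,0}$, counting the rigid configurations over $v\in S^{i}$ in this stratum computes exactly the coefficient of $z$ in $Sq^i(x\cup y)$, via Definition \ref{defn:mss} applied to the class $(x\cup y)\otimes(x\cup y)$ together with Definition \ref{defn:morsecupprod} for the inner cup products (the pair of $f^{2}/f^{5}$ vertices collapsing into one Steenrod-type vertex). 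At $t=\infty$ the graph breaks into a left piece (a $Y$-graph with incoming $z$ and two outgoing edges carrying the cup product $x\cup x$-type and $y\cup y$-type vertices) glued to two Steenrod-square $Y$-graphs; with the sphere parameter splitting $S^{|x|+|y|-i}$ according to $j+k=i$, counting this stratum yields $\sum_{j+k=i}Sq^j(x)\cup Sq^k(y)$, i.e. the right-hand side, after invoking the equivariant gluing theorem to match glued configurations with honest boundary points of $\mathcal M_1$. The remaining ends of $\mathcal M_1$, where a semi-infinite edge breaks off a Morse trajectory, contribute terms of the form $\langle$ differential applied to lower pieces $\rangle$, which vanish on homology since $dx=dy=0$ and since $x\cup y$, $Sq^j(x)$, $Sq^k(y)$ are cycles; concretely one checks these assemble into $d$ of a chain and hence drop out when passing to cohomology classes.

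Then, summing the mod-$2$ count of $\partial\mathcal M_1(x,y,z)$ over all $z$ and using $\#\partial(\text{compact }1\text{-manifold})\equiv 0\pmod 2$, the $t=0$ contribution equals the $t=\infty$ contribution modulo the differential terms, giving $Sq^i(x\cup y)=\sum_{j+k=i}Sq^j(x)\cup Sq^k(y)$ in $H^{*}(M)$. I expect the main obstacle to be the analytic input at $t=\infty$: setting up the correct $\mathbb Z/2$-equivariant gluing so that broken configurations of the three-$Y$-graph type correspond bijectively (mod $2$) to the ends of $\mathcal M_1$, and verifying that the sphere-parameter $S^{|x|+|y|-i}$ degenerates compatibly with the product decomposition $D^{j,+}\times D^{k,+}$ governing $Sq^j(x)\cup Sq^k(y)$ — this is exactly where the choice conditions on the $f^p_{v,s,t}$ (independence of $v$ on the inner edges, the $f^p_{s,t}=f$ for large $s$ conditions) are used, and it is the step requiring Appendix \ref{sec:equivariantgluing}. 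The second moduli space $\mathcal M_2(x,y,z)$ of Figure \ref{fig:modspatree2}, with the alternative edge labelling and the $(25)(34)(67)$-action, is introduced precisely to handle the compatibility of the $\mathbb Z/2$-action across the $t=\infty$ gluing region and to show the count is independent of the auxiliary choices; I would use a further cobordism between the $\mathcal M_1$ and $\mathcal M_2$ families (as in \cite[Section 3.4]{salamonfloer}) to conclude the two give the same answer, closing the argument.
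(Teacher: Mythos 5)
Your overall scheme — the one-parameter family $\overline T$, the cobordism via $\#\partial(\text{compact }1\text{-manifold})\equiv 0 \pmod 2$, and the identification of the $t=\infty$ boundary of $\mathcal M_1$ with $\sum_{j+k=i}Sq^j(x)\cup Sq^k(y)$ (Lemma \ref{lemma:lemmaSqSq}) — matches the paper. The genuine gap is your claim that the $t=0$ boundary of $\mathcal M_1$ directly computes the coefficient of $z$ in $Sq^i(x\cup y)$ ``via Definition \ref{defn:mss} applied to $(x\cup y)\otimes(x\cup y)$.'' Definition \ref{defn:mss} computes $Sq^i(x\cup y)$ by counting $Y$-graphs whose two outgoing edges are asymptotic to a Morse cochain-level representative of $x\cup y$, i.e.\ a sum of critical points $c$ weighted by cup-product counts. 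The $t=0$ stratum of $\mathcal M_1$ is instead a $4$-valent vertex with five flowlines asymptotic to $(z,x,x,y,y)$ all incident at one point, with the $\mathbb Z/2$-action permuting by $(34)(67)$. These are different moduli spaces, and equating their counts is exactly the nontrivial content of the Cartan relation; asserting their equality is essentially circular.

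This is why the paper introduces $\mathcal M_2$ as an essential second moduli space, not a technical device for gluing compatibility or independence of choices as you suggest. Its $t=\infty$ boundary is where the tree genuinely breaks into a Steenrod-square $Y$-graph fed by two cup-product $Y$-graphs, and Lemma \ref{lemma:lemmaSqcup} identifies that count with $Sq^i(x\cup y)$. The bridge is then at $t=0$: both $\mathcal M_1$ and $\mathcal M_2$ collapse to the same $4$-valent graph, and the explicit involution $r'$ (the edge permutation $(46)$) gives a bijection between their $t=0$ boundaries. The equality of the two $t=\infty$ counts follows by chaining the two cobordisms through this common $t=0$ wall. So the two moduli spaces carry the two sides of the Cartan relation to a common intermediate picture; a single cobordism in $\mathcal M_1$ alone, as you propose, does not close the argument.
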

		\begin{proof}

The moduli space $\mathcal{M}_{1}(x,y,z)$ is a $1$-dimensional cobordism, corresponding to $[0,\infty]$, so $\# \partial \mathcal{M}_{1}(x,y,z) = 0$. Simliarly $\# \partial \mathcal{M}_{2}(x,y,z) = 0$. The $t = \infty$ boundary of $\mathcal{M}_{1}(x,y,z)$ is the count of the contribution of $z$ in $$\sum_{j+k=i} Sq^{j}(x) \cup Sq^{k}(y)$$ (see Figure \ref{fig:cartanSqSq} and Lemma \ref{lemma:lemmaSqSq}). The number of points in the boundary at $t=0$ for $\mathcal{M}_{2}(x,y,z)$ is the same as for $\mathcal{M}_{1}(x,y,z)$, as follows: suppose that $(0,u, v)$ is a point in the $t=0$ boundary of $\mathcal{M}_{2}(x,y,z)$. The domain of $u$ consists of a parametrised graph $\Gamma'$ with an incoming edge labelled $1$, and four outgoing edges labelled $3,4,6,7$. Consider the automorphism $r': \Gamma' \rightarrow \Gamma'$ that acts by the permutation $(46)$ on the edges (without changing the parametrisation). Then $(0, u \circ r', v)$ is a point in the $t=0$ boundary of $\mathcal{M}_{1}(x,y,z)$, and as $r'$ is an involution we see that this is a bijective correspondence. Notice that as we are working with $\mathbb{Z}/2$-coefficients, we do not need to worry about changing the orientation of the moduli space.

The number of points in the $t=\infty$ boundary component of $\mathcal{M}_{2}(x,y,z)$ is the count of the contribution of $z$ in $Sq^{i}(x \cup y)$, by Lemma \ref{lemma:lemmaSqcup}. Hence, the bijection between the $t=0$ boundaries of the moduli spaces, along with the $1$-cobordisms assigned to $\mathcal{M}_{1}(x,y,z)$ and $\mathcal{M}_{2}(x,y,z)$, yield that $$\sum_{j+k=i} Sq^j(x) \cup Sq^k(y) = Sq^i(x \cup y),$$ as required.
		\end{proof}

		\begin{figure}
			\input{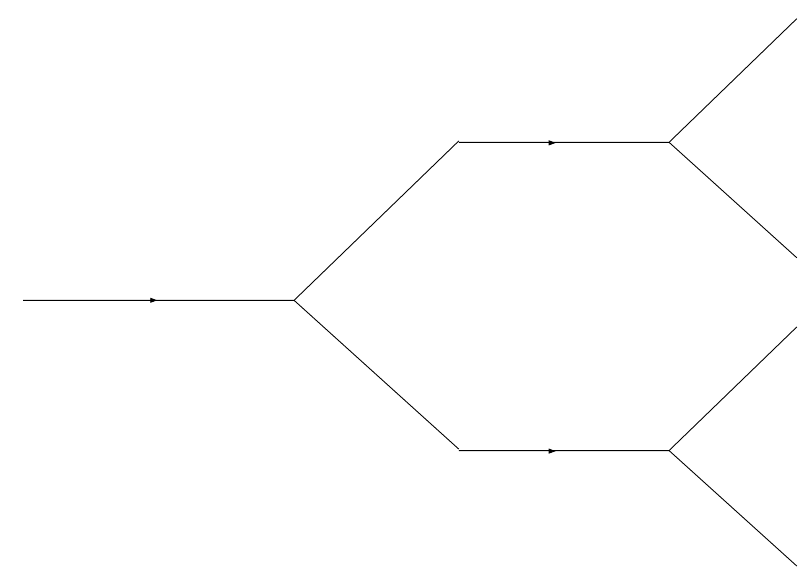_t}
			\caption{Flowline configurations for $Sq(x) \cup Sq(y)$. }
			\label{fig:cartanSqSq}
		\end{figure}

		\begin{lemma}
		\label{lemma:lemmaSqSq}
			Summing over all choices of $w_1,w_2 \in \text{crit}(f)$, counting equivalence classes $[(u,v)] \in \mathcal{M}_1(x,y,z)$ satisfying the asymptotic conditions as shown in Figure \ref{fig:cartanSqSq}, yields the coefficient of $z$ in $\sum_{j+k=i} Sq^{j}(x) \cup Sq^{k}(y)$.
		\end{lemma}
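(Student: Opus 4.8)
The plan is to analyze the $t=\infty$ boundary of the cobordism $\mathcal{M}_1(x,y,z)$ and identify it with the count underlying $\sum_{j+k=i} Sq^j(x) \cup Sq^k(y)$. First I would apply the equivariant gluing theorem (Appendix \ref{sec:equivariantgluing}) at $t=\infty$: a point in this boundary is a broken configuration consisting of a lower tree with incoming edge $e_1$ (carrying $f^1 = f$), outgoing edge carrying some intermediate critical point, and ``stacked'' above it two independent $Y$-shaped pieces. Concretely, the $t=\infty$ degeneration separates the graph into the piece with edges $e_1, e_2, e_5$ meeting at the trivalent vertex that computes a Morse cup product $w_1 \cup w_2$, glued to two copies of the Morse-Steenrod $Y$-graph: one with edges $(e_2, e_3, e_4)$ and parameter $v$, asymptotics $x,x$; and one with edges $(e_5, e_6, e_7)$ and parameter $-v$, asymptotics $y,y$. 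The constraint $v \in S^{|x|+|y|-i}$ distributes over the two $Y$-pieces, since the first contributes to $Sq^j(x)$ when its equatorial parameter lives in dimension $|x|-j$ and the second to $Sq^k(y)$ when its parameter lives in dimension $|y|-k$; summing over the ways a sphere of dimension $|x|+|y|-i$ splits as a join gives precisely $\sum_{j+k=i}$.

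Second, I would make the dimension bookkeeping precise. The lower cup-product piece $\mathcal{M}(f^i_s, z, w_1, w_2)$ is rigid (zero-dimensional) exactly when $|z| = |w_1| + |w_2|$, i.e.\ $|w_1| + |w_2| = 2|x| + 2|y| - i$. The upper pieces: by Definition \ref{defn:mss} the count $n_{w_1, x, x, j}$ is nonzero in a rigid family only when $w_1 \in \mathrm{crit}_{2|x|-j}(f)$ and the parameter sphere has dimension $j$; similarly $w_2 \in \mathrm{crit}_{2|y|-k}(f)$ with parameter-sphere dimension $k$. Then $|w_1| + |w_2| = 4|x|+4|y| - 2i - (j+k)$... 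I need to recheck: with $|w_1| = |x| + j$ being the degree of $Sq^j(x)$ — wait, $Sq^j(x) \in H^{|x|+j}$, so $w_1$ has Morse index $|x|+j$. Using $\dim \mathcal{M}_i(a_1,a_2) = |a_1| - 2|a_2| + i$, rigidity of the upper $x$-piece gives $|w_1| = 2|x| + j$... the indices need careful tracking, but the point is that matching rigidity on all three pieces forces $j+k = i$ and forces $w_1, w_2$ to be exactly the critical points appearing in $Sq^j(x)$ and $Sq^k(y)$ respectively, so the boundary count equals $\sum_{j+k=i}\langle Sq^j(x) \cup Sq^k(y), z\rangle$ by Definition \ref{defn:morsecupprod}.

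Third, I would address the $\mathbb{Z}/2$-equivariance, which is where the join structure genuinely enters. On the gluing region the free $\mathbb{Z}/2$-action $(t,u,v) \mapsto (t, u \circ \overline r, -v)$ with $\overline r = (34)(67)$ restricts on the $t=\infty$ stratum to the simultaneous swap within each $Y$-piece together with $v \mapsto -v$. I would verify that, after quotienting, the data of an equatorial sphere $S^{|x|+|y|-i}$ with its $\pm$ identification decomposes — compatibly with the action — as a join $S^{|x|-j} * S^{|y|-k}$ summed over $j+k=i$, exactly matching how the product $h$-grading in $CM^*_{\mathbb{Z}/2}(M)$ distributes under the cup product $(a h^j)\cup(b h^k) = (a\cup b)h^{j+k}$. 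This is the analogue of the topological statement $\mathbb{RP}^\infty \times \mathbb{RP}^\infty \to \mathbb{RP}^\infty$ classifying the sum of line bundles.

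The main obstacle I expect is this last equivariant join/gluing analysis: showing rigorously that the compactified moduli space near $t=\infty$ is diffeomorphic, $\mathbb{Z}/2$-equivariantly, to the fibre product of the two Steenrod $Y$-moduli spaces over $\mathrm{crit}(f)$ glued to the cup-product moduli space, with the equatorial parameter spaces assembling into a join — and that no parameter values on the hemisphere boundaries are overcounted. The transversality needed for this (genericity ``at each vertex'') is deferred to Appendix \ref{subsec:appendcartrel}, and the equivariant gluing to Appendix \ref{sec:equivariantgluing}, so in the body I would cite those and present the combinatorial identification of strata, leaving the analytic gluing estimates to the appendices.
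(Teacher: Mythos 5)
There is a genuine gap in your argument, and it is precisely at the step you flag as the ``main obstacle.'' The parameter sphere does not decompose as a join. A join $S^{|x|-j} * S^{|y|-k}$ has dimension $(|x|-j)+(|y|-k)+1 = |x|+|y|-i+1$, one too many, so even the dimension count fails. More fundamentally, the two upper $Y$-pieces in the $t=\infty$ boundary are \emph{not} parametrized by two independent spheres glued along a join coordinate: Figure \ref{fig:cartanSqSq} shows that both the $(w_1,x,x)$-piece (using $f^3_{\pm v}$) and the $(w_2,y,y)$-piece (using $f^6_{\pm v}$) are parametrized by the \emph{same} $v \in S^{|x|+|y|-i}$. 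There is no moment at which the moduli space ``splits'' the equivariant parameter between the two pieces, so the analogy with $\mathbb{RP}^\infty \times \mathbb{RP}^\infty \to \mathbb{RP}^\infty$ does not apply, and the sum over $j+k=i$ cannot come from a decomposition of the sphere.

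What makes the count work is an intersection-theoretic argument in the shared base $\mathbb{RP}^{|x|+|y|-i}$. Fix $w_1 \in \mathrm{crit}_{|x|+j}(f)$, $w_2 \in \mathrm{crit}_{|y|+k}(f)$; the constraint $j+k=i$ comes only from $|w_1|+|w_2| = |z| = |x|+|y|+i$. Form $\mathcal{U}_x$, the space of pairs $[v,u]$ with $v$ in the full sphere and $u$ an upper $Y$-piece with asymptotics $(w_1,x,x)$, and similarly $\mathcal{U}_y$. After adding codimension-one strata one obtains pseudocycles $\pi_x : \overline{\mathcal{U}_x} \to \mathbb{RP}^{|x|+|y|-i}$ and $\pi_y : \overline{\mathcal{U}_y} \to \mathbb{RP}^{|x|+|y|-i}$. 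Restricting $v$ to $\mathbb{RP}^{|x|-j}$ recovers exactly the count $n_{w_1}$ defining the coefficient of $w_1 h^{|x|-j}$ in $Sq(x)$, which shows $\pi_x$ is a weak representative of $n_{w_1}\,[\mathbb{RP}^{|y|-k}]$; likewise $\pi_y$ represents $n_{w_2}\,[\mathbb{RP}^{|x|-j}]$. These classes have complementary dimension in $\mathbb{RP}^{|x|+|y|-i}$, so $\pi_x \bullet \pi_y = n_{w_1} n_{w_2}$, and the total count at $t=\infty$ for the fixed pair $(w_1,w_2)$ is $n_{z,w_1,w_2}\, n_{w_1}\, n_{w_2}$. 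Summing over $w_1,w_2$ gives the coefficient of $z$ in $\sum_{j+k=i} Sq^j(x) \cup Sq^k(y)$. Your stratum identification and your appeal to the appendices for transversality and gluing are both sound; the missing ingredient is replacing the join picture with this intersection-number computation on the common parameter space.
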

		\begin{proof}
			We have that $|w_1| + |w_2| = |z| = |x|+|y| + i$. Hence if $|w_1| = |x|+j$ and $|w_2| = |y| + k$ then $j+k = i$. Throughout fix $w_1$,$w_2$ for the configuration, as outputs of $Sq^{j}(x), Sq^{k}(y)$ respectively.

			Restrict attention to the upper right-hand Y-shaped graph of Figure \ref{fig:cartanSqSq}. Suppose that we restrict the $v$ parameter space to $\mathbb{RP}^{|x|-j} \subset \mathbb{RP}^{|x|+|y|-i}$: in this case, counting $[(u,v)]$ satisfying the configuration conditions would be exactly the count of the coefficient of $w_1 \cdot h^{|x|-j}$ in $Sq^{j}(x)$, which we denote $n_{w_1}$. In our case, $v$ varies in the entirety of $\mathbb{RP}^{|x|+|y|-i}$, we call the set of such pairs $$\mathcal{U}_x = \left\{ [v,u] \biggr\vert \begin{array}{l} v \in S^{|x|+|y|-i} \text{ and } u: \Gamma \rightarrow M \text{ satisfies conditions as} \\ \text{illustrated in the upper right-hand graph of Figure } \ref{fig:cartanSqSq} \end{array}\right\}.$$ Here $[v,u]$ refers to taking the quotient by the $\mathbb{Z}/2$-action $(v,u) \rightarrow (-v,u \circ r)$ (where $r$ is the involution on the $Y$-shaped graph as seen previously). Similarly for the lower right-hand branch, for each $\mathbb{RP}^{|y|-k} \subset \mathbb{RP}^{|x|+|y|-i}$ there is a count of $n_{w_2}$, the coefficient of $w_2 \cdot h^{|y|-k}$ in $Sq^{k}(y)$. Define similarly $$\mathcal{U}_y = \left\{ [v,u] \biggr\vert \begin{array}{l} v \in \mathbb{RP}^{|x|+|y|-i} \text{ and } u: \Gamma \rightarrow M \text{ satisfies conditions as} \\ \text{illustrated in the lower right-hand graph of Figure } \ref{fig:cartanSqSq} \end{array}\right\}.$$ 

			Let $n_{z, w_1, w_2}$ be the coefficient of $z$ in $w_1 \cup w_2$ (the chain level Morse cup product obtained by using the perturbed Morse functions $f^1_s, f^2_s, f^5_s$). This is obtained by counting elements of the zero dimensional set corresponding to configurations as in the left hand $Y$-shaped graph of Figure \ref{fig:cartanSqSq}. We will show that the contribution of configurations as in Figure \ref{fig:cartanSqSq} to the coefficient of $z$ is $n_{z, w_1,w_2} \cdot n_{w_1} \cdot n_{w_2}$. 

Following \cite[Lemmas 4.2-4.5]{schwarzmorsesingiso}, suppose in fact that $x$ is a Morse cycle (specifically some sum of critical points, $\sum_i a_i \cdot x_i$ where $x_i \in \text{crit}(f)$). Then we may modify $\mathcal{U}_{x}$ to $\overline{\mathcal{U}_{x}},$ obtained by first taking the disjoint union of $a_i$ copies of $\mathcal{U}_{x_i}$ for each $i$ (defined as for $x$ above) and then adding in codimension $1$ strata, and identifying them in pairs (this can be done exactly because $dx = 0$). Here, the codimension $1$ strata correspond to the case when the $Y$-shaped graph undergoes a ``breaking" at one end. The outcome is then the union of a $Y$-shaped graph and an unparametrised flowline, such that:
\begin{itemize}
\item one of the $Y$-shaped graph's positively/negatively asymptotic critical points coincides with the flowline's negatively/positively asymptotic critical points. The other three asymptotic critical points are $w_1, x_i, x_i$ for some $i$.
\item the index difference between the asymptotic critical points of the unparametrised flowline is $1$.
\end{itemize}
Then observe that $\overline{\mathcal{U}_{x}}$ is a smooth manifold, \cite[Lemma 4.4]{schwarzmorsesingiso}. Let $$\pi_x : \overline{\mathcal{U}_{x}} \rightarrow \mathbb{RP}^{|x|+|y|-i}$$ be the projection onto the first coordinate. Then $\pi_x$ is a pseudocycle: specifically, consider $[v_n, u_n]$ such that $v_n$ converges in $\mathbb{RP}^{|x|+|y|-i}$ but $[v_n, u_n]$ has no convergent subsequence in $\overline{\mathcal{U}_{x}}$. By parametrised compactness of Morse flowlines we know that $[v_n, u_n]$ must have a convergent subsequence in the full compactification of $\mathcal{U}_{x}$: but that convergent subsequence must be in the codimension $2$ strata, as $\overline{\mathcal{U}_{x}}$ contains its codimension $1$ strata.

Observe also that by the second paragraph of the proof (i.e. knowing the intersection of $\pi_x$ with $\mathbb{RP}^{|x|-j}$, and in fact with any perturbation of $\mathbb{RP}^{|x|-j}$, is $n_{w_1}$) we deduce that $$\pi_x \bullet [\mathbb{RP}^{|x|-j}] = n_{w_1},$$ where $\bullet$ is the intersection number, hence $\pi_x$ is a weak representative of $n_{w_1} \cdot [\mathbb{RP}^{|y|-k}]$ (by which we mean that the intersection number of any cycle with $\pi_x$ is the same as with $n_{w_1} \cdot [\mathbb{RP}^{|y|-k}]$). Similarly the first projection $\pi_y :\overline{\mathcal{U}_y}  \rightarrow \mathbb{RP}^{|x|+|y|-i}$ is a weak representative of $n_{w_2} \cdot [\mathbb{RP}^{|x|-j}]$. 

The count of all solutions $[(u,v)]$ satisfying the configuration in Figure \ref{fig:cartanSqSq} is now $$n_{z, w_1,w_2} \cdot ( \pi_x \bullet \pi_{y}) = n_{z,w_1,w_2} \cdot n_{w_1} \cdot n_{w_2}.$$ 

Now recall from the definitions of $n_{w_1}, n_{w_2}$ that $$Sq^j(x) = \sum_{w_1 \in \text{crit}_{|x|+j}(f)} n_{w_1} w_1 h^{|x|-j}$$ and $$Sq^k(y) = \sum_{w_2 \in \text{crit}_{|y|+k}(f)} n_{w_2} w_2 h^{|y|-k}.$$ Then $$Sq^j(x) \cup Sq^k(y) = \sum_{w_1,w_2} n_{w_1} \cdot n_{w_2} \cdot w_1 \cup w_2,$$ and recalling that $n_{z,w_1,w_2}$ is the coefficient of $z$ in $w_1 \cup w_2$, the lemma is proved.
		\end{proof}

\begin{lemma}
\label{lemma:lemmaSqcup}
The count for the $t=\infty$ boundary component of $\mathcal{M}_{2}(x,y,z)$ is the count of the contribution of $z$ in $Sq^{i}(x \cup y)$.
\end{lemma}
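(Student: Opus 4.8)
The plan is to analyse the $t=\infty$ end of the one-dimensional cobordism $\mathcal{M}_2(x,y,z)$, running the argument in parallel with the proof of Lemma \ref{lemma:lemmaSqSq} but with the roles of the Morse cup product and the Morse Steenrod square interchanged. The relabelling of edges $4$ and $6$ and the choice of $\mathbb{Z}/2$-action $(25)(34)(67)$ are designed precisely so that the broken configuration at $t=\infty$ reads ``take $x\cup y$ first, then apply $Sq^i$'', whereas for $\mathcal{M}_1$ it reads ``apply $Sq^j$ and $Sq^k$ first, then $\cup$''.

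Using the equivariant gluing theorem of Appendix \ref{sec:equivariantgluing}, I would first describe the $t=\infty$ stratum. As $t\to\infty$ the interior edges $e_2,e_5$ become infinitely long and the tree breaks into a bottom $Y$-graph (incoming $e_1$ with asymptote $z$, outgoing $e_2,e_5$) carrying the equations for $f^1,f^2_{v,s},f^5_{v,s}$, together with two top $Y$-graphs on $\{e_2,e_3,e_6\}$ and on $\{e_5,e_4,e_7\}$. Since $f^1$ is $v$-independent while $f^2_{v,s}=f^5_{-v,s}$, the bottom graph is exactly a Morse--Steenrod-square configuration in the sense of Section \ref{subsec:msss}, with incoming point $z$ and outgoing points a pair $w,w'$ of intermediate critical points; since $f^3,f^6$ are generic and $f^4_{v,s}=f^3_{-v,s}$, $f^7_{v,s}=f^6_{-v,s}$, the two top graphs are Morse-cup-product $Y$-graphs for $(x,y)$, and they are interchanged (together with $v\mapsto -v$) by the $\mathbb{Z}/2$-action. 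So, after passing to the quotient, a point of the $t=\infty$ stratum is a $\mathbb{Z}/2$-orbit of: a family parameter $v$, a Morse-cup-product flowline for $(x,y)$ with output $w$, its $v\mapsto -v$ reflection with output $w'$, and a Morse--Steenrod-square flowline from $z$ with outgoing asymptotes $(w,w')$.

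The cup-product pieces carry the same parameter $v$ as the Steenrod piece, so, as in Lemma \ref{lemma:lemmaSqSq}, one cannot merely multiply counts. Since $dx=dy=0$, the Morse cochain $x\cup y$ is a cocycle; following \cite[Lemmas 4.2--4.5]{schwarzmorsesingiso} I would complete each cup-product moduli space by its codimension-$1$ strata and glue them in pairs, so that the projection of the resulting compactified family to $\mathbb{RP}^{|x|+|y|-i}$ is a pseudocycle that is a weak representative of $c_w\cdot[\mathbb{RP}^{|x|+|y|-i}]$, where $c_w$ is the cohomology-level coefficient of $w$ in $x\cup y$. This lets me replace the $v$-dependent chain-level cup-product counts by the numbers $c_w$. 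By Definition \ref{defn:mss}, the relevant count of Morse--Steenrod-square flowlines from $z$ to $(w,w')$ is the coefficient of $z\,h^{|x|+|y|-i}$ in $Sq'(w\otimes w')$. Summing over all pairs $(w,w')$ and tracking, over $\mathbb{Z}/2$, the hemisphere-versus-sphere factor of $2$, the factor of $2$ from the residual $\mathbb{Z}/2$-action on the diagonal $w=w'$, and the identity $c_w^2=c_w$, one finds that the $t=\infty$ boundary count equals the coefficient of $z\,h^{|x|+|y|-i}$ in $Sq'\big(\,(x\cup y)\otimes(x\cup y)\,\big)$. By $Sq=[Sq']\circ[\mathrm{double}]$ and the convention that $Sq^i(a)$ is the coefficient of $h^{|a|-i}$ in $Sq(a)$, this is the coefficient of $z$ in $Sq^i(x\cup y)$, as claimed.

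The main obstacle is the first step: executing the equivariant gluing at $t=\infty$ with the requisite transversality, and verifying that the $\mathbb{Z}/2$-quotient genuinely collapses the two top cup-product $Y$-graphs into a single piece with no over- or under-counting. The $\mathbb{Z}/2$-bookkeeping in the last step is the other delicate point, slightly subtler than in Lemma \ref{lemma:lemmaSqSq} because the cup product is now the inner operation, so the hemisphere/sphere normalisations must be reconciled with the symmetry of $(x\cup y)\otimes(x\cup y)$.
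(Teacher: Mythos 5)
Your structural analysis of the $t=\infty$ breaking is correct, but you overlook the crucial feature that makes the paper's proof of this lemma dramatically simpler than Lemma~\ref{lemma:lemmaSqSq}, and as a result you take a detour whose bookkeeping is muddled. Condition~(2) in the definition of $\tilde{\mathcal{M}}_2(x,y,z)$ in Section~\ref{subsec:Cartan} requires that $f^p_{v,s,t}$ be \emph{independent of $v$ for large $t$} when $p=1,3,4,6,7$; combined with condition~(1) this forces $f^3=f^4$ and $f^6=f^7$ with no $v$-dependence at all at $t=\infty$. Consequently the two outer cup-product $Y$-graphs do \emph{not} carry the family parameter $v$: each one computes a fixed chain-level coefficient $c_{w}$ of $w$ in the Morse cochain $x\cdot y$, and the only $v$-dependent piece left is the single $Sq'$ $Y$-graph on edges $(e_1,e_2,e_5)$ with $f^2_{v,s}=f^5_{-v,s}$. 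The count therefore factors immediately as $\sum_{w_1,w_2}c_{w_1}\,c_{w_2}\,n_{z,w_1,w_2,|x|+|y|-i}$, which by definition is the coefficient of $z\,h^{|x|+|y|-i}$ in $Sq'\big((x\cdot y)\otimes(x\cdot y)\big)$, i.e.\ the coefficient of $z$ in $Sq^i(x\cup y)$. No pseudocycle machinery is needed, precisely because the cup products are $v$-frozen; the asymmetry between $\mathcal{M}_1$ (inner operations $v$-dependent) and $\mathcal{M}_2$ (inner operations $v$-independent) is what makes the two lemmas structurally different.

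Your alternative route, treating the cup-product graphs as $v$-dependent and packaging them as pseudocycles over $\mathbb{RP}^{|x|+|y|-i}$, could in principle be pushed through, but the way you close it is incorrect. The dimensions do not match the argument of Lemma~\ref{lemma:lemmaSqSq}: there the $v$-dependent pieces each have a $0$-dimensional generic fibre and one computes an intersection number of two codimension-zero pseudocycles; here, if you let the cup products carry $v$, you have two codimension-zero families and one codimension-$(|x|+|y|-i)$ family, and the triple intersection requires a different justification than ``weak representative times weak representative''. Moreover, the ``hemisphere-versus-sphere factor of $2$'' and ``factor of $2$ from the residual $\mathbb{Z}/2$-action on the diagonal $w=w'$'' do not appear: the $\mathbb{Z}/2$-quotient is already built into $\mathcal{M}_i(a_1,a_2,a_3)=\mathcal{M}'_i/(\mathbb{Z}/2)$ per Definition~\ref{defn:mss}, so no overcounting or residual stabiliser arises, and the identity $c_w^2=c_w$ plays no role since the factorisation $\sum_{w_1,w_2}c_{w_1}c_{w_2}n_{z,w_1,w_2,\cdot}$ already runs over arbitrary ordered pairs. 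You should instead quote condition~(2) for $\tilde{\mathcal{M}}_2$, note the resulting $v$-independence of the two outer $Y$-graphs at $t=\infty$, and read off the count directly.
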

\begin{proof}
The edge and asymptotic conditions are as shown in Figure \ref{fig:cartanSqcup}. The edges attached to bivalent vertices are semi-infinite with the infinite end at the bivalent vertex, which is a critical point of $f$. For this operation, the $t=\infty$ boundary, we choose the perturbed Morse functions so that the two right-hand Y-shaped graphs use the same perturbations $f^3,f^6$ of $f$. Specifically, we may assume that $f^3$ and $f^6$ are independent of $v$. The number of such setups is then immediately the coefficient of $z$ in $Sq^{i}(x \cup y)$.
\end{proof}

		\begin{figure}
			\input{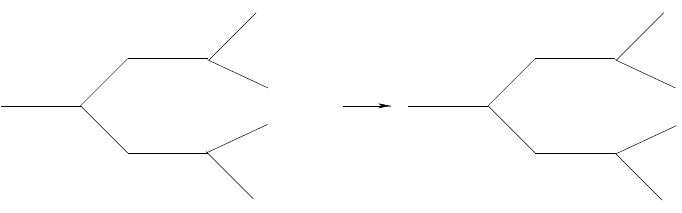_t}
			\caption{Flowline configurations for $Sq(x \cup y)$.}
			\label{fig:cartanSqcup}
		\end{figure}

	\subsection{Steenrod Squares via intersections of cycles}
	\label{subsec:intss}

		Recall that there are nested equators $S^{i} \subset S^{\infty}$, invariant under the antipodal action. Let $a \in H^{|a|}(M)$. Let $\mathcal{B}$ be a basis of $H^* (M)$. 

In practice, we would like to work with representatives. A representative of a homology class $A$ is a pair $(X,\alpha)$, often denoted simply $\alpha$, where $X$ is a smooth compact manifold and $\alpha: X \rightarrow M$ is smooth such that $\alpha_*[X] = A$. We recall that over $\mathbb{Z}/2$-coefficients every homology class has a representative (see e.g. \cite[Theorem B]{buonhacon}). For notation, we will denote a homology class by $A$, $a$ will denote its Poincar\'e dual cohomology class, and $\alpha$ will be a representative as above. We will say that $\alpha$ represents a cohomology class $a$ if $\alpha$ represents its Poincar\'e dual homology class. Similarly for $b \in \mathcal{B}$, we denote $B = PD(b)$. As previously, we denote by $b^{\vee}$ the dual basis element to $b$ in the dual basis $\mathcal{B}^{\vee}$ of $H^*(M)$. 

In order to link this definition to the previous definition, we will weaken our requirements below: in fact we only ask that $\alpha: X \rightarrow M$ is a pseudocycle representative of $a$. Note however that the definition will proceed identically in the cases where we can instead use either representatives or embeddings. Denote by $\beta^{\vee}: Y_b \rightarrow M$ a pseudocycle representative of $PD(b^{\vee})$. 
We will choose some smooth manifold $X$, along with a sequence of smooth maps $\alpha_i: X \times S^i \rightarrow M \times S^i$ (for brevity we shorten $X_i := X \times S^i$) such that:
		\begin{enumerate}
		\item For $\pi_{2}: M \times S^i \rightarrow S^i$ the second projection, $\pi_{2}(\alpha_i(x,v)) = v$ for all $(x,v) \in X \times S^{i}$.
		\item The restriction $\alpha_i |_{X_j} = \alpha_j$ for $j \le i$.
		\item For $\pi_{1}: M \times S^i \rightarrow M$ the first projection, for any $v \in S^{i}$ then \begin{equation} \label{equation:alphav} \alpha_v:= \pi_1 \circ \alpha|_{X \times \{ v \}} : X_v := X \times \{ v \} \rightarrow M \end{equation} is a pseudocycle representative of $A$ in $M$ (and is well defined by (2) above).
		\item For $b \in \mathcal{B}$, in $M \times M \times M \times S^{i}$ we require: \begin{equation} \label{tripleintersection} (\Delta \times id) \pitchfork \euscr{W} \end{equation}  where \begin{equation} \label{euscrw} \euscr{W}: Y_b \times X \times X \times S^i \rightarrow M \times M \times M \times S^i \end{equation} is defined by $(y, x,x',v) \mapsto (\beta^{\vee}(y),\alpha_i(x,v), \alpha_i(x',-v),  v)$ and $$\Delta \times id: M \times S^i \rightarrow M \times M \times M \times S^i$$ is defined by $(z,v) \mapsto (z,z,z, v)$.
		\end{enumerate}

		The pseudocycles $\Delta \times id \text{ and } \euscr{W}$ in \eqref{tripleintersection} descend to pseudocycles $$[\Delta \times id]: M \times \mathbb{RP}^i \rightarrow M \times ((M \times M) \times_{\mathbb{Z}/2} S^i),$$ and $$[\euscr{W}]: Y_b \times ((X \times X) \times_{\mathbb{Z}/2} S^i)  \rightarrow M \times ((M \times M) \times_{\mathbb{Z}/2} S^i),$$ respectively. Provided $|b| = 2 |a|- i$, define $n_{i,b,a} = [\Delta \times id] \bullet [\euscr{W}]$, the intersection of these two pseudocycles (of complementary dimension).

		\begin{defn}[Steenrod Square]
		\label{defn:miss}
			Define $$Sq(a) = \sum_{i \in \mathbb{Z}, \ b \in \mathcal{B}, \ |b| = 2|a|-i} n_{i,b,a} b h^{i},$$ where $\#$ is the count modulo $2$.
		\end{defn}

			\begin{rmk}
To see that Definition \ref{defn:miss} is a good one, i.e. independent of the choice of $\alpha_i$ (all the other choices are immediately covered by pseudocycle theory, e.g. \cite{zinger}), observe that the given number of points $n_{i,b,a}$ in any given degree (by which we mean for any fixed choice of $S^i \subset S^{\infty}$) is obtained as the number of intersection points of two pseudocycles. A construction as in \cite[Lemma 3.2]{zinger} for two different choices of $\alpha_i$ yields a bordism of pseudocycles, meaning that the intersection number $n_{i,b,a}$ is independent of this choice.
\end{rmk}

		\begin{rmk}
		\label{rmk:ourdefinitionsthesame}
	The Morse Steenrod square of Definition \ref{defn:mss} is the same as Definition \ref{defn:miss} using the isomorphism $HM^*(M,f) \cong H^*(M)$ that intertwines the Morse product and the cup product, in particular as described in \cite{schwarzmorsesingiso}. 

Recall that for each $v$, and Morse cocycle $a = \sum n_i \cdot a_i$ ($n_i \in \mathbb{Z}$ and $a_i \in \text{crit}(f)$) there is a pseudocycle associated to the $s$-dependent Morse function $f_{v,s}$. The domain of this pseudocycle is constructed first by taking the spaces $W^s(a_i,f_{v,s})$ of smooth $u: [0,\infty) \rightarrow M$ such that $\partial u/ \partial t(s) = - \nabla f_{v,s}(u(s))$ and $u(\infty) = a_i$, the stable manifold under $f_{v,s}$. One then adds in the codimension $1$ strata of the standard Morse compactification, and then glues together the disjoint union of $n_i$ copies of each $W^s(a_i,f_{v,s})$, along the codimension $1$ strata, which one knows can be done because $da  = 0$. We call this space $\overline{W}(a,f_{v,s})$. The map of this pseudocycle is (on the codimension $0$ strata) evaluation at $0$, denoted $E_v: \overline{W}(a,f_{v,s}) \rightarrow M$. Details are in \cite[Lemma 4.5]{schwarzmorsesingiso}, for the pseudocycle $\overline{W}(a,f)$ associated to the fixed Morse function $f$. 

Recall that we chose $f_{v,s}$ in Section \ref{subsec:tmssissq}, based on Section \ref{subsec:prelimbcncon}. Specifically, they satisfy $f_{v,s} = \beta(s) f_{v} + (1-\beta(s))f$ (where $f_v$ is confined to a small contractible neighbourhood of Morse functions $U_f$ containing $f$). Observe that in this instance $f_{v,s} = f$ for $s \ge 1$. Recall that for each $v \in S^i$ there is a $1$-parameter family of diffeomorphisms $\phi_{v,s}: M \rightarrow M$ for $s \in [0,1]$, defined by $\phi_{v,0} = id$ and $$\partial \phi_{v,t} /\partial t |_{t=s}(x) = - \nabla f_{v,s}(x).$$ Then $W(a,f_{v,s}) = \phi_{v,1}^{-1}(W(a,f))$.

Hence, for each $i \in \mathbb{Z}_{\ge 0}$ we obtain an $\alpha_i: \overline{W}(a,f) \times S^i \rightarrow M \times S^i$, defined by $$\alpha_i(u,v) = (E_v \phi_{v,1}^{-1} u, v).$$ Then recalling the conditions we required from $\alpha$, earlier in Section \ref{subsec:intss}, we see that:
\begin{itemize} 
\item condition $(1)$ holds, 
\item condition $(2)$ is immediate because we define our map fibrewise for each $v$, 
\item condition $(3)$ holds because of \cite[Lemma 4.5]{schwarzmorsesingiso}, 
\item condition $(4)$ holds because of condition $(2)$ at the beginning of Section \ref{subsec:msss}.
\end{itemize}

		\end{rmk}

\begin{rmk}
\label{rmk:embeddedsubs}
As the definition in this section will be used as a computational tool for our purposes, for simplicity we will assume in certain places that our homology classes in $\mathcal{B}$ can be represented as embedded submanifolds: in this instance, we may replace a $\alpha_i: X \times S^i \rightarrow M \times S^i$ (which in such a case satisfies that $\pi_1 \alpha_i(\cdot,v): X \rightarrow M$ is an embedding for each $v \in S^i$) by $X_v := \pi_1 \alpha_i(X,v)$. 
\end{rmk}

\begin{rmk}
		Suppose that $\alpha$ is represented by an embedded submanifold $\mathcal{A} \subset M$. Then each $\alpha_i(X_i)$ cannot simply be $\{ (p,v) | p \in \mathcal{A}, v \in S^{i} \}$, because then transversality would not hold. More generally, we cannot assume that the pseudocycles $\alpha_i$ are independent of $v$. In the next section we construct a family of admissible choices of $\alpha_i$. However, we may take $B^{\vee} \times S^{i}$ to be such a ``standard representative". This is analogous to how, in the Morse definition, $f^1_s$ is chosen to be independent of $v$.
\end{rmk}

		\subsection{Properties of the Steenrod Square}
		\label{subsec:propofSq}

		As promised in Section \ref{subsec:msss} we now check Axiom 3 from Section \ref{subsec:theSqs}.

				\begin{lemma}
				\label{lemma:sq0baby}
				$Sq^{0}(PD(pt))=PD(pt)$. 
				\end{lemma}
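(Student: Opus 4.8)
The plan is to use the intersection-of-cycles description of $Sq$ from Definition \ref{defn:miss}, taking advantage of the fact that the point class $PD(pt) \in H^{2n}(M)$ (where $n = \dim M$) is concentrated in top degree, so that $|a| = 2n$ and the only relevant coefficient is $h^{2|a|} = h^{4n}$ — that is, the term with $i = 2|a|$, which would extract $Sq^{0}$. First I would set $a = PD(pt)$, so that a pseudocycle representative $\alpha : X \to M$ can be taken to be the inclusion of a single point $\{p\} \hookrightarrow M$, with $X = \{pt\}$ a zero-dimensional manifold. The family $\alpha_i : X \times S^i \to M \times S^i$ then has total domain $X_i \cong S^i$, which over a generic $v \in \mathbb{RP}^i$ cuts out a single point of $M$. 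The claim $Sq^0(PD(pt)) = PD(pt)$ amounts to showing that the coefficient $n_{i,b,a}$ with $i = 2|a|$ and $b = PD(pt)$ equals $1$ (mod $2$), and vanishes otherwise; but since $|a| = 2n$ forces $|b| = 2|a| - i \le 2n$ with equality only when $i = 2n = |a|$, the only surviving term is already $i = |a|$, i.e. $Sq^{|a|}(a)$, which by Axiom 4 (already verified in Proposition \ref{propn:propositionftw}) is the cup square $a \cup a$. Hence the real content is the identity $PD(pt) \cup PD(pt) = PD(pt)$, which is immediate: $PD(pt)$ is the generator of $H^{2n}(M;\mathbb{Z}/2)$ if $M$ is connected of dimension $n$... wait, this is not quite right, since $PD(pt) \cup PD(pt)$ lands in $H^{4n}(M) = 0$ unless $n = 0$.

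Let me reconsider: the lemma must intend $PD(pt)$ as the \emph{fundamental class} in $H^0(M)$, i.e. $pt \in H_0(M)$ has Poincaré dual the unit $1 \in H^n(M)$... no — with the conventions $PD : H_*(M) \to H^{n-*}(M)$, we have $PD(pt) = PD([\text{point}]) \in H^{n-0}(M) = H^n(M)$. For this to make sense with $Sq^0$, note $|a| = n$ and we want the coefficient of $h^{n - 0} = h^n$, i.e. $i = n$; but $Sq^0$ is the coefficient of $h^{|a| - 0} = h^{|a|}$, so indeed $i = |a| = n$. So again we are extracting $Sq^{|a|}(a) = a \cup a = PD(pt) \cup PD(pt)$, which lies in $H^{2n}(M) = 0$ unless... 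So the genuinely correct reading is that $Sq^0(a)$ means the \emph{highest} power of $h$ that can appear, and for $a = PD(pt)$ this forces the configuration to be entirely rigid. The approach I would actually take is direct: use Definition \ref{defn:miss} with $\alpha_i$ a family of point-representatives, and observe that the defining intersection number $n_{i,b,a} = [\Delta \times id] \bullet [\euscr{W}]$ for $i = |a|$, $b = PD(pt)$ reduces to counting $v \in \mathbb{RP}^{|a|}$ together with a point $x \in X$ (a single point) and $x' \in X$ (a single point) such that $\beta^\vee(y) = \alpha_v(x) = \alpha_{-v}(x')$; since $X$ is a point and $\beta^\vee$ represents $PD((PD(pt))^\vee) = PD([M]) = $ the fundamental cycle, this is a transverse count over $\mathbb{RP}^{|a|}$ of the single point where the constant families meet, which after a generic perturbation contributes exactly once mod $2$.

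The main obstacle I anticipate is pinning down the degree/index bookkeeping so that the statement $Sq^0(PD(pt)) = PD(pt)$ is the content being proved rather than a triviality or a type error: one must carefully track that the relevant $h$-power is $i = 2|a| - |b|$ with $b = PD(pt)$ giving $i = 2|a| - |a| = |a|$, hence this \emph{is} $Sq^{|a|}$ in the indexing of Definition \ref{defn:mss}/\ref{defn:miss} but simultaneously the \emph{lowest-degree} square $Sq^0$ cannot be accessed by Axiom 4 and genuinely requires this separate argument because the general proof of $Sq^0 = \mathrm{id}$ (Remark \ref{rmk:mssrmks}) is deferred. So the honest structure of the proof is: (i) reduce via Definition \ref{defn:miss} to the transverse intersection count for the constant (up to perturbation) point-representative; (ii) show this count is $1 \bmod 2$ by a parametrised transversality argument over $\mathbb{RP}^{|a|}$, noting that a single point in $M$ meets its diagonal image once and the family is rigid; (iii) conclude $Sq^0(PD(pt)) = PD(pt)$, which will then be bootstrapped (together with the Cartan relation, Theorem \ref{thm:classicalcartan}, and naturality) in the subsequent argument of Section \ref{subsec:propofSq} to deduce $Sq^0 = \mathrm{id}$ in general.
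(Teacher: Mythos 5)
Your proposal correctly identifies the right framework — Definition \ref{defn:miss}, the intersection-count $n_{i,b,a} = [\Delta \times id]\bullet[\euscr{W}]$, the degree bookkeeping reducing to $i=|a|=\dim M$, and the observation that the nontrivial pairing partner is $b^\vee = [M]$ — and this is indeed the route the paper takes. However, there is a genuine gap at the decisive step: you assert that ``after a generic perturbation'' the parametrised count over $\mathbb{RP}^{|a|}$ ``contributes exactly once mod $2$,'' but give no argument for this. The claim is a Borsuk--Ulam-type statement, not a formality, and your phrase ``the family is rigid'' is misleading: the constant family $\alpha_v(pt)\equiv p$ is precisely \emph{not} rigid — it produces a whole $\mathbb{RP}^{n}$-worth of coincidences $\alpha_v(pt)=\alpha_{-v}(pt)$ — which is why the paper explicitly warns (just before Section \ref{subsec:propofSq}) that one may not take a $v$-independent representative. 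Rigidity only emerges after a suitable perturbation, and it is then not automatic that the number of rigid solutions is odd.

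The paper's proof supplies exactly what is missing: a concrete choice of $\alpha_n$, namely $\alpha_n(pt,v) = (\phi(v),v)$ where $\phi$ is the composition $S^n \xrightarrow{\phi'} D^n \cong D(pt)\hookrightarrow M$ of the ``flattening'' projection with the inclusion of a small normal disc. With this choice the coincidence condition $\phi(v)=\phi(-v)$ has exactly one antipodal pair of solutions $\{\pm v_0\}$ (the poles), transversality is verified directly via the nondegeneracy of $d\phi(\pm v_0)$, and taking the $\mathbb{Z}/2$-quotient gives the count $1 \bmod 2$; the absence of other output contributions is a degree argument. So the structure of your argument is correct, but the content of step (ii) — exhibiting a perturbation for which the parametrised count can actually be evaluated — is the whole proof, and that is the part you have left unaddressed. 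To close the gap you would need either the flattening-map construction above or an equivalent explicit equivariant homotopy argument; invoking genericity alone does not determine the parity of the count.
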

				\begin{proof}
				Let $n=\text{dim}(M)$. Write $a = PD(pt)$. We construct a representative of $\{ pt \} \times S^n$ in $M \times S^n$:

				The submanifold $pt \subset M$ has trivialisable normal bundle, so the disc subbundle $D(pt)$ of the normal bundle $N(pt)$ embeds into $M$ as a small disc around $pt$. Let $S^{n}, D^n \subset \mathbb{R}^{n+1}$, where $$S^n = \biggr\{ (x_1,\ldots x_{n+1} ) \in \mathbb{R}^{n+1} \biggr\vert \sum_{i} x_i^2 = 1 \biggr\}$$ and $$D^n = \biggr\{ (x_1,\ldots x_{n+1}) \in \mathbb{R}^{n+1} \biggr\vert x_{n+1} = 0, \sum_{i} x_i^2 \le 1 \biggr\}$$ is the $n$-disc with the $n+1^{th}$ coordinate $0$. There is a natural flattening map denoted $\phi':S^{n} \rightarrow D^{n}$, where $\phi'(x_1,\ldots, x_n, x_{n+1}) = (x_1,\ldots x_n, 0)$ is projection of $S^n$ onto the first $n$ coordinates. Note that $\phi'$ is a double cover except on the equator, which is $\partial D^{n} \cong S^{n-1}$. 

				There is a diffeomorphism $D^{n} \cong D(pt) \subset M$. Composing $\phi'$ with this diffeomorphism defines $\phi : S^{n} \rightarrow M$. The map $\phi$ is homotopic to a constant map hence $\bigsqcup_{v \in S^n} (\phi(v), v)$, the graph of $\phi$ in $M \times S^n$, is cobordant to $\{ pt \} \times S^{n} \subset M \times S^n$. Specifically, we denote $\alpha_n : \{ pt \} \times S^n \rightarrow M \times S^n$ by $\alpha_n(pt, v) = (\phi(v),v)$, and this immediately satisfies most of the relevant properties of $\alpha_n$ from Section \ref{subsec:intss} (we will verify transversality after computing the points of intersection). 

Observe that for $b \neq [M]^*$ (hence $PD(b^{\vee}) \neq [M]$), and for a general choice of the dual basis pseudocycles $\beta^{\vee}$, there is no intersection as in Statement \eqref{tripleintersection} (hence transversality holds trivially). To check transversality in the case where $b = [M]^*$, we pick $\beta^{\vee} = id_M : M \rightarrow M$. Then for $\Delta \subset M \times M$ the diagonal, $\Delta \times S^i$ intersects $\Phi := \sqcup_{v \in S^i} \{ \phi(v) \} \times \{ \phi(-v) \} \times \{v \}$ exactly when $\phi(v) = \phi(-v)$. We know there is exactly one such pair $\{ \pm v_0 \}$, where $v_0 = (0,...,0,1) \subset S^{n} \subset \mathbb{R}^{n+1}$. 

To verify transversality, consider the tangent directions at $(\beta^{\vee}(x), \phi(v_0),\phi(-v_0), v_0)$ in $T(M \times M \times M \times S^i) = TM \oplus TM \oplus TM \oplus TS^i$. Those tangent directions in $0 \oplus 0 \oplus 0 \oplus TS^i$ and $T \Delta \oplus 0 \subset T(M \times M \times M) \oplus TS^i$ are all contained in $T(\Delta \times S^i) = T \Delta \oplus TS^i$. Similarly, as $\beta^{\vee} = id_M$ we obtain all tangent vectors in $TM \oplus 0 \oplus 0 \oplus 0$. It remains to show that we may obtain the rest of the tangent vectors of $T(M \times M \times M \times S^i)$. Observe that $d \phi(v_0) = - d \phi(-v_0)$ is nondegenerate,  because $v_0 \not\in \phi'(\partial D^n)$. Hence in particular $\{ (\phi(v), \phi(-v)) \}_{v \in S^n} \subset M \times M$ intersects $\{(x,x)\}_{x \in M} \subset M \times M$ transversely at $(\phi(v_0),\phi(-v_0))$. This immediately implies transversality. 

To calculate the coefficient of $a$ in $Sq^{0}(a)$, count the number of (pairs of) solutions to $\phi(v) = \phi(-v)$ modulo $\mathbb{Z}/2$. Recall from above there exists exactly one such (pair of) solutions $v = \pm v_0$. Taking this modulo the $\mathbb{Z}/2$ action gives $Sq^0(a) = a + ...$. The cycle $\{ a \}$ generates $H^n (M)$ so there are no more contributions to $Sq^0(a)$ for degree reasons.
				\end{proof} 

			An easy generalisation of the above proof shows:

			\begin{lemma}
\label{lemma:sq0}
				For $x \in H^*(M)$, when $PD(x)$ is represented by an embedded submanifold $\chi$ then $Sq^{0}(x)=x$.
			\end{lemma}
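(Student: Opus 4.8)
The plan is to reduce Lemma \ref{lemma:sq0} to Lemma \ref{lemma:sq0baby} by a local argument that mimics its proof, replacing the single point by the embedded submanifold $\chi: X \hookrightarrow M$ representing $PD(x)$. The key observation is that $Sq^0(x)$ is determined, via Definition \ref{defn:miss}, by the $i=0$ term: we need the intersection number $n_{0,b,x} = [\Delta \times id] \bullet [\euscr{W}]$ in $M \times ((M\times M)\times_{\mathbb{Z}/2} S^0)$ when $|b| = 2|x|$, equivalently $|b^\vee| = n - 2|x| + |x| = n - |x|$ so that $b^\vee$ is (Poincar\'e dual to) a class of dimension $|x|$, i.e. $b = x$ is forced for degree reasons once we work over the basis. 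So the only contribution to $Sq^0$ lies in the coefficient of $x$, and the task is to show that intersection number equals $1$.

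First I would set up the local model exactly as in Lemma \ref{lemma:sq0baby}: the embedded submanifold $\chi(X) \subset M$ has a tubular neighbourhood $N(\chi)$, the total space of its normal bundle, of rank $k = n - |X| = n - (n-|x|) = |x|$. Here the key point is dimension bookkeeping: $X$ has dimension $n - |x|$ (since $\chi$ represents $PD(x) \in H_{n-|x|}$), so the normal bundle has rank $|x|$, and over $\mathbb{Z}/2$ we do not need to worry about orientability of this bundle. Fibrewise, I would replace the flattening map $\phi': S^n \to D^n$ of the baby case by a fibrewise flattening of the sphere bundle $S(N(\chi))$ of rank-$|x|$ onto the disc bundle $D(N(\chi))$; globally this produces $\phi: X \times S^{|x|} \to M$ obtained by, over each point of $X$, using the sphere-to-disc double cover in the normal fibre and then the tubular neighbourhood embedding. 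Then I define $\alpha_{|x|}: X \times S^{|x|} \to M \times S^{|x|}$ by $\alpha_{|x|}(\xi, v) = (\phi(\xi,v), v)$, check the conditions (1)--(3) from Section \ref{subsec:intss} as in the baby case — condition (3) because $\phi$ restricted to each slice is homotopic to $\chi$ composed with the bundle projection, hence still represents $PD(x)$.

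Next I would compute the intersection governing $n_{0,x,x}$. Taking $\beta^\vee = \chi^\vee$ a pseudocycle representative of $PD(x^\vee)$ which we may take (after a small generic perturbation) to be the inclusion of a submanifold meeting $\chi(X)$ transversely in a single point $p_0 \in M$ (since $\chi \bullet \chi^\vee = 1$ by the dual-basis property), the $\mathbb{Z}/2$-equivariant intersection of $\Delta \times id$ with $\euscr{W}$ over $S^0$ (two points $\pm v$, $S^0 = \{\pm 1\}$ — though really the relevant index is $i=0$ where $S^0$ is a point in $S^\infty$, so $v = -v$) forces $\phi(\xi, v_0) = \phi(\xi', v_0) = $ a point of $\chi^\vee$, hence lies over the unique intersection point $p_0$, and in the normal fibre over $p_0$ the sphere point $v_0$ must be the ``north pole'' not in the image of $\partial D$, giving exactly one solution modulo the $\mathbb{Z}/2$-action. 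Transversality at this point is verified exactly as in Lemma \ref{lemma:sq0baby}: the differential of the fibrewise flattening is nondegenerate off the equator, which gives the needed transverse intersection of $\{(\phi(\xi,v),\phi(\xi',-v))\}$ with the diagonal. Hence $n_{0,x,x} = 1$ and all higher $i$ and other $b$ vanish by degree, so $Sq^0(x) = x$.

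The main obstacle I expect is the globalisation of the fibrewise flattening map: in the baby case $pt$ is a point so there is just one normal fibre, whereas here one must flatten the sphere bundle $S(N(\chi)) \to X$ to the disc bundle $D(N(\chi)) \to X$ coherently, and then argue that the resulting $\phi: X \times S^{|x|} \to M$ is suitably generic (transverse to $\chi^\vee$, and the triple-intersection transversality \eqref{tripleintersection}). This is where one genuinely uses that $\chi$ is an \emph{embedded} submanifold (as in Remark \ref{rmk:embeddedsubs}) rather than merely a pseudocycle — it gives an honest tubular neighbourhood — and also where one may need a further small perturbation of $\alpha_{|x|}$ within the admissible family to achieve \eqref{tripleintersection}, which by the well-definedness remark after Definition \ref{defn:miss} does not change $n_{0,x,x}$. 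Everything else is a routine transcription of the argument for Lemma \ref{lemma:sq0baby}.
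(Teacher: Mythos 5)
The construction of $\phi$ as you describe it has a genuine gap. To feed into Definition~\ref{defn:miss} you must produce a map $\alpha_{|x|} \colon X \times S^{|x|} \to M \times S^{|x|}$ with domain the \emph{trivial product}. Your plan is to flatten the sphere bundle $S(N(\chi)) \to X$ fibrewise to the disc bundle and compose with the tubular neighbourhood embedding, but to obtain a map whose \emph{domain} is the product $X \times S^{|x|}$ rather than the possibly twisted sphere bundle $S(N(\chi))$, you need a global trivialization of $N(\chi)$, which is not part of the hypotheses. (Indeed, if $N(\chi)$ were trivializable, the result would essentially follow from Corollary~\ref{corollary:trivialisable} in a stronger form.) This is precisely the obstacle the paper's proof is designed to avoid: there, $\phi(\xi,v) = \xi$ is taken \emph{constant in $v$} outside small pairwise-disjoint neighbourhoods $U_i$ of the finitely many intersection points $p_i$ of $\chi$ with a representative of $X^{\vee}$, and the flattening in the normal fibre is performed only locally over $U_i \cap \chi$, where the normal bundle is trivial. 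That local trick — keep $\phi$ independent of $v$ except on small balls where one can trivialize — is the key idea your proposal is missing.

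A second, smaller imprecision: you assume $\chi^{\vee}$ can be arranged to meet $\chi(X)$ in a single transverse point $p_0$. Over $\mathbb{Z}/2$ you only know the intersection number is $1 \pmod 2$; without orientations there is no Whitney-trick-style cancellation available in general, so you cannot reduce to a unique intersection point. The paper correctly works with an odd number of intersection points $\{p_i\}$ and concludes by noting that (odd $\times$ odd) is odd, and that the contributions from any other basis element $B \ne X^{\vee}$ come in even number and so vanish. Your argument would need to be rephrased the same way.
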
 
			\begin{proof}

				Let $x$ be as given in the statement. Proceed as in the previous lemma, but now $x = PD(X)$ for some cycle $X$. It is convenient to assume that $X$ is in a basis for the homology of $M$, with $PD(X^{\vee}) = x^{\vee}$ being the corresponding member of the dual basis under the intersection product. Similarly to above, for general pseudocycle representatives $\alpha: \chi \rightarrow M$ and $\alpha^{\vee}: Y \rightarrow M$ of $X, X^{\vee}$, we detemine that $\alpha \cdot \alpha^{\vee}$ consists of a finite, odd number of points $\{ p_{i} \}$ (since $x \cdot x^{\vee} = 1$ mod $2$ by definition). In particular, this is true when $\alpha$ is the embedding of $\chi$. Moreover, this is true of any generic sufficiently small perturbation of $\alpha$, such as when the image of $\alpha$ is contained in a sufficiently small normal disc bundle of $\chi$.

Each of these $p_i$ has a small neighbourhood $U_{i} \subset M$ such that the normal bundle $N(\chi)$ of $\chi$ is trivial on $U_{i} \cap \chi$, with the $U_{i}$ being pairwise disjoint. Pick a bump function $\beta_{i}$ for each neighbourhood $U_{i}$. On the neighbourhood $U_{i}$, there is a diffeomorphism between the disc bundle and the trivial bundle $D(U_{i}) \cong  (U_{i} \cap \chi) \times D^{n-\dim(\chi)}$. Using the tubular neighbourhood theorem, $N(\chi)$ and hence $(U_{i} \cap \chi) \times D^{n-\dim(\chi)}$ embeds into $M$ via a map $e$. 

				Hence there is a smooth map $\phi: \chi \times S^{n- \dim(\chi)} \rightarrow M$, such that if $x \in \chi$ is not in any $U_{i}$, then $\phi(x,v) = x$. Otherwise $x$ is in exactly one $U_{i}$ and we define $\phi(x,v): = e(x, \beta_{i}(x) \phi'(v))$, where $\phi': S^{n-\dim (\chi)} \rightarrow D^{n- \dim (\chi)}$ is the flattening map as in the previous lemma. This yields $\alpha_{n-\dim(\chi)}(x,v) :=\phi(x,v)$, recalling that $n-dim(\chi) = |x|$. Consider the intersection modulo $\mathbb{Z}/2$, whose transversality is verified as in Lemma \ref{lemma:sq0baby}. The coefficient of $xh^{|x|}$ is obtained by using as the output cycle $\alpha^{\vee}(Y) \times S^{n-dim(\chi)}$. By construction, such intersections only occur when the first coordinate is one of the $p_{i}$. At $p_{i}$, there is exactly one pair of solutions corresponding to the two solutions as in the previous claim: i.e. $\phi'$ is $2$ to $1$ on a dense open subset. 
				
				Take the quotient by the $\mathbb{Z}/2$ action to deduce that the number of contributions is an odd number (the number of $p_{i}$) multiplied by an odd number (the number of pairs of solutions at each $p_{i}$), hence is odd. Therefore $Sq^{0}(x) = xh^{|x|} + ...$. To show that there are no more terms in $Sq^0(x)$, repeat this with $S^{n-dim(\chi)} \times B$ as the output cycle, for $B$ representing another element of the dual basis of homology. Strictly, to cover all cases at once we must choose pseudocycle representatives for every $B \in \mathcal{B}^{\vee}$. Then instead of considering $\{ p_i \}$, we now have $\{p_{B,i} \}$, where $B$ varies in $\mathcal{B}^{\vee}$, which are pairwise distinct. Define similarly pairwise disjoint $U_{B,i} \ni p_{B,i}$, and a map $\phi$ as previously. Then as $B \neq X^{\vee}$ is in the dual basis, a general pseudocycle representative of $B$ intersects $\chi$ with an even number of points. We count exactly as in the previous case, except the number of contributions is an even number (the intersection number of $B \cdot \chi$) multiplied by an odd number. Hence the count is even and the contributions due to other $B$ are $0$.
			\end{proof}

\begin{corollary}
\label{corollary:trivialisable}
Let $A$ be a closed submanifold of $M$, with trivialisable normal bundle. Then $Sq^{i}(PD([A])) = 0$ for $i \neq 0$.
\end{corollary}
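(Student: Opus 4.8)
The plan is to compute the Steenrod square from an admissible family of pseudocycles $\{\alpha_i\}$, which by the remark after Definition~\ref{defn:miss} is permitted, chosen so that the intersections defining the higher squares are empty. Write $a = PD([A])$ and $k = |a| = \operatorname{codim} A$. First I would dispose of the easy cases: $Sq^i(a) = 0$ for $i < 0$ and for $i > k$ by Axiom~5 (Proposition~\ref{propn:propositionftw}), while $Sq^0(a) = a$ by Lemma~\ref{lemma:sq0}, since $a$ is the Poincar\'e dual of the embedded submanifold $A$. This leaves the squares $Sq^i(a)$ with $1 \le i \le k$; by Definition~\ref{defn:miss}, $Sq^i(a) = \sum_{b \in \mathcal{B},\, |b| = k + i} n_{k-i,b,a}\, b$, so it is enough to prove $n_{j,b,a} = 0$ for all $b$ and all $j$ with $0 \le j \le k-1$.

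To build the family I would use the hypothesis directly: fix a trivialisation of the normal bundle of $A$ together with a tubular-neighbourhood embedding $e \colon A \times D^k \hookrightarrow M$ with $e(p,0) = p$ (after rescaling $D^k$ so that the closed unit disc bundle embeds). Let $\phi' \colon S^k \to D^k$ be the flattening map of Lemma~\ref{lemma:sq0baby}, which simply forgets the last coordinate, and put $\phi_v := e(\,\cdot\,, \phi'(v)) \colon A \to M$ for $v \in S^k$. Each $\phi_v$ is a smooth embedding isotopic to the inclusion $A \hookrightarrow M$, hence a pseudocycle representative of $a$. Taking $\alpha_i(p,v) = (\phi_v(p),v)$ for $i \le k-1$ gives a compatible family over $\bigcup_{j \le k-1} S^j$ satisfying conditions (1)--(3) of Section~\ref{subsec:intss}; since only the indices $j = k-i \le k-1$ enter the computation, I would extend it arbitrarily to an admissible family over all of $S^\infty$.

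\textbf{The crux} is that a trivial normal bundle lets one displace \emph{all} of $A$ from itself at once. The flattening $\phi'$ restricts to the identity on the equator $S^{k-1} \subset S^k$, and $S^j \subset S^{k-1}$ for every $j \le k-1$; hence $\phi'(v) = v \ne -v = \phi'(-v)$ for every $v \in S^j$. Since $e$ is injective on $A \times D^k$, the translates $\phi_v(A) = e(A \times \{\phi'(v)\})$ and $\phi_{-v}(A) = e(A \times \{\phi'(-v)\})$ are therefore \emph{disjoint} for all $v \in S^j$. Consequently, for $j \le k-1$ the two pseudocycles $[\Delta \times \mathrm{id}]$ and $[\euscr{W}]$ of Definition~\ref{defn:miss} whose intersection is $n_{j,b,a}$ have no intersection point whatsoever: such a point would yield $z = \beta^\vee(y) = \phi_v(x) = \phi_{-v}(x')$, an element of $\phi_v(A) \cap \phi_{-v}(A)$. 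So the transversality requirement~\eqref{tripleintersection} holds vacuously for every choice of dual-basis pseudocycle $\beta^\vee$, $n_{j,b,a} = 0$, and hence $Sq^i(a) = 0$ for $1 \le i \le k$, completing the proof.

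The one point that needs a word is that this family is genuinely an admissible choice in the sense of Section~\ref{subsec:intss}: for $j \le k-1$ there is nothing to verify, since the relevant intersection is empty and the transversality condition is vacuous, and for the remaining index ($i = k$, i.e.\ $Sq^0$) I would not use this family at all, appealing to Lemma~\ref{lemma:sq0}. The hypothesis is used in exactly one place; without it $A$ cannot be pushed entirely off itself, one is thrown back on the local, bump-function displacement of Lemma~\ref{lemma:sq0} (which only clears the finitely many points where $A$ meets a dual cycle), and the higher-index moduli spaces need not be empty.
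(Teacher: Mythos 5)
Your proof is correct and follows the same route as the paper: exploit the trivialisation to embed the whole disc bundle $A\times D^k\hookrightarrow M$, and push $A$ off itself in the normal direction so that $A_v\cap A_{-v}=\emptyset$ for all $v$ in the relevant parameter spheres, making every higher intersection count vanish identically. Your version just spells out (via the flattening map being the identity on the equator) why the translates are disjoint, and is explicit about restricting to $j\le k-1$ and using Lemma~\ref{lemma:sq0} only for the $i=0$ case; the paper's one-line argument is the same idea stated more tersely.
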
	
\begin{proof}
		Use the embedding $e:  A \times D^{n-\text{dim}(A)}  \rightarrow M$ by inclusion of the unit disc bundle of $A$, which exists because $A$ has trivialisable normal bundle, to define $A_v$ for $v \in S^{n-i}$ for $i > 0$. Immediately no intersections occur for $i \neq 0$, as $A_v \cap A_{-v} = \emptyset$ for all $v \in S^{n-i}$.
\end{proof}

\begin{rmk}
	More generally, for any immersed submanifold $A \xrightarrow{} X$, consider the homology class $[A] \in H_*(X)$. Then $Sq^i (PD([A]))$ is the Stiefel-Whitney class $w_i(N_A X)$ (where $N_A X$ is the normal bundle of $A$ in $X$). An account of Stiefel-Whitney classes is given in \cite{stiefelwhitney}.
\end{rmk}

\section{Quantum Steenrod Square via Morse theory}
\label{sec:SqQviaMorse}
	Let $M$ be a closed monotone symplectic manifold. The definition of the quantum Steenrod square uses a $Y$-shaped graph as with the Morse Steenrod square, but  now allows for a $J$-holomorphic sphere at the trivalent vertex in the Y-shaped graph in the definition. This is a $J$-holomorphic sphere with $2+1$ marked points, and 2 incoming and 1 outgoing Morse flowlines from the respective marked points.

	Make a choice of $f^p_{s,v}$ as in Subsection \ref{subsec:msss}, for $p=1,2,3$. Let $N$ be the minimal Chern number of $M$. Fix $i,j \in  \mathbb{Z}_{\ge 0}$ and $a,b \in H^{*}(M)$ with $$|b| - 2 |a| + i + 2jN = 0.$$ 

	Let $\mathcal{M}'_{i,j}(b,a)$ be the moduli space of pairs $(u,v)$, such that:
\begin{itemize}
\item $v \in S^{i}$,
\item $u: S^2 \rightarrow M$ is a simple $J$-holomorphic map of Chern number $2jN$, i.e. \begin{equation} \label{equation:jvsholoc} du(z) = J(u(z)) \circ du(z) \circ j_{S^2}(z), \end{equation} where $j_{S^2}$ is the standard almost complex structure on $S^2$, 
\item the $-\nabla f^1_{s,v}$ flowline from $u(0)$ converges to $b$ as $s \rightarrow -\infty$ and the $-\nabla f^p_{s,v}$ flowline from $u(1), u(\infty)$ converge to $a$ as $s \rightarrow \infty$ for $p=2,3$ respectively.
\end{itemize}

There is a free $\mathbb{Z}/2$-action on this moduli space: $$\iota_{\mathcal{M}} (u,v) = (u \circ R, -v)$$ where $R$ is the unique M\"obius map in $PSL(2,\mathbb{C})$ swapping $1 \text{ and } \infty$ and fixing $0$. Let $$\mathcal{M}_{i,j}(a,b) = \mathcal{M}'_{i,j}(a,b) / \iota_{\mathcal{M}}.$$ The space $\mathcal{M}_{i,j}(a,b)$ is a smooth manifold of dimension $|b| - 2|a| + i + 2jN$. See Appendix \ref{subsection:transvholspheres} for a discussion of transversality for the equivariant case in the presence of pseudoholomorphic spheres.

	\begin{defn}[Morse Quantum Steenrod Square]
	\label{defn:mqss}
		Pick a basis $\mathcal{B}$ of $H^*(M)$. Let $a \in H^*(M)$. For each $i,j$, let  $$Q\mathcal{S}_{i,j}(a) = \sum_{b \in \mathcal{B} : |b| + i + 2jN = 2 |a|} \# \mathcal{M}_{i,j}(b,a) \cdot b,$$ 

$$Q\mathcal{S}(a) = \sum_{i,j} Q\mathcal{S}_{i,j}(a) \cdot h^{i}T^{j}.$$

		Extend to a general element of $QH^{*}(M)$ by $Q\mathcal{S}(aT^{j}) = Q\mathcal{S}(a)T^{2j}$. 
	\end{defn}

The proof that $Q\mathcal{S}$ is an additive homomorphism is identical to Proposition \ref{propn:propositionftw}. First define $Q \mathcal{S}' : H^*_{\mathbb{Z}/2}(M \times M) \rightarrow QH^*(M) \otimes H^*(B \mathbb{Z}/2)$. This is identical to $Sq'$ from Definition \ref{defn:mss}, but one uses moduli spaces $\mathcal{M}_{i,j}(a_1,a_2,a_3)$ that have a $J$-holomorphic map $u:S^2 \rightarrow M$ in place of the intersection of the Morse flowlines. Then $Q \mathcal{S} = Q \mathcal{S}' \circ \text{double}$, and observe that $Q \mathcal{S}'(x_1 \otimes x_2 + x_2 \otimes x_1) = 0$.

	\begin{rmk}
\label{rmk:propertiesqs}
For $a \in H^*(M)$, $$Q\mathcal{S}_{i,0}(a) = Sq^{|a|-i}(a)$$ as it counts constant spheres. Further, $$\sum_{j \ge 0} Q\mathcal{S}_{0,j}(a) T^j = a * a$$ is the usual quantum product.
	\end{rmk}

	\subsection{Quantum Steenrod Squares via intersections of cycles}
	\label{subsec:qssintcyc}
		Let $a \in H^{|a|}(M)$, and we pick a basis $\mathcal{B}$ of $H^*(M)$. Denote $\alpha = PD(a), \beta = PD(b)$ for $b \in \mathcal{B}$. We define a moduli space and evaluation maps analogously to Section \ref{subsec:quantcupprod}: given $j \in \mathbb{Z}_{\ge 0},$ consider $\mathcal{M}_{j}(J) \times S^{i}$ consisting of pairs $(u,v)$ where $u$ is a $J$-holomorphic map such that $u_*[S^2]$ has Chern number $jN$ and $v \in S^i$. Fixing $q \in \mathbb{CP}^1$, the evaluation maps are $ev_{q} \times id_{S^i}:\mathcal{M}_j (J) \times S^i \rightarrow M \times S^i$, which we abusively denote $ev_q$. Choose a sequence of maps $(\alpha_i)_{i=0}^{\infty}: X \times S^i \rightarrow M \times S^{i}$ as in Section \ref{subsec:intss}, satisfying conditions (1), (2) and (3) but we will modify (4). Firstly, let
$\mathcal{M}(j,J)$ be the space of $J$-holomorphic spheres of Chern number $jN$, with a $\mathbb{Z}/2$-action acting by $u \mapsto u \circ R$, where as in Section \ref{sec:SqQviaMorse} $R: S^2 \rightarrow S^2, \ R(z) = z/(z-1)$. Further, for $b \in \mathcal{B}$, and $i \in \mathbb{Z}_{\ge 0}$ we define:
$$\euscr{Y}_Q: Y_b \times ((X \times X) \times_{\mathbb{Z}/2} S^i) \rightarrow M \times ((M \times M) \times_{\mathbb{Z}/2} S^i)$$ by $$(y,((x,x'),[v])) \mapsto (\beta^{\vee}(y),[\alpha_i(x,v), \alpha_i(x',-v), v]),$$ and
$$ev: \mathcal{M}(j,J) \times_{\mathbb{Z}/2} S^i \rightarrow M \times ((M \times M) \times_{\mathbb{Z}/2} S^i)$$ is defined by $$[u,v] \mapsto (u(0), [u(1), u(\infty), v]).$$ The required condition (4) is then:

		\begin{enumerate}
  \setcounter{enumi}{3}
		\item For $b \in \mathcal{B}$, and $i \in \mathbb{Z}_{\ge 0}$, the intersection of pseudocycles \begin{equation} \label{tripleintersectionquant}  ev(\mathcal{M}(j,J) \times_{\mathbb{Z}/2} S^i) \cap \euscr{Y}_Q(X \times \mathbb{RP}^{i}) \end{equation} is transverse in $M \times ((M \times M) \times_{\mathbb{Z}/2} S^{i})$.
		\end{enumerate}

Given $i,j \in \mathbb{Z}_{\ge 0}$, for $|b| = 2 |a| - i -2j$, the pseudocycles are of complementary dimension. Define $n_{i,j}(a,b)$ to be the intersection number of these pseudocycles. 

		\begin{defn}[Quantum Steenrod Square]
		\label{defn:singqss}
			For $a \in H^*(M)$ define $$Q\mathcal{S} : QH^{*}(M) \rightarrow QH^{*}(M)[h],$$ such that 

			$$Q\mathcal{S}(a):= \sum_{i,j \in \mathbb{Z}_{\ge 0}, \ b \in \mathcal{B}, \ |b| = 2|a|-i-2jN} n_{i,j}(a,b) \cdot b T^{j} h^i$$
			with $Q\mathcal{S}$ a linear homomorphism. Then extend $Q\mathcal{S}$ linearly to $QH^*$ by requiring that $Q\mathcal{S}(a T^k) = Q\mathcal{S}(a) T^{2k}$. Also define $Q\mathcal{S}_{i,j}(a)$ as previously.
		\end{defn}

		As in the classical case this is equivalent to Definition \ref{defn:mqss}.

\subsection{Quantum Stiefel Whitney Class}
		For a smooth compact manifold $M$, the classical Stiefel-Whitney class of $TM$, $w(TM)$, is constructed as in \cite[Section 5.3]{cohnor}, using a certain graph operation. We will not go into details. A more classical treatment is found in \cite{stiefelwhitney}.

		Using the convention that $\langle ah,A \rangle = \langle a,A \rangle h$ for $a \in H^*(M), A \in H_*(M)$, one can use a gluing theorem as in \cite[Theorem 20]{cohnor}, or a direct argument to prove that:

		\begin{lemma}
			$$w(TM) = \sum_{y \in \mathcal{B}} Sq(y) \cdot \langle Sq(y^{\vee}),[M] \rangle.$$
		\end{lemma}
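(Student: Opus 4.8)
Proof proposal for the lemma $w(TM) = \sum_{y \in \mathcal{B}} Sq(y) \cdot \langle Sq(y^\vee), [M] \rangle$.

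The plan is to realise both sides as counts of the same moduli space of Morse trajectory configurations with an equivariant parameter $v \in S^\infty$, exploiting the fact that the classical Stiefel--Whitney class $w(TM)$ admits the flowline description from Cohen--Norbury (a $Y$-shaped graph whose trivalent vertex carries a tangency/framing constraint reflecting $TM$), and that $Sq$ admits the flowline description of Definition~\ref{defn:mss}. First I would set up the glued moduli space: take the graph obtained by attaching an extra incoming flowline to the outgoing edge of the Steenrod $Y$-graph (or equivalently by inserting the Stiefel--Whitney vertex), with asymptotics $y$ on one input pair and the diagonal-doubled asymptotics on the other, all parametrised over $v \in S^i \subset S^\infty$, and with the free $\mathbb{Z}/2$-action coming from the reflection $r$ of the $Y$-graph combined with $v \mapsto -v$. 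The $0$-dimensional count of this space computes $w(TM)$ in a chosen basis, by the Cohen--Norbury graph formula.

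Next I would degenerate the modulus (the length of the interior edge joining the Steenrod vertex to the Stiefel--Whitney vertex) to $+\infty$, obtaining a one-dimensional cobordism whose two ends give the two sides of the claimed identity. At one end the configuration splits as ``$Sq(y)$ followed by evaluation against the Stiefel--Whitney cap'', and summing over the intermediate critical point reproduces $w(TM)$; at the other end it splits as ``$Sq$ applied after a cup-type pairing'', and here the sum over the basis $\mathcal{B}$ together with Poincaré duality over $\mathbb{Z}/2$ produces exactly $\sum_{y} Sq(y)\cdot \langle Sq(y^\vee),[M]\rangle$ — the factor $\langle Sq(y^\vee),[M]\rangle$ arising because the dual-basis output critical point $y^\vee$ is fed into the closed-up end that evaluates against the fundamental class $[M]$. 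Since we work over $\mathbb{Z}/2$ there are no sign issues, and $\#\partial$ of a compact $1$-manifold is $0$, which yields the equality. The gluing/compactness input is the equivariant version of the standard broken-trajectory analysis, available from Appendix~\ref{sec:equivariantgluing} (or, as the lemma statement notes, one may instead cite the gluing theorem \cite[Theorem~20]{cohnor} directly and simply match the outputs of the two graph operations).

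The main obstacle I expect is bookkeeping the $h$-degrees and the $\mathbb{Z}/2$-equivariant parameter consistently across the degeneration: one must check that the convention $\langle ah, A\rangle = \langle a,A\rangle h$ is compatible with how the $S^\infty$-parameter is shared between the $Sq$-vertex and the Stiefel--Whitney-vertex (i.e.\ that the antipodal identifications on the two ends of the broken configuration glue to the single antipodal action on the unbroken one), and that the transversality set up in Appendix~\ref{subsec:mssrmks} persists for the glued graph. Once the equivariant parameter is correctly threaded, identifying each boundary component with the corresponding algebraic expression is routine: it is the same cobordism argument as in the proof of the Cartan relation (Theorem~\ref{thm:classicalcartan}), applied to a graph with one extra vertex.
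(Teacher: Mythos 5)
Your proposal takes the gluing-theorem route that the paper explicitly mentions as an alternative (``one can use a gluing theorem as in \cite[Theorem 20]{cohnor}, or a direct argument'') but then declines to carry out. The paper instead gives the ``direct argument'': it uses the identity $w(TM) = Sq(v)$ for $v$ the Wu class, reduces the lemma to showing $v = \sum_{y \in \mathcal{B}} y \cdot \langle Sq(y^{\vee}), [M] \rangle$, writes $v = \sum n_{y,i}\, y h^i$ in the basis $\mathcal{B}$, and extracts the coefficients $n_{y,i}$ by pairing the defining property $\langle Sq(b), [M]\rangle = \langle b \cup v, [M]\rangle$ against the dual basis elements $b = y^{\vee}$. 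This is a short, self-contained piece of algebra that uses no moduli spaces and avoids the transversality and compactness bookkeeping that your argument would have to supply. The trade-off is that the paper's route leans on the Wu-class characterisation of $w(TM)$, whereas your route would establish the identity directly from the flowline definitions of $Sq$ and $w$, which is conceptually closer to how these operations are actually being used elsewhere in the paper.

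That said, as written your proposal is too vague to stand on its own. The ``Stiefel--Whitney vertex'' and the ``tangency/framing constraint reflecting $TM$'' are not pinned down (the paper itself only gestures at \cite[Section 5.3]{cohnor} for that construction and explicitly declines to give details), the phrase ``diagonal-doubled asymptotics'' is not defined, and the claim that one boundary of the cobordism produces $\sum_{y} Sq(y)\langle Sq(y^\vee),[M]\rangle$ while the other produces $w(TM)$ is asserted rather than derived. To make this rigorous you would need to reproduce the Cohen--Norbury description of $w(TM)$ as a graph operation, verify that the glued graph admits the required free $\mathbb{Z}/2$-action on the $v$-parameter, and check the equivariant compactification as in Appendix~\ref{sec:equivariantcompact}. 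None of this is impossible, but it is substantially more work than the one-paragraph algebraic proof the paper actually gives. If you want to pursue this route, I would recommend at minimum spelling out the $\mathbb{Z}/2$-action on the glued graph and identifying the two boundary contributions at the chain level in a fixed $h$-degree before summing, so that the factor $\langle Sq(y^\vee),[M]\rangle$ is seen to emerge concretely rather than asserted.
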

		\begin{proof}
			Recalling that $w(TM) = Sq(v)$, where $v$ is the Wu class of $M$, it is sufficient to prove that \begin{equation} \label{equation:wTM} v = \sum_{y \in \mathcal{B}} y \cdot \langle Sq(y^{\vee}),[M] \rangle. \end{equation} Suppose that we write $v$ as an element of $H^*(M)[h]$, i.e. $$v = \sum_{y \in \mathcal{B}, \ i \ge 0} n_{y,i} \cdot y h^i.$$ Substituting this into the definition of $v$, i.e. $\langle Sq(b), [M] \rangle = \langle b \cup v, [M] \rangle$ for any $b \in H^*(M)$, we obtain that $$\langle Sq(b), [M] \rangle = \sum_{y \in \mathcal{B}} n_{y,i} \cdot \langle b \cup y h^i, [M] \rangle.$$ For each $y \in \mathcal{B}$, let $b = y^{\vee}$. Hence $$\langle Sq(y^{\vee}), [M] \rangle = n_{y,i} \cdot \langle y^{\vee} \cup y h^i, [M] \rangle = n_{y,i} \cdot h^i \langle [M]^{*}, [M] \rangle = n_{y,i} \cdot h^i,$$ and \eqref{equation:wTM} follows. 
		\end{proof}

		Let $M$ be a closed monotone symplectic manifold.

		\begin{defn}[Quantum Stiefel-Whitney Class]
			The Quantum Stiefel-Whitney class is $$w_Q(TM) := \sum_{y \in \mathcal{B}} Q\mathcal{S}(y) \langle Sq(y^{\vee}),[M] \rangle.$$
		\end{defn}

	It follows from this definition and a grading argument that:	

	\begin{lemma}
	\label{propn:quantumstiefel4Ng2n}
		If the minimal Chern number $N > (\dim M)/2$ then $w_Q(TM) = w(TM)$.
	\end{lemma}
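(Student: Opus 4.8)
The plan is to show that the quantum Steenrod square $Q\mathcal{S}(a)$, when the minimal Chern number is large, coincides with the classical Morse Steenrod square $Sq(a)$, and that this forces $w_Q(TM)=w(TM)$ via the defining formulas. The key observation is a dimension count: the moduli space $\mathcal{M}_{i,j}(b,a)$ has dimension $|b|-2|a|+i+2jN$, and it contributes to $Q\mathcal{S}_{i,j}(a)$ only when this dimension is zero, i.e.\ when $|b|=2|a|-i-2jN$. Since $b\in H^*(M)$ we need $0\le |b|\le \dim M$, so $2|a|-i-2jN \ge 0$. Because $i\ge 0$ as well, this gives $2jN \le 2|a| \le 2\dim M$ but more sharply, using $|a|\le \dim M$ and $i\ge 0$, we get $2jN\le 2|a|-i\le 2\dim M$; I would rather argue directly that for $j\ge 1$ the condition $|b|=2|a|-i-2jN$ together with $N>(\dim M)/2$ forces $|b|<0$ or $i<0$, hence no such $b$ exists. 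Concretely: $|b| = 2|a| - i - 2jN \le 2|a| - 2N < 2|a| - \dim M \le \dim M - \dim M \cdot(\text{correction})$; the cleanest statement is $2|a| - 2jN \le 2\dim M - 2N < \dim M$ when $N>(\dim M)/2$... I will instead phrase the bound as: if $j\ge 1$ then $|b|=2|a|-i-2jN\le 2\dim M - 2N < \dim M$ is not yet a contradiction, so one must use $i\ge 0$ more carefully — actually the right inequality is $2|a|-2jN \le 2(\dim M) - 2N$, and this is $<\dim M$ iff $2\dim M - 2N < \dim M$ iff $N > \dim M/2$; but we need $|b|<0$, i.e.\ $2|a| - i - 2jN < 0$. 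Hmm, this requires $2|a| < 2jN$, i.e.\ $|a| < jN$. Since $|a|\le \dim M < 2N \le 2jN$ we only get $|a| < 2jN$, which is not quite $|a|<jN$ for $j=1$. So the honest argument is a degree count on the whole square: see below.

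The correct and clean approach is via the structure of $Q\mathcal{S}$ itself. By Remark \ref{rmk:propertiesqs}, $Q\mathcal{S}_{i,0}(a) = Sq^{|a|-i}(a)$ and the $j=0$ part of $Q\mathcal{S}$ reassembles to the classical $Sq(a)$. So it suffices to show $Q\mathcal{S}_{i,j}(a)=0$ for all $j\ge 1$ when $N>(\dim M)/2$. The moduli space $\mathcal{M}_{i,j}(b,a)$ involves a simple $J$-holomorphic sphere of Chern number $2jN$; for such a sphere to exist and contribute a zero-dimensional count after imposing the three flowline incidence conditions, the relevant balance is $|b|-2|a|+i+2jN=0$, i.e.\ $|b|+i = 2|a|-2jN$. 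For $j\ge 1$ this gives $|b|+i = 2|a| - 2jN \le 2|a| - 2N < 2|a| - \dim M \le \dim M - \dim M = 0$ is still not immediate. The genuine point is that $|b|\ge 0$ and $i\ge 0$, so we need $2|a|-2jN\ge 0$, i.e.\ $|a|\ge jN > (\dim M)/2 \cdot j$. For $j=1$ this means $|a| > (\dim M)/2$, which is possible; but then $|b| = 2|a| - i - 2N \le 2|a| - 2N$, and for $Q\mathcal{S}_{i,j}(a)$ to be nonzero we also need $|b|\le \dim M$, giving no contradiction directly for a single $(i,j)$.

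Therefore the main obstacle, and the step requiring care, is precisely establishing that the $j\ge 1$ contributions vanish; I expect the clean argument is: the total quantum product $a*a = \sum_j Q\mathcal{S}_{0,j}(a)T^j$ already has no $j\ge 1$ terms when $N>(\dim M)/2$ (since $|a*a| = 2|a|\le 2\dim M < \dim M + 2N$ forces the sphere-contribution classes to have negative degree), and the same grading obstruction applies uniformly to every $Q\mathcal{S}_{i,j}$ because adding the extra power of $h$ only makes the degree constraint on $b$ tighter: from $|b| = 2|a| - i - 2jN$ with $j\ge 1$ we get $|b| \le 2|a| - 2jN \le 2\dim M - 2N$, and $N > (\dim M)/2$ gives $2\dim M - 2N < \dim M$, but crucially we also use $|b| \ge 0$ forcing $2|a| \ge i + 2jN \ge 2N > \dim M \ge |a|$, i.e.\ $|a| > \dim M$, a contradiction. (This last chain is the argument: $2|a| \ge 2jN \ge 2N > \dim M$, so $|a| > (\dim M)/2$; combined with $|b| = 2|a|-i-2jN \ge 0$ one iterates, but in fact $2|a| - 2jN \ge 0$ already with $jN > (\dim M)/2$ forces $|a| > (\dim M)/2 \ge$ half the dimension, and since also $|b| + i + 2jN = 2|a| \le 2\dim M$ we get $2jN \le 2\dim M$, fine; the contradiction comes from requiring simultaneously $|b|\le\dim M$ and $|b| = 2|a| - i - 2jN$ with the $a$ in question — one checks case by case that $H^*(M)$ simply has no room.) Once $Q\mathcal{S}_{i,j}(a)=0$ for $j\ge 1$ is established, we conclude $Q\mathcal{S}(y) = Sq(y)$ for all $y$, so $w_Q(TM) = \sum_{y\in\mathcal{B}} Q\mathcal{S}(y)\langle Sq(y^\vee),[M]\rangle = \sum_{y\in\mathcal{B}} Sq(y)\langle Sq(y^\vee),[M]\rangle = w(TM)$ by the previous lemma. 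The harder part is purely bookkeeping on gradings; there is no geometric obstacle since the vanishing is forced by the cohomology of $M$ being concentrated in degrees $[0,\dim M]$.
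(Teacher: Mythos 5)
Your proposed strategy is to prove that $Q\mathcal{S}(y) = Sq(y)$ for all $y$ by showing $Q\mathcal{S}_{i,j}(y)=0$ for all $j\ge 1$. This is false under the stated hypothesis, and the gaps you kept running into in the degree count are genuine, not mere bookkeeping. A concrete counterexample: take $M=\mathbb{CP}^1$, where $N=2 > 1 = (\dim M)/2$, and $Q\mathcal{S}(x) = xh^2 + T \ne xh^2 = Sq(x)$. So $Q\mathcal{S}_{0,1}(x)=1\ne 0$ despite $N>(\dim M)/2$. The chain of inequalities you attempted (repeatedly) correctly shows only that a nonzero $Q\mathcal{S}_{i,j}(y)$ with $j\ge 1$ forces $|y|>(\dim M)/2$, and this is as far as it goes — there is no contradiction to be had, and the ``one checks case by case that $H^*(M)$ simply has no room'' step is where the argument quietly stops being a proof.

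The paper's proof takes the observation you did establish and turns it into the actual argument. It does \emph{not} claim $Q\mathcal{S}(y)=Sq(y)$ for all $y$; instead it shows that for each basis element $y$, either $Q\mathcal{S}(y)=Sq(y)$ or the weight $\langle Sq(y^{\vee}),[M]\rangle$ in the defining sum for $w_Q(TM)$ vanishes. The mechanism: if $Q\mathcal{S}(y)$ has a genuine $T$-term then (as you showed) $2|y|\ge 2N > \dim M$, so $|y|>(\dim M)/2$, hence $|y^{\vee}| = \dim M - |y| < (\dim M)/2$. Then $Sq(y^{\vee})$ lives in total degree $2|y^{\vee}|<\dim M$, so it has no summand of the form $[M]^{*}h^{j}$, and $\langle Sq(y^{\vee}),[M]\rangle=0$. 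In the $\mathbb{CP}^1$ example this is exactly what happens: $Q\mathcal{S}(x)\ne Sq(x)$, but $x^{\vee}=1$ and $\langle Sq(1),[M]\rangle=0$, so $x$ contributes nothing to $w_Q(TM)$. You need this extra idea — killing the offending terms via the pairing factor, not via vanishing of $Q\mathcal{S}_{i,j}$ — to complete the proof.
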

\begin{proof}
	We will show that given the assumptions of this lemma, for every $y \in H^*(M)$, either $Q \mathcal{S}(y) = Sq(y)$ or $\langle Sq(y^{\vee}),[M] \rangle = 0$.

	Suppose that $Q \mathcal{S}(y)$, which is of degree $2|y|$, has a summand containing some nontrivial power of $T$, which is of degree $2N$. This implies that $2|y| \ge 2N > \dim M$. Hence $|y| > (\dim M)/2$. Hence $|y^{\vee}| < (\dim M)/2$, and therefore for degree reasons there can be no summand of the form $[M]^* h^j$ in the expansion of $Sq(y^{\vee})$. Therefore $\langle Sq(y^{\vee}),[M] \rangle = 0$.
\end{proof}

	\begin{corollary}
		Let $M = \mathbb{CP}^n$. Then $w_Q(TM) = w(TM)$.
	\end{corollary}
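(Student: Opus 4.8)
The plan is to reduce immediately to Lemma \ref{propn:quantumstiefel4Ng2n}, which already asserts that $w_Q(TM) = w(TM)$ whenever the minimal Chern number satisfies $N > (\dim M)/2$. So the only thing to check is this numerical inequality for $M = \mathbb{CP}^n$.

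First I would record the two relevant invariants: $\dim_{\mathbb{R}} \mathbb{CP}^n = 2n$, and the minimal Chern number of $\mathbb{CP}^n$ is $N = n+1$. Indeed $c_1(T\mathbb{CP}^n) = (n+1)x$ for $x \in H^2(\mathbb{CP}^n;\mathbb{Z})$ the positive generator, and a line $\mathbb{CP}^1 \subset \mathbb{CP}^n$ generates $\pi_2(\mathbb{CP}^n) \cong \mathbb{Z}$ and pairs with $c_1$ to give $n+1$, so $c_1(\pi_2(\mathbb{CP}^n)) = (n+1)\mathbb{Z}$. Hence $N = n+1 > n = (\dim \mathbb{CP}^n)/2$, and Lemma \ref{propn:quantumstiefel4Ng2n} applies verbatim, giving the conclusion.

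As a consistency check one can see this directly from the formula \eqref{equation:qsforcpn}: for $y = x^k$ the class $Q\mathcal{S}(y)$ lies in total degree $4k$, so any summand $b\,h^i T^j$ with $j \ge 1$ forces $4k = |b| + i + 2jN \ge 2N$, i.e. $|y| = 2k \ge N = n+1$; then $|y^\vee| = 2n - 2k \le n-1 < n$, so $Sq(y^\vee)$ contains no $[\mathbb{CP}^n]^*h^m$ term and $\langle Sq(y^\vee),[\mathbb{CP}^n]\rangle = 0$. Thus in $w_Q(T\mathbb{CP}^n) = \sum_y Q\mathcal{S}(y)\,\langle Sq(y^\vee),[\mathbb{CP}^n]\rangle$ only the classical, $T$-free part of each $Q\mathcal{S}(y)$ survives, recovering $w(T\mathbb{CP}^n)$. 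There is no real obstacle: the entire content is the observation that $N = n+1 > n$.
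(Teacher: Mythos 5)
Your argument is exactly the paper's: compute $N = n+1 > n = (\dim \mathbb{CP}^n)/2$ and invoke Lemma \ref{propn:quantumstiefel4Ng2n}. The extra justification of the minimal Chern number and the consistency check via \eqref{equation:qsforcpn} are both fine but not needed beyond the two-line argument.
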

	\begin{proof}
		The minimal Chern number for $\mathbb{CP}^n$ is $N = n+1 > n = (\dim \mathbb{CP}^n) /2$. Now apply Lemma \ref{propn:quantumstiefel4Ng2n}.
	\end{proof}

\section{The Quantum Cartan relation}
\label{sec:quancar}
	
	We continue the discussion from Example \ref{exmpl:difficulties}. Consider the space $M^{\#}_{0,5}$ of 5 distinct marked points on the $2$-sphere, and let $$M_{0,5} = M^{\#}_{0,5} / PSL(2,\mathbb{C})$$ where the M\"obius group $G = PSL(2,\mathbb{C})$ acts diagonally on the 5 marked points. There are two different descriptions of $M_{0,5}$ that will be useful: 
	\begin{enumerate}
		\item $\{ (z_{0},z_{1},z_{2},z_{3},z_{4}) \} / G$ of five distinct points modulo the action of $G$, reparametrising M\"obius maps.
		\item $\{ (0,1, \infty, z_{3},z_{4}) \}$ with $z_{3},z_{4}$ distinct from each other and from $0,1,\infty$. 
	\end{enumerate}

		The former description gives a simpler definition of the compactification, but the latter description is more useful when describing homology classes. Letting $z_{3},z_{4}$ vary in the description (2) yields a third description:
	\begin{enumerate}
  \setcounter{enumi}{2}
		\item $$M_{0,5} \cong ((\mathbb{CP}^{1} - \{ 0,1,\infty \}) \times (\mathbb{CP}^{1} - \{0,1,\infty \})) - \Delta,$$ where $\Delta$ is the diagonal. 
	\end{enumerate}
One compactifies this space, adding stable genus $0$ nodal curves with $5$ marked points (there are 10 copies of $\mathbb{CP}^{1} - \{0,1,\infty \}$ and 15 points to add), one obtains a space $$\overline{M}_{0,5} \simeq Bl_{\{ (0,0),(1,1),(\infty, \infty) \}}(\mathbb{CP}^{1} \times \mathbb{CP}^{1}).$$ See \cite[Section D.7.]{jholssympl}. Then $\overline{M}_{0,5}$  is homotopy equivalent to $(\mathbb{CP}^{1} \times \mathbb{CP}^{1}) \# 3 (\overline{\mathbb{CP}^{2}})$, which means:
	\begin{equation} \label{equation:m05coh} \begin{array}{l} H^{*}(\overline{M}_{0,5}) = \mathbb{F}_{2}[\delta_1, \delta_2,w_{0},w_{1},w_{\infty}] / I \\ I = (\delta_1^{2},\ \delta_2^{2}, \ w_{i}^{3},w_{i}^{2}+\delta_1 \delta_2 \text{ for all } i, \text{ and } \delta_i \omega_j \text{ for all } i,j, \text{ and } \omega_i \omega_j \text{ for } i \neq j) \end{array} \end{equation}
where $w_{i}$ corresponds to the exceptional divisor at $(i,i)$ and $\delta_1, \delta_2$ correspond to the spheres $\mathbb{CP}^{1} \times \{ pt \}$ and $\{ pt \} \times \mathbb{CP}^{1}$ respectively: thus all the generators have degree $2$. A treatment of this is \cite[Section D.7]{jholssympl}. Henceforth $W_i = PD(w_i) \text{ and } \Delta_j = PD(\delta_j)$ for $i=0,1,\infty$ and $j=1,2$.

Let $x,y,z$ be cohomology classes in $H^*(M)$. Let $\zeta: Z^{\vee} \rightarrow M$ be a pseudocycle representative of $PD(z^{\vee})$. There is a natural $\mathbb{Z}/2$ action on $\overline{M}_{0,5}$, induced by $(12)(34)$. Specifically, $$\iota : (z_{0},z_{1},z_{2},z_{3},z_{4}) \mapsto (z_{0},z_{2},z_{1},z_{4},z_{3}).$$ Then $\iota \times -\text{id}$ defines a free diagonal $\mathbb{Z}/2$ action on $\overline{M}_{0,5} \times S^{i}$ for each $i$. Define $$P_i := ( \overline{M}_{0,5} \times S^i) / (\iota \times -\text{id}).$$ 

Pick smooth maps $\chi_i: X_{i} \rightarrow M$ and $\gamma_i: Y_{i} \rightarrow M$, as in Section \ref{subsec:qssintcyc}, for the cohomology classes $x,y$. Then $\mathcal{M}'_{i,j}(x,y,z)$ consists of triples $(u,m,v)$, where $m$ is a $5$-pointed genus $0$ holomorphic nodal curve, and $u: |m| \rightarrow M$ is a smooth stable (nodal) $J$-holomorphic map representing a homology class of Chern number $jN$ (here $|m|$ refers to forgetting the marked points of $m$). The parameter space is $v \in S^{i}$. The map $u$ satisfies $u(z_{0}) \in \zeta(Z^{\vee})$, $u(z_{1}) \in \chi_v(X_{v})$, $u(z_{2}) \in \chi_{-v}(X_{-v})$, $u(z_{3}) \in \gamma_v(Y_{v})$ and $u(z_{4}) \in \gamma_{-v}(Y_{-v})$. 

There is a $\mathbb{Z}/2$-action on $\mathcal{M}'_{i,j}(x,y,z)$, acting by:
 \begin{equation} \label{equation:actiononmoduli} (u,m,v) \mapsto (u,\iota m,-v), \end{equation} recalling that $\iota$ acts, as on $\overline{M}_{0,5}$, by the permutation of marked points $(12)(34)$. Then the action \eqref{equation:actiononmoduli} is well defined because $|\iota m| = |m|$. There is also an action induced by reparametrisation: specifically, if $g \in PSL(2,\mathbb{C})$ acts on some holomorphic sphere $m^a$ of $m$, with corresponding $J$-holomorphic map $u^a: |m^a| \rightarrow M$, then  \begin{equation} \label{equation:actionGonmoduli} g \cdot (u^a,m^a,v) = (u^a \cdot g^{-1}, g \cdot m^a, v). \end{equation} We denote $$\mathcal{M}_{i,j}(x,y,z)$$ the moduli space obtained after quotienting $\mathcal{M}'_{i,j}(x,y,z)$ by the actions in Equation \eqref{equation:actiononmoduli} and \eqref{equation:actionGonmoduli}.

There is a natural map $$\pi_{x,y,z}: \mathcal{M}_{i,j}(x,y,z) \rightarrow P_{i}, \ [u,m,v] \mapsto [\text{stab}(m),v],$$ where $\text{stab}(m)$ denotes taking the stabilisation of the $5$-pointed genus $0$ nodal curve $m$, which corresponds to an element of $\overline{M}_{0,5}$. The square brackets denote taking equivalences classes with respect to the actions of $PSL(2,\mathbb{C})$ and $\mathbb{Z}/2$. 

\begin{rmk}
\label{rmk:evalmapquancar}
We can think of $\mathcal{M}_{i,j}(x,y,z)$ as the inverse image under the evaluation map $$ev: \overline{\mathcal{M}_{0,5}(J,j)} \times_{\mathbb{Z}/2} S^{i} \rightarrow M \times (M^{4} \times_{\mathbb{Z}/2} S^{i}),$$ $$ev([[u,(p_0, p_1...,p_4)],v]) = (u(p_0),[(u(p_1),...,u(p_4)),v])$$ where $\mathcal{M}_{0,5}(J,j)$ is the set of $5$-pointed $J$-holomorphic maps $u: \mathbb{CP}^{1} \rightarrow M$ of Chern number $jN$, where $m$ combines the information of $\mathbb{CP}^1$ along with the $5$ marked points. The space $\overline{\mathcal{M}_{0,5}(J,j)}$ is a partial compactification using stable nodal genus $0$ holomorphic curves that contain no repeated or multiply covered components. The $\mathbb{Z}/2$-action acts on the marked points by the permutation $(12)(34)$. Observe that one uses some large machinery, namely the gluing theorem for $J$-holomorphic curves (see \cite[Chapter 10]{jholssympl}), to show that this partial compactification of the space of simple maps has a fundamental class. Then given $x,y,z$ as previous to this remark, it is immediate from the definition that $ \mathcal{M}_{i,j}(x,y,z)$ is obtained by \begin{equation} \label{equation:lotsofpseudo} ev^{-1} \left( \zeta(Z^{\vee}) \times \left[ \bigcup_{[v] \in \mathbb{RP}^{i} } \alpha_v(X_{v}) \times \alpha_{-v}(X_{-v}) \times \gamma_v(Y_{v}) \times \gamma_{-v}(Y_{-v}) \times \{ v \} \right] \right). \end{equation} As in Appendix \ref{subsec:mssrmks}, we may interpret the expression between $\left(, \right)$ in Equation \eqref{equation:lotsofpseudo} as a pseudocycle. The square brackets $\left[, \right]$ in Equation \eqref{equation:lotsofpseudo} denote the equivalence class under the $\mathbb{Z}/2$ action. This allows us to calculate $\dim \mathcal{M}_{i,j}(x,y,z)$ as follows. 
\end{rmk}

Let $Q$ be a closed submanifold of the parameter space $P_{i}$. Then  $Q$ represents a cycle in $H_{*}(P_{i})$, such that: \begin{equation} \label{eq:dimension} \dim \pi_{x,y,z}^{-1}(Q) = |z| - 2|x| - 2|y| + \dim(Q) + 2jN. \end{equation} In particular, for $Q = P_{i}$, using $\dim(P_{i}) = 4+i$, $$\dim \mathcal{M}_{i,j}(x,y,z) = \dim \pi_{x,y,z}^{-1}(P_{i}) = |z| - 2|x| - 2|y| + i+4 + 2jN.$$

\begin{defn}
\label{defn:qopn}
	Let $W$ be a cycle in $H_*(P_{i})$ with $i,j$ fixed, represented by a union of embedded closed submanifolds $\bigcup_{a \in A} Q_{a} \subset P_i$. Let $x,y \in H^*(M)$. Define $$q_{i,j}(W)(x,y) = \sum_{z : \dim \pi_{x,y,z}^{-1}(Q) = 0} \left( \sum_{a \in A} \# (\pi_{x,y,z}^{-1}(Q_a)) \right) \cdot zT^{j}$$ where the first sum is taken over a basis of $z$ for $H^{|z|}(M)$ such that Equation (\ref{eq:dimension}) is $0$. Extending bilinearly over $\mathbb{Z}/2 [T]$, this defines a bilinear map $$q_{i,j}(W) : QH^{k}(M) \otimes QH^{l}(M) \rightarrow QH^{k+l-|W|}(M).$$
\end{defn}

\begin{lemma}
\label{lemma:additiveandindep}
The homomorphism $$q_{i,j}(W) : QH^{k}(M) \otimes QH^{l}(M) \rightarrow QH^{k+l-|W|}(M)$$ does not depend on the representative of $W$, and is additive.
\end{lemma}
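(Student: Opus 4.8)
The plan is to treat the two assertions separately, reducing each to a standard argument already used in the classical setting.

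\emph{Independence of the representative of $W$.} By the Remark preceding Definition \ref{defn:qopn}, the moduli space $\mathcal{M}_{i,j}(x,y,z)$ is the transverse preimage under the evaluation pseudocycle $ev$ on $\overline{\mathcal{M}_{0,5}(J,j)}\times_{\mathbb{Z}/2}S^i$ of the cycle $\zeta(Z^\vee)\times\bigcup_{[v]}(\chi_v(X_v)\times\chi_{-v}(X_{-v})\times\gamma_v(Y_v)\times\gamma_{-v}(Y_{-v})\times\{v\})$; that this is cut out transversely and that its compactification adds only strata of codimension $\ge 2$ — so that $\pi_{x,y,z}\colon \mathcal{M}_{i,j}(x,y,z)\to P_i$ is a genuine pseudocycle — is the content of Appendix \ref{subsection:transvholspheres}, using monotonicity together with the absence of multiply covered components. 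Consequently, for a representative $\bigsqcup_a Q_a$ of $W$ chosen transverse to $\pi_{x,y,z}$ (a generic choice inside the homology class), the number $\sum_a \#\pi_{x,y,z}^{-1}(Q_a)$ is exactly the pseudocycle intersection number $(\pi_{x,y,z})_*[\mathcal{M}_{i,j}(x,y,z)]\bullet W$ in $P_i$. Given a second such representative $\bigsqcup_b Q'_b$ with $[\bigsqcup_b Q'_b]=W$, choose a $\mathbb{Z}/2$-bordism in $P_i$ between the two, perturbed to be transverse to $\pi_{x,y,z}$; its preimage under $\pi_{x,y,z}$ is a compact $1$-manifold with boundary $\pi_{x,y,z}^{-1}(\bigsqcup_a Q_a)\sqcup\pi_{x,y,z}^{-1}(\bigsqcup_b Q'_b)$, so the two counts agree modulo $2$. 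Exactly as in the Remark following Definition \ref{defn:miss}, the bordism construction of \cite[Lemma 3.2]{zinger} also yields independence of the remaining data (the almost complex structure $J$ and the pseudocycles $\chi_i$, $\gamma_i$, $\zeta$), since varying any of them produces a bordism of the relevant pseudocycles and hence leaves the intersection number unchanged.

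\emph{Additivity.} This is the quantum counterpart of the additivity of $Sq$ established in Proposition \ref{propn:propositionftw}. Let $q'_{i,j}(W)$ be the chain-level refinement of $q_{i,j}(W)$, defined on $\mathbb{Z}/2$-equivariant cochains just as $Q\mathcal{S}'$ is defined after Definition \ref{defn:mqss} but with the cycle $W\subset P_i$ inserted; by the $\mathbb{Z}/2$-equivariant cobordism argument of Appendix \ref{sec:equivariantcompact}, together with the equivariant gluing of Appendix \ref{sec:equivariantgluing} (applied with the $J$-holomorphic sphere sitting at the vertex), $q'_{i,j}(W)$ is a chain map intertwining the equivariant differential $\delta$ with $d$. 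For cocycles $x_1,x_2$ of the same degree one has, on the chain level, $q_{i,j}(W)(x_1+x_2,y)=q_{i,j}(W)(x_1,y)+q_{i,j}(W)(x_2,y)+q'_{i,j}(W)\big((x_1\otimes x_2+x_2\otimes x_1)\otimes(y\otimes y)\big)$, and since $dx_1=dx_2=0$ the cross-term satisfies $h\cdot\big((x_1\otimes x_2+x_2\otimes x_1)\otimes(y\otimes y)\big)=\delta\big((x_1\otimes x_2)\otimes(y\otimes y)\big)$ in the equivariant complex; applying the chain map $q'_{i,j}(W)$ shows that $h$ times the cross-term is exact, hence the cross-term vanishes on cohomology because multiplication by $h$ is injective on $QH^*(M)[h]=H^*(M)[[t]][h]$. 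The identical computation in the second variable gives additivity in $y$, so $q_{i,j}(W)$ is bilinear on $H^*(M)\otimes H^*(M)$, and its $\mathbb{Z}/2[T]$-bilinear extension is built into Definition \ref{defn:qopn}.

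The real obstacle lies not in either of these two arguments — which are formal once the inputs are available — but in those inputs, proved in Appendices \ref{subsection:transvholspheres}, \ref{sec:equivariantcompact} and \ref{sec:equivariantgluing}: establishing that $\mathcal{M}_{i,j}(x,y,z)$ and its restriction $\pi_{x,y,z}^{-1}(Q)$ are regular in the $\mathbb{Z}/2$-equivariant family in the presence of $J$-holomorphic spheres, and that their natural compactifications add only codimension $\ge 2$ strata. This is precisely where monotonicity of $(M,\omega)$ and the restriction to simple curves are used, and it is what legitimises both the pseudocycle intersection-number description of the count and the chain-map property invoked in the additivity argument.
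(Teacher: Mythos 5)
Your treatment of independence of the representative of $W$ is correct and matches the paper: both reduce the coefficient of $z$ to an intersection number of the pseudocycles $\pi_{x,y,z}$ and a representative of $W$, and then invoke pseudocycle bordism invariance.

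However, for the ``additive'' clause you have proved a different statement from the one the lemma asserts. You interpret additivity as bilinearity of $q_{i,j}(W)$ in the cohomology inputs $(x,y)$, and argue it via a chain-map cross-term argument modelled on Proposition \ref{propn:propositionftw}. But what the paper means — and what is actually used — is additivity in $W$: $q_{i,j}(W+W') = q_{i,j}(W) + q_{i,j}(W')$, viewing $q_{i,j}$ as a function $H_*(P_i) \to \mathrm{Hom}(QH^*(M) \otimes QH^*(M), QH^*(M))$. The tell is in the proof of Theorem \ref{thm:quancar}, which cites this lemma together with Lemma \ref{lemma:lem222}'s identity $[W_0 \times D^{i-2,+}] = [\{m_1\} \times D^{i,+}] + [\{m_2\} \times D^{i,+}]$ and immediately concludes $q_{i,j}(\{m_1\} \times D^{i,+}) = q_{i,j}(\{m_2\} \times D^{i,+}) + q_{i,j}(W_0 \times D^{i-2,+})$; that step is exactly additivity in the first slot $W$, not linearity in $(x,y)$. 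Once the independence-of-representative part is in place, additivity in $W$ is nearly immediate: if $\omega: U \to P_i$ and $\omega': U' \to P_i$ represent $W$ and $W'$, then $\omega \sqcup \omega': U \sqcup U' \to P_i$ represents $W+W'$, and the intersection number of $\pi_{x,y,z}$ with a disjoint union is the sum of the intersection numbers. No equivariant chain-map machinery is needed for this step. (Your bilinearity-in-$(x,y)$ observation is a reasonable thing to want — it is essentially the additivity content of $Q\mathcal{S}'$ from Definition \ref{defn:mqss} applied with the extra cycle $W$ inserted — but it is not what Lemma \ref{lemma:additiveandindep} claims and does not substitute for the additivity that the Cartan proof consumes.)
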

\begin{proof}
	Represent $W$ by a pseudocycle $\omega: U \rightarrow P_i$ (in the case of Definition \ref{defn:qopn}, we chose a union of embedded submanifolds). Observe that the coefficient of $z$ in $q_{i,j}(W)(x,y)$ is the intersection number of two pseudocycles. Using notation as previously, these are $\pi_{x,y,z}: \mathcal{M}_{i,j}(x,y,z) \rightarrow P_i$ and $\omega: U \rightarrow P_i$. We know that intersection numbers are independent of the choice of pseudocycle representative, i.e. $q_{i,j}(W)$ only depends on the homology class of $W$. 

	Then it is immediate that $q_{i,j}(W+W') = q_{i,j}(W) + q_{i,j}(W')$, as if $\omega: U \rightarrow P_i$ represents $W$ and $\omega': U' \rightarrow P_i$ represents $W'$ then consider $\omega'': U \sqcup U' \rightarrow P_i$, defined by $\omega''|_{U^{\lambda}} = \omega^{\lambda}$ for $\lambda = ' \text{or } ''$. Then $\omega ''$ represents $W+W'$. The intersection numbers from the previous paragraph are additive.
\end{proof}

		\begin{figure}
			\input{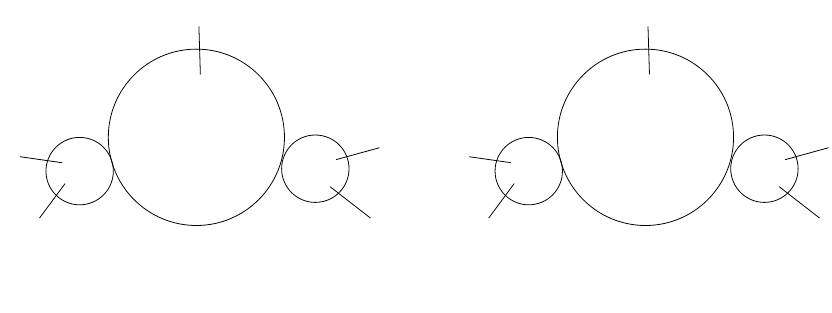_t}
			\caption{$m_{1}$, $m_{2}$ $\in$ $\overline{M}_{0,5}$}
			\label{fig:m05elmts}
		\end{figure}

\begin{rmk}
	One likewise calculates the coefficients of $z T^j$ in $q_{i,j}(W)(x,y)$ (with notation as in Definition \ref{defn:qopn}) in the following way. Take the cup product of the cycle $\pi^* \rho$, where $\rho = PD(Q) \in H^* (P_i)$, with the pullback of $z^{\vee} \times x \times x \times y \times y$ under the evaluation map (specifically, the cup product takes place in $H^*(\overline{\mathcal{M}_{0,5}(j,J)} \times_{\mathbb{Z}/2} S^i)$). Integrate this over the equivariant fundamental class of $\overline{\mathcal{M}_{0,5}(j,J)}$. 
\end{rmk}

	In the following, use a cell decomposition for $S^{i}$ with cells $D^{i,\pm}$ in degree $i$, corresponding to the two hemispheres of dimension $i$. For $d$ the differential on cellular chains, $d(D^{i,\pm}) = D^{i-1,+} + D^{i-1,-}$.

The class of cases we consider are when $\text{dim}(Q) = i$. If $m_{1}, m_{2} \in \overline{M}_{0,5}$ are as given in Figure \ref{fig:m05elmts}, then $m_{1}$ and $m_{2}$ are invariant under the $\mathbb{Z}/2$ action on $\overline{M}_{0,5}$. Hence $\{ m_{1} \} \times D^{i,+}$ and $\{ m_{2} \} \times D^{i,+}$, which are embedded submanifolds of $P_i$, represent well defined cycles in $H_*(\overline{M}_{0,5} \times_{\mathbb{Z}/2} S^{i})$. For $p=1,2$ call these cycles $Q_{p}^{i}$. To see that these cycles are indeed closed, observe that (for example using singular homology), if $X \subset \overline{M}_{0,5}$ is an embedded submanifold, then $X \times D^{i,+}$ represents some chain in $H_*(P_i)$. Then abusing notation (applying the K\"unneth isomorphism, and writing the submanifold $X$ instead of a sum of the simplices representing $X$), $$d([X \times D^{i,+}]) = [(dX) \times D^{i,+}] + [X \times (D^{i-1,+} + D^{i-1,-})] = [(X + \iota X) \times D^{i-1,+}].$$ The brackets $[,]$ represent that we have taken the quotient by $\mathbb{Z}/2$ of the chain complex $C_*(\overline{M}_{0,5} \times S^i)$. The last equality uses the $\mathbb{Z}/2$-action on $\overline{M}_{0,5} \times S^{i}$. Hence, if $X$ is a $\mathbb{Z}/2$-invariant closed submanifold, such as $\{ m_1 \}$ and $\{ m_2 \}$, then the chain represented by $X \times D^{i,+}$ is closed in equivariant homology.

Indeed, by the previous, for $i > 0$ the chain ``$\{ pt \} \times D^{i,+}$" is only a cycle when $pt$ is a fixed point of the $\mathbb{Z}/2$ action on $\overline{M}_{0,5}$. The space of fixed points $(\overline{M}_{0,5})^{\mathbb{Z}/2}$ is the disjoint union of a sphere containing $m_{2}$ and the single point $m_{1}$. See Remark \ref{rmk:z2actioncompact} at the end of this section for more details on this $\mathbb{Z}/2$-action. 

\begin{lemma}
\label{lemma:lem1}
	$$\sum_{i,j} q_{i,j}(Q_{1}^{i})(x,y)h^i = Q\mathcal{S}(x)*Q\mathcal{S}(y) \text{ and } \sum_{i,j} q_{i,j}(Q_{2}^{i})(x,y)h^i = Q\mathcal{S}(x*y).$$
\end{lemma}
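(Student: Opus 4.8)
The plan is to unwind Definition~\ref{defn:qopn} for $W=Q_p^i=\{m_p\}\times D^{i,+}$ and to exploit that both $m_1$ and $m_2$ are three-component trees (a central component carrying $z_0$ and two nodal points, together with two bubbles), so that the five incidence constraints $u(z_0)\in\zeta(Z^\vee)$, $u(z_1)\in\chi_v(X_v)$, $u(z_2)\in\chi_{-v}(X_{-v})$, $u(z_3)\in\gamma_v(Y_v)$, $u(z_4)\in\gamma_{-v}(Y_{-v})$ distribute over the three components and cut out a fibre product of simpler moduli spaces. By definition $q_{i,j}(Q_p^i)(x,y)$ is the mod-$2$ count of $[u,m,v]\in\pi_{x,y,z}^{-1}(\{m_p\}\times D^{i,+})$ summed over basis elements $z$; since $m_p$ is already stable and of the deepest combinatorial type, for generic $J$ every configuration contributing to a $0$-dimensional such count has $m$ of exactly the tree type of $m_p$ (spurious energy bubbles would push us into codimension $\ge 2$), so the $J$-map splits as a triple $(u^{(0)},u^{(1)},u^{(2)})$ matching at the two nodes with Chern numbers $j_0N,j_1N,j_2N$ summing to $jN$, and the parameter $v\in D^{i,+}$ is shared.

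First I would treat $Q_1^i$. The $\{z_1,z_2\}$-bubble, carrying the constraints $u(z_1)\in\chi_v(X_v)$, $u(z_2)\in\chi_{-v}(X_{-v})$, a free node value, parameter $v\in D^{i,+}$, and the $\mathbb{Z}/2$-reparametrisation swapping $z_1\leftrightarrow z_2$ with $v\mapsto-v$, is exactly the moduli space of Definition~\ref{defn:singqss}/\ref{defn:mqss} computing $Q\mathcal{S}(x)$ (restricted to the $i$-skeleton $\mathbb{RP}^i$); likewise the $\{z_3,z_4\}$-bubble computes $Q\mathcal{S}(y)$; and the central component with $z_0\mapsto\zeta(Z^\vee)$ and the two node values inserted is the three-point moduli defining the quantum product. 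Assembling point counts: the two bubble moduli are fibred over the common copy of $\mathbb{RP}^i$, so their classes $\sum_{i_1}Q\mathcal{S}_{i_1,j_1}(x)\,h^{i_1}$ and $\sum_{i_2}Q\mathcal{S}_{i_2,j_2}(y)\,h^{i_2}$ in $H^*(M)\otimes H^*(\mathbb{RP}^i)$ multiply via $h^{i_1}\cup h^{i_2}=h^{i_1+i_2}$, and evaluation against the fundamental chain $D^{i,+}$ of $\mathbb{RP}^i$ forces $i_1+i_2=i$; feeding the results through the quantum product at $z_0$ and summing over $z$, over $i,j$, and over the splittings $i_1+i_2=i$, $j_0+j_1+j_2=j$ gives precisely $Q\mathcal{S}(x)*Q\mathcal{S}(y)$.

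For $Q_2^i$ the same decomposition applies, but now the $\{z_1,z_3\}$-bubble carries one $x$-constraint ($u(z_1)\in\chi_v(X_v)$) and one $y$-constraint ($u(z_3)\in\gamma_v(Y_v)$) together with a free node value, so summing the node value against a dual basis it computes the quantum product $x*y$; the $\{z_2,z_4\}$-bubble likewise computes $x*y$ (at parameter $-v$); and the central component with $z_0$ and the two node insertions is the operation $Q\mathcal{S}'$ introduced after Definition~\ref{defn:mqss} applied to the pair. Hence $\sum_{i,j}q_{i,j}(Q_2^i)(x,y)h^i=Q\mathcal{S}'\big((x*y)\otimes(x*y)\big)$. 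To finish I would expand $x*y=\sum_k(x*y)_kT^k$ with $(x*y)_k\in H^*(M)$: the diagonal terms give $\sum_k Q\mathcal{S}'\big((x*y)_k\otimes(x*y)_k\big)T^{2k}=\sum_kQ\mathcal{S}\big((x*y)_k\big)T^{2k}=Q\mathcal{S}(x*y)$ by additivity and the rescaling rule $Q\mathcal{S}(at^k)=Q\mathcal{S}(a)t^{2k}$, while the off-diagonal terms cancel in conjugate pairs because $Q\mathcal{S}'(a\otimes b+b\otimes a)=0$ on cohomology (the argument of Proposition~\ref{propn:propositionftw}).

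The main obstacle is making the ``decomposition over $m_p$ into a fibre product of the $Q\mathcal{S}$- and quantum-product moduli spaces'' precise: one needs an equivariant gluing and compactness statement guaranteeing that in the relevant zero-dimensional counts no spurious bubbling occurs and that the fibre products are cut out transversally, all compatibly with the $\mathbb{Z}/2$-action --- in particular matching the hemisphere $D^{i,+}$ against the sphere $S^i$ with no overcounting along $\partial D^{i,\pm}$, exactly as in the bookkeeping of Section~\ref{subsec:msss} --- and compatibly with the Novikov structure (monotonicity keeps the sums over $j_0+j_1+j_2=j$ finite in each $T$-degree). Transversality in the presence of the $J$-holomorphic spheres on the components rests on Appendix~\ref{subsection:transvholspheres}; the remaining bordism-invariance of intersection numbers is the standard pseudocycle machinery already invoked for Definition~\ref{defn:qopn} in Lemma~\ref{lemma:additiveandindep}.
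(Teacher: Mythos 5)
Your proposal is a correct-in-spirit but genuinely different route from the paper's. The paper does not decompose the moduli space over $m_p$ directly; instead, it constructs a $1$-dimensional bordism $\mathcal{M}^Q_p(x,y,z)$ over the interval $T^c\cong[0,\infty]$ of Section~\ref{subsec:Cartan}, where the $t=0$ end carries the nodal domain $|m_p|'$ (and is, by definition, the $q_{i,j}(Q_p^i)$ count), while the $t=\infty$ end carries broken Morse flowlines and is identified with the coefficient of $Q\mathcal{S}(x)*Q\mathcal{S}(y)$, resp. $Q\mathcal{S}(x*y)$, by quoting Lemmas~\ref{lemma:lemmaSqSq} and~\ref{lemma:lemmaSqcup}. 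Your argument instead fixes $t=0$ and splits the $m_p$-type moduli space into a fibre product over the two nodes, expands the diagonal in a dual basis, and identifies the three factors with the two squaring moduli and the quantum-product moduli. What the paper's cobordism buys is that the hardest step — converting the ``shared $v\in D^{i,+}$'' count into a product of ``independent $v$'' counts with $i_1+i_2=i$ — is outsourced to the pseudocycle argument already done in Lemma~\ref{lemma:lemmaSqSq}, where $\pi_x,\pi_y\to\mathbb{RP}^{|x|+|y|-i}$ are shown to be weak representatives of multiples of complementary $\mathbb{RP}^{a}$'s so that $\pi_x\bullet\pi_y$ computes the product. What your approach buys is conceptual economy (no auxiliary parameter $t$ and no Morse breaking at $t=\infty$), at the cost of having to redo exactly that intersection-pairing argument directly at the $J$-holomorphic nodes. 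This is the one place where your write-up is thinner than it needs to be: the line about the two bubble classes ``multiplying via $h^{i_1}\cup h^{i_2}$ and evaluating against $D^{i,+}$'' is the assertion to be proved, not a bookkeeping step; you would need the analogue of the $\pi_x\bullet\pi_y$ computation of Lemma~\ref{lemma:lemmaSqSq} for the $\mathbb{RP}^i$-valued projections of your bubble moduli. You correctly flag this and the gluing/no-bubbling issues in your final paragraph, so nothing is actually wrong, but you should be aware that you are not avoiding the content of Lemma~\ref{lemma:lemmaSqSq}, only relocating it.

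One more small point on the $Q_2^i$ case: writing the answer as $Q\mathcal{S}'\big((x*y)\otimes(x*y)\big)$ requires that you first extend $Q\mathcal{S}'$ over the Novikov variable, and the equality $Q\mathcal{S}'(a\otimes a)=Q\mathcal{S}(a)$ only captures the diagonal terms of $(x*y)\otimes(x*y)$; you do address the off-diagonal terms by invoking $Q\mathcal{S}'(a\otimes b+b\otimes a)=0$, which is the correct fix, but it should be stated before writing $Q\mathcal{S}'\big((x*y)\otimes(x*y)\big)$ as though it were already identified with $Q\mathcal{S}(x*y)$.
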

\begin{proof}
	For the rest of this proof, we fix $i,j$, and we show that $$q_{i,j}(Q_{1}^{i})(x,y) = [Q\mathcal{S}(x)*Q\mathcal{S}(y)]_{i,j}T^{j}.$$ To do this we proceed as in Lemma \ref{lemma:lemmaSqSq}, using $1$-dimensional moduli spaces, the ends of which count e.g. $\sum_{i,j} q_{i,j}(Q_{1}^{i})(x,y) \cdot h^i$ and $Q\mathcal{S}(x)*Q\mathcal{S}(y)$ respectively. This yields a bordism between the endpoints, with more details provided in Appendix \ref{subsec:bordismquantumcartantrans}.

Fixing some $i \in \mathbb{N}$ (the dimension of the sphere in which $v$ will vary), we consider the $1$-dimensional moduli spaces from Section \ref{subsec:Cartan}, denoted $\tilde{\mathcal{M}}_1(x,y,z)$ and $\tilde{\mathcal{M}}_2(x,y,z)$. Recall the spaces $T^c \cong [0,\infty]$, and $\overline{T} \rightarrow T^c$. We define quantum analogues $\tilde{\mathcal{M}}^Q_{p}(x,y,z)$ of the $\tilde{\mathcal{M}}_p(x,y,z)$ from Section \ref{sec:quancar} for $p=1,2$, where now each element of $\tilde{\mathcal{M}}^Q_{p}(x,y,z)$ is a triple $(t, u, v)$ where $t \in T^c$, $v \in S^i$ and $u: (|t|_Q,t) \rightarrow M$ is continous, and smooth away from nodes. Here, $|t|_Q$ is obtained by taking the graph associated to $|t|$, and adding a sphere at each trivalent vertex (in such a way that the incoming edge of $t$ is attached at $0$ on the sphere, and the outgoing vertices are attached at $1,\infty$ respectively). We then require that $u$ is $J$-holomorphic on each sphere, satisfies the edge and asymptotic equations as in Section \ref{subsec:Cartan}, and the sum of the Chern numbers of the three spheres is $jN$. The $\mathbb{Z}/2$-action acts by $(u, v) \mapsto (u \circ \overline{r}, -v)$, where $\overline{r}$ acts on $|t|$ as in Section \ref{sec:quancar} and extends to the holomorphic spheres in the following ways:
\begin{itemize}
\item for $\tilde{\mathcal{M}}^Q_{1}(x,y,z)$, the involution $\overline{r}$ acts by $z \mapsto z / (z-1)$ on the two right holomorphic spheres and the identity on the left holomorphic sphere.
\item for $\tilde{\mathcal{M}}^Q_{2}(x,y,z)$, the involution $\overline{r}$ acts by $z \mapsto z / (z-1)$ on the left holomorphic sphere and the identity on the two right holomorphic spheres.
\end{itemize}

For the $t=0$ end of the moduli spaces, we instead consider for $\tilde{\mathcal{M}}^Q_{p}(x,y,z)$ continuous maps $u: |m_p|' \rightarrow M$. Here, $m_p$ are the elements of $\overline{M}_{0,5}$ as in Figure \ref{fig:m05elmts}, and $|m_p|'$ is obtained from the nodal sphere configuration $|m_p|$ associated to $m_p$, by attaching to $z_0$ the negative half-line $(-\infty,0]$ and to $z_1,z_2,z_3,z_4$ the positive half-line $[0,\infty)$. We then require the same conditions on the edges, the asymptotics and the energy of the nodal spheres. The $\mathbb{Z}/2$-action extends continuously from the $t \in (0,\infty]$ action. We let $\mathcal{M}^Q_{p}(x,y,z) = \tilde{\mathcal{M}}^Q_{p}(x,y,z) / (\mathbb{Z}/2)$ for $p=1,2$.

It is then immediate from the definition that counting the setups corresponding to the $t=0$ end of $\mathcal{M}_{p,j}(x,y,z)$ is the coefficient of $z$ in $q_{i,j}(Q_{p}^{i})(x,y)h^i$, for each $i$. Hence, it remains to show that counting the $t=\infty$ ends of $\mathcal{M}^Q_{p}(x,y,z)$ corresponds to the coefficient of $Q\mathcal{S}(x)*Q\mathcal{S}(y)$ and $Q\mathcal{S}(x*y)$ respectively for $p=1,2$. The proof of this is identical to Lemmas \ref{lemma:lemmaSqSq} and \ref{lemma:lemmaSqcup} respectively.

We do not need to worry about the bubbling off of extra spheres because $M$ is monotone. Specifically, any $J$-holomorphic bubble must have strictly less than $2$ marked points. Introducing ``phantom" marked points, we may consider this to be a $3$-pointed Gromov-Witten invariant corresponding to intersections with the Poincar\'e dual of $1 \in H^0(M)$. We know that, for a general choice of $J$, such Gromov-Witten invariants only contain contributions from constant spheres, as in \cite[Proposition 11.1.11(ii)]{jholssympl}.
	\end{proof}

We now prove a slightly more general lemma for $\mathbb{Z}/2$-equivariant homology.

\begin{lemma}
\label{lemma:lem111}
Suppose that $M$ is a smooth connected manifold with a smooth $\mathbb{Z}/2$-action $\iota:M \rightarrow M$. Suppose that $W^n, L^{n-1} \subset M$ are submanifolds, fixed set-wise by $\iota$, of dimensions $n, n-1$ respectively, and representing respective homology classes $[W]$, $[L]$. Suppose further that $W = L \cup U \cup \iota U$ for some open submanifold $U \subset W$ such that $\partial \overline{U} = L$, where $\overline{U}$ is the closure of $U$ in $W$.

Then denoting $D^{i,+}$ for the upper $i$-dimensional hemisphere in $S^j$ ($i \le j$), the submanifolds $W \times D^{i-1,+}$ and $L \times D^{i,+}$ of $M \times S^j$ represent homologous elements of $H_*(M \times_{\mathbb{Z}/2} S^j)$ i.e. $$[W \times D^{i-1,+}] = [L \times D^{i,+}].$$ 
\end{lemma}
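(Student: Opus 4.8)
The plan is to exhibit an explicit chain in $C_*(M \times_{\mathbb{Z}/2} S^j)$ whose boundary realises the desired homology. The natural candidate is the chain $\overline{U} \times D^{i,+}$, where $\overline{U}$ is the closed submanifold-with-boundary of $W$ with $\partial \overline{U} = L$. First I would fix, as in the discussion preceding the lemma, a cellular model for $S^j$ with two cells $D^{k,\pm}$ in each dimension $k \le j$ exchanged by the antipodal map, so that $d(D^{k,+}) = D^{k-1,+} + D^{k-1,-}$. Working with singular chains on $M \times S^j$ and then passing to the quotient complex $C_*(M \times S^j) / (\mathbb{Z}/2)$ (which computes $H_*(M \times_{\mathbb{Z}/2} S^j)$ since the action is free on the $S^j$ factor), I would compute, via the Leibniz rule and the Künneth identification already used in the paragraph before the lemma,
\[
d\bigl(\overline{U} \times D^{i,+}\bigr) = (d\overline{U}) \times D^{i,+} + \overline{U} \times (D^{i-1,+} + D^{i-1,-}).
\]
Here $d\overline{U} = L$ since $\overline{U}$ is a manifold with boundary $L$ (with $\mathbb{Z}/2$ coefficients no orientation bookkeeping is needed), so the first term is $L \times D^{i,+}$. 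For the second term I would use the $\mathbb{Z}/2$-action: in the quotient complex, $\overline{U} \times D^{i-1,-}$ is identified with $(\iota \overline{U}) \times D^{i-1,+}$, so
\[
\overline{U} \times (D^{i-1,+} + D^{i-1,-}) \;=\; (\overline{U} + \iota\overline{U}) \times D^{i-1,+} \;=\; W \times D^{i-1,+},
\]
using the hypothesis $W = L \cup U \cup \iota U$, which at the level of chains (after triangulating compatibly) gives $[\overline{U}] + [\iota\overline{U}] = [W] + [L]$; modulo the lower-dimensional class $[L] \times D^{i-1,+}$, which is a cycle of one dimension less and whose contribution I will absorb — more precisely, since $L$ itself is $\mathbb{Z}/2$-invariant, $d(L \times D^{i,+}) = L \times D^{i-1,+} + L \times D^{i-1,+} = 0$ in the quotient complex by the analogous computation, so $[\overline{U}] + [\iota \overline{U}] = [W] + [L]$ already yields $(\overline{U} + \iota\overline{U}) \times D^{i-1,+} = (W + L) \times D^{i-1,+}$, and the $L \times D^{i-1,+}$ piece is itself a boundary, namely $d(\overline{U}' \times D^{i,+})$ need not even be invoked because $L \times D^{i-1,+}$ is null-homologous as just observed. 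Combining, $d(\overline{U} \times D^{i,+}) = L \times D^{i,+} + W \times D^{i-1,+}$ in the quotient complex, which over $\mathbb{Z}/2$ is exactly the assertion $[W \times D^{i-1,+}] = [L \times D^{i,+}]$.

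The only genuine subtlety — and the step I expect to require the most care — is making the chain-level manipulations rigorous: $\overline{U}$, $W$, $L$ are submanifolds, so to treat them as singular chains one must pick compatible triangulations (of $\overline{U}$ inducing the given one of $L = \partial \overline{U}$, and of $W$ restricting to those of $\overline{U}$ and $\iota\overline{U}$) and check that the $\mathbb{Z}/2$-action can be taken simplicial, so that the identity $\overline{U} \times D^{i-1,-} \sim (\iota\overline{U}) \times D^{i-1,+}$ holds on the nose in the quotient complex. This is entirely standard (the paper already performs exactly this kind of computation in the paragraph preceding the lemma, where it writes "abusing notation (applying the Künneth isomorphism, and writing the submanifold $X$ instead of a sum of the simplices representing $X$)"), so I would carry it out in that same spirit, remarking that all equalities are understood at the level of the quotient chain complex $C_*(M \times S^j)/(\mathbb{Z}/2)$ and that $\mathbb{Z}/2$-coefficients eliminate any sign issues. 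With that bookkeeping in place the boundary computation above is immediate and the lemma follows.
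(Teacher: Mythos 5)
Your overall strategy matches the paper's exactly: take the chain $\overline{U} \times D^{i,+}$, compute its boundary in the quotient complex, identify $\overline{U} \times D^{i-1,-}$ with $\iota \overline{U} \times D^{i-1,+}$ under the $\mathbb{Z}/2$-action, and recognise the result. That part is sound, and you correctly flag the chain-level bookkeeping with compatible triangulations as the point needing care. The defect lies in your handling of the term $(\overline{U} + \iota \overline{U}) \times D^{i-1,+}$. You assert that ``at the level of chains'' $[\overline{U}] + [\iota \overline{U}] = [W] + [L]$, but this does not typecheck: $\overline{U}$, $\iota \overline{U}$ and $W$ are $n$-chains while $L$ is an $(n-1)$-chain. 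Working in the fixed degree $C_n$, the $n$-simplices of a compatible triangulation of $\overline{U}$ are precisely those whose interiors lie in $U$ (no $n$-simplex can be contained in the codimension-one subset $L$), and likewise for $\iota\overline{U}$; since $W = L \cup U \cup \iota U$ with $U, L, \iota U$ disjoint, these partition the $n$-simplices of $W$, so the correct identity is simply $\overline{U} + \iota\overline{U} = W$, with no $L$ term. This is what the paper uses, and with it your boundary computation closes immediately.

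The attempted repair of the spurious $L$ term is itself a step that would fail: you claim $L \times D^{i-1,+}$ is null-homologous because $d(L \times D^{i,+}) = 0$ in the quotient complex. But that computation shows $L \times D^{i,+}$ (and, by the same argument, $L \times D^{i-1,+}$) is a \emph{cycle}, not a boundary; the two notions do not coincide. Indeed $L \times D^{i-1,+}$ will typically represent a nontrivial class in $H_*(M \times_{\mathbb{Z}/2} S^j)$ --- taking $M = L = \{pt\}$ with the trivial involution it becomes the generator of $H_{i-1}(\mathbb{RP}^j)$. So if the extra $L$ term were genuinely present your argument would not close. Fortunately no repair is needed once the clean chain identity $\overline{U} + \iota\overline{U} = W$ is used, and the rest of your proposal is the paper's proof.
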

\begin{proof}
By the K\"unneth isomorphism, using singular homology, there is a quasi-isomorphism between  $C_{\bullet}(M) \otimes C_{\bullet}(S^j)$ and $C_{\bullet}(M \times S^j)$, here using singular homology. In fact we may replace singular homology of $C_{\bullet}(S^j)$ by cellular homology, as the K\"unneth isomorphism is natural on chain complexes. In this cellular decomposition, there are two $i$-cells for each $0 \le i \le j$, such that one obtains a decomposition of $S^{j+1}$ from $S^j$ by attaching two $j+1$-cells along their boundaries at $S^j$.

Observe then that there is an involution on $C_{\bullet}(M) \otimes C_{\bullet}(S^j)$, which is the chain map $\phi := \iota_* \otimes (-id)_*$. We consider the homology of the complex $(C_{\bullet}(M) \otimes C_{\bullet}(S^j))/\phi$, the quotient of the complex $C_{\bullet}(M) \otimes C_{\bullet}(S^j)$ by $\phi$, with the differential being induced by the differential on the tensor product. Then we know that the K\"unneth isomorphism is natural (in particular, with respect to the action of $\phi$), hence $$H_*((C_{\bullet}(M) \otimes C_{\bullet}(S^j))/\phi) \cong H_*(C_{\bullet}(M \times S^j) / \phi).$$ The homology of the latter complex is isomorphic to the homology of $M \times_{\mathbb{Z}/2} S^j$, but we will represent chains using the former complex.

We will (abusively) denote by $W \times D^{i-1,+}$ the $\phi$-equivalence class of the chain (i.e. sum of simplices) corresponding to the submanifold $W \times D^{i-1,+} \subset M \times S^j$. Observe that $$d(\overline{U} \times D^{i,+}) = (d \overline{U}) \times D^{i,+} + \overline{U} \times (d D^{i,+}),$$ abusively also denoting by $d$ the differentials on all possible complexes. We know that $d \overline{U} = L$ by assumption. Further, $d D^{i,+} = D^{i-1,+} + D^{i-1,-}$. Hence $$d(\overline{U} \times D^{i,+}) = L \times D^{i,+} + \overline{U} \times (D^{i-1,+} + D^{i-1,-}).$$ Note that $$\begin{array}{lll} \overline{U} \times (D^{i-1,+} + D^{i-1,-}) &=& \overline{U} \times D^{i-1,+} + \overline{U} \times D^{i-1,-} \\ & =& \overline{U} \times D^{i-1,+} + \iota \overline{U} \times D^{i-1,+} \\ &=& (\overline{U} + \iota \overline{U}) \times D^{i-1,+}, \end{array}$$ using for the second equality that the chains represent elements of the complex quotiented by the involution $\phi$. But note that by definition the chains $\overline{U} + \iota \overline{U} = W$ (summing simplices, the boundaries match and cancel along $L$). Hence $$d(\overline{U} \times D^{i,+}) = L \times D^{i,+} + W \times D^{i-1,+},$$ as required.
\end{proof}

Let $A^{i} = Q_{1}^{i} - Q_{2}^{i}$. Let $W_{q}$ be the pullback under the blowdown $$Bl_{(0,0), (1,1), (\infty,\infty)}(\mathbb{CP}^{1} \times \mathbb{CP}^{1}) \rightarrow Bl_{(q,q)}(\mathbb{CP}^{1} \times \mathbb{CP}^{1}),$$ of the exceptional $\mathbb{CP}^{1}$ divisor in $Bl_{(q,q)}(\mathbb{CP}^{1} \times \mathbb{CP}^{1})$, for $q=0,1,\infty$. The elements of $W_0$ are given in Figure \ref{fig:w0}.

		\begin{figure}
			\input{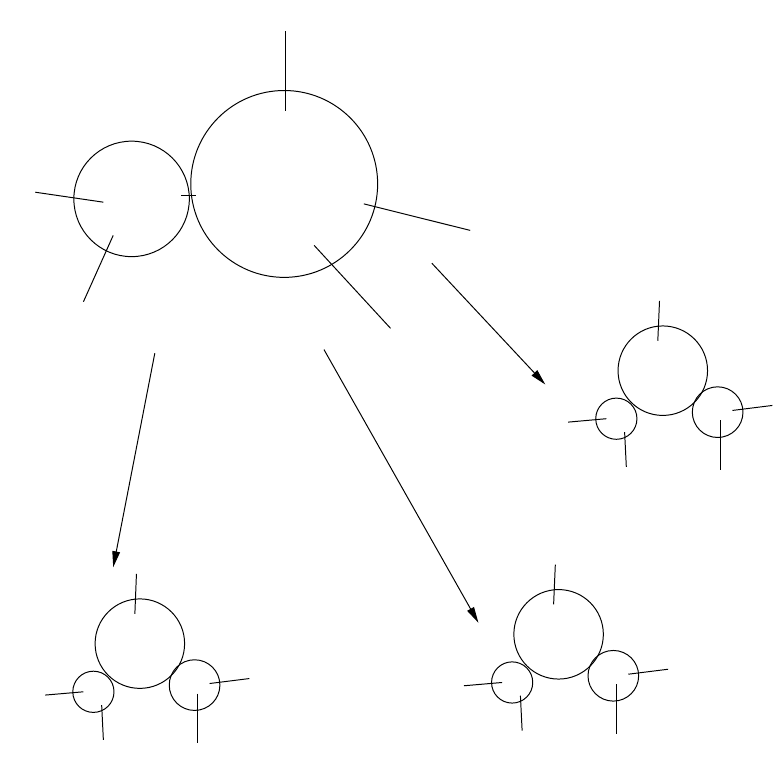_t}
			\caption{Elements of $W_0$}
			\label{fig:w0}
		\end{figure}

\begin{lemma}
\label{lemma:lem222}
$[W_0 \times D^{i-2,+}] = [\{ m_1 \} \times D^{i,+}] + [\{ m_2 \} \times D^{i,+}]$.
\end{lemma}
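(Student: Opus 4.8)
The plan is to apply Lemma \ref{lemma:lem111} twice, using the description of $\overline{M}_{0,5}$ as $Bl_{\{(0,0),(1,1),(\infty,\infty)\}}(\mathbb{CP}^1\times\mathbb{CP}^1)$ together with the identification of the various $\mathbb{Z}/2$-invariant submanifolds appearing in Figures \ref{fig:m05elmts} and \ref{fig:w0}. First I would recall that, under the blowup description, $m_1$ is the isolated fixed point of the $\iota = (12)(34)$-action (corresponding to the curve with a central component carrying $z_0$ and two tails each carrying a transposed pair), while the remaining fixed locus is a $2$-sphere $\Sigma \subset \overline{M}_{0,5}$ containing $m_2$; both of these facts are stated in the paragraph preceding the lemma, and $\Sigma$ is precisely the exceptional divisor $W_0$ (compare Figure \ref{fig:w0}: the elements of $W_0$ are exactly the nodal curves where the component carrying $z_0$ has shrunk to a point at $(0,0)$, giving a $\mathbb{CP}^1$ worth of configurations, all of which are $\iota$-invariant since $\iota$ fixes the node and acts on the bubble). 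So $W_0 = \Sigma \subset (\overline{M}_{0,5})^{\mathbb{Z}/2}$ and $m_1, m_2 \in (\overline{M}_{0,5})^{\mathbb{Z}/2}$ with $m_2 \in W_0$.

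Next I would produce the two applications of Lemma \ref{lemma:lem111}. For the first: take $M = \overline{M}_{0,5}$ with the $\iota$-action, $W = W_0$ (a $2$-sphere, $n=2$), and $L = \{m_1, m_2\}$ (dimension $0$). I need to check the hypothesis $W = L \cup \mathcal{U} \cup \iota\mathcal{U}$ with $\partial\overline{\mathcal{U}} = L$: this holds because the $\iota$-action restricted to the sphere $W_0$ is an orientation-reversing... no — rather, I should observe that $\iota$ acts on $W_0 \cong S^2$ with exactly two fixed points, namely $m_1$ and $m_2$ (this is why they are the only points of the form ``$pt \times D^{i,+}$'' that give cycles), so $W_0$ is cut into two open hemispheres $\mathcal{U}, \iota\mathcal{U}$ swapped by $\iota$, each with boundary $\{m_1,m_2\}$. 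Then Lemma \ref{lemma:lem111} with $(i,j)$ replaced appropriately gives
$$[W_0 \times D^{i-1,+}] = [\{m_1\}\times D^{i,+}] + [\{m_2\}\times D^{i,+}]$$
in $H_*(\overline{M}_{0,5}\times_{\mathbb{Z}/2} S^j)$. Wait — this already looks like the claim but with $D^{i-1,+}$ rather than $D^{i-2,+}$; so the statement as written requires $W_0$ itself to be expressed via one more step, or the indexing in the lemma statement uses a shift. I would reconcile this by noting that in the lemma's ambient setup $W_0 \subset P_i$ is $W_0 \times D^{i-2,+}$ exactly because $\dim W_0 = 2$, so pairing a $2$-dimensional $W_0$ with $D^{i-2,+}$ lands in total cellular degree matching $\{m_i\}\times D^{i,+}$; i.e. I should apply Lemma \ref{lemma:lem111} with its ``$i$'' equal to our ``$i-1$'', giving $[W_0\times D^{(i-1)-1,+}] = [\{m_1\}\times D^{i-1,+}]+\ldots$, and then I realize the cleanest route is simply to invoke Lemma \ref{lemma:lem111} \emph{directly} with $W = W_0$, $L = \{m_1\}\cup\{m_2\}$, and the hemisphere dimension chosen so that $W\times D^{i-2,+}$ and $L\times D^{i-1,+}$... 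I would carefully track indices so that the final equality reads with $D^{i-2,+}$ on the left and $D^{i,+}$ on the right; since $\dim W_0 - \dim\{m_p\} = 2$, the correct pairing shifts the hemisphere dimension by exactly $2$, and $d(\overline{\mathcal{U}}\times D^{i-1,+}) = L\times D^{i-1,+} + W_0 \times D^{i-2,+}$ after quotienting by $\phi$, exactly as in the proof of Lemma \ref{lemma:lem111} but with the sphere-dimension bookkeeping done once more because we are crossing a $2$-dimensional rather than $1$-dimensional cell; iterating that computation (or equivalently using $d(\overline{\mathcal{U}}\times D^{i,+})$ and then $d$ applied to the resulting lower term) yields the claim.

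Concretely: write $\mathcal{U}$ for one of the two hemispheres of $W_0$ with $\overline{\mathcal{U}}$ a closed $2$-disc and $\partial\overline{\mathcal{U}} = \{m_1\} \cup \{m_2\}$ — no, a $2$-disc has $S^1$ boundary, so this is wrong; I must instead take $\overline{\mathcal{U}}$ to be the closure of a single arc... The resolution: $\iota$ on $W_0\cong S^2$ is conjugate to a reflection through a plane, whose fixed locus would be a circle, \emph{not} two points — so instead $\iota|_{W_0}$ must be conjugate to the antipodal-type rotation by $\pi$ about an axis, whose fixed locus is exactly two points $m_1, m_2$. Then $W_0\setminus\{m_1,m_2\}$ is an open annulus on which $\iota$ acts freely; it is \emph{not} of the form $\mathcal{U}\sqcup\iota\mathcal{U}$ with each piece a disc. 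So the hypothesis of Lemma \ref{lemma:lem111} as literally stated needs $W = L\cup\mathcal{U}\cup\iota\mathcal{U}$ with $\partial\overline{\mathcal{U}} = L$, which for an annulus would require $\mathcal{U}$ to be ``half the annulus'', a rectangle, with $\partial\overline{\mathcal{U}}$ consisting of $L$ \emph{plus} two arcs — so actually $L$ should be taken to be a circle $\gamma \subset W_0$ through $m_1$ and $m_2$ invariant under $\iota$, giving two discs $\mathcal{U}, \iota\mathcal{U}$. Then Lemma \ref{lemma:lem111} gives $[W_0\times D^{i-2,+}] = [\gamma\times D^{i-1,+}]$. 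A second application of Lemma \ref{lemma:lem111} with $W = \gamma \cong S^1$ and $L = \{m_1\}\cup\{m_2\}$ (again $\iota|_\gamma$ is the rotation by $\pi$ with the two fixed points, so $\gamma\setminus\{m_1,m_2\}$ is two open arcs swapped by $\iota$, each with boundary $\{m_1,m_2\}$, verifying the hypothesis) gives $[\gamma\times D^{i-1,+}] = [\{m_1\}\times D^{i,+}] + [\{m_2\}\times D^{i,+}]$. Composing the two equalities yields
$$[W_0\times D^{i-2,+}] = [\{m_1\}\times D^{i,+}] + [\{m_2\}\times D^{i,+}],$$
which is the lemma. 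The main obstacle I expect is precisely this geometric analysis of the $\mathbb{Z}/2$-action on $W_0\cong S^2$: identifying that the fixed locus is $\{m_1,m_2\}$ and choosing an $\iota$-invariant equatorial circle $\gamma$ through both fixed points so that Lemma \ref{lemma:lem111} applies in two stages; everything else is formal chain-level manipulation already carried out in the proof of Lemma \ref{lemma:lem111}. I would also need to confirm that the homology class $[W_0]$ used here agrees with $w_0 = PD$ of the exceptional divisor in \eqref{equation:m05coh}, which is immediate from the blowup description, and that the various $D^{*,\pm}$ decompositions are the $\iota$-equivariant cell structures fixed at the start of the section.
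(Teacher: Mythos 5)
Your two-stage application of Lemma \ref{lemma:lem111} — first passing from $W_0 \times D^{i-2,+}$ to a circle times $D^{i-1,+}$, then from the circle to its two $\iota$-fixed points times $D^{i,+}$ — matches the structure of the paper's argument exactly. The geometric analysis of $\iota|_{W_0}$ is also right in outline: $\iota$ acts on $W_0 \cong S^2$ nontrivially with exactly two fixed points, and an $\iota$-invariant great circle through them yields the needed decomposition.

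However, there is a genuine gap: you misidentify the second fixed point of $\iota$ on $W_0$ as $m_2$. It is not. In the coordinate on $W_0$ fixing $(z_0,z_3,z_4)=(0,1,\infty)$ and letting $z$ parametrize the node (so $z_1=z_2=z$), the involution is $z\mapsto z/(z-1)$, whose fixed points are $z=0$ (which is $m_1$, the ``$z\to z_0$'' boundary of $W_0$) and $z=2$. The latter is a different configuration, called $m_3$ in the paper. Meanwhile $m_2$ — the curve with $\{z_1,z_3\}$ and $\{z_2,z_4\}$ colliding — maps to $(1,\infty)$ under the blowdown to $\mathbb{CP}^1\times\mathbb{CP}^1$, which is not a blown-up point, so $m_2$ does not lie in $W_0$ at all. (Your first paragraph also briefly asserts $W_0$ equals the fixed $2$-sphere $\Sigma$; this is incorrect and contradicts your own later — and correct — observation that $\iota$ acts on $W_0$ with only two fixed points.) Your argument as written therefore establishes
\[
[W_0\times D^{i-2,+}] = [\{m_1\}\times D^{i,+}] + [\{m_3\}\times D^{i,+}],
\]
not the claimed identity with $m_2$. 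The missing step is to show $[\{m_3\}\times D^{i,+}] = [\{m_2\}\times D^{i,+}]$: both $m_2$ and $m_3$ lie in the $2$-sphere component $\overline{F}$ of $(\overline{M}_{0,5})^{\mathbb{Z}/2}$, so they are joined by a path $l$ of $\iota$-invariant points, and $d(l\times D^{i,+}) = \{m_2\}\times D^{i,+} + \{m_3\}\times D^{i,+}$ exhibits the two cycles as homologous. Without this additional argument the proof is incomplete.
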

\begin{proof}
We use $M = \overline{M}_{0,5}$ and $W = W_0$, and $\iota = (12)(34)$. Observe that we may identify $W_0 \cong S^2$ with the extended complex plane, fixing $(z_0,z_3,z_4) = (0,1,\infty)$. The $z \in \mathbb{C} \cup \{ \infty \}$ corresponds to the freely moving point on the four-pointed component of an element $m$ of $W_0$: specifically, the node connecting together the two components (i.e. copies of $S^2$) that together comprise $m$. The $\mathbb{Z}/2$-action on $\mathbb{C} \cup \{ \infty \}$ is then $z \mapsto z/(z-1)$. Let $L = \mathbb{R} \subset \mathbb{C} \cup \{ \infty \}$. By Lemma \ref{lemma:lem111}, we know that $$[W_0 \times D^{i-2,+}] = [L \times D^{i-1,+}].$$ Now observe that $L$ contains two fixed points, $\{ m_1, m_3 \} \in W_0$ corresponding to the points $\{ 0, 2 \} \in \mathbb{R} \subset \mathbb{C} \cup \{ \infty \}$. Applying Lemma \ref{lemma:lem111} again, we obtain that $$[W_0 \times D^{i-2,+}] = [L \times D^{i-1,+}] = [\{m_1, m_3 \} \times D^{i,+}] = [\{m_1\} \times D^{i,+}] + [\{m_3\} \times D^{i,+}].$$

Hence it remains to prove that $[\{m_3\} \times D^{i,+}] = [\{m_2\} \times D^{i,+}]$. Recall we stated earlier (and will elaborate in Remark \ref{rmk:z2actioncompact}) that the fixed point set of $\overline{M}_{0,5}$ corresponds to the union of $\{ m_1 \}$ and a $2$-dimensional sphere. In particular, the points $m_3$ and $m_2$ can be joined by a path of invariant points, which we denote $l$. Then $d (l \times D^{i,+}) = \{m_3\} \times D^{i,+} + \{m_2\} \times D^{i,+}$, as required.
\end{proof}

\begin{proof}[Proof of Theorem \ref{thm:quancar}]
	By Lemmas \ref{lemma:lem222} and \ref{lemma:additiveandindep}, $q_{i,j}(\{ m_{1} \} \times D^{i,+}) = q_{i,j}(\{ m_{2} \} \times D^{i,+}) + q_{i,j}(W_{0} \times D^{i-2,+})$. Multiply by $h^{i}$ and sum over all $i,j$ and apply Lemma \ref{lemma:lem1}.
\end{proof}

\begin{lemma}
\label{lemma:lemqWtermscor}
	The homomorphism $q_{i,j}(W_0 \times D^{i-2,+})$ is only nonzero for $i > 0, j >0$.
\end{lemma}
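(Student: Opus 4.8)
The claim is that $q_{i,j}(W_0 \times D^{i-2,+})$ vanishes unless $i>0$ and $j>0$. The plan is to analyze the two indices separately using dimension and energy constraints already established.

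First I would handle the condition $i>0$. Note that $W_0 \times D^{i-2,+}$ only makes sense as a chain when $i \ge 2$, so the statement for $i$ really needs checking at $i=0,1$, where by convention the term is zero; for $i \ge 2$ the interesting point is that $q_{i,j}$ could still vanish, but the lemma only asserts $i>0$, which is then automatic. So the substance is entirely in showing $j>0$, i.e. that there is no contribution from constant ($j=0$) stable maps. The plan here is to use the description of $\mathcal{M}_{i,j}(x,y,z)$ as $\pi_{x,y,z}^{-1}(Q)$ together with Lemma \ref{lemma:lem222}: the class $[W_0 \times D^{i-2,+}]$ equals $[\{m_1\}\times D^{i,+}] + [\{m_2\}\times D^{i,+}]$, and by Lemma \ref{lemma:additiveandindep} we may compute $q_{i,j}(W_0 \times D^{i-2,+}) = q_{i,j}(Q_1^i) + q_{i,j}(Q_2^i)$ (working mod $2$). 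By Lemma \ref{lemma:lem1}, the sum $\sum_{i,j} (q_{i,j}(Q_1^i)(x,y) + q_{i,j}(Q_2^i)(x,y)) h^i$ equals $Q\mathcal{S}(x)*Q\mathcal{S}(y) + Q\mathcal{S}(x*y)$, and I would extract the degree-$(i,j)$ piece.

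The key step, which I expect to be the main obstacle to write cleanly, is the $j=0$ part: one must check that the constant-sphere contributions to $Q\mathcal{S}(x)*Q\mathcal{S}(y)$ and to $Q\mathcal{S}(x*y)$ agree. When $j=0$ the holomorphic curve in $\mathcal{M}_{i,0}(x,y,z)$ is constant, so the nodal domain carries no energy and the stabilization map to $\overline{M}_{0,5}$ degenerates in the usual way: counting configurations over $\{m_1\}\times D^{i,+}$ reduces to the classical $Sq(x)\cup Sq(y)$ computation of Lemma \ref{lemma:lemmaSqSq}, and over $\{m_2\}\times D^{i,+}$ to the classical $Sq(x\cup y)$ computation of Lemma \ref{lemma:lemmaSqcup}. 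By the classical Cartan relation (Theorem \ref{thm:classicalcartan}), these are equal, so their sum vanishes mod $2$. Hence the coefficient of $h^i T^0$ in $q_{i,j}(W_0\times D^{i-2,+})(x,y)$ is zero, giving $j>0$.

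Finally, for $i>0$: since the chain $W_0 \times D^{i-2,+}$ requires $i \ge 2$ to be defined, and the case $i=0,1$ is vacuously zero, the only thing to confirm is that this convention is consistent with Definition \ref{defn:qopn}, which it is. Thus $q_{i,j}(W_0\times D^{i-2,+})$ is nonzero only for $i>0$ and $j>0$, as claimed. The genuinely delicate point, as above, is the reduction of the $j=0$ count to the classical Cartan relation, since it requires identifying the degenerate stable maps over the two fixed points $m_1, m_2$ with the two sides of the classical relation — but this identification is exactly what was already carried out in Lemmas \ref{lemma:lemmaSqSq}, \ref{lemma:lemmaSqcup}, and the proof of Theorem \ref{thm:classicalcartan}, so no new argument is needed.
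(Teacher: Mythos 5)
Your proof is correct, but it takes a genuinely different route from the paper's. The paper argues directly from the dimension formula \eqref{eq:dimension}: for $j=0$ the $J$-holomorphic curve is constant, so the two parameters that move the nodal domain $m$ inside $W_0$ impose no constraint on the evaluation map, and the remaining Morse-flowline configuration has negative virtual dimension; hence the moduli space $\pi_{x,y,z}^{-1}(W_0 \times D^{i-2,+})$ is generically empty. Your argument instead piggybacks on the machinery already assembled for Theorem~\ref{thm:quancar}: you decompose $q_{i,j}(W_0 \times D^{i-2,+}) = q_{i,j}(Q_1^i) + q_{i,j}(Q_2^i)$ using Lemmas~\ref{lemma:lem222} and~\ref{lemma:additiveandindep}, invoke Lemma~\ref{lemma:lem1} to identify the total as $Q\mathcal{S}(x)*Q\mathcal{S}(y) + Q\mathcal{S}(x*y)$, and then extract the $T^0$ coefficient, which (via Remark~\ref{rmk:propertiesqs} and the fact that $Q\mathcal{S}(aT^j) = Q\mathcal{S}(a)T^{2j}$) equals $Sq(x)\cup Sq(y) + Sq(x\cup y)$, hence vanishes by Theorem~\ref{thm:classicalcartan}. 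Both approaches are valid; the paper's is more self-contained (it does not require the full bordism of Lemma~\ref{lemma:lem1}), while yours has the conceptual advantage of making the vanishing an immediate shadow of the classical Cartan relation. One small remark: the cleanest way to phrase your $j=0$ step is the purely algebraic one above (the $T^0$ part of $Q\mathcal{S}$ is $Sq$), rather than re-running the geometric reductions of Lemmas~\ref{lemma:lemmaSqSq} and~\ref{lemma:lemmaSqcup} at $j=0$; your appeal to $\pi_{x,y,z}^{-1}(Q)$ is also a vestige you never actually use and could be dropped.
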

\begin{proof}
	The $j=0$ case corresponds to $J$-holomorphic maps that are constant. Suppose that we have chosen input cocycles $x,y$ and a test output cocycle $z^{\vee}$, such that the moduli space used to calculate the coefficient of $z$ in $q_{i,j}(W_0 \times D^{i-2,+})(x,y)$ is $0$-dimensional. In particular, we require that $|z| = 2|x| + 2|y| - (i-2)$, using Equation \eqref{eq:dimension} and recalling that $\dim W_0 \times D^{i-2,+} = i$ and $j=0$. However, such setups in fact consists of pairs $(m \in W_0, (v,u))$ such that $(v,u)$ is a configuration as in the $t=0$ end of Figure \ref{fig:modspatree}. For generic choices of data, there do not exist any such $(v,u)$ (the space of such pairs is of virtual dimension $|z| - 2|x| - 2|y| + i = -2$), hence the moduli space is empty and therefore trivially transverse. 

For the vanishing for $i=0$, observe that the $h^{i}$ terms correspond to calculating $q_{i,j}(W_0 \times D^{i-2,+})$ which vanishes for $i < 2$, as $S^{\infty}$ has no cells of negative dimension.
\end{proof}

We will verify Theorem \ref{thm:quancar} in the case of $\mathbb{CP}^{1}$.

\begin{exmpl}[$\mathbb{CP}^{1}$]
\label{exmpl:cp1calc}
Let $x$ be the generator of 2-dimensional cohomology. We verify that $$[Q\mathcal{S}(x)*Q\mathcal{S}(x)]_{i,j} = Q\mathcal{S}_{i,j}(x*x) + q_{i,j}(W_{0} \times D^{i-2,+})(x,x).$$ We know that there can only be contributions from $j>0$ and $i>0$, by Lemma \ref{lemma:lemqWtermscor}. In the cases where $j=0$ or $i=0$, we have already verified this using Example \ref{exmpl:difficulties}. For degree reasons, there cannot be any solutions for $j \ge 2$ or $i \ge 4$, and there cannot be solutions for $i=1,3$ (as $\mathbb{CP}^1$ has only even cohomology). Hence we need to consider only the cases $(i,j) = (4,1), (2,1)$.

For the case $(i,j) = (4,1)$, $$[Q\mathcal{S}(x) * Q\mathcal{S}(x)]_{4,1} = [T^{2} + h^{4}T]_{4,1} = h^{4}T$$ $$Q\mathcal{S}_{4,1}(x*x) = Q\mathcal{S}_{4,1}(T) = [T^{2}]_{4,1} = 0.$$ We then calculate $q_{4,1}(W_{0} \times D^{2,+})(x,x)$.

Pick representatives of $PD(x) \times S^{2}$ as follows: let $\phi : S^{2} \rightarrow D^{2}$ be the ``flattening map" of the sphere, i.e. if $S^{2} \subset \mathbb{R}^{3}$ it is projection onto $\mathbb{R}^{2} \subset \mathbb{R}^{3}$. Pick two disjoint discs in $\mathbb{CP}^{1} = S^{2}$, call them $D$ and $D'$, and pick maps $\eta: D^2 \rightarrow D \xhookrightarrow{} \mathbb{CP}^{1}$ and $\eta': D^2 \rightarrow D \xhookrightarrow{} \mathbb{CP}^{1}$ identifying $D^{2}$ with $D,D'$ respectively. Let $\psi = \eta \circ \phi: S^2 \rightarrow \mathbb{CP}^{1}$, and likewise $\psi' = \eta' \circ \phi: S^2 \rightarrow \mathbb{CP}^{1}$. Then two representatives of $PD(x)_{v} = \{ pt \}_{v}$ are $\psi(v)$ and $\psi'(v)$ where $v$ varies in $S^{2}$. Recall that elements of $W_{0}$ are as in Figure \ref{fig:w0}. 

Every element of $W_0$ consists of two spheres, joined at a point, which in this discussion we call ``components": recall that one has three special points, and one has four. The space $S^2$ has minimal Chern number $N = 2$, so for $J$-holomorphic maps $u$ from elements of $W_0$ to $\mathbb{CP}^1$, the map $u$ may be non-constant on only one of the two components. Further, this $J$-holomorphic map must be degree 1 on the other component. It is immediate that the map must be constant on the component with three marked points (if it were constant on the other component, then the solution cannot be rigid as $z_{4}$ can vary freely), and the other sphere of $u$ has degree $1$. Then $u(z_{1}) = \psi(v)$ and $u(z_{2}) = \psi(-v)$ meet at the unique point on the sphere where $\psi(v) = \psi(-v)$. Hence there is one solution, and this solution gives the correction term $h^{4}T$. 

For the case $(i,j) = (2,1)$, observe that the coefficient of $h^2$ in the correction term corresponds to using as the parameter space $D^{2-2,+} \subset S^2$, which is a single point: thus, we are simply performing a nonequivariant calculation. As we are calculating the coefficient of $T$, as in the previous paragraph we know that any $J$-holomorphic maps must be constant on the component with three special points. This setup then corresponds to deducing the coefficient of $x T$ in $(x*x) * (x \cup x)$, but $x \cup x = 0$ in $\mathbb{CP}^1$. Example \ref{exmpl:difficulties} provides the contributions $Q\mathcal{S}_{2,1}(x*x) = 0$ and $[Q\mathcal{S}(x)*Q\mathcal{S}(x)]_{2,1} = 0$.
\end{exmpl}

Henceforth, for brevity we will denote $$q(W)(x,y) := \sum_{i,j} q_{i,j}(W_0 \times D^{i-2,+})(x,y)h^i$$ for $x,y \in QH^*(M)$.

\begin{rmk}[Quantum Cartan in Classical Case]
Lemma \ref{lemma:lemqWtermscor} gives a sanity check that in the classical case, $Sq(x) \cup Sq(y) = Sq(x \cup y)$.
\end{rmk}

\begin{rmk}
\label{rmk:z2actioncompact}
We recall that the $\mathbb{Z}/2$-action $\iota$ on $M_{0,5}$ acts on the labels of the points by the transposition $(12)(34)$. Hence, suppose that $(0,1,\infty,z_3,z_4) \in M_{0,5}$. Then $\iota (0,1,\infty,z_3,z_4) = [0,\infty,1,z_4,z_3]$. This is no longer from description $(2)$ of $M_{0,5}$. We must apply the element $R \in PSL(2,\mathbb{C})$ such that $R(z) = z/(z-1)$. Then $$[0,\infty,1,z_4,z_3] = [R0,R\infty,R1,Rz_4,Rz_3] = (0,1,\infty,z_4/(z_4 - 1), z_3/(z_3-1)).$$ Such a point in $M_{0,5}$ is fixed exactly when $z_3 = z_4/(z_4-1)$. This provides a $2$-dimensional family $F$ of fixed points of $\iota$, as $z_3$ varies in $S^2 - \{0,1,\infty,2 \}$.

Using description $(2)$ of $M_{0,5}$, the action $\iota$ extends in the obvious way to the compactification, by permuting edge labels of the marked points. Fixed points of the $\iota$-action on the compactification can be found in the limit as $z_3 \rightarrow 0,1,\infty,2$, assuming that $z_4 \rightarrow 0,\infty,1,2$ respectively. These four points compactify $F$ to a $2$-sphere that we denote $\overline{F}$. The point when $z_3 \rightarrow 1$ and $z_4 \rightarrow \infty$ is $m_2$.

We now use description $(1)$ of $M_{0,5}$. It can be deduced by inspection that there are no fixed points if exactly one of the pairs $(z_1, z_3), (z_2,z_4), (z_1,z_4), (z_2,z_3)$ collide. The collisions of $(z_1,z_2)$ are only fixed if they collide at $2$, which is covered above. The collision of $(z_3,z_4)$ is only fixed if it occurs at $2$, which is the single point when $(z_1,z_2)$ collide at $0$, which is counted above. It is an easy check that any point in a collision of $(z_0,z_i)$ is not fixed for $i=1,2,3,4$. Hence, the only other possibility to check is a collision when two pairs collide at the same time, say $(z_i,z_j),(z_k,z_l)$ for $(i,j,k,l)$ all distinct. Checking the cases, we see that there is a single point that has not yet been accounted for in $\overline{F}$, namely $(z_1,z_2)$ and $(z_3,z_4)$. This is the point $m_1$. 
\end{rmk}

\section{Computing the Quantum Steenrod Square for toric varieties}
\label{sec:computingqsstoric}

In this section, we will use the intersection definition of the quantum Steenrod square (Definition \ref{defn:singqss}). We will require that $\alpha_v: X_v \rightarrow M$ is an embedded submanifold for each $v$ (and not just a pseudocycle), and we will abusively replace $\alpha_v(X_v)$ by $X_v$.

		\subsection{Quantum Steenrod squares for $\mathbb{CP}^{n}$}
		\label{subsec:computingqsscpn}
		Let $x^{i}$ generate $H^{2i}(\mathbb{CP}^{n})$. By the quantum Cartan relation, Theorem \ref{thm:quancar}, $$Q\mathcal{S}(x^{i+1}) = Q\mathcal{S}(x^{i}) * Q\mathcal{S}(x) + q(W)(x^{i},x)$$ We can iteratively construct $Q\mathcal{S}(x^{i+1})$ as long as we know $q(W)(x^{i},x)$. Using a combination of degree reasons and Remark \ref{rmk:propertiesqs}, $Q\mathcal{S}(x) = x * x + xh^2$.

		\begin{lemma}
		\label{lem:qWcpn}
			For $2i<n$, $q(W)(x^i,x)=0$. For $n \le 2i \le 2n$,

			$$q(W)(x^{i},x) = {{i} \choose {n-i}} T h^{4i+2-2n}.$$
		\end{lemma}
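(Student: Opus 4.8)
The plan is to compute $q(W)(x^i,x) = \sum_{i',j} q_{i',j}(W_0 \times D^{i'-2,+})(x^i,x)\,h^{i'}$ directly from the moduli-space description, using the fact that $\mathbb{CP}^n$ has minimal Chern number $N = n+1$, so only very few energy levels are accessible. First I would fix the test output cohomology class $z$ (a power of $x$, since $H^*(\mathbb{CP}^n)$ is generated by $x$ in even degrees) and impose the dimension constraint from Equation \eqref{eq:dimension}: writing $\dim(W_0 \times D^{i'-2,+}) = i'$, the requirement that the relevant moduli space be $0$-dimensional forces $|z| = 2|x^i| + 2|x| - (i'-2) + 2jN = 4i + 4 - i' + 2 + 2j(n+1)$, i.e. a single admissible pair $(i', z)$ once $j$ is chosen. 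By Lemma \ref{lemma:lemqWtermscor} only $j > 0$ contributes, and monotonicity together with the constraint $|z| \le 2n$ will immediately kill all $j \ge 2$ when $2i \le 2n$, leaving only $j = 1$. Setting $j = 1$ and demanding $|z| = 2n$ (the top class, $z = x^n$, since a lower power would leave $W_0$ free to move in excess dimension — more precisely, the dimension count pins $|z|$) gives $i' = 4i + 4 + 2 + 2(n+1) - 2n = 4i+2-2n$ only after correctly bookkeeping the $2N$-grading of $T$; I would recompute this carefully to land on the stated exponent $h^{4i+2-2n}$ and coefficient of $T$.

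Next I would analyze the geometry of the count. An element of $W_0$ is a nodal curve with a three-pointed component (carrying $z_0$ and a sliding node) and a four-pointed component carrying $z_1, z_2, z_3, z_4$ and the node; see Figure \ref{fig:w0}. A stable $J$-holomorphic map $u$ over such a configuration of total Chern number $jN = n+1$ must, since $n+1$ is the minimal Chern number, be non-constant on exactly one component and of Chern number exactly $n+1$ there (i.e. a line), and constant on the other. As in Example \ref{exmpl:cp1calc}, rigidity forces the map to be constant on the three-pointed component: otherwise the four points $z_1,\dots,z_4$ would impose fewer constraints than the dimension of their configuration space allows, and the solution could not be isolated. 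So $u$ restricts to a fixed point $u(z_0)$ on one $\mathbb{CP}^n$ and to a degree-one rational curve (a projective line $\ell \subset \mathbb{CP}^n$) through that point on the other, with $z_1, z_2, z_3, z_4$ mapped to $\ell$. The incidence conditions are: $u(z_1) \in X_v$, $u(z_2) \in X_{-v}$ (the two $v$-twisted copies of a representative of $PD(x)$, i.e. hyperplanes), $u(z_3), u(z_4) \in$ copies of a representative of $PD(x)$ from the $\zeta$ side via $z^\vee = (x^n)^\vee = x^{n-i}$... — here I would have to be careful which marked points carry which of $x^i$, $x$, and $z^\vee$, tracking through the definitions preceding Lemma \ref{lemma:lem1}; the upshot is a count of lines in $\mathbb{CP}^n$ meeting a prescribed flag of linear subspaces.

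I expect the main obstacle to be the clean identification of this count with the binomial coefficient $\binom{i}{n-i}$. The strategy here is to reduce, via the constancy on the three-pointed component and the observation that the $\mathbb{Z}/2$-equivariant incidence at $z_1$ versus $z_2$ contributes exactly one point (the unique $v$ with $\psi(v) = \psi(-v)$, as in Example \ref{exmpl:cp1calc}, generalized to the flattening map on $S^{i'}$), to an honest enumerative problem: count projective lines $\ell \subset \mathbb{CP}^n$ passing through a fixed point and meeting a certain number of general linear subspaces whose codimensions are dictated by the exponents $i$ (from $x^i$) and $n-i$ (from $x^n$ being the output). This is a standard Schubert-calculus computation on the Grassmannian $\mathrm{Gr}(2, n+1)$ of lines in $\mathbb{CP}^n$, and the answer being $\binom{i}{n-i}$ — which vanishes precisely when $2i < n$, matching the first assertion — should drop out of a Pieri-rule or direct-incidence argument. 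I would verify the vanishing range $2i < n$ as a consequence of the virtual-dimension count being negative (no lines satisfy that many independent conditions), and then pin down the coefficient by either an explicit generating-function manipulation paralleling Example \ref{exmpl:classcpn} or by citing the classical count of lines meeting linear subspaces in general position. A consistency check with the $n = 1$, $i = 1$ case of Example \ref{exmpl:cp1calc} (where $\binom{1}{0} = 1$ and the correction was $h^4 T$, matching $4i+2-2n = 4$) would confirm the bookkeeping.
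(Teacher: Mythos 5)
Your plan starts in the right place (restrict to $j=1$ by monotonicity, isolate the constant component, reduce to an enumerative count of lines), but it contains two interlocking errors that, taken together, send the computation down the wrong path and prevent you from seeing where the binomial coefficient actually comes from.

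First, your description of elements of $W_0$ is incorrect. $W_0$ is the exceptional divisor over $(0,0)$ in the $(z_3,z_4)$-coordinates of description (3) of $M_{0,5}$, i.e.\ it records the collision $z_3, z_4 \to z_0$. The nodal curves in $W_0$ therefore consist of a component carrying $z_0, z_3, z_4$ and the node (the ``four-pointed'' component of Lemma~\ref{lemma:lem222}), attached to a component carrying $z_1, z_2$ and the node. You instead put $z_0$ alone on one component and $z_1,z_2,z_3,z_4$ on the other, which is not even a stable configuration. Compounding this, you swap the roles of the input classes: in $q(W)(x^i,x)$ the marked points $z_1, z_2$ carry the first input $x^i$ (so the cycles $X_{\pm v}$ are the codimension-$i$ subspaces $C^{n-i}_{\pm v}$), while $z_3, z_4$ carry the second input $x$ (the hyperplanes $C^{n-1}_{\pm v}$), as specified in the definition of $\mathcal{M}'_{i,j}(x,y,z)$ just before Definition~\ref{defn:qopn}. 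The rigidity argument correctly forces the constant component to be the one with $z_1, z_2$; but given your misidentification, the constraints you wrote down do not match the ones the moduli space actually imposes.

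Second, and more substantively, you expect $\binom{i}{n-i}$ to drop out of Schubert calculus on $\mathrm{Gr}(2,n+1)$, treating the $z_1,z_2$ constraint as intersection with a fixed linear subspace. This overlooks the essential point: the constant bubble carries the two $v$-twisted incidence conditions $C^{n-i}_v$ and $C^{n-i}_{-v}$, swept over the family $v \in D^{i'-2,+}$ and quotiented by the involution. That family intersection is precisely the classical Steenrod square of $x^i$. The paper's proof factors the count: the constant bubble computes a representative of $PD(Sq^{2j}(x^i)) = \binom{i}{j} PD(x^{i+j})$, forced to $j = n-i$ by nonnegativity of both representatives' dimensions, after which what remains is the trivial enumerative fact that one line joins the resulting $\binom{i}{n-i}$ points to the output point $\mathbb{CP}^0$, with the $C^{n-1}_{\pm v}$ conditions at $z_3, z_4$ only serving to fix the parametrization and the element of $W_0$. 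Without identifying this factorization through $Sq^{2j}(x^i)$ — i.e.\ recognizing that the family of intersections $C^{n-i}_v \cap C^{n-i}_{-v}$ over the hemisphere is a Steenrod-square moduli space, not a fixed flag — a static Schubert-calculus computation has no mechanism to produce the coefficient, and your proposal stalls at exactly the step you flagged as the ``main obstacle.''
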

		\begin{proof}

	Recall that we make a generic choice of $C^{n-1}_v \subset \mathbb{CP}^n$, parametrised by $v \in S^{\infty}$, such that $C^{n-1}_v$ represents $PD(x) \in H_{2(n-1)}(\mathbb{CP}^n)$ for each $v$. Similarly, we choose $C^{n-i}_v \subset \mathbb{CP}^n$ representing $PD(x^i) \in H_{2(n-i)}(\mathbb{CP}^n)$ for each $v \in S^{\infty}$. Observe that $q(W)(x^i,x)$ has degree $4i+4$ so, by Lemma \ref{lemma:lemqWtermscor}, for $i=1,...,n$ we deduce that:
\begin{equation}
\label{equation:qW}
\begin{array}{rcl}
q(W)(x^i, x) & = & \sum_{j=n-i}^{i} m^{i+1}_{j} x^{i+j-n} T h^{2(i+1)-2j}
\\[2em]

& = &
m_{n-i}^{i+1} x^0 T h^{4i+2-2n}
+
m_{n-i-1}^{i+1} x^1 T h^{4i-2n}
+
\cdots
+
m_i^{i+1} x^{2i-n} T h^2
\end{array}
\end{equation}
where $m^{i+1}_{j}$ are coefficients and the degrees are $|x| = 2$, so $|x^{i}| + |x| = 2i+2$ and $|T| = 2(n+1)$. Equation \eqref{equation:qW} follows for grading reasons.

We claim that $m^{i+1}_j$, the coefficient of $x^{i+j-n} T h^{2(i+1)-2j}$, is the number of (unparametrised) $J$-holomorphic spheres that intersect both $\mathbb{CP}^{{i+j}-n}$ and some representative of $PD(Sq^{2j}(x^i))$. We proceed in the following steps:

\begin{enumerate}[i)]
	\item Counting the coefficient of $x^{i+j-n} T h^{2(i+1)-2j}$ in $q(W)(x^i,x)$ is the same as counting setups as in Figure \ref{fig:qWcpn}(1) for $v \in D^{2i-2j, +}$ (recall $D^{2i-2j,+}$ corresponds to the $h^{2i-2j+2}$ term when defining $q(W)$ in Theorem \ref{thm:quancar}). Only $T^{1}$ appears in equation \eqref{equation:qW}, so one of the holomorphic bubbles has degree $0$ and the other degree $1$. For the solutions to be rigid, the sphere with the marked points $z_1, z_2$ must be constant (as in Example \ref{exmpl:cp1calc}). This yields the setup in Figure \ref{fig:qWcpn}(2).  

	\item Let $b$ be an element of the basis of cohomology, $\mathcal{B}$. The intersection of $C^{n-i}_{v}$ and $C^{n-i}_{-v}$ with some representative of $PD(b^{\vee})$, taken over all $v \in D^{2i-2j, +}$, is the coefficient of $b$ in $Sq^{2j}(x^{i})$ (by definition). 

\item Suppose that we neglect the intersections with $C^{n-1}_{\pm v}$ in Figure \ref{fig:qWcpn}(2). Then we count the number of (unparametrised) $J$-holomorphic spheres $u:S^2 \rightarrow M$ that intersect:
\begin{itemize}
\item a representative of $PD((x^{i+j-n})^{\vee}) = PD(x^{2n-(i+j)})$ (an example of which is a copy of $\mathbb{CP}^{i+j-n} \subset \mathbb{CP}^n$) and 
\item a representative of $PD(Sq^{2j}(x^{i}))$. 
\end{itemize} We recall that $Sq^{2j}(x^{i}) = {{i} \choose {j}} x^{i+j}$ (see \cite[Section 4.L.]{algtop}). This implies that $PD(Sq^{2j}(x^{i})) = {{i} \choose {j}} PD(x^{i+j})$. Recall that $PD(x^{i+j})$ is represented by a copy of $\mathbb{CP}^{n-(i+j)}$. Our problem reduces to asking how many lines there are intersecting $\mathbb{CP}^{(i+j)- n }$ and $\mathbb{CP}^{n - (i+j) }$, and multiplying this by the coefficient ${{i} \choose {j}}$.

However, this only makes sense if $i+j-n \ge 0$ and $n-(i+j) \ge 0$ (both of the representatives must be of nonnegative dimension), hence $j=n-i$. In particular, the representative of of $PD((x^{i+j-n})^{\vee})$ is a point, denoted $pt$. Further, there are a finite number (congruent to ${{i} \choose {n-i}} \text{ mod } 2$) of pairs $\{ v_k, -v_k \}$ such that $C^{n-i}_{v_k} \cap C^{n-i}_{-v_k} = pt_k \cong \mathbb{CP}^0$.  For each $pt_k$ there is exactly one line between $pt_k$ and $pt$ (i.e. there is always exactly one line between any two points in $\mathbb{CP}^n$).

\item  The homology class of each of the degree $1$ $J$-holomorphic spheres (the lines from the previous step) is the same homology class as that of $\mathbb{CP}^{1}$. Observe that $\mathbb{CP}^{1} \cap C^{n-1}_v = \{ pt'_v \}$ for each $v$. We make a generic choice of $C^{n-1}_v$ such that the $J-$holomorphic spheres are not contained in $C^{n-1}_{v}$ for generic $v$: specifically, we choose some hypersurface in $\mathbb{C}^{n-1}$ not containing the finite collection of lines from step iii, and then require that $C^{n-1}_v$ is a $C^2$-small perturbation in $v$ from this hypersurface (to ensure transversality). Then for each pair $\{ v_k , -v_k \}$, the intersection of the line at $pt'_{v_k}$ fixes the parametrisation of the $J$-holomorphic map, and the intersection of the line at $pt'_{-v_k}$ fixes which element $m$ of $W_0$ that we are using as the domain.
\end{enumerate}
Hence for each of the ${{i} \choose {n-i}}$ lines from step iii, there is exactly one choice of tuple $(m,u,v_k)$ (up to reparametrisation and the $\mathbb{Z}/2$-action) satisfying the configuration in Figure \ref{fig:qWcpn}(2).
		\end{proof}

		\begin{thm}
		\label{thm:SqQcpn}

For all $i \ge 0$,

\begin{equation}
\label{equation:quant1}
Q\mathcal{S}(x^{i}) = \sum_{j=0}^{i} \left( {{i} \choose {j}}+  \sum_{k=0}^{\lfloor n/2 \rfloor + 1} {{n-k}\choose{k}}\cdot {{i-(n+1-k)} \choose {j-k}} \right) x^{i+j} h^{2(i-j)},
\end{equation}

where $x^{i+j}$ is the $(i+j)$-th quantum power of $x$. 
		\end{thm}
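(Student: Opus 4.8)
The plan is to prove the formula by induction on $i$ using the quantum Cartan relation (Theorem \ref{thm:quancar}) together with the computation of the correction term in Lemma \ref{lem:qWcpn}. The base cases $i = 0$ (where $Q\mathcal{S}(1) = 1$) and $i = 1$ (where $Q\mathcal{S}(x) = x*x + xh^2 = T h^0 \cdot [\text{correction for } i+1 > n] + \dots$, more precisely $Q\mathcal{S}(x) = xh^2 + x^2$, with $x^2 = x^{2}$ interpreted as $T$ when $n = 1$) follow from Remark \ref{rmk:propertiesqs} and degree reasons, exactly as in Example \ref{exmpl:difficulties}. For the inductive step, I would write $Q\mathcal{S}(x^{i+1}) = Q\mathcal{S}(x^i) * Q\mathcal{S}(x) + q(W)(x^i, x)$, substitute the inductive hypothesis for $Q\mathcal{S}(x^i)$ and the known values $Q\mathcal{S}(x) = xh^2 + x^2$ and $q(W)(x^i,x) = \binom{i}{n-i} T h^{4i+2-2n}$ (the latter being nonzero only in the stated range), and then collect coefficients of $x^{i+1+j} h^{2(i+1-j)}$.

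The main computational task is a generating-function bookkeeping exercise. I would encode the classical part: multiplying $\sum_j \binom{i}{j} x^{i+j} h^{2(i-j)}$ by $(xh^2 + x^2)$ in the quantum cohomology ring $\mathbb{Z}/2[x]/(x^{n+1} = T)$ reproduces, via Pascal's rule $\binom{i}{j} + \binom{i}{j-1} = \binom{i+1}{j}$, exactly the classical term $\sum_j \binom{i+1}{j} x^{i+1+j} h^{2(i+1-j)}$ of the claimed formula — this is the standard classical Cartan computation of Example \ref{exmpl:classcpn}, and the only subtlety is that whenever an exponent of $x$ exceeds $n$ it gets replaced by a power of $T$, which is consistent with the convention in the statement. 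For the ``quantum correction'' part, I would similarly multiply the double sum $\sum_{j,k} \binom{n-k}{k}\binom{i-(n+1-k)}{j-k} x^{i+j} h^{2(i-j)}$ against $(xh^2 + x^2)$, again apply Pascal's rule in the $j$-index, and check that the result, \emph{together with} the single extra monomial $\binom{i}{n-i} T h^{4i+2-2n}$ coming from $q(W)(x^i,x)$, assembles into $\sum_{j,k} \binom{n-k}{k}\binom{(i+1)-(n+1-k)}{j-k} x^{i+1+j} h^{2(i+1-j)}$. The role of the $q(W)$ term is precisely to supply the ``boundary'' contribution $k = n-i$ (equivalently the term where $i - (n+1-k) = -1$ so that $\binom{-1}{\cdot}$ would otherwise need interpreting), which is where the recursion on $k$ is seeded; one should verify the identification $\binom{i}{n-i} = \binom{n-(n-i)}{n-i}$ matches the $k = n-i$ summand of the target.

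The step I expect to be the main obstacle is handling the reindexing and edge effects cleanly: keeping track of when $x^{i+1+j}$ wraps around to $x^{i+1+j-n-1}T$, verifying that the upper limit $k \le \lfloor n/2 \rfloor + 1$ in the sum is respected (so that $\binom{n-k}{k}$ doesn't spuriously contribute or vanish at the wrong place), and confirming that the binomial identities with possibly-negative or out-of-range top entries behave as the convention demands. A clean way to organize this is to fix the target monomial $x^{i+1+j}h^{2(i+1-j)}$, extract its coefficient on both sides, and reduce everything to a single binomial identity in $(i,j,k,n)$ that follows from Pascal's rule; I would present the argument in that form rather than manipulating the full generating function at once. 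Once the coefficient-matching identity is isolated, its verification is routine.
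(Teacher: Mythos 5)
Your proposal takes essentially the same route as the paper: both derive, from the quantum Cartan relation together with Lemma \ref{lem:qWcpn}, the Pascal-type recursion $l^{i+1}_j = l^i_j + l^i_{j-1}$ for $j \neq n-i$ and $l^{i+1}_{n-i} = l^i_{n-i} + l^i_{n-i-1} + \binom{i}{n-i}$ on the coefficients $l^i_j$ of $x^{i+j}h^{2(i-j)}$ in $Q\mathcal{S}(x^i)$, and then check that the claimed closed form satisfies this recursion. Your coefficient-matching plan (fix the target monomial and reduce to a single binomial identity — namely that the defect in Pascal's rule $\binom{m}{l} = \binom{m-1}{l} + \binom{m-1}{l-1}$ occurs exactly at $(m,l)=(0,0)$, which for $m = i+k-n$, $l = j-k$ picks out $k = j = n-i$ and supplies the extra $\binom{i}{n-i}$) is precisely the bookkeeping the paper compresses into ``using a Pascal triangle\ldots one can write down the closed form solution.''
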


Observe that if $i+j \ge n$ then $x^{i+j} = x^{i+j-n} T$, as this is the quantum power.

		Recall that $Q\mathcal{S}(x^i) = Sq(x^i) + T(...)$ where by Example \ref{exmpl:classcpn}: $$Sq(x^{i}) = \sum_{j=0}^{n-i} {{i} \choose {j}} x^{i+j} h^{2i-2j}.$$

		\begin{figure}
			\input{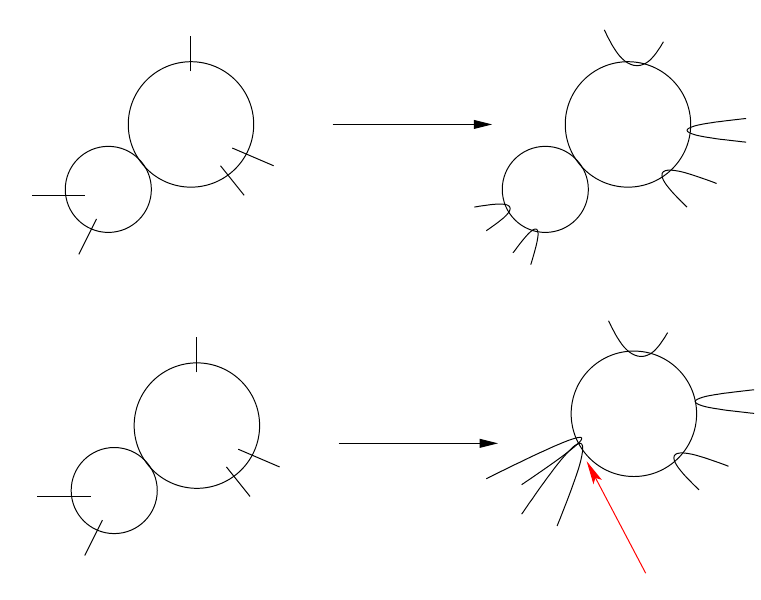_t}
			\caption{Configurations for $q(W)(x^{i},x)$ for $\mathbb{CP}^{n}$}
			\label{fig:qWcpn}
		\end{figure}

	\begin{proof}[Proof of theorem \ref{thm:SqQcpn}]

		Since $T = x^{n+1}$, we can express the square as:

$$
\begin{array}{rcl}
Q\mathcal{S}(x^{i}) & = & \sum_{j=0}^{i} l^{i}_{j} x^{i+j} h^{2i-2j}
\\[2em]

& = &
l_0^i x^i h^{2i} + l_1^i x^{i+1} h^{2i-2} + \cdots
+ l_i^{i} x^{2i} h^{0}
\end{array}
$$
for some $l^i_j \in \mathbb{Z}/2$.

By the Quantum Cartan relation, Theorem \ref{thm:quancar} and Lemma \ref{lem:qWcpn}, the coefficients $l^i_j$ satisfy $l^{i+1}_{j} = l^{i}_{j} + l^{i}_{j-1}$ for $j \neq n-i$ and $l^{i+1}_{n-i} = l^{i}_{n-i-1} + l^i_{n-i} + {{i}\choose{n-i}}$ (the latter term arises from the quantum correction). Using a Pascal Triangle and the iterative formula for the $l^{i}_{j}$, one can write down the closed form solution.		
	\end{proof}

		In particular, truncating the sum in equation \eqref{equation:quant1} to $j \le n-i$ recovers the classical Steenrod square formula for $\mathbb{CP}^{n}$ from Example \ref{exmpl:classcpn}. This is because if $j \le n-i$ then every term in the second summation in Equation \eqref{equation:quant1} vanishes because either
\begin{itemize}
\item $j-k < 0$ 
\item or $i-(n+1-k) < 0$
\end{itemize}

To see that this is true, observe that if $j \ge k$ then $$i-(n+1-k) = k+i-n-1 \le j+i-n-1.$$ Then as $j \le n-i$, we see that $j+i-n-1 \le -1 < 0$

Explicit examples:
		
		\begin{enumerate}
		\item[$\mathbb{CP}^1$] :					
			$q(W)(x,x) = {{1}\choose{1-1}}T h^{4+2-2}$

			$Q\mathcal{S}(x) = xh^{2} + T$

			$Q\mathcal{S}(T) = (x h^2 + T)^2 + {\bf Th^4} = T^2$.

		\item [$\mathbb{CP}^2$] :
			$q(W)(x,x) = {{1}\choose{2-1}}T h^{4+2-4}$ 

			$Q\mathcal{S}(x) = xh^{2} + x^{2}$

			$Q\mathcal{S}(x^{2}) =(xh^{2} + x^{2})^2 + {\bf Th^2} =  x^{2}h^{4} + Th^{2} + xT$.

		\item[$\mathbb{CP}^3$] :
			$q(W)(x,x) = {{1}\choose{3-1}}T h^{4+2-6} = 0$ and 			$q(W)(x^2,x) = {{2}\choose{3-2}}T h^{8+2-6} = 0$ 

			$Q\mathcal{S}(x) = xh^{2} + x^{2}$

			$Q\mathcal{S}(x^{2}) = (xh^{2} + x^{2})^2 = x^{2}h^{4} + T$

			$Q\mathcal{S}(x^{3}) = (xh^{2} + x^{2})(x^{2}h^{4} + T) = x^{3}h^{6} + Th^{4} + xTh^{2} + x^{2}T$.

		\end{enumerate}

\begin{rmk}
Observe that, after one appeals to dimension reasons to rule out the other cases, the proof of Theorem \ref{thm:SqQcpn} only uses $GW(\mathbb{CP}^{n-1},\{pt \}, \{ pt' \} )$.
\end{rmk}

	\subsection{Fano Toric Varieties}
	\label{subsec:gentorvar}
		Let $M$ be a compact monotone toric manifold, with $b \in H^{|b|}(M)$ and $x \in H^{2}(M)$, and let $X = PD(x)$. Then analogously to Theorem \ref{thm:SqQcpn}, one proves Theorem \ref{thm:SqQtoric}.	

		\begin{proof}[Proof of Theorem \ref{thm:SqQtoric}]
		
Consider setups as in Figure \ref{fig:qTORIC}, which we henceforth call setups. These are configurations that, when counted, yield the coefficient of $c T^{c_1(\mu)} h^{i+2}$ in $q(W)(b,x_p)$. Henceforth we fix the dimension of the equivariant parameter space, $i \in \mathbb{Z}_{\ge 0}$ corresponding to $S^{i} \subset S^{\infty}$, and some $\mu \in H_2(M, \mathbb{Z})$ such that the $J$-holomorphic curves we consider represent $\mu$. We also fix $x \in H^2(M)$ and $b \in H^*(M)$, as in the statement of the theorem. We make choices of $X_v, B_v$ for $v \in S^{\infty}$, with the usual conditions for the input cycles used in $q(W)(b,x_p)$. Given a test output cycle $c \in H^*(M)$, we pick an embedded submanifold representing $PD(c^{\vee})$.

We will describe configurations that are related to Figure \ref{fig:qTORIC}, which we call {\it reduced setups}, which arise by neglecting the intersection with $X_v$ and the marked point $z_4$ corresponding to it. The setup as given remains dimension $0$: removing $z_4$ ``removes 2 dimensions", and removing the intersection with $X_v$ ``adds 2 dimensions".

A ``reduced setup" is a pair $(v,u_{red})$ such that $v \in S^{i}$ and $u_{red}: S^2 \vee_{1 \sim 0} S^2 \rightarrow M$. Then $v \in S^{i}$ and $u_{red}$ is $J$-holomorphic, and subject to $(u_{red})_*[S^2 \vee S^2] = \mu$. The map $u_{red}$ satisfies: $$u_{red}(0) \in PD(c^{\vee}), u_{red}(\infty) \in X_{-v}, u_{red}(1) \in B_v, u_{red}(\infty) \in B_{-v}.$$ Note that given a setup, we may obtain a reduced setup by forgetting the point $z_4$ (and the associated intersection condition). Observe that the space of setups and reduced setups is of the same dimension, $|c| + i + 2c_1(\mu) - 2|x| - 2|b|$.

We would like to prove that for a generic choice of $\{ X_v \}$, if $(v, u_{red})$ is a reduced setup, then for every $p \in S^2$ such that $u_{red}(p) \in X_v$:
\begin{itemize}
\item $u_{red}$ and $X_v$ intersect transversely in $M$ at $u_{red}(p)$, and
\item $p$ is an injective point of $u_{red}$.
\end{itemize}

Observe that if we are in the situation where the set of reduced setups is $0$-dimensional, i.e. $|c| + i + 2c_1(\mu) - 2|x| - 2|b|=0$, then we may assume that no intersections occur for $v \in \partial D^{i,+} = S^{i-1}$. Further, counting reduced setups with $v \in S^i$ and then quotienting by the free $\mathbb{Z}/2$-action of Equation \eqref{equation:actiononmoduli} is identical to simply restricting to $v \in \mathring{D}^{i,+}$ (without taking a quotient). With this in mind, we may freely perturb our choice of $X_v$ for $v \in \mathring{D}^{i,+}$, without changing the reduced setups. We make sure that the perturbation is sufficiently small that the moduli spaces of setups remains transverse. Then the argument becomes a classical argument that a generic perturbation of the embedded submanifold/pseudocycle $X_v$ will be transverse to $u_{red}$, and \cite[Proposition 1.3.1]{jhols} implies that the set of injective points of a simple curve $u_{red}$ is open and dense: hence, generically each intersection occurs at an injective point of $u_{red}$.

Now suppose that we are given a reduced setup $(v, u_{red})$. Then there are $\# (X_v \bullet \mu)$ (modulo $2$) setups corresponding to it. Observe that the actual number of corresponding setups is $\# (X_v \cap \mu)$, where $\cap$ is the absolute number of intersection points counted without signs. Generally such a count is not preserved under changes of representatives of $X_v$ and $\mu$, but one immediately sees that $\# (X_v \bullet \mu) = \# (X_v \cap \mu)$ for transversely intersecting pseudocycles in characteristic $2$. This choice of $\# (X_v \bullet \mu)$ setups corresponds to a choice of the marked point $z_4$ on the domain, which we know bijects with a choice of intersection points of $X_v$ and $\text{Im}(u)$ (as it is an injective point). 

In fact, setups and reduced setups are in a $1$ to $\#( X \bullet \mu)$ correspondence (recalling that $X = PD(x)$). This holds because one may pick $X_v$ such that every $X_v$ is a normal perturbation in a $C^2$-small tubular neighbourhood of some fixed submanifold representative $\euscr{X}$ of $X$ (argue likewise for a pseudocycle representative). This is then bordant to having chosen $X_v = \euscr{X}$ for all $v$, by deformation retracting the tubular neighbourhood to $\euscr{X}$. Hence $\# (X_v \bullet \mu) = \#( X \bullet \mu)$.

It is now sufficient to prove that reduced setups count $$\sum_{2i=0}^{|b|} \sum_{j \ge 1} \sum_{k=1}^{j}  \sum_{\mu \in H_2(M) : E(\mu) = k} \left( Q\mathcal{S}_{2i,j-k}(b) *_{\mu, k} x \right) \cdot h^{|b|-2i+2} T^{j}.$$ However, considering reduced setups alone one may choose $X_{-v}$ to be independent of $v \in D^{i,+}$ (again, choose a deformation retraction of a tubular neighbourhood to its core $\euscr{X}$). The result follows immediately from the definitions of $Q\mathcal{S}$ and the quantum product.
		\end{proof}

		\begin{figure}
			\input{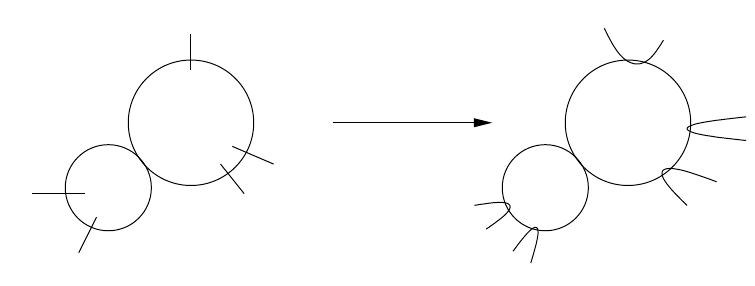_t}
			\caption{Configurations for $q(W)(b,x_{p})$ for toric varieties, where $x$ and $b$ are the inputs and $c$ is the output. Here we are using the notation where $X_v$ and $B_v$ are embedded submanifold, as in Remark \ref{rmk:embeddedsubs}.}
			\label{fig:qTORIC}
		\end{figure}

		As the cohomology of a toric variety $M$ is generated by $H^{2}(M)$, iterated application of \eqref{equation:SqQtoric} yields a general solution, i.e. one can calculate $Q\mathcal{S}(x_{p_{1}}x_{p_{2}}...x_{p_{r}})$ assuming the base cases $Q\mathcal{S}(x_{p_{i}})$ for a basis $\{ x_{p} \}$ of $H^2(M)$ are known. Using a combination of degree reasons and Remark \ref{rmk:propertiesqs}, $Q\mathcal{S} (x_p) = x_p * x_p + x_p \cdot h^2$.

		\begin{proof}[Proof of Corollary \ref{corollary:fanotoricdecided}]
			We induct on degree. The base case is for $|x|=2$, and we know from above that $Q\mathcal{S}(x)= xh^2 + x * x$ is determined by $QH^*(M)$. Given $a \in H^{*}(M)$ for $* > 2$, write $a = b * x$ for $x \in H^{2}(M)$. By Theorem \ref{thm:quancar}, we have $Q\mathcal{S} (a) = Q\mathcal{S} (b) * Q\mathcal{S} (x) + q(W)(b,x)$. By induction $Q\mathcal{S} (b)$ and $Q\mathcal{S} (x)$ are determined by $QH^* (M)$, hence so is $Q\mathcal{S} (b) * Q\mathcal{S} (x)$. By Theorem \ref{thm:SqQtoric}, $q(W)(b,x)$ is determined by $QH^* (M)$ (observing that $\# (X \bullet \mu)$ is determined from singular cohomology). 
		\end{proof}

Let $\beta: QH^*(M) \rightarrow QH^*(M)$ be a ring homomorphism satisfying $\beta(T) = T$. In the notation of Theorem \ref{thm:SqQtoric}, we deduce that for $a,b \in QH^*(M)$, $$\beta(a *_{0,0} b ) = \beta(a) *_{0,0} \beta(b).$$ This is because $\mu =0$ is the only possible element of $H_2(M, \mathbb{Z})$ of Chern number $0$ when $M$ is monotone. Indeed, Theorem \ref{thm:SqQtoric} simplifies to state that if $|x| = 2$, then \begin{equation} \label{equation:toriccartanagain} Q \mathcal{S}( b * x) = Q \mathcal{S}(b) * Q \mathcal{S}(x)  + (Q \mathcal{S}(b)* x - Q \mathcal{S}(b)*_{0,0} x). \end{equation} Thus, any ring homomorphism $\beta$ with the given constraint is compatible with the quantum Steenrod square.

\begin{exmpl}[$\mathbb{CP}^{1} \times \mathbb{CP}^{1}$]
We let $x,y$ be the generators of $H^2(\mathbb{CP}^1 \times \mathbb{CP}^1)$, with $PD(x) = [\{ pt \} \times \mathbb{CP}^1]$ and $PD(y) = [\mathbb{CP}^1 \times \{ pt' \}]$ . Here $q(W_{0} \times D^{i-2,+})(x,y) = 0$ hence $Q\mathcal{S}(x) * Q\mathcal{S}(y) = Q\mathcal{S}(x*y)$. Indeed by equation \eqref{equation:SqQtoric},
$$q(W_{0} \times D^{i-2,+})(x,y) =\sum_{2i=0}^{2} \sum_{j \ge 1} \sum_{k=1}^{j} k \cdot Q\mathcal{S}_{2i,j-k}(x) *_{\mu, k} y h^{4-2i} T^{j},$$ recalling that $x *_{ \mu, k} y$ the coefficient of $T^{k}$ in the quantum product $x * y$, using spheres representing $\mu$. Working from definitions, $Q\mathcal{S} (x) = x h^{2} + T$. Then $\alpha *_{k} y \neq 0 \implies k = 1, \alpha = y$. There are no $i,j,k$ such that $Q\mathcal{S}_{2i,j-k}(x) = y$. Hence the sum on the right hand side is $0$.
\end{exmpl}

\section{The Quantum Adem Relations}
\label{sec:QAR}

		\subsection{Classical Adem Relations}
		\label{subsec:classadem}
		We begin with a discussion of the group cohomology of $S_4$ and $D_8$. This will involve adding details to the argument alluded to by Cohen-Norbury to prove the classical Adem relations in \cite[Section 5.2]{cohnor}.	

		It is proved in \cite[Sections IV.1, VI.1]{ademmilgram} that \begin{equation} \label{equation:HBD8} H^{*}(BD_{8}) = \mathbb{Z}/{2}[e,\sigma_{1},\sigma_{2}]/(e \sigma_{1}) \end{equation} where $e, \sigma_{1}$ are of degree 1 and $\sigma_{2}$ is of degree 2, and \begin{equation} \label{equation:HBS4} H^{*}(BS_{4}) = \mathbb{Z}/{2}[n_{1},n_{2},c_{3}]/(n_{1} c_{3}), \end{equation} where again subscripts denote the degree of the elements. Considering $$D_{8} = \langle (12),(34),(13)(24) \rangle \subset S_{4},$$ there are subgroups $$\mathbb{Z}/2 = \langle (13)(24) \rangle, \qquad \mathbb{Z}/2 \times \mathbb{Z}/2 = \langle (12),(34) \rangle.$$ Then $$H^{*}(B \mathbb{Z}/2) = \mathbb{Z}/{2}[e], \qquad H^{*}(B (\mathbb{Z}/2 \times \mathbb{Z}/2)) = \mathbb{Z}/{2}[x,y].$$ Consider the commutative diagram \eqref{commutativediagramofgroups} induced by the various inclusion maps of groups. As in \cite{ademmilgram}, one shows that:

\begin{equation}\label{commutativediagramofgroups}
\xymatrix{
H^*(B \mathbb{Z}/2)
\\
H^*(BD_8)
\ar@{->}^-{i_1}[u]
\ar@{->}_-{i_2}[d]
& 
H^*(BS_4)
\ar@{->}^-{j_1}[ul]
\ar@{->}^-{j_2}[dl]
\ar@{->}^-{\pi^*}[l]
\\
H^*(B(\mathbb{Z}/2 \times \mathbb{Z}/2))
}
\end{equation}

\begin{tabular}{l }
 $i_{1}(e) = e$\\
$i_{2}(\sigma_{1}) = x+y,  \quad i_{2}(\sigma_{2})=xy$\\
$j_{1}(n_{2}) = e^{2}$\\
$j_{2}(n_{1}) = x+y, \quad j_{2}(n_{2}) = xy$\\
\end{tabular}

All of the other generators map to $0$ via the $i,j$ maps. From this, and the fact that $\pi^*$ is injective, we deduce that $$\pi^{*}(n_{1}) = \sigma_{1} \qquad \pi^{*}(n_{2}) = \sigma_{2} + e^{2} \qquad \pi^{*}(c_{3}) = e \sigma_{2}.$$

			By Cohen-Norbury, \cite{cohnor}, there is a commutative diagram, namely diagram \eqref{classicalademdiagram}, where $qq^0$ satisfies $$qq^0(\alpha) = \sum_{p,q} Sq^{q} \circ Sq^{p}(\alpha) e^{|\alpha| + p - q} \sigma_{2}^{|\alpha| - p}.$$

\begin{equation}\label{classicalademdiagram}
\xymatrix{
H^*(M)
\ar@{->}^-{\hat{qq}^0}[r]
\ar@{->}_-{=}[d]
&
H^*(M) \otimes H^*(BS_4)
\ar@{->}^-{id_{H^*(M)} \otimes \pi^*}[d]
\\
H^*(M)
\ar@{->}^-{qq^0}[r]
&
H^*(M) \otimes H^*(BD_8)
}
\end{equation}

We do not in general know a closed form definition of $\hat{qq}^0$ in terms of compositions of Steenrod squares, but in fact we do not need to: the Adem relation is a purely algebraic relation, only using the fact that $qq^0$ lifts to a homomorphism $\hat{qq}^0$ (and not any information about the homomorphism itself). For a definition of $\hat{qq}^0$ in Diagram \eqref{classicalademdiagram}, use the $T^0$-component of $\hat{qq}$ from Definition \ref{defn:qs4}.

			\begin{Fact} 
			\label{Fact:commuteAdem}
				By Theorem 19 (Invariance) in \cite{cohnor}, the diagram \eqref{classicalademdiagram} commutes. This implies that the image of $qq^0$ lies in the image of $id_{H^{*}(M)} \otimes \pi^{*}$. Hence there are constraints on the image. Specifically, $e^{2i} \sigma_{2}^{j}$ may only appear in $qq^0(\alpha)$ if it arises from some $(e \sigma_{2})^{2k} (e^{2} + \sigma_{2})^{i+j-3k}$ for $k=0,1,...$, with coefficient ${i+j-3k}\choose{i-k}$. This is a special case of Lemma \ref{lemma:quantumademdiagram}.
			\end{Fact}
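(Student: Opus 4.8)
The statement to prove is Fact \ref{Fact:commuteAdem}: that the diagram \eqref{classicalademdiagram} commutes (this is cited from Cohen-Norbury), and that consequently the image of $qq$ is constrained to lie in the image of $id \otimes \pi^*$, with the explicit combinatorial description of which monomials $e^{2i}\sigma_2^j$ can appear.

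\textbf{Proof proposal.} The plan is to take the commutativity of diagram \eqref{classicalademdiagram} as given from \cite[Theorem 19]{cohnor} (this is precisely how the Fact is phrased — ``By Theorem 19 (Invariance)...''), and then extract the algebraic consequence. So the real content to prove is: \emph{if an element of $H^*(M)\otimes H^*(BD_8)$ lies in the image of $id_{H^*(M)}\otimes\pi^*$, then its monomials of the form $\alpha\otimes e^{2i}\sigma_2^j$ arise exactly from the terms $(e\sigma_2)^{2k}(e^2+\sigma_2)^{i+j-3k}$, each contributing $\binom{i+j-3k}{i-k}$.} First I would recall from the displayed computation above that $\pi^*(n_1)=\sigma_1$, $\pi^*(n_2)=\sigma_2+e^2$, $\pi^*(c_3)=e\sigma_2$, and that $\pi^*$ is injective. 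Since $qq(\alpha)$ in fact only involves the generators $e$ and $\sigma_2$ (no $\sigma_1$ appears, as the $qq$ formula has no $\sigma_1$), I would restrict attention to the subring generated by $e$ and $\sigma_2$ inside $H^*(BD_8)$, which is the polynomial ring $\mathbb{Z}/2[e,\sigma_2]$ since the only relation $e\sigma_1$ involves $\sigma_1$.

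Next, I would identify which elements of $\mathbb{Z}/2[e,\sigma_2]$ lie in the image of $\pi^*$. From the relation $n_1c_3$ in $H^*(BS_4)$ and the images above, an element in the image of $\pi^*$ that involves only $e,\sigma_2$ must come from the subring of $H^*(BS_4)$ generated by $n_2$ and $c_3$ (since $\pi^*(n_1)=\sigma_1$ would introduce $\sigma_1$, and $n_1$ times anything not killed introduces $\sigma_1$ unless multiplied by $c_3$, which is zero). So the relevant image is $\pi^*(\mathbb{Z}/2[n_2,c_3]) = \mathbb{Z}/2[\sigma_2+e^2, e\sigma_2] \subset \mathbb{Z}/2[e,\sigma_2]$, modulo checking that $n_1c_3=0$ doesn't remove anything from this subring (it doesn't, since $c_3^2 = (e\sigma_2)^2$ is a nonzero element of $\mathbb{Z}/2[e,\sigma_2]$ and $n_1c_3$ doesn't lie in $\mathbb{Z}/2[n_2,c_3]$). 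Thus the key claim reduces to: compute the $e$-even part of a general monomial $(\sigma_2+e^2)^m(e\sigma_2)^l$ in $\mathbb{Z}/2[e,\sigma_2]$. Expanding $(\sigma_2+e^2)^m = \sum_r \binom{m}{r} e^{2r}\sigma_2^{m-r}$, multiplying by $(e\sigma_2)^l = e^l\sigma_2^l$, the total $e$-exponent is $2r+l$; this is even iff $l$ is even, say $l=2k$. Then the monomial $e^{2(r+k)}\sigma_2^{m+2k-r}$ appears with coefficient $\binom{m}{r}$. Setting $i = r+k$ and $j = m+2k-r$ gives $i+j = m+3k$, i.e. $m = i+j-3k$ and $r = i-k$, so the coefficient of $e^{2i}\sigma_2^j$ coming from the term with parameter $k$ is $\binom{i+j-3k}{i-k}$, as claimed. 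Summing over all $k\ge 0$ (with the convention that binomial coefficients with negative entries vanish) gives the stated total.

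Finally, I would note that since $qq(\alpha)$ lies in the image of $id\otimes\pi^*$ by the commutativity of \eqref{classicalademdiagram}, its coefficient of $\alpha'\otimes e^{2i}\sigma_2^j$ (for any $\alpha'$) must match what the above computation allows — namely it is determined by the coefficients of terms $(e\sigma_2)^{2k}(e^2+\sigma_2)^{i+j-3k}$, giving $\binom{i+j-3k}{i-k}$. Comparing with the explicit formula $qq(\alpha) = \sum_{p,q}Sq^q Sq^p(\alpha)\,e^{|\alpha|+p-q}\sigma_2^{|\alpha|-p}$ and equating coefficients of $e^{2i}\sigma_2^j$ across the two expressions yields precisely the Adem relations; but for the Fact itself I only need the structural constraint, not the final Adem identity. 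I would remark that this is the special ($T^0$, classical) case of Lemma \ref{lemma:quantumademdiagram}, whose proof handles the full equivariant/quantum version, so I would keep this argument brief and defer the general machinery.

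\textbf{Main obstacle.} The only genuinely non-routine point is verifying that the image of $\pi^*$ \emph{restricted to the $e,\sigma_2$-subring} is exactly $\mathbb{Z}/2[\sigma_2+e^2,\,e\sigma_2]$ and nothing larger — in particular ruling out that some combination involving $n_1$ sneaks an $e,\sigma_2$-only element into the image via the relation $n_1c_3 = 0$. This requires a careful look at the ring structure of $H^*(BS_4)=\mathbb{Z}/2[n_1,n_2,c_3]/(n_1c_3)$ and the kernel/image of $\pi^*$; once that is pinned down, the rest is the elementary binomial expansion above. Everything else is bookkeeping with $\mathbb{Z}/2$ coefficients, where signs and orientations are irrelevant.
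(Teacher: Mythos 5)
Your proposal is correct and takes essentially the same route as the paper: commutativity of \eqref{classicalademdiagram} is taken as an external input (from Cohen--Norbury, or equivalently as the $T^0$ instance of Lemma \ref{lemma:quantumademdiagram}), and the coefficient constraint is deduced by expanding $(\sigma_2 + e^2)^m(e\sigma_2)^l$ in $\mathbb{Z}/2[e,\sigma_2]$ and reading off the $e$-even monomials. The paper states the constraint without carrying out the binomial bookkeeping, so your explicit computation (set $l=2k$, then $i=r+k$, $j=m+2k-r$, giving $m=i+j-3k$, $r=i-k$, coefficient $\binom{i+j-3k}{i-k}$) is a genuine and correct filling-in of a gap the paper glosses over.

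On your flagged ``main obstacle'': the worry that the relation $n_1 c_3 = 0$ might allow elements involving $n_1$ to land in $\mathbb{Z}/2[e,\sigma_2]$ is resolvable by noting that $H^*(BD_8) = \mathbb{Z}/2[e,\sigma_1,\sigma_2]/(e\sigma_1)$ has an additive basis given by the monomials $e^a\sigma_2^c$ together with $\sigma_1^b\sigma_2^c$ with $b\ge 1$, so the span of $\sigma_1$-divisible elements meets $\mathbb{Z}/2[e,\sigma_2]$ only in $0$; since $\pi^*(n_1 Q) = \sigma_1\pi^*(Q)$, any contribution from $n_1$-terms either vanishes or contributes a $\sigma_1$-divisible summand that cannot be absorbed into $\mathbb{Z}/2[e,\sigma_2]$. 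This is precisely what the paper's passage to the reduced subalgebras $H^*(BS_4)_{red}$ and $H^*(BD_8)_{red}$ in Section \ref{subsec:QAR} handles, so your concern is well-placed but does not, in the end, cause trouble.
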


			\begin{lemma}	
\label{lem:bincoeff}		
				For any $s,m$, 
\begin{equation}
\label{equation:combinatorial}
{{3s+m}\choose{s+m}} = \sum_{l=0}^{\infty} {{m+l-1}\choose{2l}} {{3s+m}\choose{s-l}}
\end{equation}
modulo 2.
			\end{lemma}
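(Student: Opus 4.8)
The plan is to prove the identity \eqref{equation:combinatorial} by a generating function argument over $\mathbb{F}_2$. Working in $\mathbb{F}_2[[x]]$, recall that $(1+x)^n$ has $x^k$-coefficient $\binom{n}{k}\bmod 2$, and that multiplication of power series implements the Vandermonde-type convolutions of binomial coefficients. So I would reformulate both sides as coefficients of a single power series and then reduce to a formal identity that can be checked directly.

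First I would identify the right-hand side. Fix $s$ and let $m$ vary; the sum $\sum_l \binom{m+l-1}{2l}\binom{3s+m}{s-l}$ is the $x^{s}$-coefficient (equivalently, after a change of index, a fixed coefficient) of a product of two series, one of which is $(1+x)^{3s+m}$ contributing the $\binom{3s+m}{s-l}$ factors and the other encoding $\sum_l \binom{m+l-1}{2l} x^{2l}$. The key sub-step is to get a closed form over $\mathbb{F}_2$ for $g_m(x) := \sum_{l\ge 0}\binom{m+l-1}{2l}x^{2l}$. Since $\binom{m+l-1}{2l}$ counts, via the standard identity $\sum_j \binom{m+j-1}{j} y^j = (1-y)^{-m} = (1+y)^{-m}$ over $\mathbb{F}_2$, the even-degree part in $y$ of $(1+y)^{-m}$ specialized at $y^2 = x^2$ — more precisely $g_m(x)$ is the even part of $(1+x)^{-m}$, which over $\mathbb{F}_2$ equals $\tfrac12\big((1+x)^{-m}+(1-x)^{-m}\big)$ interpreted formally, i.e. $g_m(x) = \frac{(1+x)^m + (1+x)^{-m}}{(1+x)^{2m} \cdot \text{stuff}}$ — I would instead just observe the clean fact that over $\mathbb{F}_2$, $(1+x)^{-m} = \sum_j \binom{m+j-1}{j} x^j$ and its "even part" is obtained by averaging with $x\mapsto -x = x$, which is degenerate; so the cleaner route is to note $\binom{m+l-1}{2l}$ is the $x^{2l}$ coefficient of $(1+x^2)^{?}$ only after using Lucas. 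I expect the genuinely efficient path is: recognize the whole right-hand side as the $x^{s-\text{something}}$-coefficient of $(1+x)^{3s+m}\cdot h(x)$ where $h$ has only even-degree terms, then substitute $x\mapsto$ a square root and match against the left side.

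Concretely, the cleanest line of attack: the left-hand side $\binom{3s+m}{s+m}$ is the coefficient of $x^{s+m}$, equivalently of $x^{2s}$ after multiplying by $x^{s-m}$... — rather than juggle offsets, I would set up the bijective/Lucas approach. Write everything base $2$. By Lucas' theorem, $\binom{3s+m}{s+m}\bmod 2$ depends only on the binary digits, and likewise each term on the right. So the plan is: (i) use Lucas to reduce to the case where $s$ and $m$ have binary expansions with disjoint "interaction", or (ii) prove the polynomial identity $(1+x)^{3s+m} = (1+x)^{s+m}\cdot(1+x)^{2s}$ and extract the relevant coefficient, matching the $\binom{m+l-1}{2l}$ factor to the binomial expansion of $(1+x)^{2s} = (1+x^2)^{s}$ over $\mathbb{F}_2$. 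Indeed $(1+x)^{2s} = (1+x^2)^s = \sum_a \binom{s}{a} x^{2a}$ over $\mathbb{F}_2$, so $\binom{3s+m}{s+m}$ is $\sum_a \binom{s}{a}\binom{s+m}{\,s+m-2a\,} = \sum_a \binom{s}{a}\binom{s+m}{m-2a+2a}$... and one is left needing $\binom{s}{a} \equiv \sum$-relation to $\binom{m+l-1}{2l}\binom{3s+m}{s-l}$; substituting $l \leftrightarrow$ an index and using $(1+x)^{3s+m}=(1+x)^{2m}(1+x)^{3s-m}$ or a similar regrouping is where the two sides should be forced to agree.

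\textbf{Main obstacle.} The hard part will be choosing the regrouping of $(1+x)^{3s+m}$ that simultaneously produces the factor $(1+x^2)^{\bullet}$ (accounting for the $2l$ in $\binom{m+l-1}{2l}$, hence for spheres of "even" contribution) and the factor $(1+x)^{3s+m}$ with the shifted lower index $s-l$. I expect this amounts to proving the formal power series identity
\[
(1+x)^{3s+m} \;=\; \Big(\sum_{l\ge 0}\binom{m+l-1}{2l}x^{2l}\Big)\cdot(1+x)^{?}
\]
over $\mathbb{F}_2$ for the correct exponent $?$ (it should come out to $(1+x)^{3s+m}$ times a correction, with the bracketed series being the even part of $(1+x)^{-m}$), and then reading off the $x^{s+m}$-coefficient on both sides. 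Once the right series identity is pinned down, extracting coefficients is routine Lucas/Vandermonde bookkeeping, so the entire difficulty is concentrated in that one algebraic reorganization; I would verify it on small cases $s,m\le 3$ first to fix the exponents before writing the general argument.
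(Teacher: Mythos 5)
Your proposed route is to cast both sides of \eqref{equation:combinatorial} as coefficients of power series over $\mathbb{F}_2$ and then reduce the claim to a closed-form identity for $g_m(x) := \sum_{l\ge 0}\binom{m+l-1}{2l}x^l$. As you yourself note halfway through, the most natural such identity (identifying $g_m$ with an ``even part'' of $(1+x)^{-m}$) is degenerate over $\mathbb{F}_2$, and you end by conceding that ``the entire difficulty is concentrated in that one algebraic reorganization'' which you have not pinned down. This is more than a loose end: the specific identity you propose to seek, namely $(1+x)^{3s+m} = g_m(x^2)\cdot(1+x)^{?}$ in $\mathbb{F}_2[[x]]$, cannot hold once $m\ge 3$. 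For instance $g_3(x^2) = 1+x^2+x^4$, and in $\mathbb{F}_2[x]$ this factors as $(1+x+x^2)^2$, which is coprime to $1+x$; so $g_3(x^2)$ is not a quotient of powers of $(1+x)$ and no choice of exponent can make the identity true. The upper index of $\binom{m+l-1}{2l}$ depends on $l$, so $g_m$ is a diagonal of a bivariate generating function, not the coefficient sequence of any single $(1+x)^n$, and your attempt to force it into that mould is the concrete obstruction. There is also a second subtlety you do not address: in the convolution $[x^s]\bigl(g_m(x)(1+x)^{3s+m}\bigr)$ the exponent $3s+m$ itself depends on $s$, so this is not a literal coefficient extraction from a fixed series and bookkeeping with one variable $x$ is not ``routine.''

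The paper's proof avoids all of this by arguing inductively. Writing $c(m,s)=\binom{m+3s}{m+s}$ and $S(m,s)$ for the right-hand side, it first verifies the recurrence $c(m+2,s)=c(m,s)+c(m+3,s-1)\bmod 2$ (which falls out of $(1+x)^2 = 1+x^2$ applied to $\binom{n+2}{k}=\binom{n}{k}+\binom{n}{k-2}$, and which is well-adapted because $(1+x)^{3(s-1)+(m+3)} = (1+x)^{3s+m}$ keeps the exponent fixed across the three terms), then checks that $S$ satisfies the same recurrence, and finally matches the base cases $s\in\{0,1\}$, $m\in\{1,2\}$. If you want to salvage a generating-function presentation, the correct polynomial fact to aim for is not a closed form for $g_m$ but the three-term recurrence $g_{m+2}(x)(1+x^2) = g_m(x) + x\,g_{m+3}(x)$ in $\mathbb{F}_2[x]$ (which, after taking $[x^s]$ of the product with the common factor $(1+x)^{3s+m}$, reproduces exactly $S(m+2,s)=S(m,s)+S(m+3,s-1)$). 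That recurrence is checkable and is the generating-function shadow of the paper's induction, but it is a recurrence rather than a closed form, which is a genuinely different shape of statement from what your proposal was hunting for.
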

			\begin{proof}
				We prove this by induction. Let $c(m,s) = {{m+3s}\choose{m+s}}$. Then modulo 2, $$c(m+2,s) = c(m,s) + c(m+3,s-1).$$

				Define $S(m,s) = \sum_{l}  {{m-1+l}\choose{2l}}{{3s+m}\choose{s-l}}$. Check that $S(m,s) = c(m,s)$ for $s=0,1$ and $m=1,2$. These are the base cases. Hence if $S(m+2,s) = S(m,s) + S(m+3s,s-1)$ for all $m,s$ then the lemma holds by induction. This is an exercise in binomial coefficient algebra modulo $\mathbb{Z}/2$. 
			\end{proof}

			\begin{thm}[Classical Adem Relations]
			\label{thm:car}
				Given $\alpha \in H^{*}(M)$ and $q,p>0$ such that $q<2p$,

				\begin{equation} \label{equation:classicalademrelations} Sq^{q}Sq^{p}(\alpha) = \sum_{k=0}^{[q/2]} {{p-k-1}\choose{q-2k}}Sq^{p+q-k} Sq^{k}(\alpha).\end{equation}
			\end{thm}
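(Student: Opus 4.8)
The plan is to prove the classical Adem relations by extracting them purely algebraically from the structural constraint provided by Fact \ref{Fact:commuteAdem}, exactly along the lines sketched by Cohen--Norbury. The starting point is the commutative diagram \eqref{classicalademdiagram}: since $qq = (id \otimes \pi^*) \circ q_{S_4}$, every monomial $e^a \sigma_2^b$ appearing in $qq(\alpha)$ must occur inside the image of $\pi^*$, and $\pi^*$ is determined by $\pi^*(n_1) = \sigma_1$, $\pi^*(n_2) = \sigma_2 + e^2$, $\pi^*(c_3) = e \sigma_2$. Because $H^*(BD_8) = \mathbb{Z}/2[e,\sigma_1,\sigma_2]/(e\sigma_1)$, any monomial in the image that involves a positive power of $e$ cannot involve $\sigma_1$, so it lies in the subring generated by $\pi^*(n_2) = e^2 + \sigma_2$ and $\pi^*(c_3) = e\sigma_2$. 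Expanding $(e^2+\sigma_2)^r (e\sigma_2)^{2k}$ (only even powers of $e\sigma_2$ survive, since $e^2$ appears to an integer power) and collecting, one finds that the coefficient of $e^{2i}\sigma_2^j$ in $qq(\alpha)$ — which by the definition of $qq$ is $Sq^q Sq^p(\alpha)$ with $2i = |\alpha| + p - q$ and $j = |\alpha| - p$ — is forced to equal a sum of the coefficients of $e^{2i'}\sigma_2^{j'}$ over the monomials $(e\sigma_2)^{2k}(e^2+\sigma_2)^{i+j-3k}$, weighted by $\binom{i+j-3k}{i-k}$, as stated in Fact \ref{Fact:commuteAdem}.

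First I would make this bookkeeping precise: write $qq(\alpha) = \sum_{p,q} Sq^q Sq^p(\alpha)\, e^{|\alpha|+p-q}\sigma_2^{|\alpha|-p}$, set $n = |\alpha|$, and reindex so that a given pair $(a,b)$ with $e^a\sigma_2^b$ corresponds to $p = n - b$, $q = n + b - a$, i.e. $a = n + p - q$, $b = n - p$. Then the constraint from Fact \ref{Fact:commuteAdem} reads: for each $(a,b)$ with $a$ even, $$\text{coeff of } e^a\sigma_2^b \text{ in } qq(\alpha) = \sum_{k} \binom{(a/2)+b-3k}{(a/2)-k} \cdot \big(\text{coeff of } e^a\sigma_2^b \text{ in } \lambda_{k}\big),$$ where $\lambda_k$ is the contribution of the monomial $(e\sigma_2)^{2k}(e^2+\sigma_2)^{(a/2)+b-3k}$ to $qq(\alpha)$, which by the same diagram is itself a combination of composites $Sq^{?}Sq^{?}(\alpha)$ of lower ``$e$-degree." Unwinding which composites $Sq^{q'}Sq^{p'}$ actually land in monomials $e^a\sigma_2^b$ inside $(e^2+\sigma_2)^{(a/2)+b-3k}(e\sigma_2)^{2k}$ pins down the right-hand side of \eqref{equation:classicalademrelations} up to a rewriting of binomial coefficients; the needed binomial identity is precisely Lemma \ref{lem:bincoeff}, which converts the ``diagonal'' coefficients $\binom{3s+m}{s+m}$ appearing from the expansion of $(e^2+\sigma_2)^{\bullet}$ into the Adem coefficients $\binom{m+l-1}{2l}$ after the substitution $m = p - q + \text{something}$, $l = k$. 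So the logical skeleton is: (i) state the image constraint; (ii) expand the relevant monomials; (iii) match coefficients; (iv) apply Lemma \ref{lem:bincoeff} to rewrite the result in the classical form $\sum_k \binom{p-k-1}{q-2k} Sq^{p+q-k}Sq^k(\alpha)$.

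The main obstacle I expect is step (iii)--(iv): the pure combinatorics of matching the coefficient of a fixed monomial $e^{2i}\sigma_2^j$ across the multinomial expansion of $(e^2+\sigma_2)^r(e\sigma_2)^{2k}$, keeping careful track of which $(p,q)$ indices each term in $qq(\alpha)$ corresponds to, and then recognizing that the resulting identity is exactly \eqref{equation:combinatorial} with the correct substitution of parameters. In particular one has to be careful that the range $q < 2p$ (equivalently $a < 2(n-b)$, i.e. the admissibility condition) is exactly what makes the relevant monomial $e^a\sigma_2^b$ lie outside the ``already admissible'' part of the image, so that the constraint is nonvacuous and expresses the inadmissible composite $Sq^q Sq^p$ in terms of admissible ones. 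I would also need to note that everything is over $\mathbb{Z}/2$, so signs never intervene, and that $\pi^*$ being injective (as recorded after \eqref{commutativediagramofgroups}) is what licenses reading off coefficients. Once the bookkeeping is set up correctly, the proof is essentially Lemma \ref{lem:bincoeff} plus the commutativity of \eqref{classicalademdiagram}, with no geometric input needed — which is the point: the Adem relations are a formal consequence of the $S_4$-equivariant structure.
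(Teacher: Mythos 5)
Your proposal follows the same argument as the paper's proof: use the commutativity of Diagram \eqref{classicalademdiagram} together with Fact \ref{Fact:commuteAdem} to express the coefficient of $e^{|\alpha|+p-q}\sigma_2^{|\alpha|-p}$ in $qq(\alpha)$ in the $(e\sigma_2)^{2k}(e^2+\sigma_2)^{\bullet}$ basis, and then apply Lemma \ref{lem:bincoeff} to recognize the result as the Adem expansion. The substitutions you left as ``$m=p-q+\text{something}$'' are exactly the ones the paper writes out ($l=|\alpha|-p$, $m=p-q/2$, $n=q/2-k$), but the logical skeleton you give matches the paper's proof step for step.
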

			\begin{proof}
				Suppose $q$ is even. Let $l = |\alpha|-p$, $m=p-q/2$, $n=q/2-k$, thus $$ {{p-k-1}\choose{q-2k}} = {{m+n-1}\choose{2n}}.$$ Assume $l=2r$. The cases for $q$ or $l$ odd are proven identically, except for slight modifications in the substitutions and the exponents of the labelled equations. Throughout, for $E \in H^*(B D_8)$, let $\text{cff}(E)$ be the coefficient of $E$ in $qq^0(\alpha)$. By definition of $qq^0$: $$Sq^{q} \circ Sq^{p} (\alpha)= \text{cff}(e^{l+2m} \sigma_{2}^{l}) \qquad \textrm{and} \qquad Sq^{p+1-k} \circ Sq^k (\alpha) =  \text{cff}(e^{l-2n} \sigma_{2}^{l+m-n}).$$ By Fact $1$ (which also ensures that the right hand sides of the following two equations are well defined), \begin{equation} \label{equation:cff1} \text{cff}(e^{l+2m} \sigma_{2}^{l})= \sum_{i=0}^{r}  {{3r+m-3i}\choose{r+m-i}} \cdot \text{cff}((e \sigma_{2})^{2i} (e^{2} + \sigma_{2})^{3r+m-3i})\end{equation} and \begin{equation} \label{equation:cff2} \text{cff}(e^{l-2n} \sigma_{2}^{l+m+n}) = \sum_{i=0}^{r} {{3r+m-3i}\choose{r-n-i}} \cdot \text{cff}((e \sigma_{2})^{2i} (e^{2} + \sigma_{2})^{3r+m-3i}).\end{equation}

The claim now follows since by Lemma \ref{lem:bincoeff}, $${{3r+m-3i}\choose{r+m-i}} = \sum_{n=0}^{\infty}  {{m+n-1}\choose{2n}}{{3r+m-3i}\choose{r-n-i}}.$$ Substitute this into Equation \eqref{equation:cff1}, swap the summation, and then substitute Equation \eqref{equation:cff2}. This yields Equation \eqref{equation:classicalademrelations}, after substituting back for $p,q$ and $k$.

The terms with $n > q/2$ will not appear in the final statement because $n > q/2$ implies $k < 0$, and $Sq^k(\alpha) = 0$ for $k<0$.
			\end{proof}

		\subsection{Quantum Adem Relations}
	\label{subsec:QAR}
			
		In this section we will denote by $\mathcal{B}$ some basis of $H^*(BS_4)$, by $\hat{\mathcal{B}}$ some basis of $H^*(BD_8)$ and by $\mathcal{B}_M$ some basis of $H^*(M)$.

			Recall that in Definition \ref{defn:qopn}, for $W \in H_{*}(\overline{M}_{0,5} \times_{\mathbb{Z}/2} S^i)$ for some $i$, we defined additive homomorphisms $$q_{i,j}(W): QH^a(M) \otimes QH^b(M) \rightarrow QH^{2a+2b-i-2jN}(M).$$ We will define a similar construction of operators that are parametrised by $H_{*}^{D_{8}} (\overline{M}_{0,5})$ and $H_{*}^{S_{4}} (\overline{M}_{0,5})$, where $$D_{8} = \langle (12),(34),(13)(24) \rangle \subset S_{4}$$ acts by permutations on the indices of $[z_{0},z_{1},z_{2},z_{3},z_{4}] \in \mathcal{M}_{0,5}$. We will abbreviate $P^{p,q,r}_{D_8} = \overline{M}_{0,5} \times_{D_8} ES_4^{p,q,r}$ and $P^{p,q,r}_{S_4} = \overline{M}_{0,5} \times_{S_4} ES_4^{p,q,r}$, recalling the constructions in Appendix \ref{sec:ed8es4}, where we expressed $ES_4$ as the union of a countable nested sequence of smooth closed manifolds, $ES_4^{p,q,r}$ of respective dimension $2p-1+3q+6r$. 

We note that for any $M$ with $H^*(M)$ finitely generated in all degrees, there is a map $$\Psi: H^*(M) \rightarrow H_*(M),$$ along with its inverse also denoted $\Psi: H_*(M) \rightarrow H^*(M)$. This is an isomorphism via universal coefficients (as usual working over $\mathbb{Z}/2$): explicitly, one picks a dual basis under the pairing $\langle \alpha, a \rangle \mapsto \alpha (a)$ given by evaluation of cocycles. For brevity we denote $P_{D_8} = \overline{M}_{0,5} \times_{D_8} ES_4$ and $P_{S_4} = \overline{M}_{0,5} \times_{S_4} ES_4$. The homology of $P_{D_8}$ and $P_{S_4}$ satisfy this finite generation condition: this is due to the Cartan Leray spectral sequence.

Pick a pseudocycle representative $\zeta^{\vee}: Z^{\vee} \rightarrow M$ for each $z^{\vee} \in \mathcal{B}_M^{\vee}$. For $\alpha \in H^*(M)$, choose pseudocycles $i_v: A_v \rightarrow M$ for $v \in ES_{4}$ (where $A_v = A \times \{v \} \subset A \times ES_4^{p,q,r}$ for some sufficiently large $p,q,r$). We do this such that $i_v A_v$ is a weak representative of $PD(\alpha)$ for each $v$, by which we mean that $i_v A_v \bullet X = PD(\alpha) \bullet X$ for all $X \in H_*(M)$, where $\bullet$ is the intersection number. We choose $i_v$ with invariance and genericity conditions as follows:
\begin{enumerate}
\item $A_v = A_{(23) \cdot v} = A_{(24) \cdot v}$ and $i_v = i_{(23) \cdot v} = i_{(24) \cdot v}$ for all $v \in ES_{4}$.
\item Let $\mathcal{M}_{0,5}(J,j)$ be the space of $J$-holomorphic maps of Chern number $jN$ from $S^2$ to $M$ with $5$ marked points. Let $\overline{\mathcal{M}_{0,5}(J,j)}$ be its compactification with stable nodal maps. Then the $i_v$ must be chosen sufficiently generically so that the intersection of the $S_4$-equivariant pseudocycles in Equations \eqref{pseudoaaa1} and \eqref{pseudoaaa2} is transverse: \begin{equation} \label{pseudoaaa1} \begin{array}{l} ev: \overline{\mathcal{M}_{0,5}(J,j)} \times ES_4^{p,q,r} \rightarrow M \times M \times M \times M \times M \times ES_4^{p,q,r} \\ (u,v) \mapsto (u(z_0), u(z_1), u(z_2), u(z_3), u(z_4), v) \end{array} \end{equation} and \begin{equation} \label{pseudoaaa2} \begin{array}{l} Z^{\vee} \times A \times A \times A \times A \times ES_4^i \rightarrow M \times M \times M \times M \times M \times ES_4^i \\ (x,a_1, a_2, a_3,a_4, v) \mapsto (\zeta(x), i_v(a_1), i_{(12) \cdot v} (a_2), i_{(13) \cdot v} (a_3), i_{(14) \cdot v} (a_4), v). \end{array} \end{equation} 
\end{enumerate}

Observe that we may restrict to the special case of Morse theory, as we have done throughout this paper. Specifically we choose $f_{v,s}$ for $v \in ES_4$ and $s \in [0,\infty)$. We do this such that $f_{v,s} = f$ for $s \gg 0$, and $f_{(23) \cdot v, s} = f_{(24) \cdot v, s} = f_{v, s} $ for all $v,s$, and we replace the incidence condition with $i_{(1p) \cdot v} (a_p)$ by incidence with a $-\nabla f_{(1 p) \cdot v,s}$-flowline asymptotic to a critical point $\alpha$.

\begin{defn} 
\label{defn:s4operators}
Let $\alpha \in \text{crit}(f)$. For $d \in H^{*}_{S_{4}}(\overline{M}_{0,5})$, we pick a pseudocycle representative $\delta: D \rightarrow P^{p,q,r}_{S_4}$ of $\Psi (d) \in H_* (P^{p,q,r}_{S_4})$ (for some sufficiently large $p,q,r$). Then we define an operation $q_{S_4}(D): H^*(M) \rightarrow QH^*(M),$ by $$q_{S_4}(D)(\alpha) := \sum_{z \in \text{crit}(f), \ j \ge 0} n_{z, \alpha, j} \cdot z \cdot T^j,$$ where $n_{z, \alpha,j}$ counts the number of $S_4$ equivalence classes of triples $(m,u,v)$ with $[m,v] \in \delta(D) \subset P_{S_4}$ and $u: m \rightarrow M$ is $J$-holomorphic and of Chern number $j N$. We also require that $$u_0: (-\infty,0] \rightarrow M, \ u_p: [0,\infty) \rightarrow M,$$ for $p=1,2,3,4$, such that $$\begin{array}{l} \partial_t u_0(s) = -\nabla f(u_0(s)) , \ \partial_t u_p(s) = -\nabla f_{(1 p) \cdot v}(u_p(s)), \\ u(z_p) = u_p(0), \ u_0(-\infty) = z, \text{ and } u_p(\infty) = \alpha. \end{array}$$ 
On cohomology the operation will be independent of the representative $\delta$ of $\Psi (d)$, by the same proof as Lemma \ref{lemma:additiveandindep} (i.e. we express our coefficients as the intersections of pseudocycles, and bordant pseudocycles give the same intersection number).
			\end{defn}			

In order to show that Definition \ref{defn:s4operators} is well defined on cohomology, we must define an operation $q_{S_4}'(D) : C^*_{S_4}(M \times M \times M \times M) \rightarrow C^*(M) \otimes C^*(BS_4)$ as in Definition \ref{defn:mss}. It is then a standard compactification theorem to prove that $q_{S_4}$ is well defined on cohomology (as $D$ is closed), for example as in Equation \ref{equation:Sq'chainmap}. 

			The definition of $q_{D_8}(D)$ is identical to Definition \ref{defn:s4operators}, replacing everywhere $S_4$ by $D_8$ (note specifically that this definition uses $BD_8 = ES_4 / D_8$ as its parameter space. 

Henceforth, we will restrict to the subalgebra $H^*(BS_4)_{red}$ of $H^*(BS_4)$ generated by $\sigma_2$ and $e$, and similarly the subalgebra $H^*(BD_8)_{red}$ of $H^*(BD_8)$ generated by $n_2$ and $c_3$. The map $\pi^*: H^*(BS_4)_{red} \rightarrow H^*(BD_8)_{red}$ is well defined and injective because of Diagram \eqref{commutativediagramofgroups}. Indeed, the only difference to using $H^*(BS_4)$ and $H^*(BD_8)$ is that we we forget all additive generators that include monomials with some nontrivial $\sigma_1$ and $n_1$ exponent respectively.

As in the case of the quantum Cartan relation, we would like to consider cycles in $H_*(P_{S_4})$ parametrised by some basis $\mathcal{B}$ of $H^*(BS_4)_{red}$. Compare this to the proof of the quantum Cartan relations, where the classes $[\{ m_1 \} \times D^{i,+}] \in H_*( P_{\mathbb{Z}/2})$ were parametrized by $[D^{i,+}] \in H_*(B \mathbb{Z}/2) = H_* (\mathbb{RP}^{\infty})$. Further, we will show later that \begin{equation} \label{equation:quantumsquarecomposition} Q\mathcal{S} \circ Q\mathcal{S} (\alpha) = \sum_{i,j} q_{D_8} (\{ m_1 \} \otimes \Psi (e^i \sigma_2^j) )(\alpha) \cdot e^i \sigma_2^j. \end{equation}

Hence, ideally we would like the chains represented by $\{ \{ m_1 \} \otimes B \}$ to be elements of $H_*(P_{S_4})$, for $B \in \mathcal{B}$. This will not work because $m_1$ is not $S_4$-invariant. However, the cycle $m_1 + g m_1 + g^2 m_1 \in H_*(\overline{M}_{0,5})$ is $S_4$ invariant, where $g = (123)$ generates the cosets of $D_8$ in $S_4$ (note that $g m_1 = m_2$). 

\begin{defn}
\label{defn:qs4}
Given a basis $\mathcal{B}$ of $H^*(BS_4)_{red}$, define: $$\hat{qq}: H^*(M) \rightarrow QH^*(M) \otimes H^*(BS_4)_{red},$$ $$\hat{qq}(\alpha) = \sum_{b \in \mathcal{B}} q_{S_4}((m_1 + g m_1 + g^2 m_1) \otimes \Psi (b)) (\alpha) \cdot b.$$
\end{defn}

\begin{defn}
Given a basis $\tilde{\mathcal{B}}$ of $H^*(BD_8)_{red}$, define: $$qq: H^*(M) \rightarrow QH^*(M) \otimes H^*(BD_8)_{red},$$ $$qq(\alpha) := \sum_{\tilde{b} \in \tilde{\mathcal{B}}} q_{D_8}((m_1 + g m_1 + g^2 m_1) \otimes \Psi (\tilde{b})) (\alpha) \cdot \tilde{b}.$$
\end{defn}

		We fix some additive basis $\mathcal{B}$ for $H^{*}(BS_{4})_{red}$, of the form $\{ n_2^a c_3^q \}$ (with notation as in Equation\eqref{equation:HBS4}). Recall from Diagram \eqref{classicalademdiagram} there is $\pi_* : H_*(BD_8)_{red} \rightarrow H_*(BS_4)_{red}$ and $\pi^* : H^*(BS_4)_{red} \rightarrow H^*(BD_8)_{red}$, which are induced by the continuous quotient map $$\pi: ES_4 / D_8 \rightarrow ES_4 / S_4.$$ We also define: $$i_* : H_*(BS_4)_{red} \rightarrow H_*(BD_8)_{red}, \qquad i_*(D) = D + gD + g^2 D$$ and $$i^* : H^* (BD_8)_{red} \rightarrow H^* (BS_4)_{red}, \qquad i^* (d) = d + g d + g^2 d.$$ As we work over $\mathbb{Z}/2$ we see that $\pi_* \circ i_* = id$ and $i^* \circ \pi^* = id$, which also shows that $\pi^*$ is injective. As $\pi^*$ is injective, $\pi^* \mathcal{B}$ is linearly independent in $H^*(BD_8)_{red}$. We extend this to a basis $\hat{\mathcal{B}} = \pi^* \mathcal{B} \cup \mathcal{B}'$ of $H^*(BD_8)_{red}$. 

\begin{lemma}
\label{lemma:adem1}
$$q_{S_4}((m_1 + g m_1 + g^2 m_1) \otimes \pi_* \Psi (b)) = q_{D_8}((m_1 + g m_1 + g^2 m_1) \otimes \Psi (b)).$$
\end{lemma}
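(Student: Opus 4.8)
The plan is to reduce the identity to a statement about how the $q$-operators behave under the pushforward map $\pi \colon ES_4/D_8 \to ES_4/S_4$, by exploiting that both sides are defined via intersection numbers of pseudocycles in these respective Borel constructions. First I would unwind the definitions: the left-hand side $q_{S_4}((m_1 + g m_1 + g^2 m_1) \otimes \pi_* \Psi(b))$ counts $S_4$-equivalence classes of tuples $(m,u,v)$ with $[m,v]$ lying on a pseudocycle representative of $\pi_* \Psi(b) \in H_*(P_{S_4})$, whereas the right-hand side $q_{D_8}((m_1 + g m_1 + g^2 m_1) \otimes \Psi(b))$ counts $D_8$-equivalence classes of the same kind of data, but now with $[m,v]$ lying on a pseudocycle representative of $\Psi(b) \in H_*(P_{D_8})$. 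The key geometric input is that $\pi \colon P_{D_8} = \overline{M}_{0,5} \times_{D_8} ES_4 \to \overline{M}_{0,5} \times_{S_4} ES_4 = P_{S_4}$ is a covering map of degree $[S_4 : D_8] = 3$, so $\pi_* \Psi(b)$ is represented by $\pi \circ \delta$ whenever $\delta \colon D \to P_{D_8}$ represents $\Psi(b)$; moreover, since $m_1 + gm_1 + g^2m_1$ is $S_4$-invariant, the class $\Psi(b)$ can be chosen to be supported over a neighborhood of this invariant set in a compatible way with the covering.

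The main step is then to set up a bijection between the two moduli spaces of tuples being counted. Given a $D_8$-equivalence class $[(m,u,v)]$ contributing to the right-hand side, I would show it maps to an $S_4$-equivalence class contributing to the left-hand side via the covering $\pi$, and conversely: an $S_4$-orbit of tuples intersected with a pseudocycle representative of $\pi_*\Psi(b)$, when pulled back along $\pi$, decomposes into $D_8$-orbits intersecting the pseudocycle representing $\Psi(b)$. Because we work over $\mathbb{Z}/2$ and $\pi$ has odd degree $3$, the multiplicities work out: the intersection number of $\pi \circ \delta$ with the $S_4$-quotient evaluation pseudocycle equals the intersection number of $\delta$ with the $D_8$-quotient evaluation pseudocycle (this is precisely the statement that intersection numbers are preserved under the covering, using that $\pi_*$ and $\pi^*$ are adjoint with respect to the intersection pairing and $\pi_* \circ i_* = \mathrm{id}$ as recorded just before the lemma). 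Concretely, the incidence conditions $u(z_p) = u_p(0)$ with flowlines governed by $f_{(1p)\cdot v}$ are manifestly $\pi$-equivariant: the $S_4$-labeling of which flowline sits at which marked point pulls back to the $D_8$-labeling on the cover, so the defining equations of the two moduli spaces correspond under $\pi$ on the nose.

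The step I expect to be the main obstacle is verifying that the pseudocycle representative $\delta$ of $\Psi(b) \in H_*(P_{D_8})$ can be chosen so that $\pi \circ \delta$ simultaneously represents $\pi_*\Psi(b)$ \emph{and} is transverse to the $S_4$-equivariant evaluation pseudocycle of Equations \eqref{pseudoaaa1}--\eqref{pseudoaaa2}, while $\delta$ itself is transverse to the $D_8$-equivariant evaluation pseudocycle — i.e. that the genericity conditions on both sides can be met compatibly with the covering. This should follow from the fact that transversality is a generic condition and $\pi$ is a local diffeomorphism, so a generic perturbation downstairs pulls back to a generic perturbation upstairs and vice versa; but the bookkeeping with the $m_1 + gm_1 + g^2m_1$ term (whose three summands lie over a single $S_4$-orbit but three distinct $D_8$-orbits, with $gm_1 = m_2$) needs care to ensure no spurious over- or under-counting. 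Once transversality is arranged, the equality of the two $\mathbb{Z}/2$-counts is the statement that a covering map of odd degree preserves mod-$2$ intersection numbers between a cycle and its pushforward/pullback, which is standard.
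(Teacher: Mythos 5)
Your proposal follows essentially the same route as the paper's proof: both exploit that $\pi$ (and more precisely the induced map $M \times ((M^{\times 4} \times \overline{M}_{0,5}) \times_{D_8} ES_4^{p,q,r}) \to M \times ((M^{\times 4} \times \overline{M}_{0,5}) \times_{S_4} ES_4^{p,q,r})$) is a $3$-to-$1$ covering, push $\Psi(b)$ forward via $\pi \circ \delta$, observe that transversality upstairs and downstairs are equivalent since the covering is a local diffeomorphism, and conclude because $3 \equiv 1 \pmod 2$. The paper phrases the multiplicity comparison as "the $D_8$-count is three times the $S_4$-count" rather than a literal bijection, but this is exactly the odd-degree observation you flag, and the $\mathbb{Z}/2$ reduction makes the two formulations agree.
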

\begin{proof}
	Suppose that we pick some pseudocycle representative $f: X \rightarrow BD_8$ of $\Psi(b) \in H_*(BD_8)_{red}$ (or specifically, some stratum $BD_8^{p,q,r}$). To define a pseudocycle representative of $\pi_* \Psi(b) \in H_*(BS_4)_{red}$, we choose $\pi \circ f$. So in particular, there is a pseudocycle representative of $(m_1 + g m_1 + g^2 m_1) \otimes \Psi (b)$ of the form $$f': \{pt_1,pt_g,pt_{g^2} \} \times X \rightarrow PD_8, \ f'(pt_a, x) = (a \cdot m_1, f(x)),$$ which we see descends to a $D_8$-equivariant pseudocycle, and similarly an $S_4$-equivariant pseudocycle: $$\pi \circ f': \{pt_1,pt_g,pt_{g^2} \} \times X \rightarrow PS_4, \ \pi \circ f'(pt_a, x) = (a \cdot m_1,\pi \circ f(x)).$$

Let $z \in \text{crit}(f)$. Let $\overline{\mathcal{M}}(J,j)$ be a partial compactification of the space of genus $0$ stable $J$-holomorphic maps (i.e. excluding repeated or multiply covered components). Recall from Lemma \ref{lemma:additiveandindep} the means by which we determine the coefficient of $z$ in $q_{S_4}((m_1 + g m_1 + g^2 m_1) \otimes \pi_* \Psi (b))(x)$ as an intersection number. One defines a $5$-pointed Gromov-Witten invariant assigned to $\overline{\mathcal{M}}(J,j)$. Push this forwards along the map $$\mathcal{W}: \overline{\mathcal{M}}(J,j) \times ES_4^{p,q,r} \rightarrow M \times ((M \times M \times M \times M \times \overline{M}_{0,5}) \times_{S_4} ES_4^{p,q,r})$$ (for some $p,q,r$), which is induced by the evaluation map on the five marked points, the stabilisation map $\overline{\mathcal{M}}(J,j)  \rightarrow \overline{M}_{0,5}$ and the identity on the $ES_4^{p,q,r}$ factor. This determined a cohomology class in $M \times ((M \times M \times M \times M \times \overline{M}_{0,5}) \times_{S_4} ES_4^{p,q,r})$. There is also a pseudocycle constructed using the evaluation maps on the partially compactified (un)stable manifolds $W^u(z,f)$, $W^s(x,f_{v},s)$, $W^s(x,f_{(12) \cdot v},s)$, $W^s(x,f_{(13) \cdot v},s)$, $W^s(x,f_{(14) \cdot v},s)$, alongside the map $\pi \circ f'$. The intersection of the image of the (equivariant) Gromov-Witten invariant with the pseudocycle provides the coefficient of $z$. A similar argument holds for $q_{D_8}$, this time using the map $f'$. Then $$M \times ((M \times M \times M \times M \times \overline{M}_{0,5}) \times_{D_8} ES_4^{p,q,r}) \rightarrow M \times ((M \times M \times M \times M \times \overline{M}_{0,5}) \times_{S_4} ES_4^{p,q,r})$$ is a $3$-to-$1$ covering, hence the coefficients of this intersection differ by multiplying the $S_4$-coefficient by three. As we work over $\mathbb{Z}/2$-coefficients, multiplication by three is the identity.

Ensuring transversality for pseudocycles in both the base and the cover simulataneously is not an issue, as the property of an intersection being transverse is preserved under a $p$-fold smooth covering map (being a local diffeomorphism), so it suffices to ensure transversality on the cover, $M \times ((M \times M \times M \times M \times \overline{M}_{0,5}) \times_{D_8} ES_4^{p,q,r})$, which we know by Appendix \ref{subsec:quantumademrels}.
\end{proof}

\begin{lemma}
		For $b' \in \mathcal{B}' := \hat{\mathcal{B}} - \pi^* \mathcal{B}$, $$q_{D_8} ((m_1 + g m_1 + g^2 m_1) \otimes \Psi (b')) = 0.$$
\end{lemma}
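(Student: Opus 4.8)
The plan is to reduce this to Lemma \ref{lemma:adem1} together with a short computation in the cohomology of $BD_8$ and $BS_4$; the statement should be essentially formal once Lemma \ref{lemma:adem1} is in hand. First I would invoke Lemma \ref{lemma:adem1} with $b = b'$, which gives
\[
q_{D_8}\bigl((m_1 + g m_1 + g^2 m_1) \otimes \Psi(b')\bigr) = q_{S_4}\bigl((m_1 + g m_1 + g^2 m_1) \otimes \pi_* \Psi(b')\bigr).
\]
Since $q_{S_4}(D)(\alpha)$ is additive in the homology class $D$ (by the argument of Lemma \ref{lemma:additiveandindep}) and $D \mapsto (m_1 + g m_1 + g^2 m_1) \otimes D$ is linear, it then suffices to prove that $\pi_* \Psi(b') = 0$ in $H_*(BD_8)_{red}$ pushes to zero, i.e. that $\pi_* \Psi(b') = 0$ in $H_*(BS_4)_{red}$.

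To establish $\pi_* \Psi(b') = 0$ I would use the explicit description of $\Psi$: taking $\Psi \colon H^*(BD_8)_{red} \to H_*(BD_8)_{red}$ to be the universal coefficients isomorphism associated to the basis $\hat{\mathcal{B}} = \pi^* \mathcal{B} \cup \mathcal{B}'$, the class $\Psi(b')$ is the corresponding dual basis vector, so for any $c \in H^*(BD_8)_{red}$ the quantity $\langle c, \Psi(b') \rangle$ is precisely the coefficient of $b'$ in the $\hat{\mathcal{B}}$-expansion of $c$. Pairing with an arbitrary $\alpha \in H^*(BS_4)_{red}$ and using naturality of the evaluation pairing under $\pi \colon ES_4/D_8 \to ES_4/S_4$ gives $\langle \alpha, \pi_* \Psi(b') \rangle = \langle \pi^* \alpha, \Psi(b') \rangle$; but $\pi^* \alpha$ lies in the span of $\pi^* \mathcal{B}$, none of whose elements equal $b'$ inside the basis $\hat{\mathcal{B}}$, so this coefficient vanishes. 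Since $H^*(BS_4)_{red}$ is finitely generated in each degree (by the Cartan--Leray spectral sequence, as noted earlier), the evaluation pairing is nondegenerate, and therefore $\pi_* \Psi(b') = 0$, which by the previous paragraph completes the proof.

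The one point I would be careful about — and the only real content beyond Lemma \ref{lemma:adem1} — is that the description of $\Psi$ used above must agree with the $\Psi$ fixed in the surrounding definitions. This is harmless: $q_{D_8}(D)(\alpha)$ depends only on the homology class $D$, and $qq$ is independent of the basis chosen to define $\Psi$, because $\sum_{\tilde b} q_{D_8}((m_1 + g m_1 + g^2 m_1) \otimes \Psi(\tilde b))(\alpha) \cdot \tilde b$ is just the image of the fixed linear map $D \mapsto q_{D_8}((m_1 + g m_1 + g^2 m_1) \otimes D)(\alpha)$ under the canonical identification $\mathrm{Hom}(H_*(BD_8)_{red}, QH^*(M)) \cong QH^*(M) \otimes H^*(BD_8)_{red}$. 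Hence one may simply take $\Psi$ to be the isomorphism attached to $\hat{\mathcal{B}}$ from the outset, and the argument above applies verbatim.
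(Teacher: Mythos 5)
Your argument is correct and matches the paper's proof essentially step for step: apply Lemma \ref{lemma:adem1}, then show $\pi_*\Psi(b')=0$ by pairing against the basis $\mathcal{B}$ (equivalently, against arbitrary $\alpha$) and using $\langle b, \pi_* B'\rangle = \langle \pi^* b, B'\rangle$ together with the fact that $\pi^*\mathcal{B}\subset\hat{\mathcal{B}}$ while $b'\notin\pi^*\mathcal{B}$. The closing remark about basis-independence of $\Psi$ is a reasonable extra safety check but is not needed, as the paper fixes $\Psi$ once and for all via its chosen bases.
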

\begin{proof}
		By Lemma \ref{lemma:adem1}, $$q_{D_8} ((m_1 + g m_1 + g^2 m_1) \otimes \Psi (b')) = q_{S_4} ((m_1 + g m_1 + g^2 m_1) \otimes \pi_* \Psi (b')).$$ If $B' = \Psi (b')$ then for all $b \in \mathcal{B}$, $$\langle b, \pi_* \Psi(b') \rangle  = \langle b, \pi_* B' \rangle = \langle \pi^* b, B' \rangle = \langle \pi^* b, \Psi(b') \rangle = 0$$ by definition of the dualising isomorphism $\Psi$. Hence $\pi_* \Psi (b') = 0$.
\end{proof}

This implies that \begin{equation} \label{equation:qq} qq(\alpha) := \sum_{\pi^* b \in \pi^* \mathcal{B}} q_{D_8}((m_1 + g m_1 + g^2 m_1) \otimes \Psi (\pi^* b)) (\alpha) \cdot \pi^* b. \end{equation}

\begin{lemma}
\label{lemma:quantumademdiagram}
The following diagram commutes:

\begin{equation} \label{quantumademdiagram}
\xymatrix{
H^*(M)
\ar@{->}^-{\hat{qq}}[r]
\ar@{->}_-{=}[d]
&
QH^*(M) \otimes H^*(BS_4)_{red}
\ar@{->}^-{id_{H^*(M)} \otimes \pi^*}[d]
\\
H^*(M)
\ar@{->}^-{qq}[r]
&
QH^*(M) \otimes H^*(BD_8)_{red}
}
\end{equation}
\end{lemma}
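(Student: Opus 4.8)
The plan is to mirror the proof of the classical Diagram \eqref{classicalademdiagram}, i.e. the statement of Fact \ref{Fact:commuteAdem}, upgrading it to the quantum setting by tracking the $T$-powers throughout. The key point is that both $q_{S_4}$ and $qq$ are, by Definitions \ref{defn:qs4} and \eqref{equation:qq}, assembled from the same underlying operations $q_{S_4}(D)(\alpha)$ and $q_{D_8}(D)(\alpha)$ attached to homology classes $D$ in $\overline{M}_{0,5} \times_G ES_4$ for $G = S_4$ or $D_8$, and that these are related by the covering map $\pi: P_{D_8} \to P_{S_4}$.

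First I would show the naturality statement at the level of the operators $q_G(-)(\alpha)$: for $D \in H_*(P_{S_4})$,
\begin{equation}
\label{equation:qnaturality}
q_{S_4}(D)(\alpha) = q_{D_8}(\pi_!(D))(\alpha) \quad \textrm{and dually} \quad q_{S_4}(\pi_* D')(\alpha) = q_{D_8}(D')(\alpha),
\end{equation}
where the second identity is exactly the content of Lemma \ref{lemma:adem1} applied fibrewise; this is where the $3$-to-$1$ covering argument from that lemma (transversality being preserved under the local diffeomorphism $\pi$, and multiplication by $3$ being the identity mod $2$) does the work. Crucially, the moduli spaces defining $q_{S_4}(D)(\alpha)$ and $q_{D_8}(D')(\alpha)$ are the \emph{same} spaces of $J$-holomorphic stable maps with the same energy-$j$ stratification, so the $T^j$-coefficients match term by term; passing from $BD_8$ to $BS_4$ only affects the equivariant parameter space, not the holomorphic curve count.

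Next I would feed this into the definitions of $q_{S_4}$ and $qq$. Writing $\alpha$ fixed and using Definition \ref{defn:qs4}, $q_{S_4}(\alpha) = \sum_{b \in \mathcal{B}} q_{S_4}((m_1 + gm_1 + g^2 m_1) \otimes \Psi(b))(\alpha) \cdot b$. Applying $id \otimes \pi^*$ gives $\sum_{b} q_{S_4}((m_1+gm_1+g^2m_1) \otimes \Psi(b))(\alpha) \cdot \pi^* b$. By Lemma \ref{lemma:adem1}, each coefficient equals $q_{D_8}((m_1+gm_1+g^2m_1) \otimes \Psi(\pi^* b))(\alpha)$ — here one uses that $\Psi(\pi^* b) = \pi_!(\Psi(b))$ up to the identification, or more directly that $\pi_* \Psi(b)$ is computed by the pushforward pseudocycle as in the proof of Lemma \ref{lemma:adem1}, so $q_{D_8}((m_1+gm_1+g^2m_1)\otimes \Psi(\pi^* b)) = q_{S_4}((m_1+gm_1+g^2m_1) \otimes \pi_* \Psi(\pi^* b)) = q_{S_4}((m_1+gm_1+g^2m_1)\otimes \Psi(b))$, using $\pi_* \circ \pi^* = id$ mod $2$. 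Hence $(id \otimes \pi^*) \circ q_{S_4}(\alpha) = \sum_{\pi^* b \in \pi^* \mathcal{B}} q_{D_8}((m_1+gm_1+g^2m_1)\otimes \Psi(\pi^* b))(\alpha) \cdot \pi^* b$, which is exactly $qq(\alpha)$ by the formula \eqref{equation:qq}. Finally I would note the consistency check that the preceding lemma kills the $\mathcal{B}'$-components, so the sum \eqref{equation:qq} genuinely captures all of $qq$ and no information is lost in restricting to $\pi^* \mathcal{B}$.

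The main obstacle I anticipate is purely bookkeeping rather than geometric: making sure the dualising isomorphism $\Psi$ intertwines $\pi_*$ on homology with $\pi^*$ on cohomology in the right way (i.e. that $\Psi(\pi^* b)$ really is represented by the pseudocycle $\pi \circ f'$ used in Lemma \ref{lemma:adem1}), and that the finite-generation hypothesis guaranteeing $\Psi$ exists — provided by the Cartan--Leray spectral sequence for $P_{D_8}$ and $P_{S_4}$ — is in force. Once that identification is pinned down, commutativity of Diagram \eqref{quantumademdiagram} is a formal consequence of Lemma \ref{lemma:adem1} together with $\pi_* \circ \pi^* = id$ over $\mathbb{Z}/2$, with the $T$-grading carried along for free because it is invisible to the $S_4$-versus-$D_8$ distinction.
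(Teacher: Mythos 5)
Your overall strategy matches the paper's: apply $id \otimes \pi^*$ to $q_{S_4}(\alpha)$, invoke Lemma \ref{lemma:adem1} to convert an $S_4$-operator into the corresponding $D_8$-operator, and recognize the result as $qq(\alpha)$ via Equation \eqref{equation:qq}. Your remark that the $T$-grading ``comes along for free'' is also correct and uncontroversial. However, the step you repeatedly lean on --- that ``$\pi_* \circ \pi^* = id$ mod $2$'' forces $\pi_* \Psi(\pi^* b) = \Psi(b)$ --- is not justified, and this is in fact the only non-routine part of the argument.

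First, $\pi_* \circ \pi^*$ does not typecheck: $\pi^*$ is contravariant on cohomology ($H^*(BS_4)_{red} \to H^*(BD_8)_{red}$) while $\pi_*$ is covariant on homology ($H_*(BD_8)_{red} \to H_*(BS_4)_{red}$), so there is no such composition. What the paper actually established is $i^* \circ \pi^* = id$ and $\pi_* \circ i_* = id$, where $i^*$ and $i_*$ are the transfers. Second, and more seriously, the identity you need --- $\pi_* \Psi(\pi^* b) = \Psi(b)$ --- is not a formal consequence of any degree-of-covering identity: $\Psi$ is defined by a specific choice of dual basis, and the identity depends crucially on $\hat{\mathcal{B}}$ having been chosen to \emph{contain} $\pi^* \mathcal{B}$ as a subset. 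Had $\hat{\mathcal{B}}$ been chosen differently, $\pi_* \Psi(\pi^* b)$ need not equal $\Psi(b)$. The paper supplies the missing two lines: with $D = \Psi(\pi^* b)$, one has $\langle \pi^* d, D \rangle = \delta_{d,b}$ for $d \in \mathcal{B}$ (because $\pi^* \mathcal{B} \subset \hat{\mathcal{B}}$ and $\pi^*$ is injective), hence $\langle d, \pi_* D \rangle = \delta_{d,b}$ by adjointness of $\pi_*$ and $\pi^*$ under the evaluation pairing, which is exactly the defining property of $\Psi(b)$. You flag this as ``bookkeeping'' and correctly identify it as the potential obstacle, but your proposal does not actually carry it out, and the rationale you offer for skipping it is not a valid statement. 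With that short computation supplied, the rest of your argument would match the paper's proof.
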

\begin{proof}
	Observe that $$(id \otimes \pi^*) \hat{qq}(\alpha) = \sum_{\pi^* b \in \pi^* \mathcal{B}} q_{S_4} ((m_1 + g m_1 + g^2 m_1) \otimes \Psi (b)) (\alpha) \cdot \pi^* b.$$ Then $$qq (\alpha) = \sum_{\pi^* b \in \pi^* \mathcal{B}} q_{S_4}((m_1 + g m_1 + g^2 m_1) \otimes \pi_* \Psi (\pi^* b)) (\alpha) \cdot \pi^* b$$ using Equation \eqref{equation:qq} and Lemma \ref{lemma:adem1}. For $b \in \mathcal{B}$, let $D = \Psi (\pi^* b)$. Then $\langle \pi^* b, D \rangle = 1$ and $\langle \hat{b}, D \rangle = 0$ for all $\hat{b} \in \hat{\mathcal{B}} - \pi^* b$, specifically for $\hat{b} = \pi^* d$ with $d \in \mathcal{B} - b$. Hence $\langle d, \pi_* D \rangle = 0$ for $d \in \mathcal{B} - b$ and $\langle b, \pi_* D \rangle = 1$, so $\pi_* \Psi (\pi^* b) = \Psi (b)$ by definition of $\Psi$.
\end{proof}

			We now pick a different basis $\tilde{\mathcal{B}}$ for $H^*(BD_8)_{red}$ (i.e. different from $\hat{B}$) consisting of elements of the form $e^i \sigma_2^j$ (see Section \ref{subsec:classadem} to recall the notation). Let \begin{equation} \label{equation:qqijdef}qq_{i,j}(\alpha) := q_{D_8}((m_1 + g m_1 + g m_1) \otimes \Psi (e^{i} \sigma_{2}^{j})), \end{equation} the coefficient of $e^{i} \sigma_{2}^{j}$ in $qq(\alpha)$. 

			\begin{proof}[Proof of Theorem \ref{thm:QAR}, The Adem Relations]
			The theorem follows immediately by Lemma \ref{lemma:quantumademdiagram} and the combinatorial argument in Theorem \ref{thm:car}.
			\end{proof}

			We relate this to a composition of quantum Steenrod squares. Firstly, we observe that instead of using $BD_8$ as our parameter space, we will use the spaces $E$ and $B = E / D_8$ as defined in Appendix \ref{subsec:ed8}. Recall from Appendix \ref{subsec:es4}, there is a map $\rho: ES_4 \rightarrow \mathbb{S}_2 = E$, a space that is an $ED_8$ (i.e. a contractible space with a free $D_8$-action). 

Recall then that $E$ is stratified by finite dimensional $D_8$-invariant submanifolds $E^{i,j} = S^i \times (S^j \times S^j)$. Let $D^{j,+}$ be the upper $i$-dimensional hemisphere as usual. It is immediate that $D^{i,+} \times D^{j,+} \times D^{j,+} \subset E$ represents a closed cycle in $H_*(BD_8)$. 

\begin{lemma}
\label{lemma:submandidjdj}
The submanifold $D^{i,+} \times D^{j,+} \times D^{j,+}$ represents $\Psi (e^i \sigma_2^j)$
\end{lemma}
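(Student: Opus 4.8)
The plan is to identify $D^{j,+} \times D^{j,+} \subset S^j \times S^j$ with a cycle computing the cohomology of $B(\mathbb{Z}/2 \times \mathbb{Z}/2)$, and $D^{i,+} \subset S^i$ with a cycle computing the cohomology of $B\mathbb{Z}/2$, and then track these classes under the inclusion-induced maps in Diagram \eqref{commutativediagramofgroups}. First I would recall, from Appendix \ref{subsec:ed8}, that $E = \mathbb{S}_2$ carries a free $D_8$-action built from the antipodal actions on the $S^i$ and $S^j \times S^j$ factors, with $(12)$ and $(34)$ acting by negation on the first and second $S^j$ respectively, and $(13)(24)$ swapping the two $S^j$ factors and negating the $S^i$ factor (I would double-check the precise recipe against the appendix). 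The subgroup $\mathbb{Z}/2 \times \mathbb{Z}/2 = \langle (12),(34)\rangle$ then acts only on the $S^j \times S^j$ factor, antipodally on each coordinate, so $(S^j \times S^j)/(\mathbb{Z}/2 \times \mathbb{Z}/2) \simeq \mathbb{RP}^j \times \mathbb{RP}^j$, and the standard cell structure shows that $D^{j,+} \times D^{j,+}$ represents the Poincaré dual (via $\Psi$) of $x^j y^j \in H^*(B(\mathbb{Z}/2\times\mathbb{Z}/2))$ in the appropriate truncation — here $x,y$ are the degree-$1$ generators as in \eqref{commutativediagramofgroups}.

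Next I would compute the image of $[D^{i,+}\times D^{j,+}\times D^{j,+}]$, or rather of its dual cohomology class, in $H^*(BD_8)_{red}$. The cleanest route is cohomological: the class $\Psi(D^{i,+}\times D^{j,+}\times D^{j,+})$ is by construction the external product of the class dual to $D^{i,+}$ on the $S^i/(\text{res. }\mathbb{Z}/2)$ factor and the class dual to $D^{j,+}\times D^{j,+}$ on the $(S^j\times S^j)/(\mathbb{Z}/2\times\mathbb{Z}/2)$ factor, pushed into $H^*(BD_8)$ via the bundle projections. Restricting along $i_1: H^*(BD_8)\to H^*(B\mathbb{Z}/2)$ (the subgroup $\langle(13)(24)\rangle$, which acts freely only on the $S^i$ factor together with the diagonal of $S^j\times S^j$) and along $i_2: H^*(BD_8)\to H^*(B(\mathbb{Z}/2\times\mathbb{Z}/2))$ and matching with the formulas $i_1(e)=e$, $i_2(\sigma_1)=x+y$, $i_2(\sigma_2)=xy$ from \eqref{commutativediagramofgroups} pins down the class. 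Since $H^*(BD_8)_{red}$ is detected on these subgroups (the generators $e,\sigma_2$ restrict faithfully enough — this is exactly why the reduced subalgebra is the natural object), and since the $S^i$ factor contributes a pure $e$-power while the symmetric $D^{j,+}\times D^{j,+}$ contributes the $\sigma_2$-power (being the class dual to $(xy)^j$ under $i_2$), one concludes the class is $e^i\sigma_2^j$, i.e. the submanifold represents $\Psi(e^i\sigma_2^j)$ as claimed.

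The main obstacle I expect is bookkeeping the precise $D_8$-action on $E^{i,j}=S^i\times(S^j\times S^j)$ and verifying that the cell $D^{i,+}\times D^{j,+}\times D^{j,+}$ is genuinely a \emph{cycle} in $H_*(BD_8)$ (one must check its boundary vanishes after passing to the $D_8$-quotient, using that $D^{i,+}$ and each $D^{j,+}$ have boundary $D^{i-1,+}+D^{i-1,-}$ and that the relevant group elements identify $+$ and $-$ hemispheres appropriately — an argument parallel to the discussion preceding Lemma \ref{lemma:lem111}). The second delicate point is ruling out contributions of $\sigma_1$ (equivalently, checking the class lies in $H^*(BD_8)_{red}$): this should follow because the product cell is invariant under the subgroup generated by $(12)(34)$ acting appropriately, or more robustly because its restriction under $i_2$ is symmetric in $x,y$, forcing the $\sigma_1$-exponent to be even and in fact combining with $\sigma_2$; since we work in $H^*(BD_8)_{red}$ where $\sigma_1$ is discarded, there is nothing further to check. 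Once the action and the cycle condition are nailed down, the identification of the class is a short computation with the maps in \eqref{commutativediagramofgroups}.
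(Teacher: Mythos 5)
Your reduction to the two base cases $j=0$ and $i=0$ — identifying $D^{i,+}$ and $D^{j,+}\times D^{j,+}$ with the standard cells in $B\mathbb{Z}/2$ and $B(\mathbb{Z}/2\times\mathbb{Z}/2)$ and tracking them via the maps in Diagram \eqref{commutativediagramofgroups} — is the right starting point, and it matches what the paper does for those cases (the paper computes the pushforwards $(i_1)_*$ and $(i_2)_*$ to show $\Psi(e^i)=[D^{i,+}\times D^{0,+}\times D^{0,+}]$ and $\Psi(\sigma_2^j)=[D^{0,+}\times D^{j,+}\times D^{j,+}]$). The gap is in how you combine the two.

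The detection claim, ``$H^*(BD_8)_{red}$ is detected on these subgroups,'' is false. From the restriction formulas in Diagram \eqref{commutativediagramofgroups} (``all other generators map to $0$''), $i_1(\sigma_2)=0$ and $i_2(e)=0$, so $i_1(e^i\sigma_2^j)=0$ whenever $j>0$ and $i_2(e^i\sigma_2^j)=0$ whenever $i>0$. Thus restriction to $K_1=\langle(13)(24)\rangle$ and $K_2=\langle(12),(34)\rangle$ annihilates every mixed monomial $e^i\sigma_2^j$ with $i,j>0$, while $H^*(BD_8)_{red}=\mathbb{Z}/2[e,\sigma_2]$ has multi-dimensional graded pieces, so these restrictions cannot pin down the class. (This is consistent with Quillen detection for $D_8$: the maximal elementary abelian subgroups are the two Klein four-groups, and $K_1$ is not one of them.) The phrase ``by construction the external product'' also does not rescue this: $B=E/D_8$ is not a product of $B\mathbb{Z}/2$ with $B(\mathbb{Z}/2\times\mathbb{Z}/2)$, since the $D_8$-action mixes the factors and the extension $1\to K_2\to D_8\to\mathbb{Z}/2\to 1$ is only a semidirect product, so there is no external-product structure on $H^*(BD_8)$ through which the subgroup classes could be ``pushed in.'' The paper's fix is to do the multiplication at the chain level of the genuine product $E=S^\infty\times S^\infty\times S^\infty$: having established the two base cases, $e^i$ is represented by the $D_8$-equivalence class of the indicator cochain $x_i\otimes x_0\otimes x_0$ and $\sigma_2^j$ by $x_0\otimes x_j\otimes x_j$, so by the K\"unneth cup-product formula (no signs over $\mathbb{Z}/2$) their product is represented by $x_i\otimes x_j\otimes x_j$, whose $\Psi$-dual is exactly $[D^{i,+}\times D^{j,+}\times D^{j,+}]$. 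You should replace the detection step with this chain-level K\"unneth argument; your bookkeeping of the $D_8$-action on $E^{i,j}$ and the cycle condition on the hemisphere chains is otherwise fine.
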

\begin{proof}
Consider the projections $$k_1: E \rightarrow S^{\infty}, \ (x,(x_1,x_2)) \mapsto x$$ and $$k_2: E \rightarrow S^{\infty} \times S^{\infty}, \ (x,(x_1,x_2)) \mapsto (x_1,x_2),$$ which are respectively $\mathbb{Z}/2 \cong \langle (13)(24) \rangle$- and $\mathbb{Z}/2 \times \mathbb{Z}/2 \cong \langle (12), (34) \rangle$-equivariant. Indeed, they induce respectively $\mathbb{Z}/2$- and $\mathbb{Z}/2 \times \mathbb{Z}/2$-equivariant homotopy equivalences for the same reason as the map at the end of Appendix \ref{subsec:es4}. We abusively denote by $k_1,k_2$ the maps after quotienting by the free $\mathbb{Z}/2$-action. Combining these with the quotient maps $$l_1: E /( \mathbb{Z}/2) \rightarrow B$$ and $$l_1: E / (\mathbb{Z}/2 \times \mathbb{Z}/2) \rightarrow B,$$ we obtain $i_p$ from Diagram \ref{commutativediagramofgroups} as the composition $i_p = l_p^* \circ (k_p^*)^{-1}$, for $p=1,2$. 

If $j=0$ then observe that $i_1(e^i) = e^i$ using Diagram \ref{commutativediagramofgroups}. Notice that for the homogeneous choice of homology basis, $\Psi(e^i)$ in $B \mathbb{Z}/2$ is represented by $D^{i,+} \subset S^{\infty}$. Letting $(i_{p})_{*}: H_*(B \mathbb{Z}/2) \rightarrow H_*(B)$ be the pushforward, observe that $$\Psi(e^i) = (i_{1})_{*} \Psi (i_{1})^{*}(e^i) = (i_{1})_{*} \Psi(e^i) = (i_{1})_{*}([D^{i,+}]) = [D^{i,+} \times D^{0,+} \times D^{0,+}] \in H_*(B).$$ Hence the result holds for $j=0$. Similarly the result holds for $i=0$, using the homomorphism $i_2$ and replacing $e$ by $\sigma_2^j$ and $D^{i,+}$ by $D^{j,+} \times D^{j,+}$, we see that $\Psi(\sigma_2^j)$ is respresented by $[D^{0,+} \times D^{j,+} \times D^{j,+}]$. 

Observe, via the K\"unneth isomorphism, that elements of $H^*(B)$ may be represented as $D_8$-equivalence classes of cochains $x \otimes y \otimes z$, where $x \in C^*(S^{\infty})$. By the previous paragraph, we know that $e^i \sigma_2^j$ is represented by $x_i \otimes x_j \otimes x_j$, where $x_i$ is the indicator homomorphism for a simplex representing $D^{i,+}$. Then $\Psi([x_i \otimes x_j \otimes x_j]) = [D^{i,+} \times D^{j,+} \times D^{j,+}]$ as required.
\end{proof}

We reinterpret the operation $qq$ in terms of the parameter space $B = E / D_8$ (as opposed to $BD_8 = ES_4 / D_8$). Observe that the space $E$ does not have an action of any element of $S_4 - D_8$. Hence, in Definition \ref{defn:s4operators} we no longer ask for our Morse function $f_{v,s}$ to have invariance under $(23),(24)$ (as this is meaningless). Further, we choose the $f_{v,s}$ that we use for incidence conditions to be respectively $f_{v}, f_{(12) \cdot v}, f_{(13)(24) \cdot v}, f_{(14)(23) \cdot v}$. (Observe that when we used the parameter space $ES_4$, the invariance conditions imply that $f_{(13)(24) \cdot v} = f_{(13) \cdot v}$ and $f_{(14)(23) \cdot v} = f_{(14) \cdot v}$). 

For the following proof, we fix $\alpha \in \text{crit}(f)$.

			\begin{proof}[Proof of Corollary \ref{corollary:QAR}]
				The corollary follows from Theorem \ref{thm:QAR} if we prove that for each $i,j \in \mathbb{Z}_{\ge 0}$ \begin{equation} \label{equation:corollaryequation} q_{D_8}(m_1  \otimes \Psi (e^{i} \sigma_{2}^{j}))(\alpha) = \sum_{b,d} Q\mathcal{S}^{i,b} \circ Q\mathcal{S}^{j,d}(\alpha). \end{equation} Henceforth we will fix some choice of $b,d$, and count those contributions to $q_{D_8}$, ostensibly denoted $q_{D_8,b,d}$, which arise from counting configurations with nodal curves with three components, corresponding to a nodal sphere comprised of a sphere of Chern number $bN$ attached to two spheres of Chern number $dN$ at $1$ and $\infty$. As we have now fixed $b,d$, we abusively exclude them from the notation.

To prove Equation \eqref{equation:corollaryequation}, we use a similar idea to proving the Cartan relation, as is illustrated in Figure \ref{fig:ademmodulispace}. Specifically, recall the $1$-dimensional space of graphs $T^c$ from Section \ref{subsec:Cartan}. Recall from Lemma \ref{lemma:lem1} that for each $t \in [0,\infty]$ there is a space $|t|_Q$, consisting of three copies of $S^2$ with semi-infinite or finite lines attached at $0,1,\infty$ (with the length of the finite edges being $t$ for $t \in [0,\infty)$). Associated to each $t \in [0,\infty)$, we define $f_{v,s,t}$ and $g_{v,s,t}$ for $v \in E$ and for $s \in [0,\infty)$ or $[0,t]$ respectively, and $t \in T$. We also choose a perturbation $f_s$ for $s \in (-\infty, 0]$. We choose these such that:
				\begin{enumerate}
					\item $f_{s} = f$ for $s \le -1$.
					\item $f_{(14)(23) \cdot v, s, t} = f_{(13)(24) \cdot v, s, t} = f_{v,s,t}$ for $t \ge 1$,
					\item $g_{(12) \cdot v,s,t} = g_{(34) \cdot v,s,t} = g_{v,s,t}$ for all $v,s,t$.
					\item $f_{v,s,t} = f$ for $s \ge 1$, for all $v,t$.
					\item $g_{v,s,t} = f$ when $t \ge 1$ and $s \ge 1$.
				\end{enumerate}

These conditions are analogues of those made in Definition \ref{defn:s4operators} (here adapted to the $D_8$ case).

		\begin{figure}
			\input{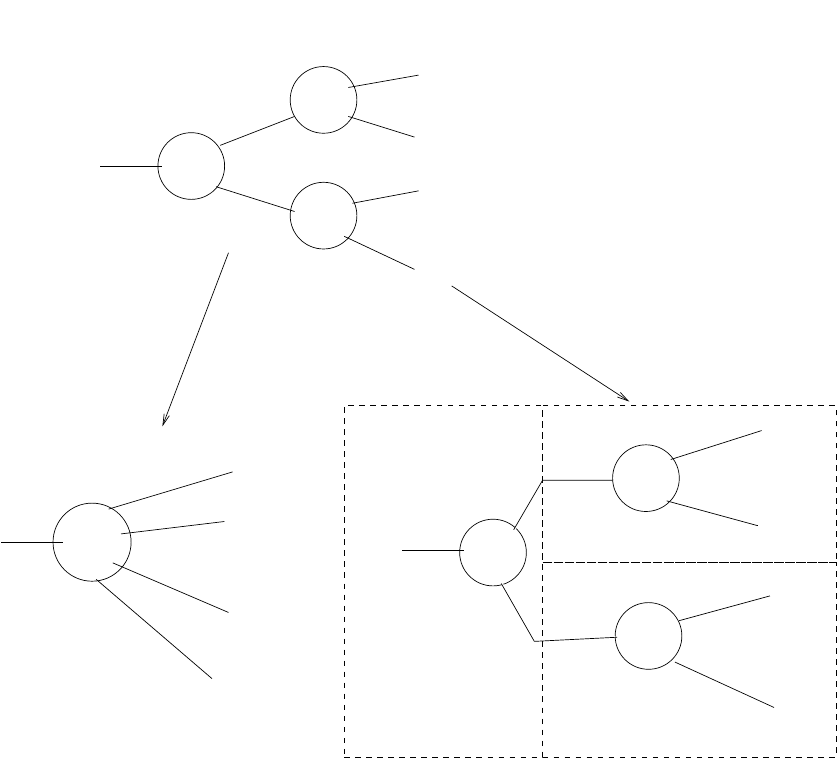_t}
			\caption{Moduli space for the Adem relations.}
			\label{fig:ademmodulispace}
		\end{figure}

		Similarly to the case of the quantum Cartan relation, we define for each $t \in [0,\infty]$ a moduli space $\tilde{\mathcal{M}}_t(\alpha, z)$, consisting of pairs $(v,u)$ such that $v \in S^{i,+} \times S^{j,+} \times S^{j,+}$ (see Lemma \ref{lemma:submandidjdj}) and $u: |t|_Q \rightarrow M$ such that $u$ is $J$-holomorphic, with the Chern number on each sphere being as fixed at the beginning of the proof, and edge and asymptotic conditions as in Figure \ref{fig:ademmodulispace}. There is the previously given $D_8$-action on $E^{i,j} = S^{i,+} \times S^{j,+} \times S^{j,+}$, and $D_8$ also acts by permutations on $\overline{M}_{0,5}$ (which induces an action on the moduli space of $u: |t|_Q \rightarrow M$). Together these yield a $D_8$-action on $\tilde{\mathcal{M}}_t(\alpha, z)$, and we write $\mathcal{M}_t(\alpha, z) = \tilde{\mathcal{M}}_t(\alpha, z) / D_8$. We let $\mathcal{M}(\alpha, z) = \sqcup_{t \in T^c} \mathcal{M}_t(\alpha, z)$, which is a smooth $1$-dimensional manifold (establishing transversality is a modification of Appendix \ref{subsec:bordismquantumcartantrans} using the considerations of Appendix \ref{subsec:quantumademrels}, so we omit a restatement here).

Note that it is immediate from the definition that the $t=0$ boundary corresponds to $q_{D_8}(m_1  \otimes \Psi (e^{i} \sigma_{2}^{j}))(\alpha)$ (i.e. when the output is $z \in \text{crit}(f)$, this yields the coefficient of $z$ in $q_{D_8}(m_1  \otimes \Psi (e^{i} \sigma_{2}^{j}))(\alpha)$). Hence it remains to prove that the $t= \infty$ boundary yields the coefficient of $z$ in $Q\mathcal{S}^{i,b} \circ Q\mathcal{S}^{j,d}(\alpha)$. 

We observe that from our choice of $g_{v,s,t}$, we may ensure that $g_{v,s,\infty}$ depends only on the first summand $S^{\infty}$ of $E = S^{\infty} \times S^{\infty} \times S^{\infty}$, which we denote $\tilde{g}_{v,s}$ for $v \in S^{\infty}$. Similarly, we may ensure that $f_{v,s,\infty}$ depends only on the second two summands, denoted $\tilde{f}_{v_1,v_2,s}$ for $(v_1,v_2) \in S^{\infty} \times S^{\infty}$. Then let $S$ be the space obtained by attaching $(-\infty,0]$ to $S^2$ at $0$, and two copies of $[0,\infty)$ to $S^2$ at $1$ and $\infty$ respectively. Let $R : S \rightarrow S$ be the involution that is $z \mapsto z/(z-1)$ on $S^2$, swapping the positive half-lines and fixing the negative half-line. 

The configurations for the $t=\infty$ end then decouple. Specifically, if $v = (v_0,(v_1,v_2)) \in E$ then pairs $(v,u) \in \mathcal{M}(\alpha,z)$ correspond to two tuples as follows:
\begin{itemize}
\item A pair $(v_0,u)$ such that $v_0 \in S^{i}$, $u: S \rightarrow M$ such that $u$ is $J$-holomorphic of Chern number $bN$, satisfying conditions in Figure \ref{fig:ademmodulispace}$(I)$.
\item A four-tuple $(v_1,v_2,u_1, u_2)$ such that $(v_1,v_2) \in S^{j} \times S^{j}$, and $u_p: S \rightarrow M$ for $p=1,2$ such that $u_p$ is $J$-holomorphic of Chern number $dN$, satisfying conditions in Figure \ref{fig:ademmodulispace}$(II), (III)$ respectively.
\end{itemize}
The $D_8$ action on these pairs is as follows (recalling that the invariance conditions on $\tilde{f}$ ensures that $\tilde{f}_{v_1,v_2,t} = \tilde{f}_{-v_1,-v_2,t}$ for any $v_1,v_2 \in S^{\infty}$):
\begin{itemize}
\item $\begin{array}{l} (12) \cdot (v_0,u) = (v_0,u), \\ (34) \cdot (v_0,u)  = (v_0,u), \\ (13)(24) \cdot (v_0,u) = (-v_0, u \circ R). \end{array} $ \\
\item $\begin{array}{l} (12) \cdot (v_1,v_2,u_1,u_2) = (-v_1,v_2, u_1 \circ R, u_2 \circ R), \\ (34) \cdot (v_1,v_2,u_1, u_2)  = (v_1,-v_2,u_1 \circ R, u_2 \circ R), \\ (13)(24) \cdot (v_1,v_2,u_1, u_2) = (v_2,v_1, u_2, u_1). \end{array}$
\end{itemize}

To begin with, we see that counting the pair $(v_0, u)$ such that $v_0 \in S^{\infty}$, modulo the $D_8$ action, is exactly the coefficient of $(Q \mathcal{S}')^{i,b}(A \otimes B(x))$, where the operation $A \otimes B: QH^*(M) \rightarrow QH^*(M) \otimes QH^*(M)[h]$ is determined by counting configurations corresponding to the $(v_1,v_2,u_1,u_2)$ above  (with $(v_1, u_1)$ determining the $A$ component and $(v_2,u_2)$ determining the $B$ component), and $Q \mathcal{S}'$ is recalled from Definitions \ref{defn:mss} and \ref{defn:mqss}.

In fact, we only need to count solutions where $v_1 = v_2$ and $u_1 = u_2$. Firstly, (from Figure \ref{fig:ademmodulispace}) consider contributions from using the intermediate critical points $w_1 \neq w_2$. Then if $(v_0, u), (v_1, v_2,u_1,u_2)$ contributes to a term of the form $(Q \mathcal{S}')^{i,b}(w_1 \otimes w_2 T^d)$, it must be that $(v_0, u), (v_2, v_1,u_2,u_1)$ contributes to $(Q \mathcal{S}')^{i,b}(w_2 \otimes w_1 T^d)$. Hence together counting all such contributions, one will attain a summand of the form $$(Q \mathcal{S}')^{i,b}(n \cdot (w_1 \otimes w_2 + w_2 \otimes w_1) T^d),$$ for some $n \in \mathbb{Z}/2$. We know this to be zero by an argument as in Proposition \ref{propn:propositionftw}. Hence the only contributions we must count occur when $w_1 = w_2$, in which case if $v_1 \neq v_2$ or $u_1 \neq u_2$ then solutions come in pairs $(v_0, u_0), (v_1, v_2, u_1, u_2)$ and $(v_0, u_0), (v_2, v_1, u_2, u_1)$ which are not related by the $D_8$-action (hence are counted separately).

In particular, it is immediate that (with asymptotic conditions as given in Figure \ref{fig:ademmodulispace}) the number of pairs $(v_0,u),(v_1,u_1)$ up to the action of $D_8$, is the coefficient of $w_1$ in $Q \mathcal{S}^{j,d}(\alpha)$ multiplied by the coefficient of $z$ in $Q \mathcal{S}^{i,b}(w_1)$. Summing over all $w_1 \in \text{crit}(f)$, we get that the count of the moduli spaces of maps is then exactly the coefficient of $z$ in $Q \mathcal{S}^{i,b} \circ Q \mathcal{S}^{j,d}(\alpha)$ as required.
			\end{proof}

		\begin{rmk}
			Observe that the coefficients of $zT^0$ in $q_{D_{8}}(g m_1 \times \Psi b)(x)$ and in $q_{D_{8}}(g^2 m_1 \times \Psi b)(x)$ (corresponding to constant spheres) are the same: specifically, we are counting exactly the same moduli space in both cases. Consider the contributions of $q_{D_{8}}(g m_1 \times \Psi b)(x)$ and $q_{D_{8}}(g^2 m_1 \times \Psi b)(x)$ to Equation \eqref{equation:QAR} for constant spheres. These then cancel out modulo $2$, and so we are left with only $q_{D_8}( m_1 \times \Psi b)(x)$, which for constant $J$-holomorphic spheres is $Sq \circ Sq(x)$.
		\end{rmk}

			\begin{rmk}

			The term $qq_{j,0}(\alpha) \in QH^*(M)$ is the $h^{j}$ coefficient in $Q\mathcal{S}(\alpha) * Q\mathcal{S}(\alpha)$. This is one of the correction terms that can be computed, e.g. the $p=|\alpha|$ term in Corollary \ref{corollary:QAR}. 
			\end{rmk}

\section{$Q\mathcal{S}$ for blow-ups}
\label{sec:blowups}
Denote by $Q \mathcal{S}_{i,j}(x)$ the coefficient of $h^i T^j$ in $Q \mathcal{S}$ (where $|T|$ is the minimal Chern number of $M$). We will demonstrate calculations of $QS_{1,1}$ in two cases. The setup in both cases will be similar to the setup in \cite[Section 8]{blaier}, where Blaier computes the quantum Massey product. 

\subsection{$\mathbb{CP}^3$}

Fix two generic quadric hypersurfaces in $X = \mathbb{CP}^3$ . Their intersection $Y$ is an elliptic curve, hence a torus. We let $M = Bl_Y X$, equipped with the blowdown $\rho: M \rightarrow X$. Recall that there is a $\mathbb{CP}^1$-bundle $\pi: E \rightarrow Y$ over the torus and an inclusion $i: E \rightarrow M$ of the exceptional divisor $E$. Specifically $E$ is the projectivisation of the normal bundle of $y: Y \xhookrightarrow{} X$.

Consider the continuous $3$-disc bundle $\pi': DY \rightarrow Y$ such that $E \xhookrightarrow{} DY$ is an inclusion of the subbundle $E$, with the maps of fibres being inclusion of the boundary $S^2 \xhookrightarrow{} D^3$. Locally, let $U \subset Y$ is a trivialising neighbourhood for $Y$, so there is a homeomorphism $\phi_U: U \times S^2 \xrightarrow{\cong} \pi^{-1}(U)$. Then we define $DY$ to have the same trivialising neighbourhoods as $E$, i.e. $\phi'_U: U \times D^3 \cong \pi'^{-1}(U)$. Further, we require that the transition functions $\psi'_{U_1,U_2} :=(\phi'_{U_1})^{-1} \circ \phi'_{U_2}: (U_1 \cap U_2) \times D^3 \rightarrow (U_1 \cap U_2) \times D^3$ are defined by $$\psi'_{U_1,U_2}((r,\theta),x) = ((r, \psi_{U_1,U_2}(\theta)), x),$$ where here we use polar coordinates on $D^3$ and $\psi_{U_1,U_2}$ is the transition function on $Y$. One can use the Mayer-Vietoris sequence, by observing that $M \cup_{E} DY$ is homotopy equivalent to $X$, to write down the {\it long exact sequence of a blow-up},
\begin{equation}\label{lesblowup}
\xymatrix{
\ldots \ar@{->}[r]
&
H_*(E) \ar@{->}^-{i_* \oplus \pi_*}[r]
&
H_*(M) \bigoplus H_*(Y) \ar@{->}^-{\rho_* - y_*}[r]
&
H_*(X) \ar@{->}^-{\delta}[r]
&
H_{*-1}(E) \ar@{->}[r]
&
\ldots
}
\end{equation}
One can also use the homological Gysin sequence for the bundle $DY \rightarrow Y$ to get an exact sequence (after applying the Thom isomorphism and observing that $DY \rightarrow Y$ is a homotopy equivalence):
\begin{equation}\label{homgysin}
\xymatrix{
\ldots \ar@{->}[r]
&
H_*(E) \ar@{->}^-{\pi_*}[r]
&
H_*(Y) \ar@{->}^-{g}[r]
&
H_{*-3}(Y) \ar@{->}[r]^{\phi}
&
H_{*-1}(E) \ar@{->}[r]
&
\ldots
}
\end{equation}

Where $\delta, \phi$ are the connecting homomorphisms in the long exact sequence and the maps $i_*, \pi_*, \rho_*$ and $y_*$ are induced by the continuous maps above. The map $g$ is induced by the composition of $H_*(Y) \cong H_*(DY) \rightarrow H_*(DY, DY - Y)$ (where $DY - Y$ denotes the removal of the $0$-section) with the Thom isomorphism. Putting these together, we see that:

\begin{equation} \label{equation:eq111} H_2(M) \cong H_2(X) \oplus H_2(E)/H_2(Y) \end{equation} \begin{equation} \label{equation:eq222} i_*: H_3(E) \xrightarrow{\cong} H_3(M) \end{equation} \begin{equation} \label{equation:eq333} \phi: H_1(Y) \xrightarrow{\cong} H_3(E) .\end{equation} In particular $\dim H_3 (E) = 2$. The class of a sphere lifted from $\mathbb{P}^3$ to $M$ and the class of a fibre $\pi^{-1}(y)$ of $\pi$ over $y \in Y$ generate $H_2(M)$.

We calculate $c_1(TM)$. There is a natural embedding $j: M \rightarrow \mathbb{P}^3 \times \mathbb{P}^1$ as a complex hypersurface of bidegree $(2,1)$, with respect to the generators of $H_2(M)$ in the previous paragraph. By bidegree, we mean that the number of points of intersection of $M$ with a general curve $A$ of $\mathbb{P}^3 \times \mathbb{P}^1$ is the following:
\begin{itemize} 
\item if $A = \mathbb{P}^1 \times \mathbb{P}^0$, then we get an intersection number of $2$ because we are counting the number of solutions of a general quadric equation (the blow-up is defined as the set of $(t,[r:s]) \in \mathbb{CP}^3 \times \mathbb{CP}^1$ such that $r f(t) + s g(t) = 0$, where $f$ and $g$ are the quadric equations defining $Y$).
\item if $A = \mathbb{P}^0 \times \mathbb{P}^1$, then we obtain an intersection number of $1$ because a general point $p \in \mathbb{P}^3$ is such that there is only one point $q$ such that $(p,q) \in M.$
\end{itemize}

Functorality and the Whitney sum formula imply that $$c_{1}(TM) + c_1(v M) = c_1(T(\mathbb{P}^3 \times \mathbb{P}^1)|_M)$$ where $v M$ is the normal bundle of $M$ in $\mathbb{P}^3 \times \mathbb{P}^1$. Recall $c_1(T(\mathbb{P}^3 \times \mathbb{P}^1)|_M) = (4,2)$, and $c_1(v M) = (2,1)$ because it is the same as the degree (here we note that the Euler class can be reinterpreted as $j^* PD([M])$, represented by the self intersection of $M$, where $[M] \in H_6(\mathbb{P}^3 \times \mathbb{P}^1)$). Hence $c_1(TM) = (2,1)$. Therefore, when calculating $Q\mathcal{S}_{1,1}$ we only need to consider the spheres in the fibre class of $M$ as these are the only $J$-holomorphic spheres of Chern number $1$, which are confined to be in $E$.

Consider $Q\mathcal{S}_{1,1} : H^3(M) \rightarrow H^3(M)$. We will show that $Q\mathcal{S}_{1,1}|_{H^3(M) } = id$. First we show that calculating $Q\mathcal{S}_{1,1}|_{H^3(M)}$ reduces to calculating $Q\mathcal{S}_{1,1}: H^3(E) \rightarrow H^1(E)$. We use $i^{!}$ on cohomology to mean $PD \circ i_* \circ PD^{-1}$, and let $i_! = PD^{-1} \circ i^* \circ PD$ on homology.

\begin{lemma}
\label{lemma:SqE2M}
Fix $a \in H^3(M)$. Then
$$Q\mathcal{S}_{1,1} \circ i^* (a) = i^! \circ Q\mathcal{S}_{1,1}(a)$$ for $a \in H^3(M),$ i.e. \eqref{squareEMcommutes} commutes:

\begin{equation}\label{squareEMcommutes}
\xymatrix{
H^3(M)
\ar@{->}^-{Q\mathcal{S}_{1,1}}[r]
\ar@{->}_-{i^*}[d]
&
H^3(M)
\ar@{<-}^-{i^!}[d]
\\
H^3(E)
\ar@{->}^-{Q\mathcal{S}_{1,1}}[r]
&
H^1(E)
}
\end{equation}
\end{lemma}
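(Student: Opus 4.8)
The plan is to show that the moduli spaces defining $Q\mathcal{S}_{1,1}$ on $M$ are, after appropriate choices, supported inside the exceptional divisor $E$ and biject with the moduli spaces defining $Q\mathcal{S}_{1,1}$ on $E$. The key point established just above the lemma is that $c_1(TM) = (2,1)$, so the only $J$-holomorphic spheres of Chern number $N = 1$ are fibre-class spheres, and these are automatically contained in $E$ (a fibre-class curve cannot leave the exceptional divisor). Thus, in computing $Q\mathcal{S}_{1,1}$ via Definition~\ref{defn:singqss}, every contributing configuration $(u, v)$ has $\mathrm{Im}(u) \subset E$.

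First I would fix a generic almost complex structure $J$ on $M$ that is compatible with the $\mathbb{CP}^1$-bundle structure on $E$ in a neighbourhood of $E$, so that the fibre-class moduli space on $M$ agrees with the corresponding moduli space internal to $E$ (the $\mathbb{CP}^1$-bundle $\pi: E \to Y$ has fibrewise degree-one $J$-holomorphic spheres which are exactly the fibres, cut out transversally). Next I would choose the parametrised pseudocycle representatives of the input class $i^*(a) \in H^3(E)$ and the dual test classes to lie inside $E$ and to be compatible under pushforward $i_*$ with representatives on $M$; the crucial identity is $i^! \circ Q\mathcal{S}_{1,1}(a)$ pairs against a cycle $\beta$ in $E$ exactly as $Q\mathcal{S}_{1,1}(a)$ pairs against $i_*\beta$ in $M$, by the projection formula and the fact that $i_*$ is injective on $H_3$ (Equation~\eqref{equation:eq222}). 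Then I would run the intersection-number count of Definition~\ref{defn:singqss} for $Q\mathcal{S}_{1,1}$ on $M$ with input $a$ and output paired against $i_* \beta$: every sphere is a fibre, hence lies in $E$, so the count is identical to the count computing $\langle Q\mathcal{S}_{1,1}(i^* a), \beta\rangle$ inside $E$. This would give the commuting square.

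The steps, in order: (i) recall $c_1(TM)=(2,1)$ and deduce Chern-number-$1$ spheres are fibre class, hence contained in $E$; (ii) set up a neighbourhood-compatible $J$ and verify the fibre-class moduli space on $M$ coincides with the internal moduli space of $E$, with transversality (the fibre spheres are regular since the bundle is holomorphic); (iii) choose parametrised representatives of $PD(i^* a)$ and of the dual basis inside $E$, compatibly with representatives of $PD(a)$ on $M$ under $i_*$; (iv) match the two intersection counts using the projection formula and injectivity of $i_*$ on $H_3$; (v) conclude the diagram commutes. I would also need to note degree-consistency: $Q\mathcal{S}_{1,1}: H^3(E) \to H^1(E)$ is the correct target since $|a| - 2\cdot 1 - 2\cdot 1 \cdot N$ with the degree shift, matching the claim, and $i^!$ raises degree by $\mathrm{codim}_M E = 2$ as needed so that $H^1(E) \to H^3(M)$.

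The main obstacle I anticipate is item (ii): ensuring that a single generic $J$ on $M$ simultaneously achieves transversality for all the relevant moduli spaces (including the equivariant family over $S^i$ and the flowline incidence conditions) \emph{and} restricts nicely to $E$ so that ``$J$-holomorphic spheres in $M$ of Chern number $1$'' literally equals ``fibres of $\pi: E \to Y$''. This requires a standard but slightly delicate argument: fibre-class curves are never multiply covered or nodal for energy reasons (Chern number $1 = N$ is minimal), the normal bundle of a fibre in $M$ splits in a way making it automatically regular, and one can take $J$ integrable near $E$; the equivariant transversality in the presence of these pseudoholomorphic spheres follows as in Appendix~\ref{subsection:transvholspheres}. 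Once this identification of moduli spaces is in hand, the commutativity of \eqref{squareEMcommutes} is a formal consequence of the definitions and Poincar\'e duality.
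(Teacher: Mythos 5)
Your proposal takes essentially the same route as the paper: the crux in both is that Chern-number-$1$ curves in $M$ are fibre-class and hence contained in $E$, so after choosing the parametrised representatives $A_v$ compatibly (so that $A_v\cap E$ cuts out $i^*a$ and $B^\vee\cap E$ represents the appropriate dual class), the two intersection counts biject configuration-by-configuration. The only stylistic difference is that the paper makes the compatibility of dual representatives explicit by the basis identity $i_!\,PD(b)^\vee=(i_*^{-1}PD(b))^\vee$ rather than citing the projection formula, and it does not spell out your step~(ii) about restricting $J$ to $TE$ — an implicit hypothesis (that $J$ preserves $TE$, so fibre curves in $M$ are honest $J|_{TE}$-curves of Chern number $2$ in $E$) that your proposal correctly and usefully surfaces.
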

\begin{proof}
		\begin{figure}
			\input{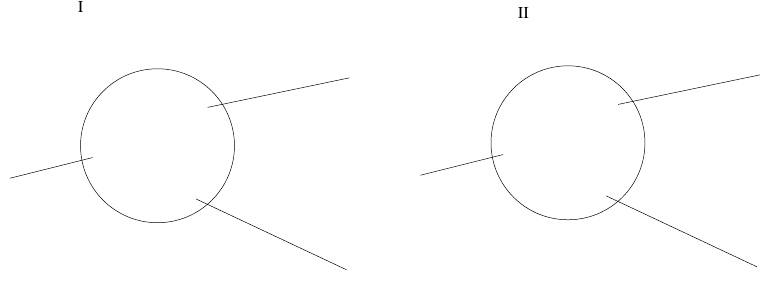_t}
			\caption{Configurations for $Q\mathcal{S}_{1,1}$ on $M$ and $Q\mathcal{S}_{1,1}$ on $E$.}
			\label{fig:p_3setup}
		\end{figure}

Fix a generator $a \in H^3(M)$ whose Poincar\'e dual is represented by a smooth submanifold $A$. To compute the coefficient of $b$ in $Q \mathcal{S}_{1,1}(a)$, we choose $A_v$ for $v \in S^{\infty}$ (such that $A_v$ represents $PD(a)$ for each $v$) and $B^{\vee}$ that represents $PD(b^{\vee})$. To simplify the notation, $^{\vee}$ acts on $H_*(M)$ as the conjugation of the cohomological intersection dual by Poicar\'e duality, so $PD(b)^{\vee} := PD(b^{\vee})$. We therefore see that the coefficient of $b$ in $Q \mathcal{S}_{1,1}(a)$ is determined by counting setups as in Figure \eqref{fig:p_3setup}I.

In general, if $a$ is represented by $A$ then a representative of $i^*(a)$ is obtained by choosing some small perturbation $A'$ of $A$, such that $A'$ intersects $E$ transversely. Then the intersection $A' \cap E$, a submanifold of $E$, represents $i^*(a)$. Hence, supposing that we perturb $A$ to intersect $E$ transversely, if we choose $A_v$ to be a sufficiently small perturbation of $A$ for each $v$ then $A_v \cap E$ is transverse for each $v \in S^{\infty}$. By this procedure, we obtain representatives of $i^*A$ for each $v$ that we denote $A_v \cap E$, and we similarly obtain a representative $B^{\vee} \cap E$ of $i^*(B^{\vee})$. This can be done in such a way as to ensure that the space of setups as in Figure \eqref{fig:p_3setup}II are transverse: in particular any perturbation of $A_v \cap E$ may be extended to yield a perturbation of $A_v$ in a small neighbourhood of $E$. By making this perturbation sufficiently small, we ensure that the intersection between $A_v$ and $E$ remains transverse.

Observe that using the choice of basis induced from $H_1(Y)$ by Equations \eqref{equation:eq222} and \eqref{equation:eq333}, a direct computation shows that for any $b \in H^3(M)$, \begin{equation} \label{Bvees} i_! PD(b)^{\vee} = (i_*^{-1} PD(b))^{\vee}. \end{equation} Hence in particular $B^{\vee} \cap E$ represents $((i^!)^{-1}b)^{\vee}$. With all of this in mind, the coefficient of $(i^!)^{-1}b$ in $Q \mathcal{S}_{1,1}(i^* a)$ is determined by counting setups as in Figure \eqref{fig:p_3setup}II.

Hence, to show that the diagram commutes we need to show that setups of type I and II biject. This is immediate, however, because every $J$-holomorphic curve $u$ of Chern number $1$ (in $M$) is contained in $E$. Hence if $(v,u)$ is a setup of type I (i.e. $u$ intersects $A_{\pm v}$ and $B^{\vee}$) then $u$ will automatically intersect with $E \cap A_{\pm v}$ and $E \cap B^{\vee}$ (hence $(v,u)$ will be a setup of type II), and vice versa. 
\end{proof}

\begin{rmk}
In Lemma \ref{lemma:SqE2M}, in order to show that $E \cap A_v$ is of the correct form for Definition \ref{defn:singqss}, we need to demonstrate that in fact $$\bigsqcup_{v \in S^i} (E \cap A_{v}) \times \{ v \}$$ is of the form $A' \times S^i$ for some smooth manifold $A'$. We make two observations:
\begin{itemize}
\item The space $\sqcup_{v \in S^i} A_{v} \times \{ v \}$ is a smooth manifold by assumption, transversely intersecting $E \times S^i$. Hence $\sqcup_{v \in S^i} (E \cap A_{v}) \times \{ v \} = (\sqcup_{v \in S^i} A_{v} \times \{ v \}) \cap (E \times S^i)$ is a smooth manifold. Further, the map $\sqcup_{v \in S^i} E \cap A_{v} \times \{ v \} \rightarrow S^i$ induced by projection to the second factor is a proper surjective submersion between two smooth manifolds, hence it is a fibre bundle by Ehresmann's Lemma. 

\item This fibre bundle is trivial, because we know that it must extend to a fibre bundle over $D^{i+1,+}$, the upper hemisphere in $S^i$, which is a contractible base. 
\end{itemize}

A similar lemma to \ref{lemma:SqE2M} would hold if we replaced embedded submanifolds by pseudocycles.
\end{rmk}

Note that the index of the codomain of $Q \mathcal{S}_{1,1}$ changes by $2$, between $H^3(M)$ and $H^1(M)$, in Diagram \eqref{squareEMcommutes}. This comes from the fact that $i^!$ changes cohomological degree by 2 (and is also to be expected because the minimal Chern number of $E$ is $2$, whereas the minimal Chern number of $M$ is $1$). 

Henceforth, we will use the Morse theoretic definition of the quantum square. Observe that $E = Y \times \mathbb{P}^1$ so we may pick the Morse function on $E$ to be $f+g$ where $f: Y \rightarrow \mathbb{R}$ and $g: \mathbb{P}^1 \rightarrow \mathbb{R}$, such that $g$ has two critical points of index $0,2$, which we call $a_0, a_2$, and $f$ has critical points $b_0, b_1, b_1^{'}, b_2$ (whose indices are the subscripts). Recalling that $\pi: E \rightarrow Y$ is the projection map, $$(\pi^!)^{-1} (b_1)  = (b_1, a_0) \text{ and } \pi^*(b_1) = (b_1,a_2).$$ 

\begin{lemma}
\label{lemma:SqE2Y}
Let $Sq_i(x)$ be the coefficient of $h^i$ in $Sq(x)$. Then $Q\mathcal{S}_{1,1} =\pi^* \circ Sq_1 \circ \pi^!$.
\end{lemma}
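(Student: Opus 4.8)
The plan is to analyze the moduli space defining $Q\mathcal{S}_{1,1}$ on $E = Y \times \mathbb{P}^1$ and show that the only contributing $J$-holomorphic spheres are constant on the $Y$-factor and of degree $1$ on the $\mathbb{P}^1$-factor, which decouples the computation into a Steenrod square on $Y$ and a classical quantum-product-type count on $\mathbb{P}^1$. First I would pick a product almost complex structure $J = J_Y \times J_{\mathbb{P}^1}$ compatible with a product symplectic form, so that every $J$-holomorphic sphere $u : S^2 \to E$ splits as $u = (u_Y, u_{\mathbb{P}^1})$; the minimal Chern number of $E$ is $2$ (Chern class of $E = Y \times \mathbb{P}^1$ is pulled back from $\mathbb{P}^1$ since $Y$ is a torus with $c_1(TY) = 0$), so a sphere of Chern number $2$ has $u_Y$ of Chern number $0$ — hence constant, as $Y$ is aspherical — and $u_{\mathbb{P}^1}$ of degree $1$. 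So the relevant moduli space of maps $S^2 \to E$ of Chern number $2N_E = 2$ (here I should be careful that $N_E = 1$ in the normalization where $T = t^{N}$, so $Q\mathcal{S}_{1,1}$ really uses $j=1$, i.e. Chern number $2N_E$ in the convention of Definition \ref{defn:mqss}) consists precisely of constant-in-$Y$ maps times degree-one maps into $\mathbb{P}^1$.

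Next I would set up the Morse-theoretic count for $Q\mathcal{S}_{1,1}$ on $E$ using the product Morse function $f + g$ as in the statement, following Definition \ref{defn:mqss}: the moduli space $\mathcal{M}_{1,1}(c, x)$ consists of pairs $(u,v)$ with $v \in S^1$, $u$ a simple $J$-holomorphic sphere of the appropriate Chern number, and $-\nabla(f+g)_{v,s}$-flowlines from $u(0)$ to $x$ (input) and from $u(1), u(\infty)$ to $c$ (output). Since $u$ splits, and the Morse function is a product, the flowlines split into a $Y$-component and a $\mathbb{P}^1$-component. On the $\mathbb{P}^1$-factor the data is: a degree-one sphere in $\mathbb{P}^1$ with the three marked points $0, 1, \infty$ carrying Morse flowlines of $g$; because a degree-one sphere in $\mathbb{P}^1$ through two prescribed points (the images of the two outgoing flowlines, emanating from $a_2$) is unique and the incoming flowline starts from some critical point of $g$, the only rigid configuration has the incoming point of the sphere going to $a_0$ (the index-$0$ critical point) — this is exactly the statement that $Q\mathcal{S}_{1,1}$ lowers the $\mathbb{P}^1$-grading in the way encoded by $\pi^!$ then $\pi^*$. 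Meanwhile the $Y$-factor contributes precisely a Steenrod square configuration for $f$ on $Y$: a trivalent graph with two incoming (from $x$'s $Y$-part) and... wait — on $Y$ there is no sphere, the $Y$-part of $u$ is constant, so the three flowlines all emanate from a single point of $Y$, which is exactly the Morse Steenrod square configuration of Definition \ref{defn:mss}, parametrized by the same $v \in S^1$ hemisphere $D^{1,+}$, contributing the $h^1$-coefficient $Sq_1$ of the classical Steenrod square on $Y$.

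Assembling: a contribution to $Q\mathcal{S}_{1,1}(x)$ for $x \in H^3(E)$, written in the product basis, forces the $\mathbb{P}^1$-component of $x$ to be $a_2$ (so $x = \pi^*(\bar x)$ for $\bar x \in H^1(Y)$, up to identifying $H^3(E) \cong H^1(Y) \otimes H^2(\mathbb{P}^1)$), and then $Q\mathcal{S}_{1,1}(x)$ equals $(\text{the }h^1\text{-part of }Sq\text{ on }Y\text{ applied to }\bar x)$ tensored with $a_0 \in H^0(\mathbb{P}^1)$, which is precisely $\pi^* \circ Sq_1 \circ \pi^!(x)$ once one checks $\pi^!$ picks off the $H^1(Y)$-component and $\pi^*$ reinserts $a_2$; indeed $(\pi^!)^{-1}(b_1) = (b_1, a_0)$ and $\pi^*(b_1) = (b_1, a_2)$ as stated, and $Sq_1$ on $H^1(Y)$ lands in $H^2(Y)$ so the output $\pi^* Sq_1 \pi^!(x)$ lies in $H^2(Y) \otimes H^0(\mathbb{P}^1) \subset H^1(E)$ — wait, that is degree $2$, not $1$; I need $\pi^!$ to drop degree by $2$, landing $H^3(E) \to H^1(Y)$... actually $\pi^! = PD \circ \pi_* \circ PD^{-1}$ lowers cohomological degree by the fibre dimension $2$, so $\pi^! : H^3(E) \to H^1(Y)$, then $Sq_1 : H^1(Y) \to H^2(Y)$, then $\pi^* : H^2(Y) \to H^2(E)$ — but the target should be $H^1(E)$. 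Let me instead take $Sq_1$ to mean the $h^1$-coefficient, which on $H^1(Y)$ produces something in $H^1(Y) + h^0 H^2(Y)$... this bookkeeping is the subtle point. I would resolve it by tracking gradings through the $\mathbb{P}^1$-factor: the sphere-in-$\mathbb{P}^1$ count effectively applies $\pi^!$ (degree $-2$) and then $\pi^*$ (degree $0$, but lands in the $a_0$-component of $H^0(\mathbb{P}^1)$ rather than $a_2 \in H^2(\mathbb{P}^1)$, a net degree $-2$), while the $h^1$ consumes one more degree, so altogether $Q\mathcal{S}_{1,1}: H^3(E) \to H^1(E)$ is consistent, and the formula $\pi^* \circ Sq_1 \circ \pi^!$ with $Sq_1$ the genuine operation $Sq^1$ works out. \textbf{The main obstacle} I anticipate is precisely this grading/degree-shift bookkeeping together with rigorously justifying that no other sphere classes or broken configurations contribute — i.e. the transversality and compactness argument that the product-$J$ moduli space is regular and that bubbling of extra $\mathbb{P}^1$-components is excluded by monotonicity (as in the remark at the end of Lemma \ref{lemma:lem1}'s proof). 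Once those are in place, the decoupling and the identification with $\pi^* \circ Sq_1 \circ \pi^!$ is a direct comparison of the two Morse-theoretic definitions.
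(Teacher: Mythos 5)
Your approach is essentially the paper's: both identify that the only $J$-holomorphic spheres in $E = Y \times \mathbb{P}^1$ contributing to $Q\mathcal{S}_{1,1}$ are fibre spheres (constant in $Y$, degree~$1$ in $\mathbb{P}^1$), and both use the product Morse function $f + g$ to decouple the count into a Steenrod-square configuration on $Y$ and a fibre-direction piece that carries the $\pi^!$/$\pi^*$ structure. The paper implements the decoupling by a bijection: it projects a configuration on $E$ down to $Y$ and then shows each $Y$-configuration lifts uniquely, because $a_2$ is the maximum and $a_0$ the minimum of $g$ so the $\mathbb{P}^1$-component of each lifted flowline is forced to be constant, after which the fibre sphere is uniquely pinned down by the three intersection conditions. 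Your ``decoupling via product $J$ and product Morse function'' encodes the same idea.

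There are, however, gaps you should close. First, the grading bookkeeping: you vacillate and end by asserting that ``$Sq_1$ the genuine operation $Sq^1$ works out,'' which is wrong. The lemma's own convention is that $Sq_i(x)$ is the coefficient of $h^i$ in $Sq(x)$, i.e.\ $Sq_i(x) = Sq^{|x|-i}(x)$; applied to $x \in H^1(Y)$ this gives $Sq_1 = Sq^0 = \mathrm{id}$, a degree-preserving operation, so that $\pi^* \circ Sq_1 \circ \pi^! : H^3(E) \to H^1(Y) \to H^1(Y) \to H^1(E)$ and all the degree drop is carried by $\pi^!$. Taking $Sq_1 = Sq^1$ would land in $H^2(E)$, inconsistent with the target of $Q\mathcal{S}_{1,1}$. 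Second, the bijection between the $E$-moduli space and the $Y$-moduli space needs to be stated rather than gestured at: the crucial observation is that the $g$-component of the lifted input (resp.\ output) flowline is uniquely the constant flowline at $a_2$ (resp.\ $a_0$), precisely because these are the extremal critical points of $g$; only then can you say the fibre sphere, and hence the whole lifted configuration, is determined. Third, a minor arithmetic slip: the minimal Chern number of $E = Y \times \mathbb{P}^1$ with $Y$ a torus is $N_E = 2$, not $1$ (a fibre sphere has $c_1 = 2$); this does not affect the substance since the contributing sphere is still the degree-one fibre sphere, but it should be corrected.
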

\begin{proof}
Recall that input elements of $H^3(E)$ correspond to $(b_1,a_2)$ or $(b^{'}_1,a_2)$, which project down under $\pi$ to $b_1$ or $b_1^{'}$ respectively. Output elements of $H^1(E)$ correspond to $(b_1,a_0)$ or $(b^{'}_1,a_0)$. 

We will show that pairs $(\tilde{v},\tilde{u})$ in the moduli space used to calculate the coefficient of $c$ in $Q\mathcal{S}_{1,1}(x)$ correspond to pairs $(v,u)$ in the moduli space yielding the coefficient of $\pi^* c$ in $Sq_{1}(\pi^{!} x)$. For clarity we will fix $x = (b_1,a_2)$ and $c = (b_1,a_0)$ (hence $\pi^{*} (b_1) = x$ and $\pi^! (c) = b_1$), although the argument follows identically for any choice of $x$ and $c$. For conciseness we denote the moduli spaces of pairs respectively as $\mathcal{M}_Q$ and $\mathcal{M}$ for $Q \mathcal{S}$ on $E$ and $Sq$ on $M$.

Consider a pair $(v,u) \in \mathcal{M}_Q$, as on the right hand side of Figure \eqref{fig:sqtosqQblowup}. We observe that, using the projection $\pi: E \rightarrow Y$, the setup $(v, \pi u)$ is one that is counted when calculated the coefficient of $b_1$ in $Sq_{1}(b_1)$, i.e. $(v, \pi u) \in \mathcal{M}$. This is because a fibre sphere in $E$ lives above a point $y \in Y$, hence the incidence condition of the flowlines attaching to $S^2$ at the points $0,1,\infty$ translates under this projection to the three flowlines coinciding at the point $y \in Y$. As the transversality condition is generic, we may choose the perturbations $f_{v,s}$ and $g_{v,s}$ of $f$ and $g$ in such a way that both the moduli space $\mathcal{M}$ and $\mathcal{M}_Q$ are transverse.

We show that every pair $(v,u) \in \mathcal{M}$ arises uniquely from the a pair $(\tilde{v}, \tilde{u}) \in \mathcal{M}_Q$ as in the previous paragraph. Consider such a pair $(v,u)$: specifically, the image of $u$ consists of three perturbed half-flowlines meeting at some point $y$. Consider the $- \nabla f_{v,s}$ flowline $l$, which is the image of $u$ restricted to one of the two positive halflines. This flowline is asymptotic to $b_1$ in $Y$, hence it lifts uniquely to a $- \nabla (f_{v,s} + g_{v,s})$ flowline that is asymptotic to $(b_1,a_2)$ in $E$. The uniqueness is because $a_2$ is the maximum of $g$, and hence there is a unique $- \nabla g_{v,s}$-flowline $L$ asymptotic to $a_2$. Specifically this is the flowline $L: [0,\infty) \rightarrow S^2$ such that $L(s) = a_2$ for $s \gg 0$.  See Figure \ref{fig:sqtosqQblowup}. Likewise the output flowline on $Y$, which is a $- \nabla f_s$-flowline, lifts uniquely to a $-\nabla (f_s+g_s)$-flowline on $E$, asymptoting to $(b_1,a_0)$, which is unique because $a_0$ is the minimum of $g$.

		\begin{figure}
			\input{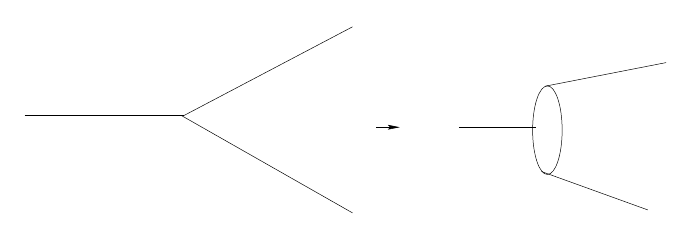_t}
			\caption{Lifting configurations of $Sq$ on $Y$ to $Q\mathcal{S}$ on $M$.}
			\label{fig:sqtosqQblowup}
		\end{figure}

Moreover, because this setup lifts from $Y$ we know that the three flowlines will all intersect the $J$-holomorphic sphere $\pi^{-1} y$ at $0$. We also know where each lifted flowline intersects $J$-holomorphic sphere $\pi^{-1} y$, as this is determined by the gradient of $f+g$ on each of the flowlines at $s=0$. Hence there is a unique $J$-holomorphic sphere that fits into the lifted setup, giving a unique configuration on $E$ corresponding to the configuration on $Y$.
\end{proof}

\begin{proof}[Proof of Equation \eqref{equation:blowupID1} in Theorem \ref{thm:blowupID}]
	Note that $Sq|_{H^1(Y)}: H^1(Y) \rightarrow H^1(Y)$ is the identity, which is known by the definition of $Sq$. Lemmas \ref{lemma:SqE2M} and \ref{lemma:SqE2Y} imply that Diagram \eqref{squareMEYcommutes} commutes.

\begin{equation}\label{squareMEYcommutes}
\xymatrix{
H^3(M)
\ar@{->}^-{Q\mathcal{S}_{1,1}}[r]
\ar@{->}_-{i^*}[d]
&
H^3(M)
\ar@{<-}^-{i^!}[d]
\\
H^3(E)
\ar@{->}^-{Q\mathcal{S}_{1,1}}[r]
\ar@{->}_-{\pi^!}[d]
&
H^1(E)
\ar@{<-}_-{\pi^*}[d]
\\
H^1(Y)
\ar@{->}^-{Sq_{1}}[r]
&
H^1(Y)
}
\end{equation}

The abelian group $H^i(E)$ is generated by $\{ (b_1,a_{i-1}), (b'_1,a_{i-1}) \}$ for $i=1,3$. From the axioms of $Sq$, we know that $Sq_1 = id$. Then from Lemma \ref{lemma:SqE2Y}: \begin{equation} \label{equation:eq211} Q\mathcal{S}_{1,1}(b_1,a_0) = (b_1, a_2) \text{ and } Q\mathcal{S}_{1,1}(b^{'}_1,a_0) = (b^{'}_1, a_2) \end{equation} 

We apply the isomorphism between Morse and classical cohomology and then Poincar\'e duality to the Morse cocycles $(b_1,a_2) \in H^3(E)$ and $(b_1,a_0) \in H^1(E)$. This yields cycles $B_1 \in H_1(E)$ and $B_3 \in H_3(E)$. Likewise we define $B'_i \in H_i(E)$ for $(b'_1,a_{3-i})$ for $i=1,3$. We recall that $^{\vee}$ is the intersection dual on homology (defined as the conjugation by Poincar\'e duality of the duality on cohomology, for our given basis). Note that \begin{equation} \label{equation:eq212} B^{\vee}_3 = B'_1, \end{equation} and so on. In this notation $Q\mathcal{S}_{1,1} (B_1) = B_3$. Observe that $B_3 \cap B_3 = \emptyset$ so we see that $i_* B_3 \cap i_* B_3 = \emptyset$ (which is immediate if one chooses a generic submanifold representative: then nonintersection in $E$ implies nonintersection in $M$). As $H_3(M)$ is generated by $i_* B_3$ and $i_* B'_3$, this implies
\begin{equation} \label{istarB} (i_* B_3)^{\vee} = i_* B'_3.\end{equation} 

By Equation \eqref{istarB}, \begin{equation} \label{equation:bbb1} i_* \circ Q\mathcal{S}_{1,1} \circ i_! ((i_* B_3)^{\vee}) =  i_* \circ Q\mathcal{S}_{1,1} \circ i_! ((i_* B'_3)).\end{equation}

By Equation \eqref{Bvees}, then Equations \eqref{equation:eq211} and \eqref{equation:eq212}, \begin{equation} \label{equation:bbb2} i_* \circ Q\mathcal{S}_{1,1} \circ i_! ((i_* B_3)^{\vee}) = i_* \circ Q\mathcal{S}_{1,1} (B^{\vee}_3) = i_* B'_3. \end{equation}

From Equations \eqref{equation:bbb1} and \eqref{equation:bbb2}, along with identical calculations for the other generators, plus the fact that $i_*$ is an isomorphism, we deduce that $i_* \circ Q\mathcal{S}_{1,1} \circ i^* = id.$ Diagram \eqref{squareMEYcommutes} then implies that $$Q \mathcal{S}_{1,1} = i_* \circ Q\mathcal{S}_{1,1} \circ i^* = id.$$
\end{proof}

\subsection{$\mathbb{CP}^1 \times \mathbb{CP}^1 \times \mathbb{CP}^1$}

Now let $X = \mathbb{CP}^1 \times \mathbb{CP}^1 \times \mathbb{CP}^1$, with $Y \subset X$ defined by the intersection of two generic linear hypersurfaces. ``{\it Linear}" means that we require that the defining equation of the hypersurfaces are linear in the coordinates of each $\mathbb{CP}^1$ (when the other coordinates are treated as constants). The subvariety $Y$ is in fact a torus, which one can see by using the adjunction formula: in particular, one proves that $K_Y = 0$. Specifically, $K_X = (-2,-2,-2)$ and the two linear hypersurfaces are $(1,1,1)$ by definition, and hence $K_Y = (1,1,1) + (1,1,1) + (-2,-2,-2) = 0$. Then the genus $g$ of $Y$ satisfies $g = 1 + (\deg K_Y)/2 = 1$, hence $Y$ is a surface of genus $1$.

Define $M = Bl_Y X$. Using a similar method to the $\mathbb{CP}^3$ case, we can show that the Chern class of $M$ is $(1,1,1,1)$, where the first three entries correspond to lifting the $J$-holomorphic spheres on each of the $3$ coordinates of $X$, and the final entry corresponds to a fibrewise $J$-holomorphic sphere in the exceptional divisor. Hence, when calculating $Q\mathcal{S}_{1,1}: H^3(M) \rightarrow H^3(M)$ there are contributions from the fibre direction plus those from $J$-holomorphic spheres in $X$ that have been lifted to $M$. The fibrewise contributions are calculated in exactly the same way as for $\mathbb{CP}^3$, so we turn our attention to the spheres lifted from $\mathbb{CP}^1 \times \mathbb{CP}^1 \times \mathbb{CP}^1$.

\begin{proof}[Proof of Equation \eqref{equation:blowupID2} in Theorem \ref{thm:blowupID}]
Suppose the defining linear equations for $Y$ are $P_1(x,y,z)$ and $P_2(x,y,z)$ in local coordinates on $\mathbb{P}^1 \times \mathbb{P}^1 \times \mathbb{P}^1$. Fixing $x$ and $y$, there is at most one solution $z$ such that $P_1(x,y,z) = P_2(x,y,z)=0$. Hence, let $S = \{ x \} \times \{ y \} \times \mathbb{CP}^1$ be a $J$-holomorphic curve in $X$, and $\tilde{S}$ its lift to $M$. By the previous, $\tilde{S} \cap E$ is at most $1$ point. If $A \in H_3(M)$ then recall from Equation \eqref{equation:eq222}, we may assume that $A = i_* A_E$ for some $A_E \in H_3(E)$. Recall that to calculate $Q \mathcal{S}_{1,1}(a)$, where $A = PD (a)$, we need to choose some $A_v$ satisfying the transversality conditions. We may pick the representatives $A_v$ to be some $D_v \times \mathbb{P}^1$, where $D_v$ is a representative of $D \in H_1(Y)$. Then assuming $S$ is not contained in $Y$, there are no solutions to Figure \ref{fig:p_1_3setup}. For such a solution, we would need that $\tilde{S}$ intersects $E$ in at least $2$ points (as $A_v \subset E$ for all $v$), which we know is impossible. Hence the space of such setups is transverse, because it is empty.

		\begin{figure}
			\input{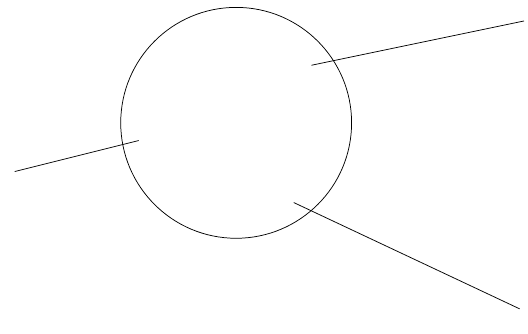_t}
			\caption{Configurations for contributions to $Q\mathcal{S}_{1,1}$ from lifts on $\mathbb{P}^1 \times \mathbb{P}^1 \times \mathbb{P}^1$.}
			\label{fig:p_1_3setup}
		\end{figure}

The case $S \subset Y$ is not possible, because there is no degree $1$ holomorphic map $\mathbb{P}^1 \rightarrow Y$. 
\end{proof}

\appendix
\section{Equivariant Compactification}
\label{sec:equivariantcompact}
We give a more in-depth treatment of Equation \eqref{equation:Sq'chainmap} in Definition \ref{defn:mss}. 

Consider a $1$-dimensional moduli space $\mathcal{M}_{i}(a_1,a_{2}, a_3)$. Specifically, we require that $|a_1| -|a_2| - |a_3| + i = 1$. Here, the notation is as in Section \ref{subsec:msss}. One characterises the different possible limits $(v_{\infty}, u_{\infty})$ of some sequence of pairs $(v_j,u_j)$ in $\mathcal{M}_{i}(a_1,a_{2}, a_3)$. There are two possibilities for each: 
\begin{itemize}
\item either $v_{\infty} \in D^{i,+}$ or $v_{\infty} \in \partial D^{i,+} = D^{i-1,+} \cup D^{i-1,-}$, and
\item either $u_{\infty}$ is a map taking as its domain the $Y$-shaped graph with a single broken edge (broken) or $u_{\infty}$ is a map with the $Y$-shaped graph as its domain (unbroken).
\end{itemize}

No codimension $1$ boundary setups are present when either:
\begin{itemize}
\item $u_{\infty}$ is unbroken and $v_{\infty} \in D^{i,+}$, or
\item $u_{\infty}$ is broken and $v_{\infty} \in \partial D^{i,+}$.
\end{itemize}
In the case where $v_{\infty} \in D^{i-1,+}$ and $u_{\infty}$ is unbroken, one obtains the coefficient of $a_1$ in $Sq'_{i-1}(a_2 \otimes a_3) = Sq'_{i}((a_2 \otimes a_3)h)$. Similarly, when $v_{\infty} \in D^{i-1,-}$ and $u_{\infty}$ is unbroken, one obtains the coefficient of $a_1$ in $Sq'_{i-1}(a_3 \otimes a_2) = Sq'_{i}((a_3 \otimes a_2)h)$. When $v_{\infty} \in D^{i,+}$ and $u_{\infty}$ is unbroken, this is the standard compactification of the $Y$-shaped graph, and thus one obtains the coefficient of $a_1$ in $dSq'_i(a_2 \otimes a_3) + Sq'_i((d a_2) \otimes a_3) + Sq'_i(a_2 \otimes (d a_3))$. These are all of the terms in Equation \eqref{equation:Sq'chainmap}.

\section{Notes on Transversality}
Following are more detailed treatments of the relevant transversality considerations for the sections of this paper.

\subsection{The Morse Steenrod Square: Section \ref{subsec:msss}}
\label{subsec:mssrmks}
In the second of the conditions at the beginning of Section \ref{subsec:msss}, for $a_1, a_2 \in \text{crit}(f)$, we required that our moduli spaces were transverse. More specifically, denote by $$W^s(a_2, f^2_{v,s}) = \{ p \in M | {\displaystyle \lim_{s \rightarrow \infty} } \phi_{v,s}(p) = a_2 \},$$ where $\phi_{v,t}$ is the $1$-parameter family of diffeomorphisms defined for $v \in S^i$ and $t \ge 0$ by \begin{equation} \label{equation:backint} \dfrac{d \phi_{v,t}}{dt}(s) = - \nabla f^2_{v,s} \text{ and } \phi_{v,0} = id. \end{equation} We recall first that if $a$ is in fact a sum of critical points of $f$ such that $da = 0$, then there is a (partial) compactification $\overline{W^s(a,f)}$ of $W^s(a,f)$, the stable manifold of the critical point $a$ for the Morse function $f$. The compactification $\overline{W^s(a,f)}$ comes equipped with an evaluation map $E: \overline{W^s(a,f)} \rightarrow M$, so that $\overline{W^s(a,f)}$ becomes a pseudocycle (i.e. \cite{schwarzmorsesingiso}). We consider $$\tilde{\euscr{W}}^{i} := \overline{W^u(a_1, f)} \times \bigsqcup_{v \in S^i} \overline{W^s(a_2, f^2_{v,s})} \times \overline{W^s(a_2, f^2_{-v,s})} \times \{ v \},$$ and $\euscr{W}^{i} = \tilde{\euscr{W}}^{i} / (\mathbb{Z}/2)$ (i.e. the pseudocycle descending to the quotient). The $\mathbb{Z}/2$-action is induced from $M \times M \times M \times S^i$, fixing the first $M$ factor, swapping the second and third $M$ factors and acting antipodally on $S^i$. This induces a smooth pseudocycle representative of a cycle in $M \times ((M \times M) \times_{\mathbb{Z}/2} S^i)$, which we can see for example by precomposing the smooth $\mathbb{Z}/2$-equivariant map \begin{equation} \label{equation:pseudohere}\begin{array}{l} \nu: M \times M \times M \times S^i \rightarrow M \times M \times M \times S^i, \\ (x_0, x_1, x_2,v) \mapsto (x_0, \phi_{v,-1}(x_1), \phi_{-v,-1}(x_2), v). \end{array} \end{equation} with the pseudocycle $$E \times E \times E \times id: \overline{W^u(a_1,f)} \times \overline{W^s(a_2,f)} \times \overline{W^s(a_2,f)} \times S^i \rightarrow M \times M \times M \times S^i.$$ Here $\phi_{v,-1}$ is the diffeomorphism induced by backwards integrating along $f^2_{v,s}$ from $s=1$ to $s=0$: specifically, $\dfrac{d \phi_{v,t}}{dt}(-s) = \nabla f^2_{v,1-s} \text{ and } \phi_{v,0} = id$ for $s \in [0,1]$, analogously to Equation \eqref{equation:backint}. Indeed, $\phi_{v,-1} = \phi_{v,1}^{-1}$. We abusively denote by $\euscr{W}^{i}$ the pseudocycle associated to the quotient by $\mathbb{Z}/2$ of $\nu \circ (E \times E \times E \times id)$. 

For transversality, we require that for $$\zeta: M \times \mathbb{RP}^i \rightarrow M \times( (M \times M) \times_{\mathbb{Z}/2} S^i), \text{ such that } (x, [v]) \mapsto (x,[x,x,v]),$$ the pseudocycle $\euscr{W}^{i}$ is transverse to $\zeta$. Indeed, the coefficient $n_{a_1,a_2,a_3,i}$ in Definition \ref{defn:mss} is the intersection number of these two pseudocycles. More generally in that definition, we considered moduli spaces $\mathcal{M}_i(a_1,a_2,a_3)$ for general $a_1,a_2,a_3 \in \text{crit}(f)$. This does not yield a pseudocycle (as there may be codimension $1$ boundary strata). Nonetheless, such an intersection may still be defined at the chain level, and the operation $Sq$ itself is only well defined on cohomology (i.e. exactly when the codimension $1$ strata of these moduli spaces may be glued so as to define a smooth manifold).

To demonstrate that we can choose such an $f_{v,s}$ (and indeed such a choice is generic in a reasonable sense), we first assume that the $f_{v,s}$ are constrained to $U_f$ from Section \ref{subsec:prelimbcncon}. We next, as in the same section, assume that $f_{v,s} =\beta(s) f_{v,0} + (1-\beta(s)) f$ for the monotonic bump function $\beta : \mathbb{R} \rightarrow [0,1]$. Observe that for $v \in S^0$, there is always a choice of $f_{v,0}$ and $f^1_s$ such that $f_{v,0}$, $f_{-v,0}$ and $f^1_0$ yield transverse moduli spaces $\mathcal{M}'_i(a_1,a_2,a_3)$ of dimension $|a_1| - |a_2| - |a_3| + i$, as in Definition \ref{defn:mss}. The above pseudocycle description using $\zeta$ and $\euscr{W}^{i}$ reveals that this is simply a classical transversality question. We wish to ensure transversality for all possible incoming and outgoing critical points $a_1,a_2,a_3$, which requires only a finite number of choices. Having shown a base case, we then proceed inductively. Next suppose that we have made a choice of $f_{v,0}$ for $v \in S^{i-1}$. Choose a small open collar neighbourhood $N^{i-1}$ of $S^{i-1}$ in $S^i$. As the setup is transverse when we let $v \in S^{i-1}$ vary, if we pick some small perturbation of $f_{v,0}$ for $v \in N^{i-1}$, then the pseudocycle intersection remains transverse when we let $v$ vary in $N^{i-1}$. We then choose an extension of $f_{v,0}$ to $S^i$. Observe that we may freely (i.e. without requiring any consistency conditions on our $f_{v,0}$) pick a small perturbation away from a (potentially smaller than $N^{i-1}$) collar neighbourhood of $S^{i-1}$. This ensures that the intersection is now transverse when we vary $v$ in $S^{i}$, and remains transverse when restricting to $S^{i-1}$. Further, as we make a countable number of choices, this condition is generic. This is an example of family Morse homology, as covered in \cite{hutchingsfamilies} (the Floer theoretic example of this is made in \cite[Section 4c]{seidel}).

\begin{rmk}
The difference between the case considered here and that in \cite{seidel} is that our parameter space is $\mathbb{RP}^i$, and not the space of flowlines of some fixed Morse function $h: \mathbb{RP}^i \rightarrow \mathbb{R}$. In our instance there is no technical difference between these two options, because the underlying chain complexes have a trivial action of $\mathbb{Z}/2$. The technical importance of using a parametrisation by the space of flowlines appears when considering gluing and compactness of equivariant flowlines in the equivariant Morse or Floer complex: see \cite[4b]{seidel}.
\end{rmk}

\begin{rmk}
Note that the transversality condition required here is weaker than requiring $W^s(a_2, f^2_{v,s}) \pitchfork W^s(a_2,f^2_{-v,s})$ and $(W^s(a_2, f^2_{v,s}) \cap W^s(a_2,f^2_{-v,s})) \pitchfork W^u(a_1,f)$ intersect transversely for all $v$ (which is impossible in general with our given conditions). Indeed, it is the failure of transversality for particular isolated $v$ that ensures we obtain interesting Steenrod squares.
\end{rmk}

\subsection{The Cartan Relation: Section \ref{subsec:Cartan}}
\label{subsec:appendcartrel}
We will, for convenience, assume that $f^i_{v,s,t} = f$ for $s \ge 1$ when $i=3,4,6,7$.

In Section \ref{subsec:Cartan}, when constructing the moduli spaces that we used to prove the Cartan relation, we asked that we chose our $f^i_{v,s,t}$ to be ``generic at each vertex". To illustrate what is meant by this, observe that for each vertex of the graph corresponding to $t \in T$, there are either three or five edges meeting at this vertex. The idea is that for each $p$, the smooth functions $f^p_{v,0,t}$ and $f^p_{v,t,t}$ are chosen in such a way that all of the intersections occur transversely within the space $M^{\times 9} \times_{\mathbb{Z}/2} S^{\infty}$ (denoting the $9$-fold Cartesian product of $M$ by $M^{\times 9}$). Here, labelling the copies of $M$ by $1,...,9$, the $\mathbb{Z}/2$-action is denoted as the transposition $(56)(89)$.

More specifically, we first define (analogously to Equation \eqref{equation:backint}) some $1$-parameter families of diffeomorphisms $\phi^p_{v,s,t}: M \rightarrow M$ (with parameter $s$), via $$\dfrac{d \phi^p_{v,s,t}}{ds}(s_0) = - \nabla f^p_{v,s_0,t} \text{ and } \ \phi_{v,0,t} = id \text{ for all } v,t.$$ For the moduli space denoted $\mathcal{M}_1(x,y,z)$ we define a map $$\zeta_1: M \times M \times M \times \mathbb{RP}^i \rightarrow ( M^{\times 9}) \times_{\mathbb{Z}/2} S^i,$$ $$(x_1, x_2, x_3, [v]) \mapsto [x_1, x_1, x_1, x_2, x_2, x_2,x_3, x_3, x_3,v],$$ recalling that in this instance $f^2_{v,s,t}$ and $f^5_{v,s,t}$ (hence $\phi^2_{v,s,t}$ and $\phi^5_{v,s,t}$) are independent of $v$. We also define: $$\nu_1: M^{\times 7} \times S^i \times T_K \rightarrow M^{\times 9} \times S^i,$$ where $T_K = [0,K]$ is an interval for any $K \in \mathbb{R}_{>0}$, $$\left( \begin{array}{l} m_1 \\ m_2 \\ m_3 \\ m_4 \\ m_5 \\ m_6 \\ m_7 \\ v \\ t \end{array} \right) \mapsto \left( \begin{array}{l} \phi^1_{v,1,t}(m_1) \\ m_2 \\ m_3 \\ \phi^2_{v, t,t}(m_2) \\ \phi^3_{v,-1, t}(m_4) \\ \phi^3_{-v,-1, t}(m_5) \\ \phi^5_{v,t,t}(m_3) \\ \phi^6_{v,-1, t}(m_6) \\ \phi^6_{-v,-1, t}(m_7) \\ v \end{array} \right),$$ to construct a pseudo-cycle bordism when composing with $$\left( \begin{array}{l} E \\ id_M \\ id_M \\ E \\ E  \\ E \\ E  \\ id_{S^i} \\ id_{T_K} \end{array} \right) {\Huge :}\left( \begin{array}{l} \overline{W^u(a_3,f)} \\ M \\ M \\ \overline{W^s(a_1,f)} \\ \overline{W^s(a_1,f)} \\ \overline{W^s(a_2,f)} \\ \overline{W^s(a_2,f)} \\ S^i \\ T_K \end{array} \right) \rightarrow M^{\times 7} \times S^i \times T_K,$$ where $E$ is the evaluation map. Transversality in this instance means that we want to make a choice of $f^i_{v,0,t}$ (for all $i$) and $f^j_{v,t,t}$ (for $j=2,5$) such that the two pseudocycles intersect transversely for a generic choice of $t$, including $t=0, 1$. To do this we appeal once again to genericity for Morse homology, observing as previously that we may always perturb any choice of $f^i_{v,0,t}$ and $f^j_{v,t,t}$ in a generic way so that these pseudocycles intersect transversely. This ensures that the moduli space $\mathcal{M}'_1(a_1,a_2,a_3)$ is a manifold of dimension $1$ if $|a_3| - 2|a_1| - 2 |a_2| + i =0$ (intuitively there is a limiting process as $K \rightarrow \infty$, in practice one appeals to a gluing theorem). There is a similar argument for $\mathcal{M}'_2(a_1,a_2,a_3)$.

\begin{rmk}
\label{rmk:EZ/2}
			Some of the conditions require a choice of $f^p_{v,s,t}$ that becomes independent of $v$ for large enough $t$. The reason why this is not an unrealistic request is that we need to retain a nontrivial $v$ dependency on the moduli space in some form, but it need not be everywhere. In essence we are constructing a copy of $E \mathbb{Z}/2$: there is a tuple $(f^{1}_{v,s,t},...,f^{7}_{v,s,t})$ for each $v \in S^{\infty}$, along with the associated $\mathbb{Z}/2$ action $\iota$. Our choices ensure that $\iota: v \mapsto -v$ always acts freely on tuples, hence the union of tuples over all $v \in S^{\infty}$ is an $E \mathbb{Z}/2 \subset (C^{\infty}(M))^7$. Nonetheless, each pair $(f^{i}_{v,s,t},f^{j}_{v,s,t})$ for $(i,j)=(2,5),(3,4),(6,7)$ also defines an $E \mathbb{Z}/2$, as long as the action remains free on that pair (which will be the case for a generic choice of $v$-dependence). So as long as $\iota$ acts freely on some pair, the set of these tuples is an $E \mathbb{Z}/2$. 
		\end{rmk}

\begin{rmk}
Intuitively, the $M^{\times 9}$ corresponds to the $9$ possible finite ends of a flowline in the domain of $\mathcal{M}'_1(a_1,a_2,a_3)$ (as each vertex has valence $3$). The $M^{\times 7}$ corresponds to the $7$ edges in the domain of $\mathcal{M}'_1(a_1,a_2,a_3)$. Then the condition of intersecting with $\zeta_1$ is exactly identifying the edges at the vertexes at their finite endpoints.
\end{rmk}

\subsection{Transversality for the quantum Steenrod square: Section \ref{sec:SqQviaMorse}}
\label{subsection:transvholspheres}

The argument that we can ensure that the moduli spaces $\mathcal{M}_{i,j}(b,a)$ are carved out transversely in Section \ref{sec:SqQviaMorse} is identical to that in Appendix \ref{subsec:mssrmks}, after turning this problem into a suitable intersection problem involving pseudocycles. 

In particular, fixing $a,b$ we consider $$\tilde{\euscr{W}} := \bigsqcup_{v \in S^i} W^u(b,f) \times W^s(a, f^2_{v,s}) \times W^s(a, f^2_{-v,s}) \times \{ v \} \subset M \times M \times M \times S^i,$$ which similarly to the previous appendices when passing to the partial compactification (if $a$ is a closed sum of critical points) induces (via the evaluation map) a pseudocycle that we denote $\euscr{W}$ in $M \times ((M \times M) \times_{\mathbb{Z}/2} S^i)$. We require that this intersects transversely with the pseudocycle defined by the evaluation map, $$ev: \overline{\mathcal{M}}(j,J) \times S^i \rightarrow M \times M \times M \times S^i,$$ defined by $$ev(u,v) = (u(0), u(1), u(\infty), v).$$ We observe that this is classically a pseudocycle, such as in \cite[Theorem 6.6.1]{jholssympl} where we denote by $\overline{\mathcal{M}}(j,J)$ the compactification of the smooth moduli space of $u: S^2 \rightarrow M$ such that $u_*[S^2]$ is a homology class of Chern number $jN$ and $u$ is $J$-holomorphic. 

Once we quotient the two pseudocycles by the given $\mathbb{Z}/2$-action, our requirement is then that we can perturb the choice of Morse functions in such a way that the Morse pseudocycle is transverse to the pseudocycle $ev$. This is the same as the previous transversality problems.

\subsection{The quantum Cartan relation: Section \ref{sec:quancar}}
\label{subsec:bordismquantumcartantrans}

In the proof of Lemma \ref{lemma:lem1}, and in Appendix \ref{subsec:appendcartrel}, we constructed a bordism. We will make this somewhat clearer, using the language of the previous appendices.

Strictly, for each $t$ the setup as given can be shown to be identical to the intersection of two pseudocycles, as usual. The first is:
$$ev: \mathcal{M}_j(J) \times \mathcal{M}_j(J) \times \mathcal{M}_j(J) \times  S^i \rightarrow M^9 \times S^i,$$ acting by $$ev: (u_1,u_2,u_3,v) \mapsto (u_1(0),u_1(1), u_1(\infty), u_2(0),u_2(1), u_2(\infty),u_3(0),u_3(1), u_3(\infty),v).$$ 

The second is (for $\chi_i : X_i \rightarrow M \times S^i$ and $\gamma_i: Y_i \rightarrow M \times S^i$ being the ``input" pseudocycles, and $\zeta: Z \rightarrow M$ the ``output" pseudocycle) the following map: $$g: M \times M \times X \times X \times Y \times Y \times Z \times S^i \times [0,K] \rightarrow M^9 \times S^i,$$ for some large $K$, such that \begin{equation} \label{equation:qcarpseudo} g: \left( \begin{array}{l}a_1 \\ a_2 \\ x_1 \\ x_2 \\ y_1 \\ y_2 \\ z \\ v \\ t \end{array} \right) \mapsto \left( \begin{array}{l} \zeta(z) \\  a_1 \\ a_2 \\ \phi^2_{v,t,t}(a_1,v) \\ \phi^3_{v,-1,t} \circ \chi_v(x_1,v) \\ \phi^4_{v,-1,t} \circ \chi_{-v}(x_2,-v) \\ \phi^5_{v,t,t}(a_2, v) \\ \phi^6_{v,-1,t} \circ \gamma_v(y_1,v) \\ \phi^7_{v,-1,t} \circ \gamma_{-v}(y_2,-v) \\ v\end{array} \right) \end{equation} We observe that this is a pseudocycle bordism, identical to the construction for the classical Cartan relation of Appendix \ref{subsec:appendcartrel}. For each fixed $t$, the pseudocycle is then obtained by using the parameter $t \in [0,K]$. Observe that for sufficiently large $K$, an (equivariant) gluing theorem (in this case it is nothing more than a standard gluing theorem: see Appendix \ref{sec:equivariantgluing}) shows that the $K$-end of the bordism corresponds to a count using broken trajectories. Further, the entire bordism $g$ above shows that the pseudocycles corresponding to the $t=0$ and $t=K$ ends are bordant, hence have the same intersection with the pseudocycle $ev$. 

\subsection{Transversality for the quantum Adem relations: Section \ref{sec:QAR}}
\label{subsec:quantumademrels}
We give only brief details for this case, as one just needs to edit the previous transversality arguments using the discussion in this Appendix.

We recall that in our previous constructions, we proved transversality by observing that we may assume that $v$ varies in some fundamental domain $D$ of the $\mathbb{Z}/2$-action on $S^{\infty}$, and then we may freely choose $f_{v}$ for $v \in D$. We would like to make a similar claim for the more general finite groups that we consider.

Suppose that we are defining operations as in Section  \ref{sec:QAR}, using $ES_4$ as our parameter space. One constructs a pseudocycle as in Equation \eqref{pseudoaaa2}, and intersects this with the evaluation pseudocycle in Equation \eqref{pseudoaaa1}. In order to ensure transversality, one requires that the $f_v$ may be chosen (where $v \in E S_4$) sufficiently generically. In particular, we must ensure that the invariance conditions are not troublesome. Suppose that $D$ were a fundamental domain of the $S_4$-action on $ES_4$. The only way that the invariance conditions could prove to be a problem for transversality is if one of the invariance conditions were to in some way relate $(1 p) \cdot D$ with $(1 q) \cdot D$ for some $p,q = 1,2,3,4$. If this were the case then we could not freely choose our Morse function on $f_{(1 q) \cdot v}$ with $v \in D$ (because then this would influence our choice of $f_{(1 p ) \cdot D)}$). Recall that the invariance conditions required that $f_{(23) \cdot v} = f_{(24) \cdot v} = f_v$. The pseudocycle that one must write down involves $f_v$, $f_{(12) \cdot v}$, $f_{(13) \cdot v}$ and $f_{(14) \cdot v}$. It is then sufficient to demonstrate that the cosets of $G:= \langle (23), (24) \rangle \subset S_4$ are exactly $\{ G, (12) \cdot G, (13) \cdot G, (14) \cdot G \}$, which is a straightforward verification.

\section{Equivariant Gluing}
\label{sec:equivariantgluing}
We will not repeat the classical gluing argument, as pertinent details are well known for example as in \cite[Chapter 10]{jholssympl}. Instead, for completeness we mention gluing for the equivariant case (i.e. as necessary for Section \ref{subsec:Cartan}). A more general equivariant gluing argument, as in for example \cite[Section 4c]{seidel}, is not necessary for this paper. In the case of Morse flowlines, observe that the Morse-Smale condition is open, so for a fixed metric $g$ we may assume that each pair $(f_{v,s},g)$ is Morse-Smale. The equivariant gluing theorem for a broken pair of flowlines with parameter $v$ simply works applying a standard gluing argument to a $- \nabla f_{v,s}$-flowline concatenated with a $- \nabla f_{v,\pm \infty}$-flowline. The conditions on the functions $f^p_{v,s,t}$ for $p=1,...,7$ for Section \ref{subsec:Cartan} were specifically chosen for this to be the case. Specifically, for large $t$, one chooses $f^p_{v,s,t}$ such that $f^p_{v,s,t}$ is independent of $v$ and $s$ when $|s|$ is sufficiently large. 

The gluing theorem for holomorphic spheres holds likewise: one may apply a gluing theorem on $\mathcal{M}(J,j)$ of stable genus $0$ $J$-holomorphic maps of Chern number $j$, and this immediately provides a gluing theorem for $\mathcal{M}(J,j) \times S^i$. However, an important point must be highlighted: when descending to $\mathcal{M}(J,j) \times_{\mathbb{Z}/2} S^i$, recalling the action of $\mathbb{Z}/2$ on $\mathcal{M}(J,j)$ is by $$(z \mapsto z/(z-1)) \in PSL(2,\mathbb{C}),$$ suppose that $u$ is a nodal $J$-holomorphic map (with at least one node). Then if $[u,v] \in \mathcal{M}(J,j) \times_{\mathbb{Z}/2} S^i$, in general there is an ambiguity in how one defines a glued solution (which depends on a choice of lift to $\mathcal{M}(J,j) \times S^i$). In particular, if the domain of $u$ is $m_1$ (in the notation of Figure \ref{fig:m05elmts}) then there is no way to coherently glue the domain while respecting the $\mathbb{Z}/2$-action. This is because $m_1$ is an isolated point in the fixed point set of $\overline{M}_{0,5}$.

\section{Constructions for $ED_8$ and $ES_4$}
		\label{sec:ed8es4}
In this appendix, we construct $ED_8$ and $ES_4$ as the union of a countable nested family of submanifolds (with respective $D_8$ and $S_4$ actions).

\subsection{The construction for $ED_8$}
		\label{subsec:ed8}
Consider the contractible space $S^{\infty} \times (S^{\infty} \times S^{\infty})$, along with an action of $D_8 = \langle r, a | r^2 = a^4 = 1, r \cdot a = a^3 \cdot r \rangle \subset S_4$ (where $r=(13)(24)$ and $a= (1324)$) as follows:

			\begin{itemize}
			\item $r \cdot (z,( z_1, z_2)) = (-z, (z_2,z_1))$.
			\item $a \cdot (z,(z_1,z_2)) = (-z, (-z_2, z_1))$.
			\end{itemize}

			This is a free action of $D_8$ on the contractible space $S^{\infty} \times (S^{\infty} \times S^{\infty})$, which can be checked by verifying all possibilities. We call this space $E$, and it is a the model for $ED_8$. We denote $B = E / D_8$, to contrast with the $BD_8$ as constructed in the next section. Observe that there are finite dimensional submanifolds $S^i \times (S^j \times S^j)$, for each $(i,j)$, which are invariant under the $D_8$-action. These will be referred to as $E^{i,j}$, and let $B^{i,j} = E^{i,j} / D_8$.

\subsection{The construction for $ES_4$}
		\label{subsec:es4}

Recall from Appendix \ref{subsec:ed8}, the space $$\mathbb{S}_2 := S^{\infty} \times (S^{\infty} \times S^{\infty}),$$ defined for $D_8$ using $r=(12)(34)$ and $a= (1324)$. We may define similarly $\mathbb{S}_3, \mathbb{S}_4$ for the conjugates $(23) \cdot D_8 \cdot (23)$ and $(24) \cdot D_8 \cdot (24)$ (corresponding to copies of $D_8$ with $r=(13)(24)$ and $r=(14)(23)$) respectively. 

Let $S^{\infty}_{\mathbb{C}} = \cup_{i \ge 1} S^{2i-1}$ denote the infinite dimensional sphere taken as the union of the odd dimensional spheres $S^{2i-1} \subset \mathbb{C}^{i}$. Let $$\mathbb{S}_{\mathbb{C}} = S^{\infty}_{\mathbb{C}} \times \mathbb{S}_{2}  \times \mathbb{S}_{3}  \times \mathbb{S}_{4}.$$ This remains contractible, but now has a free $S_4$ action as follows (to define the action, it is sufficient to define it on the generators $(12),(13),(14)$ and show that all relations are satisfied). Firstly, these act on $$\mathbb{S}_{2}  \times \mathbb{S}_{3}  \times \mathbb{S}_{4},$$ by:

$$(12): \left( \begin{array}{ccc} a &  a_1 & a_2 \\ b &  b_1 & b_2 \\ c &  c_1 & c_2 \end{array} \right) \mapsto  \left( \begin{array}{ccc} -a &  -a_1 & a_2 \\ c &  c_2 & c_1 \\ b &  b_2 & b_1 \end{array}\right)$$
$$(13): \left( \begin{array}{ccc} a &  a_1 & a_2 \\ b &  b_1 & b_2 \\ c &  c_1 & c_2 \end{array} \right)  \mapsto 
\left( \begin{array}{ccc} c &  -c_2 & c_1 \\ -b &  -b_1 & b_2 \\ a &  a_2 & -a_1 \end{array}\right) $$
$$(14): \left( \begin{array}{ccc} a &  a_1 & a_2 \\ b &  b_1 & b_2 \\ c &  c_1 & c_2 \end{array} \right)\mapsto 
\left( \begin{array}{ccc} b &  -b_2 & -b_1 \\ a &  -a_2 & -a_1 \\ -c &  -c_1 & c_2 \end{array}\right). $$
Here we denote an element of this space as a matrix $$\left( \begin{array}{ccc} a &  a_1 & a_2 \\ b &  b_1 & b_2 \\ c &  c_1 & c_2 \end{array}\right),$$ where for example $(a,(a_1,a_2)) \in \mathcal{S}_2$ and so on.

Further, to decide the action of these transpositions $(12),(13),(14)$ on $S^{\infty}_{\mathbb{C}}$, we pick three reflections of $S^{\infty}$ (i.e. involutions that restrict to reflections in each $S^{2i-1} \subset S^{\infty}$), such that the composition of any two of these reflections are a rotation by $2 \pi /3$ in each complex coordinate, using the identification with the unit sphere $S^{2i-1} \subset \mathbb{C}^i$. Alternatively, this is multiplication by the third root of unity $\zeta \in \mathbb{C}$. To see that we can do this, recall that any element of $O(2n)$ may be block diagonalised into $2 \times 2$ blocks. It is then sufficient to show that this may be done for $S^1$ (and then extending likewise for each $2 \times 2$ block). Observe that if one takes the diameters of $S^1$ through $0$, $\zeta$ and $\zeta^2$ then reflection through any two of these three lines suffices.

It remains to show that this is indeed a well defined free action of $S_4$, which we will not prove here but can be checked by exhaustion. Note further that the inclusion of $D_8 \subset S_4$ from Appendix \ref{subsec:ed8} demonstrates this $ES_4$ as an $ED_8$. We denote by $ES_4^{i,j,k}$ the $S_4$-invariant submanifold of $ES_4$ corresponding to $$S^{2i-1} \times (S^j \times (S^k \times S^k)) \times (S^j \times (S^k \times S^k)) \times (S^j \times (S^k \times S^k)).$$

Choosing any point $p \in ES_4$ that is fixed by the reflection associated to $(12)$, there is a homotopy equivalence between our $ES_4$ and $E$ (from Appendix \ref{subsec:ed8}), via the projection $$\rho: ES_4 \rightarrow \mathbb{S}_2,$$ which is in fact a $D_8$-equivariant homotopy equivalence with respect to the action of $D_8 \subset S_4$. To see that we can do this, recall the following argument that $S^{\infty}_{\mathbb{C}}$ is contractible: there is a shift map $$T: S^{\infty}_{\mathbb{C}} \rightarrow S^{\infty}_{\mathbb{C}}, \ (z_0,z_1,\ldots) \mapsto (0, z_0, z_1,\ldots),$$ where each $z_i \in \mathbb{C}$. Pick some $p \in S^{\infty}_{\mathbb{C}} - \text{Im}(T)$. Then $T$ is both homotopic to the identity map on $S^{\infty}_{\mathbb{C}}$ and the constant map at $p$ via a (normalised) linear interpolation (the key being that this interpolation is never zero). A similar argument shows that $S^{\infty}$ is contractible. We then observe that $\rho$ is $D_8$-equivariant, and the $ES_4 / D_8 \rightarrow E / D_8$ is a homotopy equivalence because it is a fibration over $E / D_8$ with a contractible fibre. Using the long exact sequence of homotopy for a fibration, this implies that $\rho$ is a weak equivalence. By construction, our choices of $ES_4$ and $E$ are CW-complexes, hence this is indeed a homotopy equivalence.

For notational purposes, we will denote $BD_8 = ES_4 / D_8$ (specifically, using the contractible space $ES_4$ with a free $S_4$ action as constructed in Appendix \ref{subsec:es4}, but only quotienting by $D_8$). We will denote $B = E / D_8$ (specifically, using the contractible space $E$ from Appendix \ref{subsec:ed8}), noting that by standard theory of classifying spaces $B$ and $BD_8$ are both homotopy equivalent, as in the previous paragraph.

\bibliographystyle{plain}
\bibliography{bibliothesis}

\end{document}